\numberwithin{equation}{section} 
\theoremstyle{plain}
\newtheorem{theorem}{Theorem}[section]
\newtheorem{lemma}[theorem]{Lemma}
\newtheorem{corollary}[theorem]{Corollary}
\newtheorem{proposition}[theorem]{Proposition}
\theoremstyle{definition}
\newtheorem{definition} 
[theorem]{Definition}
\newtheorem{remark}[theorem]{Remark}
\newtheorem{question}{Question}
\newcommand{\tr}{\mathrm{Tr}}
\begin{document}

\setcounter{tocdepth}{2}

\title{\bf{Edge statistics for random band matrices
}}

\author{
Dang-Zheng Liu\footnotemark[1] ~ and    Guangyi Zou\footnotemark[1]}
\renewcommand{\thefootnote}{\fnsymbol{footnote}}
\footnotetext[1]{School of Mathematical Sciences, University of Science and Technology of China, Hefei 230026, P.R.~China. 
dzliu@ustc.edu.cn, zouguangyi2001@mail.ustc.edu.cn}

\maketitle

\begin{abstract}
We consider Hermitian and symmetric random band matrices on the $d$-dimensional lattice $(\mathbb{Z}/L\mathbb{Z})^d$ with bandwidth $W$, focusing on local eigenvalue statistics at the spectral edge in the limit $W\to\infty$. Our analysis reveals a critical dimension $d_c=6$ and identifies the critical bandwidth scaling as $W_c=L^{(1-d/6)_+}$. 
In the Hermitian case,  we establish   the Anderson transition   for all dimensions $d<4$,   and  GUE edge universality when $d\geq 4$ under the condition $W\geq L^{1/3+\epsilon}$ for any $\epsilon>0$.   In the symmetric case, we also establish 
   parallel   but more subtle transition phenomena after tadpole diagram renormalization. 
These findings   extend Sodin's pioneering work [Ann. Math. 172, 2010],   which was limited to the one-dimensional case  and did not address the critical phenomena.
\end{abstract}

\tableofcontents

\section{Introduction}

\subsection{Random band matrices}
 
Given a $d$-dimensional lattice  
\begin{equation}
    \Lambda_L=\Big[-\frac{1}{2}L,\frac{1}{2}L\Big)^d \bigcap \mathbb{Z}^d,
\end{equation}
where   $L$ is a positive integer and    $N:=L^d$ is the size of $\Lambda_L$, we introduce a canonical representative for $x \in \mathbb{Z}^d$ as
\begin{equation}
    [x]_L: = \big(x + L\mathbb{Z}^d\big) \cap \Lambda_{L}
\end{equation}
and also a periodic distance on $\Lambda_{L}$ via   the $l^2$ norm
\begin{equation}
    \|x-y\|: = \|[x-y]_L\|_{2}.
\end{equation}
In a canonical form, a symmetric or Hermitian (correspondingly, Dyson index $\beta=1,2$) random band matrices (RBM for short) can be defined as
\begin{equation}
   H = \big(H_{xy}\big)_{x,y\in \Lambda_{L}}, \quad   H_{xy}:=\sigma_{xy}A_{xy},
\end{equation}
where $\{A_{xy}\}$ are independent random variables with mean zero and variance one, up to symmetry of $H_{xy}=\overline{H_{yx}}$. The variance $\sigma^{2}_{xy}$ depends on the distance between the two sites, and  typically decays with distance on a characteristic length scale $W$, called the bandwidth of $H$. When $W=L$ and all variances are equal, the RBM reduces to the usual Wigner matrices, acting as a mean-field model. The case $W = O(1)$ corresponds to the famous Anderson model \cite{anderson1958absence}:
\begin{equation}
    H_{\text{AM}}=-\Delta+ \lambda V,
\end{equation}
where $\Delta$ is the discrete Laplacian on $\Lambda_{L}$ and $V$ is a random potential with i.i.d. random variables $V_{x}$. The parameter $\lambda>0$ measures the strength of the disorder and is expected to be an analogy of $1/W$.     This model exhibits short-range interaction and spatial structure, distinguishing it from mean-field models.

As $W$ varies, random band matrices become more realistic interpolating models between Wigner matrices and Anderson models. The entries represent quantum transition rates between two   quantum states. More importantly, a sharp Anderson metal-insulator transition in the bulk of the spectrum is conjectured to occur at some critical bandwidth $W_c$:
\begin{itemize}
    \item[(i)] When $W\ll W_c$, $H$ has localized eigenvectors and Poisson statistics;
    \item[(ii)] When $W\gg W_c$, $H$ has delocalized eigenvectors and GOE/GUE bulk statistics;
    \item[(iii)] When $W\sim W_c$, $H$ has coexisting eigenvector states and crossover eigenvalue statistics.
\end{itemize}
Here
\begin{equation}  
    W_c = 
    \begin{cases} \sqrt{L}, &  d=1,\\
        \sqrt{\log L}, &  d=2,\\
        O(1),& d\ge 3,
    \end{cases}
\end{equation} 
as reported in \cite{bourgade2018random, casati1990scaling1, casati1990scaling, fyodorov1991scaling,spencer2011random,spencer2012susy} for simulations, non-rigorous supersymmetry, or heuristic arguments.

Establishing the Anderson metal-insulator transition for the Anderson models and random band matrices \cite{aizenman2015random,spencer2011random} remains a challenging problem. Regarding the localization of the Anderson models, significant mathematical progress includes works of Goldsheid-Molchanov-Pastur \cite{gol1977pure} in one dimension, Fr\"ohlich-Spencer \cite{frohlich1983absence} by multiscale analysis, Aizenman-Molchanov \cite{aizenman1993localization} by fractional moment method for higher dimensions, and recent works of Ding-Smart \cite{ding2020localization} and Li-Zhang \cite{li2022anderson} for Anderson Bernoulli models in dimensions 2 and 3. However, much less is known about the delocalization conjectured to occur in dimensions $d\geq 3$; see \cite{aizenman2013resonant} and references therein.

For random band matrices, there are numerous partial results on localization or delocalization, including \cite{bao2017delocalization,bourgade2020random,benaych2014largest,disertori2002density,erdHos2011quantum,erdHos2011quantum2,erdHos2013delocalization,he2019diffusion,schenker2009eigenvector,sodin2010spectral,shcherbina2017characteristic,shcherbina2021universality,shcherbina2022susy,yang2021delocalization,yang2021random,yang2022delocalization,xu2022bulk} (this is far from an exhaustive list!). See \cite{bourgade2018random,bourgade2020random,yang2021delocalization} for detailed reviews on recent progress. As far as we know, the best delocalization result for high-dimensional band matrices was obtained in a series of works \cite{yang2021delocalization,yang2022delocalization,xu2022bulk} under the assumption $d\geq 7$ and $W\gg L^{\epsilon}$.
On the other hand, for the local eigenvalue statistics in the bulk of the spectrum, the universality result was proved in \cite{bourgade2020random} when $W\gg L^{3/4}$ in dimension $d=1$, and in \cite{xu2022bulk} when $W\gg L^{ 95/(d+95)}$ in dimension $d\geq 7$. The crossover near the critical bandwidth $W\sim \sqrt{L}$ in $d=1$ has been proved for 2-point correlation functions of characteristics polynomials under Gaussian entries and a special variance profile in \cite{shcherbina2021universality,disertori2021density,shcherbina2022susy}; see \cite{shcherbina2018transfer} for more details. It is worth stressing that there are many works about Wigner-type matrices, see \cite{anderson2010introduction,bourgade2018random,erdHos2017dynamical} and references therein.

This paper focuses on the local eigenvalue statistics at the spectral edge of the spectrum, building upon fundamental developments in random matrix theory. The study of edge statistics originated with the seminal works of Tracy and Widom   \cite{tracy1994level,tracy1996orthogonal} and Forrester \cite{forrester1993spectrum}, who introduced the now ubiquitous Tracy-Widom distributions and Airy point processes for Gaussian unitary and orthogonal ensembles (GUE,  GOE)  respectively.  These universal edge statistics were   subsequently   proven to hold for Wigner matrices (see e.g. \cite{soshnikov1999universality}) with higher moment assumptions, and finally a necessary and sufficient characterization was established by Lee and Yin \cite{lee2014necessary}. Methodologically, Soshnikov \cite{soshnikov1999universality} pioneered the moment approach to prove edge universality, while Feldheim and Sodin \cite{feldheim2010universality} later developed the more sophisticated Chebyshev polynomial method, extending universality results to both Wigner matrices and sample covariance matrices. 
Furthermore, Sodin's seminal work \cite{sodin2010spectral}  made a crucial breakthrough by identifying the critical bandwidth scaling $W_c=L^{5/6}$ for one-dimensional random band matrices with unimodular entries and with a cutoff variance profile, while rigorously characterizing   edge statistics in both the extended ($W\gg W_c$, supercritical) and localized ($W\ll W_c$, subcritical) regimes. This was followed by significant advances from 
Erd{\H{o}}s and Knowles \cite{erdHos2011quantum,erdHos2011quantum2}, who employed Chebyshev-Fourier expansions to investigate bulk eigenvector localization and quantum diffusion in band matrices with i.i.d. entries and with   general variance profiles. Their subsequent work \cite{erdHos2015altshuler1, erdHos2015altshuler2} further extended this methodology to analyze mesoscopic  bulk statistics, revealing new insights into the Altshuler-Shklovskii scaling regimes.

In his seminal paper \cite{sodin2010spectral}, Sodin proposed several open problems  concerning random band matrices, two of which are stated below.

\noindent{\bf Problem 1}. {Study the eigenvalue edge statistics in the crossover regime   $W\sim L^{5/6}$ in dimension $d=1$;  see  \cite[Remark V, Sect. 9]{sodin2010spectral}}.

\noindent
{\bf Problem 2}. Establish a natural extension of eigenvalue edge statistics proved in both subcritical and supercritical regimes from dimension $d=1$ to  higher-dimensional lattices ($d>1$);  see  \cite[Remark VIII, Sect. 9]{sodin2010spectral}.

 The primary objective of this paper is to investigate edge statistics and address these two problems. Specifically, we rigorously establish phase transition and prove universality results for random band matrices  with   Gaussian profile and   unimodular entries in arbitrary dimension $d$,  subject to potential bandwidth constraints   when $d\geq 4$. 

 \begin{definition}[{\bf{Unimodular RBM}}] \label{defmodel} A symmetric/Hermitian  matrix $H=(H_{xy})_{x,y\in \Lambda_{L}}$ is called a  unimodular random band matrix with bandwidth $W\leq L/2$ and variance profile  
  \begin{equation} \label{VP}
    \sigma_{xy}^2=   \frac{1}{M}\sum_{n\in \mathbb{Z}^d}f\big(\frac{x-y+nL}{W}\big), \quad  M:=\sum_{x\in \mathbb{Z}^d} f\big(\frac{x}{W}\big),  
 \end{equation} 
 if   $H_{xy}=\sigma_{xy}A_{xy}$ and   the following three assumptions hold: 
 \begin{itemize}
     \item[(A1)] ({\bf Independence}) $ \{A_{xy}\}$ are independent up to   Hermitian   symmetry;
     
     \item[(A2)] ({\bf  Unimodularity}) all the diagonal entries  $\{A_{xx}\}$ are    uniformly distributed  
     on $S^0:=\{+1,-1\}$ and  the off-diagonal entries  $\{A_{xy}\}$  
     are   uniformly distributed 
     on $S^0$    ($\beta=1$) or   $S^1:=\{e^{i\theta}: \theta\in [0,2\pi)\}$   ($\beta=2$); 
    \item[(A3)]({\bf Gaussian 
    profile}) $f(x)$ is the $d$-dimensional Gaussian density with covariance matrix  $\Sigma>0$,
\begin{equation}
    f(x)=\frac{1}{ (\sqrt{2\pi})^{d}\sqrt{\det(\Sigma)}}e^{-\frac{1}{2} x^T\Sigma^{-1}x}.
\end{equation}

 \end{itemize} 
  \end{definition}

The Gaussian variance profile function $f$, which characterizes the bandwidth parameter $W$, serves as a canonical model for rapidly decaying profiles.
Crucially, the identities   $\sum_{y\in \Lambda_L}\sigma^2_{xy}=1$ for all $x$ and the asymptotic relation   $M= W^d(1+o(1))$  play important   roles. These conditions ensure the validity of the semicircle law in the limit  $W \to \infty$, as demonstrated in \cite{bogachev1991level}.

\subsection{Main results}

We establish a three-phase characterization of edge statistics for dimensions $d<4$ and prove the Tracy-Widom distribution for $d\geq 4$ in the regime $W\gg L^{\frac{1}{3}+\epsilon}$ for any positive $\epsilon$. These findings extend the earlier work of Sodin \cite{sodin2010spectral}, which was restricted to the supercritical and subcritical regimes for RBM on  one-dimensional lattices  with cutoff profile.

\begin{theorem}[{\bf Supercritical regime}] \label{metatheoremsup} For  the unimodular 
{\rm{RBM}} 
in Definition   \ref{defmodel},  if 
    \begin{equation} \label{supercond2}
     W\gg 
     \begin{cases} L^{1-\frac{d}{6}}, &  d<4,\\
        L^{\frac{1}{3}+\epsilon},& d\geq 4, 
        \end{cases}
    \end{equation} 
    with any  fixed  $\epsilon>0$,   
    then  as $L\to \infty$ any  $k$-point correlation function for rescaled   eigenvalues  of $H$
    \begin{equation}\{N^{\frac{2}{3}}\big(\lambda_i-2+a_{4}(\beta)\big)\}_{i=1}^N\end{equation}    converge weakly to       those  of GUE and GOE.
Here  the correction term  $a_{4}(2)=\sum_{y\in \Lambda_L}  |\sigma_{0y}|^4$ and  $a_{4}(1)=A(1)$ with $A(1)$  defined    in \eqref{equ:def_A(z)}.
\end{theorem}
 The rescaled  limits  in  the subcritical and critical regimes  are stated  only when $d<4$.

\begin{theorem}[{\bf Subcritical regime}] \label{metatheoremsub} For  the unimodular {\rm{RBM}} in Definition   \ref{defmodel}, assume that $d<4$ and  $n_i=[\tau_i W^{\frac{2 d}{6 -d}}], \tau_i \in (0,\infty)$, $i=1,\ldots, k$. 
If
    \begin{equation} \label{subcond2}
W\ll   L^{1-\frac{d}{6}},  
    \end{equation} 
    then  as $W\to \infty$  

    \begin{equation}\label{equation:Chebyshev_independent}
         \mathbb{E}\Big[\prod_{i=1}^k \mathrm{Tr}\Big( \frac{1}{n_i}\mathcal{P}_{n_i}(H)\Big)\Big]=
         \Big(\frac{N}{W^{\frac{6d}{6 -d}}}\Big)^{k} 
         \Big(\prod_{i=1}^k\big(\frac{1+(-1)^{n_i}}{2}\big) \phi_{1}^{(\mathrm{sub})}(\beta,d;\tau_i)+o(1)\Big) \end{equation}
         where $\phi_{1}^{(\mathrm{sub})}(\beta,d;\tau)$ is given in 
         Definition \ref{def:sub_critical_transform}, the polynomials $\{\mathcal{P}_{n} \}$  are given  in Definition \ref{a4Polynomial}   and should be    replaced by renormalization polynomials $\{\widetilde{\mathcal{P}}_n\}$  in  \eqref{generalpolynomial}   when  $\beta=1$ and $d>1$. 
\end{theorem}
   
   \begin{theorem}[{\bf Critical regime}] \label{metatheoremcri} 
   For  the unimodular {\rm{RBM}}   in Definition   \ref{defmodel}, 
   assume that $d<4$ and  $n_i=[\tau_i (\frac{L}{W})^{2}], \tau_i \in (0,\infty)$, $i=1,\ldots, k$  such that $\sum_{i=1}^k n_i$ is even. 
   If
    \begin{equation} \label{critcond2}
 W= \big(1+o(1)\big)\gamma  L^{1-\frac{d}{6}}
    \end{equation} 
   for   some    $\gamma\in (0,\infty)$,  
         then  as $W\to \infty$   
\begin{equation}\label{criticalChebyshev}
          \mathbb{E}\Big[\prod_{i=1}^k \mathrm{Tr}\Big( \frac{1}{n_i}\mathcal{P}_{n_i}(H)\Big)\Big]=\big(1+o(1)\big) 
         \phi_{k}^{(\mathrm{crit})}(\beta,d,\gamma;\tau_1, \ldots,\tau_k),
                               \end{equation}
where $\phi_{k}^{(\mathrm{crit})}(\beta,d,\gamma;\tau_1, \ldots,\tau_k)$ is given in Definition \ref{def:sub_critical_transform},    
       the polynomials $\{\mathcal{P}_{n} \}$  are given  in Definition \ref{a4Polynomial}   
  and should be    replaced by $\{\widetilde{\mathcal{P}}_n\}$  in  \eqref{generalpolynomial}   when  $\beta=1$ and $d>1$.
\end{theorem}

\begin{remark} In the supercritical regime as in Theorem \ref{metatheoremsup},  to establish the   Tracy-Widom distribution   we need to make   a refined correction   depending on $W$ at the spectral edge.  In dimension $d=1$, the Chebyshev polynomials as test functions,  instead of   $\mathcal{P}_{n}$  and  $\widetilde{\mathcal{P}}_n$, are sufficient to ensure the validness of Theorem \ref{metatheoremsub} and Theorem \ref{metatheoremcri}        in the subcritical and critical regimes. 

\end{remark}

\begin{remark} 
The renormalization polynomials ${\mathcal{P}_n}$ and ${\widetilde{\mathcal{P}}_n}$ exhibit double scaling limits at the spectral edge, with their limiting forms acting as integral transforms of the limiting correlation functions. These transforms coincide with those of Chebyshev polynomials when $\beta=2$ or $(\beta=1, d=1)$, but differ for $\beta=1$ in dimensions $d=1,2,3$. However, the inverse transforms appear to be uniquely determined only for $d=1,2$, a consequence of the tail decay properties of the one-point correlation function (see Appendix  \ref{sec:poly_moment_analysis} or \cite{sodin2021uniqueness}).

In the subcritical regime, the transformed correlation functions factorize into products of rescaled one-point correlation functions with scaling factor $NW^{-6d/(6-d)}$, suggesting the edge eigenvalue point process may converge to a Poisson point process. For the critical regime, we observe various interpolation patterns between Poisson and Airy point processes whose specific forms depend on both $\beta$ and $d$.
\end{remark}

\begin{remark}      ({\bf Stable  profile})  Analogous results to our three main theorems remain valid for variance profiles characterized by by the density of $d$-dimensional $\alpha$-stable distributions, with characteristic function $e^{-c_\alpha|t|^\alpha}$ where $0<\alpha<2$ and $c_\alpha>0$. In this case, the critical exponent becomes $1-\frac{d}{3\alpha}$.
For one-dimensional power-law random band matrices ($d=1$), this model was first proposed by   \cite{mirlin1996transition} and predicted in the physics literature to display multifractality and spectral statistics interpolating between Wigner-Dyson and Poisson statistics. However, rigorous mathematical results remain limited (see e.g. \cite{mirlin1996transition,evers2008anderson}).   These results will be investigated in detail in a separate paper.  
  \end{remark}

\subsection{Key ideas for proofs}

In this paper, we develop the method  of polynomial moments   by employing non-backtracking path expansions to analyze high-dimensional random band matrices, extending beyond classical Chebyshev polynomial techniques. Our approach naturally separates   into two components: combinatorial analysis and analytical treatment.
\begin{itemize}
    \item  \textbf{Combinatorial framework.}  
    This paper primarily focuses on combinatorial analysis. For Hermitian matrices, we implement non-backtracking path expansions by introducing a family of modified Chebyshev polynomials ${\mathcal{P}_n(x)}$. Then we enumerate the non-vanishing paths by classifying non-backtracking paths into distinct diagrams and count the preimage of diagrams which is connected to the evaluation of Feynman graph integrals. 
    
However, in dimensions $d \geq 2$, a significant challenge arises: the divergences of Feynman graph integrals emerge in the real symmetric case that are absent for $d=1$. To resolve these divergences, we introduce renormalization polynomials ${\widetilde{\mathcal{P}}_n(x)}$ that eliminate effects from tadpole diagrams. 

    \item  \textbf{Analytical framework.} 
 Within our analytical approach, we demonstrate that convergence of polynomial moments guarantees that of point correlation functions. We study the asymptotic behavior for $\mathcal{P}_n(x)$ and $\widetilde{\mathcal{P}}_n(x)$ in Appendix~\ref{sec:re_poly}. Furthermore, we address the fundamental questions of existence and uniqueness for the limiting point correlation functions in Section~\ref{sec:cont_thm}, where we develop a continuity theorem to characterize this convergence.
\end{itemize} 
 
  In summary, the proofs of our main results---Theorems~\ref{metatheoremsup}, \ref{metatheoremsub}, and \ref{metatheoremcri}---  integrate several fundamental ingredients as follows.
  
\begin{itemize}
    \item[(i)] \textbf{Method of polynomial moments and non-backtracking path expansion.} 
  For random band matrices with cutoff profile functions and unimodular entries, the Chebyshev polynomial $U_n(H/2)$ admits an exact combinatorial interpretation as a sum over all non-backtracking paths \cite{feldheim2010universality,sodin2010spectral,erdHos2011quantum}. However, this precise correspondence fails for general profile functions, where $U_n(H/2)$ decomposes into a sum of non-backtracking paths plus $O(W^{-d})$ remainder terms \cite{erdHos2011quantum2} - corrections that become non-negligible in dimensions $d\geq 2$.
To address this limitation, we introduce the \textbf{modified Chebyshev polynomial} $\mathcal{P}_n(x)$, which enables a refined path expansion with significantly reduced error terms (Proposition~\ref{prop:2.7}, Theorem~\ref{thm:thm2.5}). Our analysis reveals that these polynomials induce an $O(W^{-d})$ correction to the semicircle law at the spectral edge, distinct from the expected $W^{-\frac{4d}{6-d}}$ edge fluctuations in subcritical regimes. This correction, which becomes relevant for $d\geq 2$, represents a non-universal contribution depending on both the profile function and fourth moments of matrix entries (Lemma~\ref{lem:B.5}).
To our knowledge, the modified Chebyshev polynomials and the non-universal edge corrections appear for the first time in random band matrices.

    \item[(ii)] \textbf{Diagram reduction.} 
  Building upon the non-backtracking   path reduction technique first introduced in \cite{feldheim2010universality}, we present diagram reduction procedure with slight modification in Section~\ref{sec:diagram_reduction} for completeness. Our analysis requires natural extensions of this framework to handle multiple consecutive non-backtracking paths. We need to consider the expectation of $\mathbb{E} \tr(V_n)^t$ for $t>1$ rather only the case of $t=1$ treated in \cite{feldheim2010universality, sodin2010spectral} and in Section \ref{sec:real_upper_bound}, \ref{sec:4.2}, we use it to neglect error terms. We introduce `zig up' operation and the `\textbf{multi-tail diagram}' to overcome the new difficulty(Section \ref{sec:P_powers}). 
    
    \item[(iii)] \textbf{Random walks on the torus $\mathbb{T}^d$.} 
  For each diagram, we count its preimages by the diagram function from Definition~\ref{diagramf} (Proposition~\ref{coro:3.7}, Corollary~\ref{coro:2.19}). The limiting behavior of these diagram functions is connected to random walks on the torus $\mathbb{T}^d$, which exhibit three distinct regimes depending on the relative scaling between bandwidth $W$ and step number $n$ (Section~\ref{sec:diagram_function}). The crossover behavior of random walks ultimately governs the transition between subcritical and supercritical  regimes.
    \item[(iv)] \textbf{Singularity analysis for Feynman graph integrals.} The limiting behavior of diagram functions is closely connected to Feynman integrals from quantum field theory. A crucial new challenge arises in dimensions $d\geq 2$ ($\beta=1$) and $d\geq 4$ ($\beta=2$) due to the emergence of ultraviolet divergences in these Feynman integrals - a phenomenon not present in one-dimensional settings. To our knowledge, this represents the first occurrence of Feynman integral convergence problems in random matrix theory.
Building on techniques from \cite{speer1975ultraviolet,smirnov2013analytic}, we reformulate  a singularity criterion for Feynman graph integral(Proposition \ref{propositionconvergence}). We also adopt a new combinatorial argument to obtain a quantitative bound of Feynman integral(Lemma \ref{lemma:upper_bound_integral}). 
    
    \item[(v)] \textbf{Tadpole diagram renormalization.} In real symmetric case with dimensions $d\ge 2$, the tadpole diagram becomes a singular pattern and leads to a $O(W^{-d})$ correction, which is non-universal and depends on the profile function, to the edge statistics behavior of random band matrices. To see the real fluctuation after the tadpole shift, we introduce loop-free non-backtracking path expansion, see Definition \ref{non-backtracking R-power} and a new family of renormalization polynomial $\{\widetilde{\mathcal{P}}_n(x)\}$, which are totally novel in the field of random matrix theory.
    
    \item[(vi)] \textbf{Analytical properties  of    renormalization  polynomials.} The polynomials $\mathcal{P}_n(x)$ and $\widetilde{\mathcal{P}}_n(x)$ are  defined via   recursion relations and lack explicit closed-form expressions unlike the classical Chebyshev polynomials $U_n(x)$. In the supercritical regime, properly scaled versions of $\mathcal{P}_n(x)$ and $\widetilde{\mathcal{P}}_n(x)$ exhibit identical asymptotic behavior to $U_n(x)$. Surprisingly, the subcritical case reveals more intricate behavior - the limiting forms of $\widetilde{\mathcal{P}}_n(x)$ differ significantly from their supercritical counterparts.
We develop a comprehensive analytical framework for these polynomials via contour integral methods (Appendix~\ref{sec:re_poly}) and establish continuity theorems   for the limiting test functions (Appendix~\ref{sec:cont_thm}). 
    
\end{itemize}

 \subsection{Structure of the paper} 
  The rest of the paper is organized as follows.

\textbf{Section \ref{sec:moment_method_revisited}:}
This section develops the method of polynomial    moments. Section \ref{sec:poly_path} introduces the modified Chebyshev polynomial $\mathcal{P}_n(x)$ and establishes its non-backtracking path expansion $\mathcal{P}_n(H)$. 
Section \ref{sec:reduce_to_Vn} demonstrates how to eliminate error terms, leaving only the essential non-backtracking power $V_n$. 
Building on \cite{feldheim2010universality}, Section \ref{sec:diagram_reduction} presents an adapted diagram reduction procedure for both self-consistency verification and subsequent analysis. 
Finally, Section \ref{sec:count_typical} develops a both lower and upper bound for diagram preimages and restricts our counting framework to typical diagrams.
    
\textbf{Section \ref{section_diagram}:} 
Section \ref{sec:diagram_function} introduces the diagram functions $F_{\mathfrak{D}}(\{n_i\})$ and establishes their asymptotic behavior in the subcritical regime. Section \ref{subsection_singular_diagram} revisits and reformulates the divergence criterion for Feynman integrals. Finally, Section \ref{sec:diagram_func_bound} develops both upper bounds and asymptotic estimates for individual diagram functions $F_{\mathfrak{D}}$.

\textbf{Section \ref{section:asymptotic_expansion}:} 
Section \ref{sec:transform of correaltion}  presents the integral transform   for limiting correlation functions. Section \ref{sec:asy_mixed_moment} establishes the asymptotic behavior of polynomial moments $\mathbb{E}\big[\prod_{i=1}^{k}\tr \mathcal{P}_{n_i}(H)\big]$, while Section \ref{sec:P_powers} extends these results to general mixed powers $\mathbb{E}\big[\prod_{i=1}^{k}\tr (\mathcal{P}_{n_i}(H))^{t_i}\big]$ and derives the tail decay properties for extreme eigenvalues.  The Hermitian case of the main theorems is fully resolved in Section \ref{sec:beta=2_main_thm}.

\textbf{Section \ref{sec:tadpole_diagram}:} 
  Section \ref{sec:renorm_poly} 
 introduces  the renormalization  polynomials $\widetilde{\mathcal{P}}_n(x)$ and develops  the loop-free non-backtracking path expansion for $\widetilde{\mathcal{P}}_n(H)$ with cutoff parameter $R$. Section \ref{sec:real_upper_bound} simplifies the error term analysis through appropriate reductions. The negligibility of these error terms is established in Section \ref{sec:4.2}. Finally, Section \ref{sec:beta=1_main_thm} presents the proof of our main theorems for the real symmetric case.

\textbf{Section \ref{openquestions}:} This section presents   several open   questions.

For clarity of presentation, we defer two key components to appendices: the analysis of random walks on the torus $\mathbb{T}^d$ and the  analytical framework for the polynomial moment method,  
both of central importance to our results and of independent mathematical interest.
\textbf{Appendix \ref{section:heat_kernel_profile}}    focuses on   properties of  random walk on the torus $\mathbb{T}^d$, including local limit theorems and uniform upper bounds for the heat kernel.
\textbf{Appendix \ref{sec:poly_moment_analysis}} 
 develops the analytical framework for the polynomial moment method. Section \ref{sec:re_poly} examines the asymptotic behavior of both $\mathcal{P}_n(x)$ and $\widetilde{\mathcal{P}}_n(x)$, while Section \ref{sec:cont_thm} establishes the continuity theorem.

{\bf Notation.} We use  $C, C_0, C_1, C_2, \ldots,$ to denote arbitrary constants, independent of the large parameters $W$, $L$ and $N$, unless otherwise specified.

\newpage
\section{Method of polynomial moments}\label{sec:moment_method_revisited}
The method of moments, pioneered by Chebyshev in his proof of the central limit theorem for sums of independent random variables, has become an indispensable tool across probability and statistics. Its application to random matrix theory was initiated by Wigner's seminal proof of the semicircle law \cite{wigner1955characteristic}, followed by Soshnikov's breakthrough work establishing edge universality for Wigner matrices through the analysis of asymptotically large moments \cite{soshnikov1999universality}. While the use of Chebyshev polynomials in this framework dates back to \cite{bai1993limit}, recent advances by Feldheim-Sodin \cite{feldheim2010universality,sodin2010spectral,sodin2014several} and Erdős-Knowles \cite{erdHos2011quantum2} have demonstrated the remarkable power of combining moment methods with Chebyshev polynomial techniques for studying spectral properties of large random matrices. Nevertheless, tackling unimodular random band matrices on higher-dimensional lattices with $
d>1$ requires moving beyond conventional Chebyshev polynomials through the development of carefully modified polynomial sequences.

\subsection{Polynomial moments   and non-backtracking walks}\label{sec:poly_path}
	The usefulness of the method of moments  in   random matrix theory is based on  a simple observation that  expresses   traces of powers of an Hermitian matrix $H=(H_{uv})_{u,v\in \Lambda_{L}}$ as  a sum  over paths
	\begin{equation}
			\tr (H^n)=\sum_{\mathfrak{p}_{n}}\prod_{j=0}^{n-1}H_{u_ju_{j+1}},
	\end{equation}
	where the summation  is over all  paths  $\mathfrak{p}_n=(u_0,u_1,\cdots u_{n-1},u_n)$ with  $ u_n=u_0$.
		
Recall 		that  the Chebyshev polynomial of second kind is   defined as  		\begin{equation}U_n(x)=\frac{\sin\big((n+1)\theta\big)}{\sin\theta},\quad  x=\cos \theta,
 			\end{equation}
which is indeed  a polynomial of $x$ with    degree $n$ and  	    satisfies  the three-term   recurrence relation
		\begin{equation}
			U_{n}(x)=2xU_{n-1}(x)-U_{n-2}(x), \quad n=1,2,\ldots,
		\end{equation}
		with the initial values $U_{-1}= 0$ and $U_{0}= 1$. 
   Using the recurrence relation,   whenever  $|H_{uv}|^2$ are constants,  a further  key  observation   from  Feldheim and Sodin \cite{feldheim2010universality,sodin2010spectral}  (cutoff variance profile) and  Erd{\H{o}}s and Knowles \cite{erdHos2011quantum2} (general variance profile),   shows  that  the trace of   $U_{m}(H/2)$ admits 
  a non-backtracking path expansion. 

However,  once    the  lattice dimension  $d>1$,  we need to add a small shift of order $W^{-d}$  in the three-term recurrence relation for Chebyshev polynomials. This  will be used to eliminate the divergence  effect caused by the four moment.  See  Proposition \ref{prop:2.7} below,  which  is our primary result in this section.
 
\begin{definition} \label{a4Polynomial}
    Given a real number $a_4$,  the modified Chebyshev  polynomial  $\mathcal{P}_n(z)$ of degree $n$ is defined    by the   four-term recursion  
    \begin{equation}\label{equ:modify_Cheby}
        \mathcal{P}_{n}(z)=z\mathcal{P}_{n-1}(z)-\mathcal{P}_{n-2}(z)+a_4\mathcal{P}_{n-4}(z), \quad n=1,2,\ldots,
    \end{equation}
    with the standard initial conditions $\mathcal{P}_{0}(z)=1$ and  $\mathcal{P}_{-n}(z)=0$ for any integer $n>0$. It will always be assumed that   $a_4=\sum_{y\in \Lambda_L}(2\sigma_{xy}^4 - \mathbb{E}|H_{xy}|^4)=\sum_{y\in \Lambda_L}\sigma_{xy}^4$  in subsequent sections. 
\end{definition}
Next, we  need to develop a path expansion for the polynomial matrix  $\mathcal{P}_{n}(H)$ by  the similar  argument used  in \cite[Section 6]{erdHos2011quantum2}. For this, we  introduce four matrices $\Phi_1, \Phi_3, \Phi_5$ and $\Phi_7$  that are induced  from $H$,  
\begin{equation}\label{equ:2.12}
    (\Phi_1)_{xy}=H_{xy},(\Phi_3)_{xy}=-|H_{xy}|^2H_{xy},~(\Phi_5)_{xy}=2|H_{xy}|^4H_{xy},~(\Phi_7)_{xy}=-|H_{xy}|^6H_{xy}.
\end{equation}
For  the vector $\mathbf{b}^{(s)}=(b_1, \ldots, b_s)$ with nonnegative integers, put   $|\mathbf{b}^{(s)}|=\sum_{i= 1}^{s}b_i$.
By convention,  $\mathbb{1}(x\ne y)$ denotes the  indicator function  and  $\delta_{x,y}=\mathbb{1}(x=y)$ for any $x,y\in \Lambda_L$. 
\begin{definition} \label{non-backtracking power}
The $n$-th non-backtracking power of $H$, denoted by $V_n$, is defined  by
\begin{equation}\label{equ:def_V}
    \begin{aligned}(V_n)_{xy}:=\sum_{x_0,x_1,\ldots,x_n}\delta_{x,x_0}\delta_{x_n,y}\left[\prod_{i=0}^{n-2}\mathbb{1}(x_i\ne x_{i+2})\right] H_{x_0x_1}H_{x_1x_2}\cdots H_{x_{n-1}x_n}.
    \end{aligned}
\end{equation}
Similarly,  the almost non-backtracking power of $H$, denoted by   $\mathcal{V}_n$, is defined by  
\begin{equation} \label{anbp}
(\mathcal{V}_n)_{xy}:=\sum_{s\ge 0}\sum_{|\mathbf{b}^{(s)}|=n}\sum_{ x_0,x_1,\ldots,x_s}\delta_{x,x_0}\delta_{x_s,y}\left[\prod_{i=0}^{s-2}\mathbb{1}(x_i\ne x_{i+2})\right]\prod_{i=1}^{s} (\Phi_{b_i})_{x_{i-1}x_{i}},
\end{equation}
where  all $b_{i}\in \{1,3\}$.  In particular, $V_n=\mathcal{V}_n=0$ for any integer  $n<0$ and $V_1=\mathcal{V}_1=\mathbb{I}$.
 Moreover, when  $s=0$,  only the term $\delta_{x,y}$ remains nonvanishing in the sum \eqref{anbp} for $n=0$.
\end{definition}
Here it is worth stressing that,   once one   removes the factor  $|H_{xy}|^2$  in every   term from the matrix $\Phi_3$ on the right-hand side of  \eqref{anbp},  the path can still be  treated as a  non-backtracking path. That is the reason why the term  `almost' is used.   

By definition of  $\Phi_1$   and $\Phi_3$, if $(\Phi_1)_{x_{i-1}x_{i}}(\Phi_3)_{x_{i}x_{i+1}}$ is a backtracking path, that is, $x_{i+1}=x_{i-1}$, then a tadpole sub-diagram (diagram for short) with loop length 2 appears. To counter the effect of such diagrams, we have to add the subtract term $a_4\mathcal{P}_{n-4}$ as shown in \eqref{equ:modify_Cheby}. At this time  the term  $\Phi_5$ will occur. We also introduce the two shorthands  such as $\underline{\Phi_5\mathcal{V}_n}$ and $\underline{\Phi_7\mathcal{V}_n}$, defined by
\begin{equation}
    (\underline{\Phi_5 \mathcal{V}_n})_{xy}:=\sum_{s\ge 1}\sum_{|\mathbf{b}^{(s)}|=n,b_1=5}\sum_{\{x_i\}_{i=0}^s}\delta_{x_0,x}\delta_{x_s,y}\prod_{i=0}^{s-2}\mathbb{1}({x_i\ne x_{i+2}})\prod_{i=1}^{s}(\Phi_{b_i})_{x_{i-1}x_{i}},
\end{equation} 
and
\begin{equation}
    (\underline{\Phi_7 \mathcal{V}_n})_{xy}:=\sum_{s\ge 1}\sum_{|\mathbf{b}^{(s)}|=n,b_1=7}\sum_{\{x_i\}_{i=0}^s}\delta_{x_0,x}\delta_{x_s,y}\prod_{i=0}^{s-2}\mathbb{1}({x_i\ne x_{i+2}})\prod_{i=1}^{s}(\Phi_{b_i})_{x_{i-1}x_{i}}.
\end{equation} where the sum is over all $b_{i}\in \{1,3\}$ for $i>1$. 

\begin{lemma} \label{recursionV} For the matrix  $\mathcal{V}$  in Definition \ref{non-backtracking power}, 
    we have 
    \begin{equation}\label{equ:2.18}
        \mathcal{V}_n=H\mathcal{V}_{n-1}-\mathcal{V}_{n-2}+a_4\mathcal{V}_{n-4}-\underline{\Phi_5\mathcal{V}_{n-5}}-\underline{\Phi_7\mathcal{V}_{n-7}}\ (n\ge 3),
    \end{equation}
   where  \begin{equation}
        \mathcal{V}_0=\mathbb{I},~\mathcal{V}_1=H, ~\mathcal{V}_{2}=H^2-\mathbb{I}, ~\mathcal{V}_{m}=0 ~(\forall \,m<0). 
    \end{equation}
\end{lemma}
\begin{proof}

  Obviously, it's easy to verify the initial condition.

For the recursion, by definition of $\mathcal{V}_{n}$ in \eqref{anbp}, for $n\geq 2$ we have 
\begin{equation}\label{equ:HVn}
    \begin{aligned}
        (H\mathcal{V}_{n-1})_{xy}&=\sum_{x_1}H_{xx_1}(\mathcal{V}_{n-1})_{x_1y}\\
        &=\sum_{s\ge 0}\sum_{|\mathbf{b}^{(s)}|=n-1}\sum_{x_0, x_1, \ldots,x_{s+1}}\delta_{x,x_0}\delta_{x_{s+1},y}(\Phi_1)_{x_0x_1}\left[\prod_{i=1}^{s-1}\mathbb{1}(x_i\ne x_{i+2})\right]\prod_{i=1}^{s} (\Phi_{b_i})_{x_{i}x_{i+1}}.
    \end{aligned}
\end{equation}
Divide the above last  sum into two parts, according to   two cases (i) $x_0\ne x_2$ and (ii) $x_0=x_2$,   and we rewrite it as 
\begin{equation}
     (H\mathcal{V}_{n-1})_{xy}:=\Sigma_{x_0\ne x_2} + \Sigma_{x_0=x_2}. 
\end{equation}

In the first case (i),  we have \begin{equation}\label{equ:2.19}
        \Sigma_{x_0\ne x_2}=\sum_{s\ge 0}\sum_{|\mathbf{b}^{(s)}|=n-1} \sum_{x_0, x_1, \ldots,x_{s+1}}\delta_{x,x_0}\delta_{x_{s+1},y}(\Phi_1)_{x_0x_1}\left[\prod_{i=0}^{s-1}\mathbb{1}(x_i\ne x_{i+2})\right]\prod_{i=1}^{s} (\Phi_{b_i})_{x_{i}x_{i+1}}.
    \end{equation}
    Change  the vector $(b_1,\ldots,b_s)$ to  $(1,b_1,\ldots,b_s)$  by adding 1 as the first component and rewrite the new vector  as $(b_1,\ldots,b_s,b_{s+1})$ with $b_1=1$,  we further get 
     \begin{equation}\label{equ:2.20}
    \begin{aligned}
       \Sigma_{x_0\ne x_2}&= \sum_{s\ge 0}\sum_{|\mathbf{b}^{(s+1)}|=n,b_1=1} \sum_{x_0, x_1, \ldots,x_{s+1}} \delta_{x,x_0}\delta_{x_{s+1},y}\left[\prod_{i=0}^{s-1}\mathbb{1}(x_i\ne x_{i+2})\right]\prod_{i=1}^{s+1} (\Phi_{b_i})_{x_{i-1}x_{i}}\\
        &=\sum_{s\ge 1}\sum_{|\mathbf{b}^{(s)}|=n,b_1=1}\sum_{x_0, x_1, \ldots,x_{s}}\delta_{x,x_0}\delta_{x_s,y}\left[\prod_{i=0}^{s-2}\mathbb{1}(x_i\ne x_{i+2})\right]\prod_{i=1}^{s} (\Phi_{b_i})_{x_{i-1}x_{i}}.
    \end{aligned}
    \end{equation}
      
In the second case (ii),  we have 
    \begin{equation}\label{equ:2.21}
    \begin{aligned}
         \Sigma_{x_0=x_2}=&
         \sum_{s\ge 0}\sum_{|\mathbf{b}^{(s)}|=n-1} \sum_{x_0, x_1, \ldots,x_{s+1}} \delta_{x,x_0}\delta_{x_0,x_2}\delta_{x_{s+1},y}\\&\times  (\Phi_1)_{x_0x_1}(\Phi_{b_1})_{x_{1}x_{2}}\left[\prod_{i=1}^{s-1}\mathbb{1}(x_i\ne x_{i+2})\right]\prod_{i=2}^{s} (\Phi_{b_i})_{x_{i}x_{i+1}}.
    \end{aligned}
    \end{equation}
Take  a summation  over $x_1$  and we have  
\begin{equation}\label{sumx1}
\begin{aligned}
    \sum_{x_1}(\Phi_1)_{x_2x_1}(\Phi_{b_1})_{x_1x_2}\mathbb{1}(x_1\ne x_3)&=\sum_{x_1}(\Phi_1)_{x_2x_1}(\Phi_{b_1})_{x_1x_2}-\sum_{x_1}(\Phi_1)_{x_2x_1}(\Phi_{b_1})_{x_1x_2}\delta_{x_1,x_3}\\
    &=\mathbb{1}(s\ge 1)(\delta_{b_1,1}-a_4\delta_{b_1,3})- \mathbb{1}(s\ge 2)(\Phi_1)_{x_2x_3}(\Phi_{b_1})_{x_3x_2}.
\end{aligned}
\end{equation}
Thus, we see from  \eqref{equ:2.21} and   \eqref{anbp} that  the summation corresponding to $b_1=1$ 
 \begin{equation}\label{equ:2.23-1}
        \begin{aligned}
           &\sum_{s\ge 1}\sum_{|\mathbf{b}^{(s)}|=n-1,b_1=1}\sum_{x_2,\ldots,x_{s+1}}\delta_{x,x_2}\delta_{x_{s+1},y}\sum_{x_0,x_1}\delta_{x,x_0}(\Phi_1)_{x_0x_1}(\Phi_{b_1})_{x_{1}x_{2}}\left[\prod_{i=2}^{s-1}\mathbb{1}(x_i\ne x_{i+2})\right]\prod_{i=2}^{s} (\Phi_{b_i})_{x_{i}x_{i+1}}\\
           &= \sum_{s\ge 0}\sum_{|\mathbf{b}^{(s)}|=n-2}\sum_{x_0,x_1,\ldots,x_{s}}\delta_{x,x_0}\delta_{x_{s},y}
            \left[\prod_{i=0}^{s-2}\mathbb{1}(x_i\ne x_{i+2})\right]\prod_{i=1}^{s} (\Phi_{b_i})_{x_{i-1}x_{i}}\\
            &=\mathcal{V}_{n-2},
        \end{aligned}
    \end{equation}
while it is $\mathcal{V}_{n-4}$ corresponding to $b_1=3$.  Together  with the above sums,  we see from  \eqref{equ:2.21} and \eqref{sumx1} that 
 \begin{equation}\label{equ:2.23}
        \begin{aligned}
            \Sigma_{x_0=x_2}= &\mathcal{V}_{n-2}-a_4\mathcal{V}_{n-4}+B,
        \end{aligned}
    \end{equation}
   where 
   \begin{equation}
    \begin{aligned}
  B:=     
  -&\sum_{s\ge 2}\sum_{|\mathbf{b}^{(s)}|=n-1}\sum_{x_2,\ldots, x_{s+1}}\delta_{x,x_2}\delta_{x_{s+1},y}\\
  &\times (\Phi_1)_{x_2x_3}(\Phi_{b_1})_{x_3x_2}(\Phi_{b_2})_{x_2x_3}\left[\prod_{i=2}^{s-1}\mathbb{1}(x_i\ne x_{i+2})\right]\prod_{i=3}^{s} (\Phi_{b_i})_{x_{i}x_{i+1}},
   \end{aligned}
    \end{equation}
    $a_4=\sum_{y\in \Lambda_L}  |H_{xy}|^4$, and by convention $\prod_{i=3}^{s} (\Phi_{b_i})_{x_{i}x_{i+1}}=1$ when  $s=2$. 
    
    Recalling \eqref{equ:2.12},  noting that 
    \begin{equation}
        -\sum_{b_1,b_2\in \{1,3\}}(\Phi_1)_{x_2x_3}(\Phi_{b_1})_{x_3x_2}(\Phi_{b_2})_{x_2x_3}=(\Phi_3)_{x_2x_3}+(\Phi_5)_{x_2x_3}+(\Phi_7)_{x_2x_3},
    \end{equation}
     we thus obtain
    \begin{equation}\label{equ:2.25}
        \begin{aligned}
           B &=\sum_{s\ge 2}\sum_{b_1+b_2+1=\{3,5,7\}\atop |\mathbf{b}^{(s)}|=n-1}
\sum_{x_2,\ldots,x_{s+1}}\delta_{x,x_2}\delta_{x_{s+1},y}(\Phi_{b_1+b_2+1})_{x_2x_3}\left[\prod_{i=2}^{s-1}\mathbb{1}(x_i\ne x_{i+2})\right]\prod_{i=3}^{s} (\Phi_{b_i})_{x_{i}x_{i+1}}\\
            &=(\underline{\Phi_5 \mathcal{V}_{n-5}})_{xy}+(\underline{\Phi_7 \mathcal{V}_{n-7}})_{xy}\\
            &~~~+\sum_{s\ge 2}\sum_{|\mathbf{b}^{(s)}|=n-1,b_1=b_2=1}
\sum_{x_2,\ldots,x_{s+1}}\delta_{x,x_2}\delta_{x_{s+1},y}(\Phi_{3})_{x_2x_3}\left[\prod_{i=2}^{s-1}\mathbb{1}(x_i\ne x_{i+2})\right]\prod_{i=3}^{s} (\Phi_{b_i})_{x_{i}x_{i+1}},
        \end{aligned}
    \end{equation}
where the last sum is indeed equal to  
\begin{equation}\label{equ:2.22}
\begin{aligned}
\sum_{s\ge 1}\sum_{|\mathbf{b}^{(s)}|=n,b_1=3}
\sum_{x_0,\ldots,x_{s}}\delta_{x,x_0}\delta_{x_{s},y}\left[\prod_{i=0}^{s-2}\mathbb{1}(x_i\ne x_{i+2})\right]\prod_{i=1}^{s} (\Phi_{b_i})_{x_{i-1}x_{i}}.
\end{aligned}
\end{equation}
So we see that  $\eqref{equ:2.20}+\eqref{equ:2.22}$ gives $\mathcal{V}_n$ (note that $s=0$ term vanishing in the sum when $n\ge 1$). 

Finally, combining \eqref{equ:2.20}, \eqref{equ:2.23} and \eqref{equ:2.25}, we arrive at the desired recursion \eqref{equ:2.18}.  
    \end{proof}

Using  the similar notation  as in Definition \ref{non-backtracking power},  let 
$\mathbf{l}^{(s+1)}=(l_0,l_1, \ldots, l_{s})$ and   $\mathbf{c}^{(s)}=(c_1, \ldots, c_s)$, 
we can arrive at an expansion of $\mathcal{P}_{n}(H)$.
\begin{proposition}\label{prop:2.7}
  For  the modified Chebyshev  polynomial $\mathcal{P}_n(z)$    in Definition \ref{a4Polynomial},    we have
    \begin{equation}\label{equ:2.27}
        \mathcal{P}_{n}(H)=\sum_{s\ge 0}\sum_{|\mathbf{c}^{(s)}|+|\mathbf{l}^{(s+1)}|=n}\mathcal{V}_{l_0}\underline{\Phi_{c_1}\mathcal{V}_{l_1}}\ldots \underline{\Phi_{c_s}\mathcal{V}_{l_s}},
    \end{equation}
    where     the second sum is taken over all integers $l_i\ge 0$ and all $c_i\in\{5,7\}$.
\end{proposition}
\begin{proof}
    Introduce  a matrix sequence  $\mathcal{D}_n$ defined by  
    \begin{equation}
        \mathcal{D}_n:=\mathcal{P}_{n}(H)-\mathcal{V}_n,
    \end{equation}
   then  it's easy to see from Definition \ref{a4Polynomial} and Lemma \ref{recursionV}   that    \begin{equation}\label{equ:2.30}
        \mathcal{D}_n=H\mathcal{D}_{n-1}-\mathcal{D}_{n-2}+a_4\mathcal{D}_{n-4}+\underline{\Phi_5\mathcal{V}_{n-5}}+\underline{\Phi_7\mathcal{V}_{n-7}}\, (n\geq 4),
    \end{equation}
        as well as
    \begin{equation}
        \mathcal{D}_m =0, \quad  \forall m\leq 3.
    \end{equation}

    We will verify  the following expansion   
    \begin{equation}\label{equ:2.31}
        \mathcal{D}_n=\sum_{s\ge 1}\sum_{|\mathbf{c}^{(s)}|+|\mathbf{l}^{(s)}|=n}\mathcal{V}_{l_0}\underline{\Phi_{c_1}\mathcal{V}_{l_1}}\ldots \underline{\Phi_{c_s}\mathcal{V}_{l_s}}.
    \end{equation}
    By induction,     suppose that the above claim holds true up to $n-1$ and it suffices to   prove  it  for $n$  via the recursive relation \eqref{equ:2.30}. So we have 
    \begin{equation}\label{equ:2.32}
        \begin{aligned}
            H\mathcal{D}_{n-1}=\sum_{s\ge 1}\sum_{|\mathbf{c}^{(s)}|+|\mathbf{l}^{(s+1)}|=n-1}H\mathcal{V}_{l_0}\underline{\Phi_{c_1}\mathcal{V}_{l_1}}\ldots \underline{\Phi_{c_s}\mathcal{V}_{l_s}}.
        \end{aligned}
    \end{equation}
    
    By \eqref{equ:2.18}, rewrite $H\mathcal{V}_{l_0}$ as a five-term sum and we immediately see from \eqref{equ:2.32} that 
    \begin{equation}\label{equ:2.33}
        \begin{aligned}
            &H\mathcal{D}_{n-1}-\mathcal{D}_{n-2}+a_4\mathcal{D}_{n-4}\\
            &=\sum_{s\ge 1}\sum_{|\mathbf{c}^{(s)}|+|\mathbf{l}^{(s+1)}|=n-1}(\mathcal{V}_{l_0+1}+\underline{\Phi_5\mathcal{V}_{l_0-4}}+\underline{\Phi_7\mathcal{V}_{l_0-6}})\underline{\Phi_{c_1}\mathcal{V}_{l_1}}\ldots \underline{\Phi_{c_s}\mathcal{V}_{l_s}}\\
            &=\Big(\sum_{s\ge 1}\sum_{|\mathbf{c}^{(s)}|+|\mathbf{l}^{(s+1)}|=n,l_0\ge 1}+\sum_{s\ge 2}\sum_{|\mathbf{c}^{(s)}|+|\mathbf{l}^{(s+1)}|=n,l_0=0}\Big)\mathcal{V}_{l_0}\underline{\Phi_{c_1}\mathcal{V}_{l_1}}\ldots \underline{\Phi_{c_s}\mathcal{V}_{l_s}}.
        \end{aligned}
    \end{equation}
    Note that $\mathcal{V}_{m}=0 ~(\forall \,m<0)$, by adding two terms that  correspond to the special case of  $s=1$ and $l_0=0$ in the above second summation, we thus get   
    \begin{equation}\label{equ:2.34}
    \begin{aligned}
    &H\mathcal{D}_{n-1}-\mathcal{D}_{n-2}+a_4\mathcal{D}_{n-4}+\underline{\Phi_5\mathcal{V}_{n-5}}+\underline{\Phi_7\mathcal{V}_{n-7}}=\sum_{s\ge 1}\sum_{|\mathbf{c}^{(s)}|+|\mathbf{l}^{(s+1)}|=n}\mathcal{V}_{l_0}\underline{\Phi_{c_1}\mathcal{V}_{l_1}}\ldots \underline{\Phi_{c_s}\mathcal{V}_{l_s}}.
    \end{aligned}
    \end{equation}
Comparison with   the recurrence relation \eqref{equ:2.30} immediately implies  
    the desired  expansion   \eqref{equ:2.31}.   
    
    Thus, we have    completed  the proof  by mathematical induction.
\end{proof}

\subsection{\texorpdfstring{Removing $\Phi_3$, $\Phi_5$ and $\Phi_7$}{Removing Phi3, Phi5 and Phi7}}\label{sec:reduce_to_Vn}

For simplicity,  we always use the notation   $n=n_1+\cdots+n_k$ as sum of $k$ non-negative integers for any fixed positive integer $k$  in this subsection. A crucial observation is that the modified Chebyshev polynomials can be reduced to the non-backtracking powers of $H$ in Definition \ref{non-backtracking power}. 
\begin{theorem}\label{thm:thm2.5}
    If    $n W^{-d} \to 0
    $ as $W\to \infty$,  then  we have
    \begin{equation}
        \mathbb{E}\Big[\prod_{i=1}^k \tr\mathcal{P}_{n_i}(H)\Big]=\Big(1+O\big(\frac{n}{W^d}\big)\Big)\mathbb{E}\Big[\prod_{i=1}^k \tr V_{n_i}\Big].
    \end{equation}
\end{theorem}

We will see that the above theorem is indeed an immediate corollary  of Lemma \ref{PcalV} and Lemma \ref{lem:lemma2.7} below. To this end, we need   three key lemmas. 

The first one is a
 technical monotonic lemma for the trace powers.
\begin{lemma}\label{lem:2.7}
    Assume that all $ n_i-2j_i\ge 1$ with   non-negative integers $n_i$ and $j_i$, $i=1,\ldots,k$. If   $n W^{-d} \to 0$ as $W\to \infty$,  then     we have 
    \begin{equation}\label{equ:2.39}
        \mathbb{E}\Big[\prod_{i=1}^k \tr V_{n_i-2{j}_i}\Big]\le \Big(1+O\big(\frac{n}{W^d}\big)\Big)\mathbb{E}\Big[\prod_{i=1}^k \tr V_{n_i}\Big],
    \end{equation}
    and 
     \begin{equation} 
        \mathbb{E}\Big[\prod_{i=1}^k \tr \mathcal{V}_{n_i-2{j}_i}\Big]\le \Big(1+O\big(\frac{n}{W^d}\big)\Big)\mathbb{E}\Big[\prod_{i=1}^k \tr \mathcal{V}_{n_i}\Big].
    \end{equation}
\end{lemma}
\begin{proof}
    We only prove the case of $V$ since    $\mathcal{V}$ can be attacked in a similar way. 
    
    For each path $xx_1\cdots x_{n_i-2j_{i}-1}x$ in $\tr V_{n_i-2j_{i}}$, we can add $2j_i$ edges to   get a new non-backtracking path  $u_1u_2\cdots u_{j_{i}}xx_1\cdots x_{n_i-2j_{i}-1}xu_{j_{i}}\cdots u_1$. Now we sum over all possible $u_t$ with   $t=1,\ldots,j_{i}$. Note that each time $u_t$ can be chosen all except for  the restriction   $u_{t}=u_{t-2}$ 
    , which actually contributes a factor of size  $1-|H_{u_{t+1}u_{t+2}}|^2=1-O(W^{-d})$,
so if we sum over all possible $2j_1+\cdots +2j_k\le n$ edges, we then  have a factor of size at most $1-O(n W^{-d})$. 

Thus, this gives the desired estimate \eqref{equ:2.39}.
\end{proof}

The second  lemma helps us get rid of all $\Phi_{5}$ and $\Phi_{7}$ terms that appear in the  sum  of  \eqref{equ:2.27}.
\begin{lemma}  \label{PcalV} If  $n W^{-d} \to 0$ as $W\to \infty$, then  
    \begin{equation}
        \mathbb{E}\Big[\prod_{i=1}^k \tr\mathcal{P}_{n_i}(H)\Big]=\Big(1+O\big(\frac{n}{W^d}\big)\Big)\mathbb{E}\Big[\prod_{i=1}^k \tr\mathcal{V}_{n_i}\Big].
    \end{equation}
\end{lemma}
\begin{proof} 
    For simplicity, we only prove the result for $k=1$ since the proof for $k>1$ as a fixed finite number is the same. Just  consider the $s\ge 1$ terms in \eqref{equ:2.27}. The possible backtracking happens to the element $H_{xy}(\Phi_{c_i})_{yz}$ as the non-backtracking condition only have an  effect inside $\underline{\Phi_{c_i}\mathcal{V}_{l}}$. This might lead to a long backtracking as $$H_{xx_1}H_{x_1x_2}\cdots H_{x_my}(\Phi_{c_i})_{yx_m}H_{x_mx_{m-1}}\cdots H_{x_2x_1}H_{x_1z}.$$
    At this time, we can still extract the almost non-backtracking part. For each $\underline{\Phi_{c_i}\mathcal{V}_{l_i}}$,  we put a number $t_i\le l_i$ such that the  length of the backtracking path  is $t_i$. Now the path consists of an almost non-backtracking skeleton and several backtracking branches (the end edge is traversed for at least 6 times).

    Fix $t_i$, we sum over all possible label of vertices in the backtracking branches, for multiple edges (traversed more than 2 times, such as the end edges), we use the upper bound $|H_{xy}|^2=O(W^{-d})$. Each backtracking branch contributes a $O(W^{-2d})$ factor since   the end edges are traversed for at least 6 times and summing over non-multiple edges gives the weight that does not exceed 1.

    Now for the non-backtracking $\Phi_5$, we can use the upper bound $|H_{xy}|^2=O(W^{-d})$ to reduce it into $H_{xy}$ and keeps the non-backtracking property. The remaining almost non-backtracking skeleton is a sub-sum of some $\mathcal{V}_{n_i-2t}$ for some $t\ge 0$ and $\mathbb{E}[\tr\mathcal{V}_{n_i-2t}]$ can be  dominated by $\mathbb{E}[\tr\mathcal{V}_{n_i}]$ by Lemma \ref{lem:2.7}.

    Now for the path in \eqref{equ:2.27}, each $\Phi_{c_i}$ gives an $O(W^{-2d})$ factor. Summing  over $t_i$ and $l_i$  gives an $O(n^{2s})$ factor. Hence, by using  the  upper estimates except for the trivial $s=0$ term on the right-hand side of  \eqref{equ:2.27},   we arrive at 
    \begin{equation}
    \begin{aligned}
        \mathbb{E}[\tr\mathcal{P}_n(H)]
        =\mathbb{E}[\tr \mathcal{V}_n]+\sum_{s\ge 1}O\Big(\frac{n^{2s}}{W^{2sd}}\mathbb{E}[\tr \mathcal{V}_n]\Big)=\Big(1+O\big(\frac{n^2}{W^{2d}}\big)\Big)\mathbb{E}[\tr \mathcal{V}_n].
        \end{aligned}
    \end{equation}

    This completes the proof.
\end{proof}
The third   lemma helps us get rid of all $\Phi_3$ terms.
\begin{lemma}\label{lem:lemma2.7} 
   If  $n W^{-d} \to 0$ as $W\to \infty$, then     
    \begin{equation}
        \mathbb{E}\Big[\prod_{i=1}^k \tr \mathcal{V}_{n_i}\Big]=\Big(1+O\big(\frac{n}{W^d}\big)\Big)\mathbb{E}\Big[\prod_{i=1}^k \tr V_{n_i}\Big].
    \end{equation}
\end{lemma}
\begin{proof}
    For simplicity, we only verify  the case of $k=1$. Recalling the definition of  $\mathcal{V}_n$ in 
    \eqref{anbp}, let $l$ be the number of the  subscript  $i$ such that $b_i=3$ in $\mathcal{V}_n$ and label them by  $i_1,\ldots i_l$, then $s=n-2l$. Note that  $V_n$ exactly corresponds to the case of $l=0$, we  get 
    \begin{equation}
    \begin{aligned}
        &\left|\mathbb{E}[\tr\mathcal{V}_n]-\mathbb{E}[\tr V_n]\right|=\sum_{l\ge 1}\sum_{x_0,x_1,\ldots,x_{n-2l}=x_0}\sum_{0\le i_1<\cdots <i_l\le n-2l-1}\Big[\prod_{i=0}^{s-2}\mathbb{1}(x_i\ne x_{i+2})\Big]\\
        &\times  \left|\mathbb{E}\left[ H_{x_0x_1}H_{x_1x_2}\cdots (\Phi_3)_{x_{i_1}x_{i_1+1}}\cdots H_{x_{j}x_{j+1}}\cdots (\Phi_3)_{x_{i_2}x_{i_2+1}}\cdots H_{x_{n-2l-1}x_{n-2l}}\right]\right|.
        \end{aligned}
    \end{equation}
    
    Applying   the simple fact of  $|H_{xy}|^2=O(W^{-d})$  to the $\Phi_3$ terms shows  that there exists  some $C>0$    such that 
    \begin{equation}\begin{aligned}
        \left|\mathbb{E}[\tr\mathcal{V}_n]-\mathbb{E}[\tr V_n]\right|&\le \sum_{l\ge 1}\sum_{x_0,x_1,\ldots,x_{n-2l}=x_0}\sum_{0\le i_1<\cdots <i_l\le n-2l-1}\Big[\prod_{i=0}^{s-2}\mathbb{1}(x_i\ne x_{i+2})\Big] C^lW^{-ld}\\
        &\times  \mathbb{E}\Big[ H_{x_0x_1}H_{x_1x_2}\cdots H_{x_{i_1}x_{i_1+1}}\cdots H_{x_{j}x_{j+1}}\cdots H_{x_{i_2}x_{i_2+1}}\cdots H_{x_{n-2l-1}x_{n-2l}}\Big]\\
        &= \sum_{l\ge 1}\sum_{0\le i_1<\ldots <i_l\le n-2l-1}C^lW^{-ld}\sum_{x_0,x_1,x_2,\ldots,x_s=x_0}\Big[\prod_{i=0}^{s-2}\mathbb{1}(x_i\ne x_{i+2})\Big]\\
        &\times \mathbb{E}\big[ H_{x_0x_1}H_{x_1x_2}\cdots H_{x_{i_1}x_{i_1+1}}\cdots H_{x_{j}x_{j+1}}\cdots H_{x_{i_2}x_{i_2+1}}\cdots H_{x_{s-1}x_s}\big]\\
        &=\sum_{l\ge 1}\sum_{0\le i_1<\cdots <i_l\le n-2l-1}C^lW^{-ld} \mathbb{E}\left[\tr V_{n-2l} \right]\\
        &\le \sum_{l\ge 1}\frac{1}{l!}\big(CnW^{-d}\big)^l\mathbb{E}\left[\tr V_{n-2l} \right]=O\big(nW^{-d}\big)\mathbb{E}\left[\tr V_{n} \right],
        \end{aligned}
    \end{equation} where Lemma \ref{lem:2.7} has been  used in the last equality.
    
    Thus, this completes the proof in  the case of $k=1$.  
\end{proof}
\subsection{Reduction to diagrams}\label{sec:diagram_reduction}
In this subsection we  are devoted to classifying  non-backtracking paths into different diagrams in the sense of topological equivalence,  as  done in \cite{feldheim2010universality,sodin2010spectral}. 

Recall  the unimodular random band matrix $H$  in Definition   \ref{defmodel}  and the  $n$-th non-backtracking power $V_n$ associated with  $H$  in Definition \ref{non-backtracking power}, 
 for any fixed integer $k\geq 1$,  we immediately get 

 \begin{proposition} \label{nbformula}

\begin{equation}
		\mathbb{E}\!\big[(\tr V_{n_1} \tr V_{n_2}\cdots \tr V_{n_k})\big]=\Sigma_{\beta}^{(1)}(n_1,n_2,\cdots,n_k),
        \end{equation}
	where $\Sigma_{\beta}^{(1)}$	equals to a weighted sum of $k$-tuples of paths ($k$-paths for short)
			\begin{equation}\label{equ:2.45}
			\mathfrak{p}:=	\mathfrak{p}_{n_1,\cdots,n_k}=u_0^1u_1^1\cdots u_{n_1}^1,~ u_0^2u_1^2\cdots u_{n_2}^2,~\cdots, u_0^ku_1^k\cdots u_{n_k}^k, 
			\end{equation}
such that 	
			\begin{itemize}
\item[(a)] {\bf Non-backtrackings:} $u^i_{j+2}\ne u^i_{j}$, $1\le i\le k$, $ 0\le j\le n_i-2$;
			
			\item[(b)] {\bf Closed paths:} $u_{n_i}^i=u^i_0$, $1\le i\le k$;
			
			\item[(c$_\beta$)] {\bf Even multiplicity:} for any $u\in \Lambda_L$,
             \begin{equation}	\#\{(i,j) \mid u^i_j=u,u_{j+1}^i=u\} \equiv 0 \mod 2,
            \end{equation}
            and  for any $u\ne v \in \Lambda_L$,   
			 \begin{equation}\begin{cases}
			    \#\{(i,j)\mid u^i_j=u,u_{j+1}^i=v\}+\#\{(i,j)\mid u^i_j=v,u_{j+1}^i=u\} \equiv 0\mod 2, &\beta=1;\\
                \#\{(i,j) \mid u^i_j=u,u_{j+1}^i=v\}= \#\{(i,j)\mid u^i_j=v,u_{j+1}^i=u\}, & \beta=2.
			\end{cases} \end{equation}
\end{itemize}
In this case  
\begin{equation}
    \Sigma_{\beta}^{(1)}=\sum_{ \mathfrak{p}:\,(a)-(c_\beta)}
    \prod_{(u,v) \in \mathfrak{p}
    }\sigma_{uv}^{\#\{(i,j)\mid u^i_j=u,u_{j+1}^i=v\}},
\end{equation}
where the sum ranges over all $k$-paths and the product  over all distinct    directed pairs of points on the $k$-path.    
   \end{proposition}

 Given a path $\mathfrak{p}$ in \eqref{equ:2.45} satisfying (a), (b) and (c$_\beta$), we can define   a natural  directed  multigraph
$G = G(\mathfrak{p}) = 
(V, E_\text{dir})$,  where $V\subset \Lambda_L$ is the set of all distinct vertices $u^i_{j}$ on $\mathfrak{p}$ and $E_\text{dir}$ is the set of edges $(u^i_{j},u^i_{j+1})$ with multiplicities.  It is important to emphasize  that the directed  multigraph   allows multiple edges (parallel edges) between the same pair of vertices, 
as well as loops (edges that join a vertex to itself).
To address  the effect of multiple edges in $\Sigma_{\beta}^{(1)}$--for instance, an edge traversed no fewer than four times--we need to couple the $k$-path $\mathfrak{p}$ in \eqref{equ:2.45} with a matching  $\mathfrak{m}$.
This ensures that for multiple edges, the traversals are partitioned into pairs in a specified manner, as previously described in \cite[II.1\& II.3]{feldheim2010universality} as follows.
\begin{itemize}
\item For $\beta=1$, every edge $(u,v)$ can    be matched either to a coincident edge $(u,v)$ or  reverse edge $(v,u)$. In this case, if the edge is traversed   $2m$ times, there are $(2m-1)!!$ pairwise matches . 

\item For $\beta=2$, every edge $(u,v)$ can only be matched to its reverse edge   $(v,u)$. In this case, if the edge is traversed $2m$ times, there are $m!$ possible matches .  
\end{itemize}

However, for each path $\mathfrak{p}_{n_1,\cdots,n_k}$  in \eqref{equ:2.45}, we just  assign one match $\mathfrak{m}$  since we will see that  the specific choice does not matter!  So if we rewrite $\mathfrak{p}_{n_1,\cdots,n_k}$   as $(\mathfrak{p},\mathfrak{m})$, then by Proposition \eqref{nbformula} we obtain 
\begin{equation} \label{matchsum}
    \Sigma_{\beta}^{(1)}=\sum_{ (\mathfrak{p},\mathfrak{m}): \,(a)-(c_\beta)}
    \prod_{(u,v)\in \mathfrak{p}}\sigma_{uv}^{\#\{(i,j)\mid u^i_j=u,u_{j+1}^i=v\}},
\end{equation}
where  the sum ranges  over all matched  $\mathfrak{p}$   satisfying (a)-(c$_\beta$) 
in Proposition \eqref{nbformula}, and   the product is  over all distinct    directed pairs. Next, an incredible  step  due to   Feldheim and Sodin \cite{feldheim2010universality} is to construct  a mapping from the collection of matched $k$-paths  to weighted diagrams  below.  The following definition is  a slight modification   of
\cite[{Definition II.3.1}]{feldheim2010universality}, by 
 removing  the degree $3$ restriction    and changing  the range of the weight function.

\begin{definition}\label{def:diagram}Let $\beta\in \{1,2\}$.
\begin{itemize}
\item A \textbf{\textit{{$k$-diagram}}} of type $\beta$ is an (undirected) multi-graph $\bar{G} = (\bar{V}, \bar{E})$,
together with a $k$-tuple of circuits 
\begin{equation}\label{equ:2.52} \bar{\mathfrak{p}} = \bar{u}_0^1 \bar{u}_1^1 \cdots  \bar{u}_0^1 ,\,\,\,
             \bar{u}_0^2 \bar{u}_1^2 \cdots   \bar{u}_0^2 ,\,\,\,
             \cdots ,\,\,\, \bar{u}_0^k \bar{u}_1^k \cdots   \bar{u}_0^k \end{equation}
on $\bar{G}$, such that
\begin{itemize}
\item $\bar{\mathfrak{p}}$  is  {\em non-backtracking}, that is, in every circuit no edge is
followed by its reverse unless $\beta=1$ and the edge is $\bar{u}\bar{u}$;
\item For every loop $
e_0= (\bar{u}, \bar{u}) \in \bar{E}$,
\begin{equation} 
\# \left\{ (i,j) \, | \, (\bar{u}_j^i, \bar{u}_{j+1}^i) =e_0 \right\}  = 2,
     \end{equation}
and for every $
e= (\bar{u}, \bar{v}) \in \bar{E}$ with $\bar{u}\ne \bar{v}$,
\begin{equation}\begin{split}
&\# \left\{ (i,j) \, | \, (\bar{u}_j^i, \bar{u}_{j+1}^i) =e \right\} + \# \left\{ (i,j) \, | \, (\bar{u}_{j+1}^i, \bar{u}_{j}^i) = e \right\} = 2
    \quad (\beta = 1)~, \\
&\# \left\{ (i,j) \, | \, (\bar{u}_j^i,\bar{u}_{j+1}^i) = e \right\} = \# \left\{(i,j) \, | \, (\bar{u}_{j+1}^i, \bar{u}_{j}^i) = e \right\} = 1
    \quad (\beta = 2)~;
\end{split}\end{equation}
\item For $i=1,\ldots, k$, the degree of $u_0^i$ in $\bar{G}$ is always 1 and the degree of all other vertices is greater than 1. Each  $u_0^i$ is called as  a \textit{\textbf{marked point}} while the edge with endpoint $u_0^i$ is a \textbf{\textit{tail edge}}.
\end{itemize}
\item A \textbf{\textit{{weighted $k$-diagram}}} is a $k$-diagram $\bar{G}$    assigned with a weight function
$\bar{w}: \bar{E} \to \{0,1,2,\cdots\}$,  in which  $\bar{w}(e)=0$ is  possible  only when $e \in \bar{E}$ is a tail edge.
\item    Denote  by  $\mathfrak{D}=(\bar{G},\bar{\mathfrak{p}})$   as a $k$-diagram and  by the pair $(\mathfrak{D},w)$  as a weighted $k$-diagram, for short.
\end{itemize}
\end{definition}

Now we describe the construction of the mapping   from matched paths to weighted diagrams, due to Feidheim and Sodin \cite{feldheim2010universality}. 
 
\begin{definition}[{\bf Feidheim-Sodin \textit{contraction}}]\label{operation:diagram}
A mapping 
\begin{equation}
    {\varphi}:\text{\big\{matched $k$-paths\big\}} \longrightarrow \text{\big\{weighted $k$-diagrams\big\}},  \quad(\mathfrak{p}, \mathfrak{m}) \mapsto (\mathfrak{D},w),
\end{equation}
from the collection of matched paths  satisfying (a)-(c$_\beta$) into the collection of weighted diagrams (of type $\beta$) is constructed as follows.

\vspace{2mm} \noindent {\bf\em (i)} Start with the directed  multigraph
$G = G(\mathfrak{p}) = (V, E_\text{dir})$ associated with   the path $\mathfrak{p}$, where 
\[ V = \{ u\in \Lambda_L \, \mid \, \exists (i,j), \,u_{j}^i = u \}~, \,\,
   E_\text{dir} = \{ (u^{i}_{j}, u^{i}_{j+1}) \}~, \]
and merge  each pair of matched edges into a single undirected edge.

\vspace{1mm} \noindent {\bf\em (ii)} If the degree of $u_{0}^i$ is greater than 1, add a new vertex $r^i$
connected to $u_{0}^i$, and replace the corresponding circuit of $\mathfrak{p}$ with $r^i u_{0}^i u_{1}^i \cdots u_{0}^i r^i$. 

\vspace{1mm} \noindent {\bf\em (iii)} Erase all the vertices of degree 2.

\vspace{1mm} \noindent {\bf\em (iv)} Set
\[ w(\bar{e}) = \begin{cases}
     1+\# \{\text{erased vertices on} \ \bar{e}\},\\
    0, \quad \text{if $\bar{e}$ was created at  {\bf Step \em(ii)} }.
 \end{cases}\]
 Here $w(\bar{e})$ counts the number of edges in the preimage   of the edge $\bar{e}$ under the mapping $\varphi$. The  abbreviated symbol   $\varphi[(\mathfrak{p},\mathfrak{m})]= \mathfrak{D}$ will be used  by forgetting the weight ${w}$.
\end{definition}

\begin{remark}\label{remark:2.14}
    It is worth stressing    that  the immediate self-loop $\bar{e}=\bar{u}\bar{u}$ is possible in the diagram in the $\beta=1$ case of   Definition \ref{def:diagram}. However, since $\bar{e}$ is non-backtracking, we must have ${w}(\bar{e})\ge 3$ or $w(\bar{u})=1$. If $w(\bar{u})=1$, the degree of the corresponding vertex must be greater than 3 since by non-backtracking property the edge $\bar{u}$ can not be traversed twice immediately. The self-loop is of vital  importance as it is the first singular diagram occurring in high-dimensional random band matrices and is known as a tadpole diagram related to   Feynman diagram integrals in Quantum Field Theory.
\end{remark}

We need to introduce  a concept  of the typical $k$-diagrams,  which is indeed the so-called $k$-diagram  in \cite[Definition II.3.1]{feldheim2010universality}.
\begin{definition}\label{def:2.15}
    \begin{itemize}
        \item  A \textbf{\textit{typical (weighted) $k$-diagram}} $\bar{G} = (\bar{V}, \bar{E})$ is a $k$-diagram  in which   the degree of marked points in $\bar{G}$ is 1 and the degree of all the other vertices is equal to $3$. Let  $\mathscr{G}_{\beta,k,3}$ be the set of typical (weighted) $k$-diagrams  of type $\beta$, and  let $\mathscr{G}_{\beta,k,\ge 3}$ be the set of general (weighted) $k$-diagrams including typical and non-typical  diagrams. 
        \item A diagram $\mathfrak{D}=(\bar{G},\bar{\mathfrak{p}})$ is \textbf{\textit{connected}} if the  graph $\bar{G}$ is connected.  Let   $\mathscr{G}^c_{\beta,k,3}$ be the set of connected typical (weighted) $k$-diagrams of type $\beta$ and   $\mathscr{G}^c_{\beta,k,\ge 3}$ be  the set of connected general (weighted) $k$-diagrams.
    \end{itemize}
    
\end{definition}

Some    important properties of typical weighted $k$-diagrams  are listed as follows.  
\begin{lemma}
Let $\mathfrak{D}=(\bar{G},\bar{\mathfrak{p}})$ be a typical weighted $k$-diagram, then  
    \begin{itemize}
        \item[(i)] $\bar{w}(\bar{e})\ge 0$  for any  tail edge $\bar{e}$, while  $\bar{w}(\bar{e})\ge 1$  for any   other edge $\bar{e}$;
        \item[(ii)] $\bar{w}(\bar{e})\ge 3$ for any  self-loop edge $\bar{e}=\bar{u}\bar{u}$ in the $\beta=1$ case;
        \item[(iii)] For $\bar{G}=(\bar{V}, \bar{E})$,  there exists an   integer $s\ge k$ such that $|\bar{E}|=2s$ and $|\bar{V}|=3s-k$. 
    \end{itemize}
\end{lemma}
\begin{proof}
    Obviously, both (i) and (ii) hold true. See  \cite[Claim II.2.1 \& Claim II.2.2]{feldheim2010universality} for   (iii).
\end{proof}

Since the length of the $i$-th path $u_0^iu_1^i\cdots u_{n_i-1}^i u_0^i$ in \eqref{equ:2.45} is exactly $n_i$, 
the weight $w$ must satisfy a system of linear equations. For practical use, we only definite it in typical diagrams.
\begin{definition}[Linear $\mathfrak{D}$-system] \label{D-system}
  A family of non-negative integers $ \{ w({{e}}) \}_{{e}\in E}$
  is said to  satisfy a system of linear equations  associated with the typical $k$-diagram $\mathfrak{D}$  if   
		\begin{equation}  
		\mathfrak{C}(\mathfrak{D}): \quad \sum_{{e}\in {E}}c_i({e})w({{e}})=n_i,\quad i=1,...,k,  \label{EqC}
		\end{equation}
		where  $c_i(e)\in \{0,1,2\}$ is the number of times that ${e}$ occurs in the $i$-th circuit of ${\mathfrak{p}}$ and 
\begin{equation}\label{equ:2.56}
    w(e)\ge\begin{cases}
       3,~~~~\text{if $e$ is a self-loop};\\
        1,~~~~\text{if $e$ is not a tail edge}.
   \end{cases} 
\end{equation} 
Similarly, a system of   inequalities, by removing the set $T$ of  all tail edges  
\begin{equation} \label{CDtildesystem}
   \widetilde{\mathfrak{C}}(\mathfrak{D}): \quad \begin{cases}
        &\sum_{{e}\in {E}\backslash T}c_i({e})w({{e}})\le n_i,\qquad i=1,...,k,\\
        &\sum_{{e}\in {E}\backslash T}c_i({e})w({{e}})\equiv n_i\pmod 2,\qquad i=1,...,k,\\
        &w(e)\ge 3,~~~~\text{if $e\in E\backslash T$ is a self-loop},\\
        &w(e)\ge 1,~~~~\text{if $e\in E\backslash T$}.
    \end{cases}
\end{equation}
\end{definition} 
		\begin{proposition}[{\cite[Proposition II.3.3]{feldheim2010universality}}] \label{diagrambound}
For $\beta\in \{1,2\}$, let $D_{\beta,k}(s)$ be the number of typical $k$-diagrams with $|\bar{V}|=2s$ and $|\bar{E}|=3s-k$, then there is  a universal constant   $C\ge 1$ such that 
			\begin{equation}	\frac{1}{(k-1)!} 
\Big(\frac{s}{C}\Big)^{s+k-1}\le D_{\beta,k}(s)\le  \frac{1}{(k-1)!} \big(C\,s\big)^{s+k-1}.
			\end{equation}
		\end{proposition} 

\subsection{Counting weighted diagrams}\label{sec:count_typical}

Take  the Feidheim-Sodin \textit{contraction} in Definition \ref{operation:diagram}, and we get  the weighted $k$-diagram $\mathfrak{D}=(\bar{G},\bar{\mathfrak{p}})$ with  $\bar{G}=(\bar{V}, \bar{E})$.   However,  to determine the index of vertices in $\mathfrak{D}$ we need to  embed the vertex $\bar{V}$ into the lattice $\Lambda_L$. Introduce  a mapping   $\iota:\bar{V}\rightarrow \Lambda_L$ and 
denote the preimage of the edge $\bar{e}\in \bar{E}$ by $u^{\bar {e}}_0\ldots u^{\bar{e}}_{w(\bar{e})}$,  then  we derive from the weighted sum \eqref{matchsum} that    
\begin{equation}\label{equ:Sigma1}
    \begin{aligned}   
    \Sigma_{\beta}^{(1)}&=\sum_{\mathfrak{D}\in \mathscr{G}_{\beta,k,\ge 3}}\sum_{w:(\mathfrak{D},w)}
\sum_{\mathfrak{p}:\varphi[(\mathfrak{p},\mathfrak{m})]=(\mathfrak{D},w), (a)-(c_\beta)} \prod_{(u,v)\in \mathfrak{p}}\sigma_{uv}^{\#\{(i,j)\mid u^i_j=u,u_{j+1}^i=v\}}\\
&=\sum_{\mathfrak{D}\in \mathscr{G}_{\beta,k,\ge 3}}\sum_{w:(\mathfrak{D},w)}\Sigma_{\beta}^{(1)}(\mathfrak{D},w),
\end{aligned}
\end{equation}
where 
\begin{equation} \label{Dw1}
    \Sigma_{\beta}^{(1)}(\mathfrak{D},w):=\sum_{\iota:\bar{V}\rightarrow \Lambda_L} \sum_{\mathfrak{p}:\varphi[(\mathfrak{p},\mathfrak{m})]=(\mathfrak{D},w), (a)-(c_\beta)}
    \prod_{\bar{e}\in \bar{E}}\Big(\prod_{i=0}^{w(\bar{e})-1}\sigma^2_{u^{\bar{e}}_{i}u^{\bar{e}}_{i+1}}\Big).
\end{equation}

On one hand, let $p_m(u,v)$ be $m$-step transition probability associated with the doubly stochastic matrix $\sigma^2_{xy}$ in \eqref{VP}, introduce 
\begin{equation}  \label{sum2}
\Sigma_{\beta}^{(2)}(\mathfrak{D},w):=
\sum_{\iota:\bar{V}\rightarrow \Lambda_L} \prod_{\bar{e}\in \bar{E}}p_{w(\bar{e})}(\iota(u^{\bar{e}}),\iota(v^{\bar{e}})),
\end{equation}
in which  $u^{\bar{e}}$ and $ v^{\bar{e}}$ are  the endpoints of $\bar{e}$ in the diagram, and  we have indeed summed  over all other possible labeling   indexes  except those   in $\bar{V}$.
On the other hand, considering  the restriction on $\mathfrak{p}$   that every edge is traversed exactly twice,    this  path forms a self-avoiding patten of shape $\mathfrak{D}$, which means that each path corresponding to $e\in \bar{E}$ is self-avoiding and for $e_1\ne e_2\in\bar{E}$, the paths corresponding to $e_1$ and $e_2$ are not intersected. 
 Also introduce the strongly restricted sum  
\begin{equation}  
\Sigma_{\beta}^{(0)}(\mathfrak{D},w):= \sum_{\iota:\bar{V}\rightarrow \Lambda_L}
\sum_{\text{\tiny self-avoiding}\, \mathfrak{p}: \iota, w}
\ \prod_{\bar{e}\in \bar{E}}\Big(\prod_{i=0}^{w(\bar{e})-1}\sigma^2_{u^{\bar{e}}_{i}u^{\bar{e}}_{i+1}}\Big).
\end{equation}

With the preparations above, we can now proceed to prove the upper and lower bounds  for $\Sigma_{\beta}^{(1)}(\mathfrak{D},w)$.

\begin{lemma}\label{lem:2.19} 
    \begin{equation}
        \Sigma_{\beta}^{(0)}(\mathfrak{D},w)\le \Sigma_{\beta}^{(1)}(\mathfrak{D},w)\le \Sigma_{\beta}^{(2)}(\mathfrak{D},w).
    \end{equation}
\end{lemma}
\begin{proof} The lower bound is obvious.    For the upper bound, remove some restrictions in the sum of \eqref{Dw1} and  we have from \eqref{sum2}  that 
\begin{equation}
\begin{aligned}
    \Sigma_{\beta}^{(1)}(\mathfrak{D},w)
&\le \sum_{\iota:\bar{V}\rightarrow \Lambda_L}\sum_{\mathfrak{p}:  \iota, w }\prod_{\bar{e}\in \bar{E}}\Big(\prod_{i=0}^{w(\bar{e})-1}\sigma^2_{u^{\bar{e}}_{i}u^{\bar{e}}_{i+1}}\Big)=\Sigma_{\beta}^{(2)}(\mathfrak{D},w).
\end{aligned}
\end{equation} 
Here the second sum means that the index of corresponding vertices in $\mathfrak{p}$ is determined by $\iota$ and the length of steps between them is $w(\bar{e})$. 
\end{proof}

The following crucial conclusion  shows that the lower and upper bounds are asymptotically the same for any $k$-diagram in Definition \ref{def:2.15}, based on properties of random walks on the torus.
\begin{proposition}\label{coro:3.7} 
For any diagram $(\mathfrak{D},w)\in \mathscr{G}_{\beta,k,\ge 3}$ with   $\mathfrak{D}=(\bar{G},\bar{\mathfrak{p}})$ and  $\bar{G}=(\bar{E},\bar{V})$,  let  $n=\sum_{i=1}^k n_i$ and 
\begin{equation} \label{ENNotation}
    \mathcal{E}(n)=\frac{n^2}{N}+\begin{cases}
		\frac{n^{\frac{3}{2}}}{W}, & d=1;\\
		\frac{n\log n}{W^2}, & d=2;\\
		\frac{n}{W^d}, & d>2.
	\end{cases}
\end{equation}
If $\mathcal{E}(n)\to 0$ as $W\to \infty$,  then we have
\begin{equation}
        \Sigma_{\beta}^{(0)}(\mathfrak{D},w)=
        \left(1+O\big(|\bar{E}|\cdot \mathcal{E}(n)\big)\right)\, \Sigma_{\beta}^{(2)}(\mathfrak{D},w).
\end{equation}
\end{proposition}

\begin{proof}
Firstly,  we add $w(\bar{e})-1$ vertices on each edge $\bar{e}\in \bar{E}$ whenever $w(\bar{e})>1$ and denote by $\Tilde{G}=(\Tilde{E},\Tilde{V})$ the resulting new graph. Let the number of intersections  
\begin{equation}
   R=\sum_{u\ne v: (u,v)\in \Tilde{E}}  1_{\{\iota(u)=\iota(v) \}},
\end{equation}
we will  complete the proof by estimating the expected number of intersections.  

Secondly, we can prove    the upper bound for the number of intersections under the condition 
\begin{equation}\label{equ:2.64}
    \mathbb{E}[R|\iota(\bar{V})] = O(|\bar{E}|\cdot \mathcal{E}(n))
\end{equation}
as follows. 
Choose $\bar{e}_1\neq \bar{e}_2\in \bar{E}$,  by Lemma \ref{proposition:Gaussian_asymptotic_independence} we see that  the expected number of intersections between $\bar{e}_1, \bar{e}_2$ is bounded by $O(\mathcal{E}(w(\bar{e}_1)+w(\bar{e}_2)))$. 
Summing over all $(\bar{e}_1,\bar{e}_2)$, by subadditivity   of $\mathcal{E}(n)$ we  obtain  
    \begin{equation}
        \sum_{(\bar{e}_1,\bar{e}_2)}\mathcal{E}(w(\bar{e}_1)+w(\bar{e}_2))\le 2\sum_{(\bar{e}_1,\bar{e}_2)}(\mathcal{E}(w(\bar{e_1}))+\mathcal{E}(w(\bar{e_2})))\le 4|\bar{E}|\sum_{\bar{e}}\mathcal{E}(w(\bar{e}))\le 4|\bar{E}|\cdot \mathcal{E}(n).
    \end{equation}
Similarly,     for  the self-intersection of $e\in \bar{E}$, by Lemma \ref{proposition:Gaussian_self_avoiding}    the self-intersection number of each edge is also bounded by $O(\mathcal{E}(w(\bar{e}))$. Summing them up also gives rise to   \eqref{equ:2.64}. 

    Now since    $1-R\le 0$ whenever   there is any intersection, we have
    \begin{equation}
        \left(1+O\big(|\bar{E}|\cdot \mathcal{E}(n)\big)\right)\, \Sigma_{\beta}^{(2)}(\mathfrak{D},w)=\sum_{\iota(\bar{V})}\mathbb{E}[1-R|\iota(\bar{V})]\le \Sigma_{\beta}^{(0)}(\mathfrak{D},w)\le \sum_{\iota(\bar{V})}\mathbb{E}[1|\iota(\bar{V})]=\Sigma_{\beta}^{(2)}(\mathfrak{D},w),
    \end{equation}
    from which the desired result follows.
\end{proof}

In order to control the contribution from the non-typical diagrams, we need to introduce  the concept of derived diagram and partial order on diagrams.   
\begin{definition}[Partial order]
    A  weighted $k$-diagram  $(\mathfrak{D}',w')$ is said to be a derived diagram of $(\mathfrak{D},w)$,
   denoted by 
    $(\mathfrak{D}',w')\prec (\mathfrak{D},w)$, if it can be  constructed  by identifying two points $u_1,u_2$ on the edges and balancing  the weights  through one of  
    three  possible manipulations:
    \begin{itemize}
        \item[(i)] Both  $u_1$ and $u_2$ are  interior points   of the  (possibly same) edges $e_1$ and $e_2$, and in this case  $u_i$ splits the edge into two (or three) pieces such that  the  sum of their weights   equals $w(\bar{e})$;
        \item[(ii)] $u_1\in \bar{V}$ and $u_2$ is  an interior point      of   $e\in \bar{E}$ such that   $e$  is split  into two edges $e_1,e_2\in\bar{E}'$ by $u_2$  and   $w'(e_1)+w'(e_2)=w(e)$. 
        \item[(iii)] $u_1,u_2\in \bar{V}$, and in this case $w(e)=w'(e)$ for all relevant  edges.
    \end{itemize}    
\end{definition}

Under the partial order relation, the typical weighted $k$-diagrams in Definition \ref{def:2.15} lie in the set of maximal elements.

\begin{lemma}   Let $\mathcal{M}_{\mathrm{diagram}}$ be the set of maximal $k$-diagrams   under   the partial order $\prec$, we have
    \begin{equation}
        \mathscr{G}_{\beta,k,3}\subset \mathcal{M}_{\mathrm{diagram}}.
    \end{equation}
\end{lemma}
\begin{proof} By reduction,  suppose that some 
       $\mathfrak{D}\in \mathscr{G}_{\beta,k,3}$ is   a derived digram.  That is, it can be glued from another $k$-diagram $\Tilde{\mathfrak{D}}$, at least  with a corresponding vertex  $v$  glued from $u_1$ and $u_2$. Here  $u_i$ could be an endpoint or  also might lie in the interior of  some edge. 
       Then   $\text{deg}(v)=\text{deg}(u_1)+\text{deg}(u_2)\ge 1+1$, implying   $\text{deg}(v)=3$.  We further know that one   of  $\text{deg}(u_1)$ and $\text{deg}(u_2)$ is 1 and the other is 2. Without loss of generality, let  $\text{deg}(u_1)=1$, then   $u_1$ is a marked vertex.  In this case the number of degree 1 decreases to $k-1$,   so  $\mathfrak{D}$ can not   be a  $k$-diagram. That is a contradiction. 

    Thus the desired result follows.
  \end{proof}

With the partial order relation, we immediately have a better bound from Lemma \ref{lem:2.19}.
\begin{corollary}\label{coro:2.19} For any diagram $\mathfrak{D}$, we have
    \begin{equation} \label{finesum}
        \Sigma_{\beta}^{(0)}(\mathfrak{D},w)\le \sum_{(\mathfrak{D}',w'):(\mathfrak{D}',w')\prec (\mathfrak{D},w)}\Sigma_{\beta}^{(1)}(\mathfrak{D}',w')\le \Sigma_{\beta}^{(2)}(\mathfrak{D},w).
    \end{equation}
    Furthermore, if $nW^{-d}\rightarrow 0$ as $W\rightarrow \infty$, we have
    \begin{multline}\label{equ:maximal_sum}
        \Big(1+O\big(\frac{n}{W^d}\big)\Big)\sum_{\mathfrak{D}\in \mathcal{M}_{\text{diagram}}}\sum_{w(e):(\mathfrak{D},w)}\Sigma_{\beta}^{(0)}(\mathfrak{D},w)\le \mathbb{E}\Big[\prod_{i=1}^k \tr\mathcal{P}_{n_i}(H)\Big]\\ \le \Big(1+O\big(\frac{n}{W^d}\big)\Big)\sum_{\mathfrak{D}\in \mathcal{M}_{\text{diagram}}}\sum_{w(e):(\mathfrak{D},w)}\Sigma_{\beta}^{(2)}(\mathfrak{D},w).
    \end{multline}
\end{corollary}
\begin{proof}
    We first consider \eqref{finesum}. The lower bound is trivial, so we just focus on   the upper bound. 
    
    To this end,   note that $\Sigma_{\beta}^{(2)}(\mathfrak{D},w)$ contains all paths  satisfying $\iota$ and $w$, while for any path $\mathfrak{p}$ in $\Sigma_{\beta}^{(1)}(\mathfrak{D}',w')$ more restrictions--for instance,  certain vertices have same labeling-- are imposed. Also, by the definition of the  Feidheim-Sodin \textit{contraction} ${\varphi}$  in Definition \ref{operation:diagram}, each path is counted once. Hence, the summation  in the middle of \eqref{finesum}     is just taken over partial terms from those of the summation for  $\Sigma_{\beta}^{(2)}(\mathfrak{D},w)$. This completes proof of \eqref{finesum}.

      Next,  we consider \eqref{equ:maximal_sum}. 
      By Theorem \ref{thm:thm2.5}, we have
         \begin{equation}\label{equ:lemma2.21_equ:2.67}
        \mathbb{E}\Big[\prod_{i=1}^k \tr\mathcal{P}_{n_i}(H)\Big]=\Big(1+O\big(\frac{n}{W^d}\big)\Big)\mathbb{E}\Big[\prod_{i=1}^k \tr V_{n_i}\Big].\end{equation}
    By Proposition \ref{nbformula} and the equation \eqref{equ:Sigma1}, we have
    \begin{equation}\label{equ:lemma2.21_equ:2.68}
        \mathbb{E}\Big[\prod_{i=1}^k \tr V_{n_i}\Big]=\Sigma_{\beta}^{(1)}(n_1,n_2,\cdots,n_k)=\sum_{\mathfrak{D}\in \mathscr{G}_{\beta,k,\ge 3}}\sum_{w:(\mathfrak{D},w)}\Sigma_{\beta}^{(1)}(\mathfrak{D},w).
    \end{equation}
    Take    $\mathfrak{D}\in \mathcal{M}_{\text{diagram}}$ in \eqref{finesum}, we  obtain   
    \begin{equation}\label{equ:lemma2.21_equ:2.69}
        \sum_{\mathfrak{D}\in \mathcal{M}_{\text{diagram}}}\sum_{w(e):(\mathfrak{D},w)}\Sigma_{\beta}^{(0)}(\mathfrak{D},w) \le \sum_{\mathfrak{D'}\in \mathscr{G}_{\beta,k,\ge 3}}\sum_{w:(\mathfrak{D'},w)}\Sigma_{\beta}^{(1)}(\mathfrak{D'},w)\le \sum_{\mathfrak{D}\in \mathcal{M}_{\text{diagram}}}\sum_{w(e):(\mathfrak{D},w)}\Sigma_{\beta}^{(2)}(\mathfrak{D},w).
    \end{equation}
    Clearly, combination  of\eqref{equ:lemma2.21_equ:2.67}, \eqref{equ:lemma2.21_equ:2.68} and \eqref{equ:lemma2.21_equ:2.69} gives rise to     \eqref{equ:maximal_sum}. 
    
    Thus, this completes proof of the corollary.
    \end{proof}

The counting of any maximal $k$-diagram can be controlled by that of  some typical diagram, based on properties of random walks on the torus.

\begin{lemma}\label{lem:2.21}
For any weighted $k$-diagram $(\mathfrak{D},w)\in \mathcal{M}_{\text{diagram}}\backslash\mathscr{G}_{\beta,k,3}$ with graph $\bar{G}=(\bar{E},\bar{V})$, there is a weighted typical diagram $(\Tilde{\mathfrak{D}},\Tilde{w})\in \mathscr{G}_{\beta,k,3}$ with graph $\Tilde{G}=(\Tilde{E},\Tilde{V})$, such that for some constant $C>0$,  
\begin{equation}
    \Sigma_{\beta}^{(2)}(\mathfrak{D},w)\le C^{|\Tilde{E}|-|\bar{E}|}\Sigma_{\beta}^{(2)}(\Tilde{\mathfrak{D}},\Tilde{w}),
\end{equation}
where $\Tilde{w}(e)\in \{1,2\}$ for   $|\Tilde{E}|-|\bar{E}|$ (new) edges of $ \Tilde{E}$ and  $\Tilde{w}(e)=w(e)$ for   other edges $e\in \bar{E} \cap \Tilde{E}$.
\end{lemma}

\begin{proof}
Firstly, we  enlarge the diagonal terms from the diagonal matrix entries  to a self-loop with weight $w(e)=3$, as shown in Step 1 of Figure \ref{fig:diagonal_term}. In this case every diagonal term contributes a constant factor $C_1$. 

Secondly,  we need to get rid of all the vertices of degree greater than $3$. We will  use Lemma \ref{proposition:degree_operation} again  to split those   vertices of $\bar{G}$, as shown in Figure \ref{fig:degree_operation}. The crucial difficulty  is how to keep the circuit structure in   $\Tilde{G}$.

    Let $x$ be a vertex with $\text{deg}(x)>3$, we consider the edges, associated with the vertex $x$,  go in and out  each time as a set of unordered pair $S=\{(e_{i}^{(1)},e_{i}^{(2)})\}_{i=1}^{\text{deg}(x)}$ where $e_i^{(1)}$ and $e_{i}^{(2)}$ have length 1; see Figure \ref{fig:keep_circuit}. 
    We claim that after suitable sort the path set can be written as $\{(e_1,e_2),(e_2,e_3),\ldots, (e_{\text{deg}(x)},e_1)\}$. Otherwise, since each edge is traversed exactly twice, $S$ must be of the form $$S=\{(e_1,e_2),(e_2,e_3),\ldots (e_{i_1},e_1); 
    \cdots; (e_{i_{1}+\cdots+ i_{t-1}+1},e_{i_1+\cdots+ i_{t-1}+2}),\ldots, (e_{i_{1}+\cdots+ i_{t}},e_{i_1+\cdots+ i_{t-1}+1})\}.$$
    However, if $t>1$, the vertex $x$ must be a glued vertex, which is contradicted to $\mathfrak{D}\in \mathcal{M}_{\text{diagram}}$.

    Now if the path is of form $\{(e_1,e_2),(e_2,e_3),\ldots, (e_{\text{deg}(x)},e_1)\}$, we can add a vertex $y$ to decrease the degree $\text{deg}(x)$ as
    $\{(e_1,xy,e_2),(e_2,e_3),(e_3,xy,e_4),\ldots, (e_{\text{deg}(x)},e_1)\}$, whenever $\text{deg}(x)>3$. 
    Here 
      $xy$ means that  we add an edge $xy$ between the path $e_1,e_2$ and $e_3,e_4$; see Figure \ref{fig:keep_circuit} as an example. So the circuit structure  can be   conserved even  after this operation.  So continue, until the degree of $x$ is equal to 3. 
    
    Lastly,  we consider the diagram weight $\Sigma_{\beta}^{(2)}(\mathfrak{D},w)$. By Lemma \ref{proposition:degree_operation}, each splitting vertex operation contributes a constant factor $C_1$. Taking $n_3=1$ there, the weight of the new edge is $1$. The total number   of splitting vertex operations is $|\Tilde{E}|-|\bar{E}|$. Each diagonal term also contributes a constant factor $C_2$. Besides, the splitting operation  from the diagonal term of degree greater than 3  also gives  a   factor $C_1$. Hence,  after absorbing the constant $C_2$ by taking $C=C_1C_2$, we complete the proof. \end{proof}

\begin{figure}
    \centering
    \includegraphics[width=0.5\textwidth]{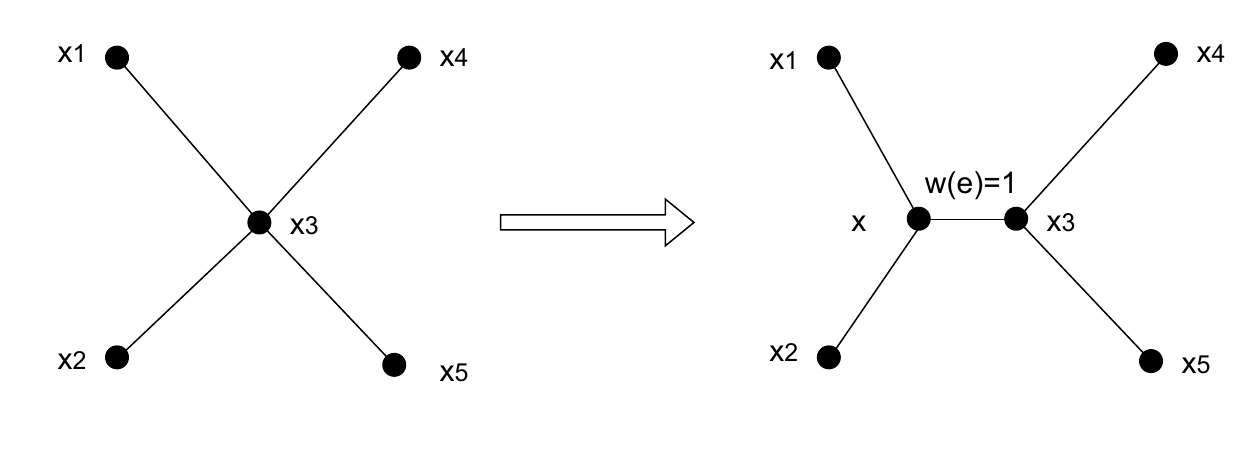}
    \caption{Split a  vertex with degree  greater than 3}
    \label{fig:degree_operation}
\end{figure}
\begin{figure}
    \centering
    \includegraphics[width=0.5\textwidth]{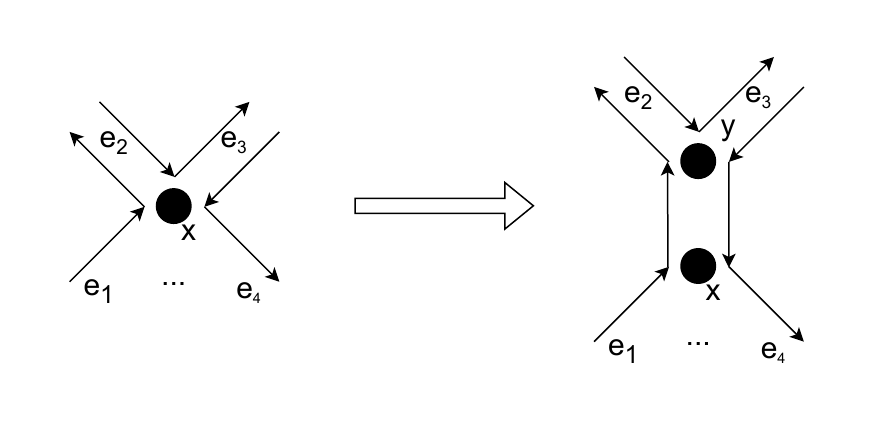}
    \caption{Keep the circuit structure}
    \label{fig:keep_circuit}
\end{figure}
\begin{figure}
    \centering
    \includegraphics[width=0.5\textwidth]{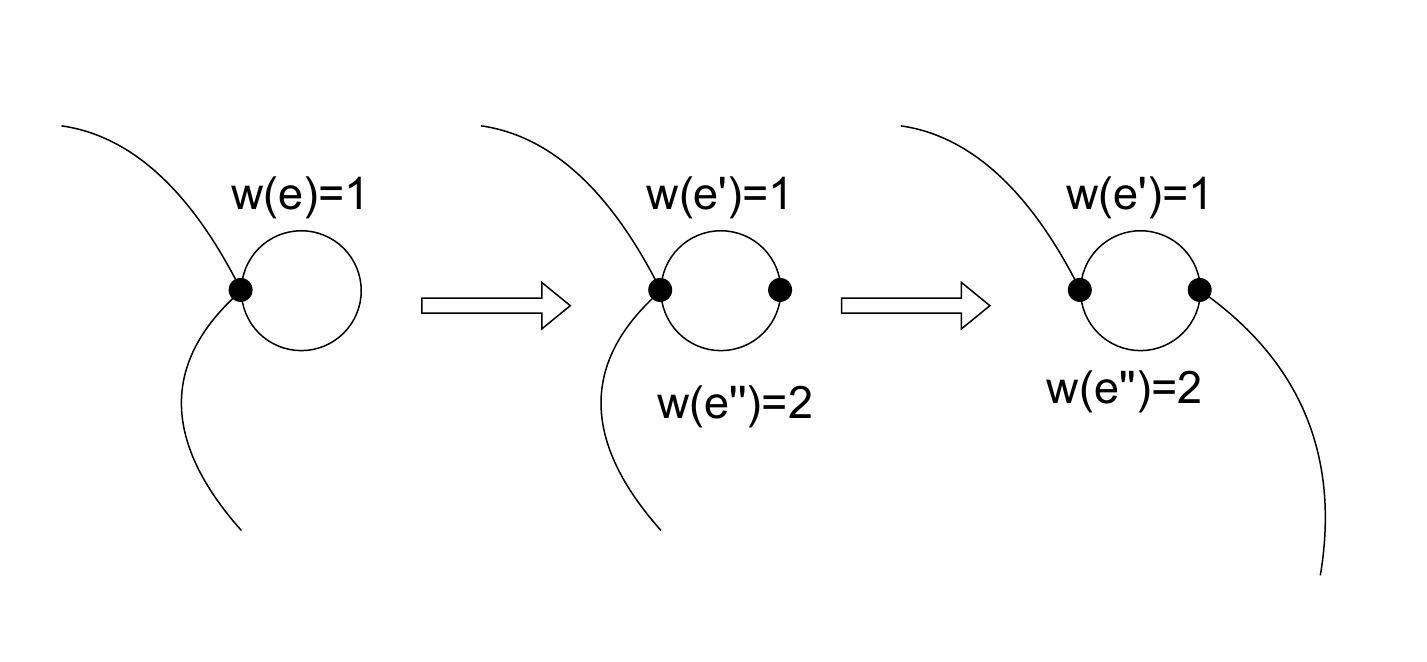}
    \caption{Split a  vertex with degree  greater than 3: diagonal case}
    \label{fig:diagonal_term}
\end{figure}

\newpage
\section{Diagram functions and integrals}\label{section_diagram}

{ \bf Notation}.  Throughout Section \ref{section_diagram}, Section \ref{section:asymptotic_expansion} and Section \ref{sec:tadpole_diagram}, we use the following conventions.  

\begin{itemize}
    \item For a given integer $k\geq 1$, $n:=\sum_{i=1}^k n_i$ where $n_1, \ldots, n_k$ are integers.
    \item We always assume  that  $ne^{-0.5c_{\Sigma}W^2}\to 0$ as $W\to \infty$, where $c_\Sigma>0$ denotes the smallest eigenvalue of the positive definite   matrix  $\Sigma$.

    \item For any graph $G=(V,E)$,  we use $|V|$ and $|E|$ to denote  the number of vertices and edges respectively for convenience. If $G$ is the underlying graph of diagram $\mathfrak{D}\in \mathscr{G}_{\beta, k,3}$, we have $|V|=2s$  and $|E|=3s-k$.

    \item We use $\Tilde{E}, \Tilde{V}$ to denote the graph corresponding to ${\mathfrak{D}}$ with all tail edges removed. For any  connected    diagram $\mathfrak{D}$ in $\mathscr{G}_{\beta, k,3}$, we have $|\Tilde{V}|=2s-k$ edges  and $|\tilde{E}|=3s-2k$ vertices.
\end{itemize}

\subsection{Diagram functions}\label{sec:diagram_function}
\begin{definition} \label{diagramf}
    For any {{connected diagram }} $\mathfrak{D}\in \mathscr{G}_{\beta,k,3}$ with graph $G=(V,E)$,   the $\mathfrak{D}$-diagram function is defined  as
    \begin{equation}\label{equ:3.1}
    \begin{aligned}
        F_{\mathfrak{D}}(\{n_i\}_{i=1}^k)&:=\frac{1}{N}\sum_{w(e):(\mathfrak{D},w)}\Sigma_{\beta}^{(2)}(\mathfrak{D},w)\\
        &=\frac{1}{N}\sum_{w(e):(\mathfrak{D},w)} 
        \sum_{\iota:V\rightarrow \Lambda_L}\prod_{e\in E}p_{w(e)}(\iota(u^{e}),\iota(v^{e})),
    \end{aligned}
    \end{equation}
    where $N=L^d$ and  the weight function $w(e)$ satisfies the $\mathfrak{D}$-system $\mathfrak{C}$ in Definition \ref{D-system}. For short,   we also use  the sum $\sum_{w(e):\mathfrak{C}}=\sum_{w(e):(\mathfrak{D},w)}$.
\end{definition}

A central objective of this paper is to characterize the asymptotic behavior of diagram functions. Building on foundational results by 
Sodin  \cite{sodin2010spectral}, who resolved this analytical challenge for both subcritical and supercritical regimes in the one-dimensional case ($d=1$), we investigate how transitions between diagram function phases determinate the statistics at spectral edges. These transitions establish a 3-fold classification of eigenvalue statistics at the   spectral edge, which is determined by the 3-fold transitions of random walk.

\begin{itemize}
 \item {\bf 
    Supercritical phase}: $n(W/L)^2\gg 1$.  The random walk 
    achieves uniform mixing over the large  torus and the edge statistics align with mean-field random matrix universality, governed by 
    the  Tracy-Widom distribution.
    
    \item {\bf 
    Subcritical phase}: $n(W/L)^2\ll 1$. The random walk
      localization induces Gaussian fluctuations within microscopic $\mathbb{R}^d$-scaled windows, and Poissonian eigenvalue statistics emerge, characteristic of weakly interacting systems.
      
    \item {\bf 
    Critical threshold}: $n(W/L)^2=\gamma>0$.  The heat kernel propagation dominates toroidal geometry and the interpolating correlation patterns arise, blending mean-field and localized characteristics.
\end{itemize}

\begin{proposition}[Subcritical case]\label{proposition:graph_polynomial}
    If  $ne^{-0.5c_\Sigma W^2} \ll 1$  and   $n\ll (\frac{L}{W})^2$ as $W\to \infty$, then  

\begin{equation}\label{equation:graph_polynomial}
\begin{aligned}
  & F_\mathfrak{D}(\{n_i\}_{i=1}^k)=\big(1+O(ne^{-0.5c_\Sigma W^2})\big)\cdot (\det(W^2{\Sigma}) )^{\frac{-|E|+|V|-1}{2}}\sum_{w(e): \mathfrak{C}}\Big(\sum_{T\in Str(\mathfrak{D}) } \prod_{e\notin T} w(e)\Big)^{-\frac{d}{2}},
\end{aligned}
\end{equation}
 where   $Str(\mathfrak{D})$ denotes the set of spanning trees  of $\mathfrak{D}$.
\end{proposition}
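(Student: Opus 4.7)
The plan is to approximate the discrete diagram function by a continuum Gaussian integral indexed by the Feynman diagram, and then to evaluate that integral via the weighted matrix-tree theorem. Because the edge weights $n_e \geq 1$ are kept as an explicit sum in the statement, the analysis reduces to estimating the inner spatial sum for a fixed edge-weight assignment $\vec n$.

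First, I would feed each transition probability $p_{n_e}(x_{e1},x_{e2})$ appearing in $F_\mathfrak{D}$ into the subcritical local limit theorem, Theorem \ref{prop:gaussian_llt}(i). In the regime $n(W/L)^2\ll 1$, the periodic theta function reduces to the single continuum Gaussian on $\mathbb{R}^d$ with covariance $n_e W^2 \Sigma$, the periodic copies being negligible on scale $W\sqrt{n_e}\ll L$; this introduces a multiplicative error $(1+O(n^{1+\epsilon}e^{-c_\Sigma W^2}))$. By translation invariance of the $p_{n_e}$, I then fix one vertex of the connected diagram $\mathfrak{D}$ at the origin, canceling a factor of $N$ against the $1/N$ prefactor, while the remaining spatial sum runs over $x_v \in \Lambda_L^{V-1}$.

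Next I would replace this lattice sum by a Gaussian integral over $\mathbb{R}^{d(V-1)}$. Since each edge factor is a Gaussian of width $W\sqrt{n_e}\ll L$, both extending the sum from $\Lambda_L^{V-1}$ to $(\mathbb{Z}^d)^{V-1}$ and converting $\mathbb{Z}^d$ sums to $\mathbb{R}^d$ integrals via Poisson summation, incur only errors of size $e^{-c_\Sigma W^2}$ per edge, which is absorbed in the stated error. With this in hand, the exponent of the integrand reads
\begin{equation*}
-\frac{1}{2W^2}\sum_{e\in E} \frac{1}{n_e}(x_{e1}-x_{e2})^T \Sigma^{-1}(x_{e1}-x_{e2})
=-\frac{1}{2W^2}\, x^T \bigl(\mathscr{L}(\vec n)\otimes \Sigma^{-1}\bigr)\, x,
\end{equation*}
where $\mathscr{L}(\vec n)$ is the reduced graph Laplacian of $\mathfrak{D}$ with edge weights $1/n_e$ and $x$ collects the $V-1$ free vertex coordinates. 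Using the Kronecker identity $\det(A\otimes B)=(\det A)^{\dim B}(\det B)^{\dim A}$, the Gaussian integral evaluates to
\begin{equation*}
\frac{(2\pi W^2)^{d(V-1)/2}\,(\det\Sigma)^{(V-1)/2}}{(\det\mathscr{L}(\vec n))^{d/2}},
\end{equation*}
and combining with the edge prefactors $\prod_e\bigl((2\pi n_e W^2)^{d/2}(\det\Sigma)^{1/2}\bigr)^{-1}$ packages the $W$- and $\det\Sigma$-powers into $(\det(W^2\Sigma))^{(V-1-E)/2}$.

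Finally I apply the weighted matrix-tree theorem, $\det\mathscr{L}(\vec n)=\sum_{T\in \mathrm{Str}(\mathfrak{D})}\prod_{e\in T}n_e^{-1}$; pulling $\prod_e n_e$ inside the $d/2$-th power rewrites the denominator as $\bigl(\sum_T \prod_{e\notin T} n_e\bigr)^{d/2}$, yielding the stated formula after summing over $\vec n$ in $\mathfrak{C}_+$. The main obstacle I foresee is the bookkeeping in the Gaussian evaluation: for diagrams with loops and multi-edges (which are generic for trivalent Feynman diagrams) the oriented incidence matrix and hence $\mathscr{L}(\vec n)$ must be set up with care, and the universal factor $(2\pi)^{d(V-1-E)/2}$ must either be absorbed consistently or noted as an implicit normalization. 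A secondary routine issue is accumulating the multiplicative error across all $\#E=3s-k$ edges and through the Poisson summation step, but since $s$ is fixed for a given diagram this stays within the stated $(1+O(n^{1+\epsilon}e^{-c_\Sigma W^2}))$ bound.
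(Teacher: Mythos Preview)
Your proposal is correct and follows essentially the same route as the paper: apply the subcritical local limit theorem to each edge factor, pass from the lattice sum to a Gaussian integral via Poisson summation, and evaluate the resulting Gaussian via the reduced graph Laplacian together with the weighted matrix-tree theorem. The paper's own proof is in fact only a one-line sketch referring to \cite[Equation (4.5)]{sodin2010spectral}, so your write-up is considerably more detailed than what appears there; your flagged bookkeeping issue with the stray $(2\pi)^{d(V-1-E)/2}$ factor is a genuine normalization point worth tracking, but it does not affect the argument.
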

\begin{proof} Applying the local central limit   theorem   shown  in Theorem \ref{prop:gaussian_llt} first,   then using  Poisson summation formula and  the determinant of the Laplacian matrix,  we can proceed as in   {\cite[Eq. (4.5)]{sodin2010spectral}} to obtain the desired result. 
\end{proof}
    
       In dimension   $d=1$, Sodin  \cite{sodin2010spectral} has proved that  in the subcritical case  the summation on the right-hand side of \eqref{equation:graph_polynomial} tends  to a finite  integral, while in the supercritical case of  $n \gg (L/W)^2$,
    \begin{equation}
        F_\mathfrak{D}(\{n_i\})=(1+o(1))C_{\mathfrak{D}}n^{|E|-k}N^{|V|-|E|-1}.
    \end{equation}
Here $C_{\mathfrak{D}}$ is a constant given  in Definition \ref{D-system}. However, in  dimension $d>1$ the summation \eqref{equation:graph_polynomial} may not be approximated by   finite integrals. 
 We give a simple example that explains the above phenomena as follows.

\begin{figure}
\centering
\includegraphics[width=0.5\textwidth]{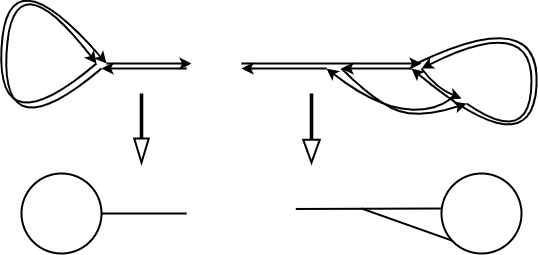}
\caption{Example of simplest diagram for $\beta =1$ and $\beta=2$}
\label{fig:singular_diagram}
\end{figure}

\textbf{Singularity problem.} Consider   the sum associated with the  loop diagram, which is usually called   a tadpole diagram (the left hand of figure \ref{fig:singular_diagram}) in   Quantum Field Theory (QFT for short). Its diagram function  reduces to  \begin{equation}\label{equ:singularity}
    ~W^{-d}\sum_{n_2=3}^{n}n_2^{-\frac{d}{2}}\sim \begin{cases}
    \frac{1}{2} W^{-1} \sqrt{n}, &\quad d=1,\\
    W^{-2} \log n, &\quad d=2,\\
    C_d W^{-d},&\quad d\ge 3.
    \end{cases}
\end{equation}
The singularity is closely related to 
the divergence of Feynman graph  integrals   and  is studied 
   in  Section \ref{subsection_singular_diagram}.

\subsection{Graph integrals}\label{subsection_singular_diagram}

The sum of multivariate polynomials on the right-hand side of \eqref{equation:graph_polynomial} 
is closely related to a multivariate integral on Feynman graph in QFT, and we have to attack the same   convergence  issue. 
A well-known approach to analyze convergence of Feynman integrals in QFT is to decompose the initial integration domain  into appropriate subdomains (sectors) and  then introduce  new variables  in each sector such  that the integrand  function 
properly factorizes;  see  Hepp \cite{hepp1966proof}  and Speer \cite{speer1975ultraviolet}. A very effective way to analyze   
the singularity of Feynman diagrams is the the maximal UV (ultra-violet) forest method; see e.g. \cite[Chapter 4]{smirnov2013analytic} and \cite{speer1975ultraviolet} for more details.  The maximal UV forest method gives the correct singularity criterion for our diagrams but without effective bound. In section \ref{sec:diagram_func_bound}, we use a combinatorial argument as illustrated  in Lemma \ref{lemma:upper_bound_integral} below to establish an effective bound.
 
Here we give a brief review  about  the maximal UV  forest method,  see e.g.  \cite[Chapter 4]{smirnov2013analytic} and \cite{speer1975ultraviolet} for more details.  

\begin{definition}\label{definition:graph_polynomial}
    For   a connected graph $G=(V,E)$, a homogeneous polynomial of Feynman parameters  $\{\alpha_e\}_{e\in  {E}}$
    \begin{equation}
        \mathcal{U}_{G}(\{\alpha_e\})=\sum_{T\in \mathrm{Str}(G)}\prod_{e \notin   T}\alpha_e
    \end{equation}
   is called as  first \textit{Symanzik polynomial}, where  $\mathrm{Str}(G)$  is the set of  spanning trees  of $G$.  
\end{definition}

The first \textit{Symanzik polynomial}  is also referred to as the 
\textit{Kirchhoff polynomial} or \textit{graph polynomial}. Consequently, the summation for the diagram $\mathfrak{D}$ in Proposition \ref{proposition:graph_polynomial}  can be rewritten as   
\begin{equation}
    \sum_{w(e): \mathfrak{C}}\Big(\sum_{  T  \in \mathrm{Str}(\mathfrak{D}) } \prod_{e\notin T} w(e)\Big)^{-\frac{d}{2}}=\sum_{w(e): \mathfrak{C}}\big(\mathcal{U}_{\mathfrak{D}}(\{w(e)\})\big)^{-\frac{d}{2}}.
\end{equation}
This summation can be approximated by a Feynman graph integral $I_{G}(D)$,  defined as   in \eqref{D-integral} below,  
   over a bounded domain 
 $D $ containing the origin. A fundamental question is to determine whether the graph integral converges.
   As has been known since the early days of QFT, Feynman graph integrals often suffer from divergences. These divergences arise because the integrals are ill-defined due to  UV singularities at the origin.
    
We require additional graph-theoretical concepts; for details, see e.g.    \cite[Chapter 4]{smirnov2013analytic}. 
\begin{definition} An articulation vertex of a graph $ G$
is a vertex whose deletion disconnects $G$.
    Any connected graph with no articulation vertices
is said to be 
one-vertex-irreducible (1VI). Otherwise, it is called
one-vertex-reducible (1VR).     A single line is treated as a 1VI graph
since it    can not be decomposed   into two parts.
   \end{definition}

 In  QFT, a UV forest is a collection of subgraphs within a Feynman diagram that plays a crucial role in analyzing and resolving  UV divergences. Specifically, a UV forest is defined as follows.
   \begin{definition}
   Given a graph $G$, a set $f$ composed of    1VI subgraphs is said to be   a UV forest if the following two conditions  hold:

    \begin{itemize}
        \item[(i)] for any pair $\gamma,\gamma' \in f$ ,  either $\gamma\subset \gamma', \gamma'\subset \gamma$ or $\gamma$ and $\gamma'$  have   no common edge;
        \item[(ii)] if $\gamma_1,...,\gamma_m\in f$ ($m>1$) and each pair $\gamma_i, \gamma_j$  have  no common edge, the subgraph $\cup_{i=1}^m \gamma_i$ as the union is 1VR.        
    \end{itemize}  
A maximal UV forest is a UV forest that is not contained in any other UV forest. Denote by  $\mathcal{F}$ the set of maximal UV forest. 
\end{definition}

As shown in \cite[Chapter 4]{smirnov2013analytic}, 
for a given maximal UV forest $f\in \mathcal{F}$, we can introduce a corresponding sector  $D_f$ such that the following facts hold. 
\begin{itemize}
        \item[(i)] 
        the intersection of two different sectors has  measure zero;
        \item[(ii)]   the union of all the
sectors is exactly   the whole integration domain of the alpha parameters;
        \item[(iii)]   for a given sector $D_f$,  there exist   new variables labelled by the elements of $f$   \begin{equation}
    \alpha_e=\prod_{\gamma\in f:e\in \gamma}t_\gamma,
\end{equation}
such that   the corresponding  Jacobian  equals to $\prod_\gamma t_\gamma^{L(\gamma)-1}$
and  the Symanzik polynomial  takes the form 
\begin{equation}
    \mathcal{U}(\{\alpha_e\})=\prod_{\gamma\in f}t_\gamma^{h(\gamma)}[1+P_f],
\end{equation} 
where  $P_f$  is a  polynomial with non-negative coefficients,  $h(\gamma):=L(\gamma)-V(\gamma)+1$,   $L(\gamma)$ and $V(\gamma) $ are respectively   the number of loops, edges and vertices in the graph. 
    \end{itemize}

With the above preparation, we can state the following convergence  rule for Feynman  graph integrals (see \cite[Chapter 4]{smirnov2013analytic} for a detailed proof). 
\begin{proposition}[{\cite{smirnov2013analytic,speer1975ultraviolet}}]\label{propositionconvergence} 
Given a connected graph $G=(V,E)$, let   $D=[0,1]^{|E|}$,    then  the graph integral 
\begin{equation} \label{D-integral}
       I_{G}(D):=\int_D    \big(\mathcal{U}_{G}(\{\alpha_e\})\big)^{-\frac{d}{2}}   \prod_{e\in {E}} d\alpha_e
\end{equation}
 is finite if and only if   the  discriminant
  \begin{equation}\label{equ:criterion_singularity}
    \Delta_{G}:=|E(\gamma)| - \frac{d}{2}\big(|E(\gamma)|-   |V(\gamma)|+1\big)>0 
    \end{equation}
    for every     $\mathrm{1VI}$ subgraph $\gamma$ of $G$, where $E(\gamma)$ and $V(\gamma) $ are respectively   the set of     edges and vertices.
\end{proposition}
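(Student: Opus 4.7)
The plan is to prove both directions by sector decomposition, using exactly the maximal UV forest apparatus introduced just before the statement. The integration cube $D = [0,1]^{\#E}$ splits, up to a null set, as $D = \bigcup_{f\in \mathcal{F}} D_f$, and in each sector the substitution $\alpha_e = \prod_{\gamma \in f:\, e\in \gamma} t_\gamma$ is a change of variables with Jacobian $\prod_{\gamma\in f} t_\gamma^{L(\gamma)-1}$. The Symanzik polynomial factorizes as $\mathcal{U}_\Gamma(\{\alpha_e\}) = \prod_{\gamma\in f} t_\gamma^{h(\gamma)}(1 + P_f)$ with $h(\gamma) = L(\gamma) - V(\gamma) + 1$ and $P_f$ a polynomial with nonnegative coefficients that vanishes at the origin. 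Consequently, in $D_f$ the integrand transforms into $\prod_{\gamma\in f} t_\gamma^{\Delta_\gamma - 1}\cdot (1 + P_f)^{-d/2}$ against $\prod_{\gamma\in f} dt_\gamma$, so convergence reduces to pure power counting in the variables $t_\gamma$ near the origin.

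For the sufficient direction, I would first record that every element of a maximal UV forest is by construction a 1VI subgraph, so the hypothesis $\Delta_\gamma > 0$ applies to every $\gamma \in f$. Since $1 + P_f \geq 1$ on the closed sector, the factor $(1 + P_f)^{-d/2}$ is bounded above by $1$, and Fubini together with the one-dimensional estimate $\int_0^1 t_\gamma^{\Delta_\gamma - 1}\, dt_\gamma < \infty$ forces each sector integral to be finite. Summing over the finite index set $\mathcal{F}$ then yields $I_\Gamma(D) < \infty$.

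For the necessary direction, suppose some 1VI subgraph $\gamma_0$ satisfies $\Delta_{\gamma_0} \leq 0$. Then $\{\gamma_0\}$ is itself a UV forest and can be extended to a maximal UV forest $f \in \mathcal{F}$. In the associated sector I would confine integration to the slab where $t_{\gamma_0} \in (0,\varepsilon)$ while every other $t_{\gamma'}$ with $\gamma' \in f \setminus \{\gamma_0\}$ lies in a compact subset of $(0,1)$; on this region $(1 + P_f)^{-d/2}$ is bounded below by a strictly positive constant, and the integral dominates $c\int_0^\varepsilon t_{\gamma_0}^{\Delta_{\gamma_0}-1}\, dt_{\gamma_0}$, which diverges because $\Delta_{\gamma_0} - 1 \leq -1$. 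Hence $I_\Gamma(D) = \infty$, contradicting integrability.

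The principal technical obstacle is justifying the three properties of the sector decomposition recalled just before the statement: that the $D_f$ cover $D$ up to a null set, that the monomial substitution is a genuine diffeomorphism onto a standard cube in each sector, and that $\mathcal{U}_\Gamma$ admits the asserted factorization with $P_f$ nonnegative. These are precisely the content of the Hepp--Speer construction \cite{hepp1966proof, speer1975ultraviolet, smirnov2013analytic}; once they are in hand the power-counting argument above needs no further combinatorial input about $\Gamma$, and the divergence criterion $\Delta_\gamma > 0$ drops out purely from the exponents produced by the Jacobian and the factorization.
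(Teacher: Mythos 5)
The paper states this proposition without proof, citing Smirnov and Speer, but the sector-decomposition machinery it recalls immediately beforehand is exactly the Hepp--Speer power-counting argument you carry out: the Jacobian contributes $t_\gamma^{L(\gamma)-1}$, the factorized Symanzik polynomial contributes $t_\gamma^{-\frac{d}{2}h(\gamma)}$, and $L(\gamma)-1-\frac{d}{2}h(\gamma)=\Delta_\gamma-1$ reduces both directions to one-dimensional integrability of $t_\gamma^{\Delta_\gamma-1}$ near the origin. Your proof is correct and is the standard argument the paper's cited references use; the only step you rightly flag as needing the references is the existence and exactness of the sector decomposition itself.
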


Next,   we introduce the notion of singularity or regularity for a one-vertex irreducible (1VI) graph. This concept will heavily depend on the dimension $d\geq 0$.

\begin{definition}\label{def:4.8}  For any real $d\geq 0$, 
a one-vertex irreducible (1VI) graph  $G$ is said to be primitively 
$d$-singular   (or simply   singular) if its discriminant satisfies 
$\Delta\leq 0$, but no proper 1VI subgraph of  $G$ satisfies this condition.
Conversely, a graph is said to be $d$-regular (or regular) if none of its 1VI subgraphs is singular.
   \end{definition} 

Apply Proposition \ref{propositionconvergence} and  we can easily classify 
the Feynman diagrams  in  the union  $G_{3}:=\cup_{k\geq 1} \mathscr{G}^{c}_{1,k,3}$, where  
   $\mathscr{G}^c_{1,k,3}$ is the set of connected typical   $k$-diagrams of type 1 as   in    Definition \ref{def:2.15}.
 First,  for the  1VI graphs  that has   vertex of degree 1,  there are only  two simple diagrams: a line or a tadpole.  Since any diagram of $G_{3}$    has only  vertex of degree  not   greater than 3,  so does   a  subgraphs. Let $V_2$ and $V_3$ be respectively the number of vertices of degree 2 and 3 for a subgraph, the above singular criterion for the 1VI subgraphs  can be   immediately  reformulated as 
 
\begin{equation}\label{criterion:gaussian_case}
    \Delta(V_2,V_3)=V_2+\big(\frac{3}{2}-\frac{d}{4}\big)V_3-\frac{d}{2}\leq 0.
\end{equation}
See Table \ref{singular_V2V3_table} for explicit pairs.  
\begin{table}
    \centering
    \begin{tabular}{|c| c|}
        \hline
        $d$ & $(V_2, V_3)$ \\
        \hline
        1 & None   \\
                2 & (1,0),(0,1)\\
                3 & (1,0),(0,1),(0,2)\\
                4 & (1,0),(1,1),(2,0),(0,1),(0,2),(0,3),(0,4)\\
                5 &$V_2+\frac{1}{4}V_3\le \frac{5}{2}$\\
                6 &  All $V_2=1,2,3$ \& even $V_3$\\
        \hline
    \end{tabular}
    \caption{Possible vertices $(V_2, V_3)$
    with $\Delta(V_2,V_3)\leq 0$}
    \label{singular_V2V3_table}
\end{table}

 However, not all pairs $(V_2, V_3)$ listed in Table \ref{singular_V2V3_table} can actually form a subgraph of any diagram from $G_3$. A straightforward observation reveals that any subgraph must have at least one vertex with a degree less than 3, and that $V_3$ must be even, as must $2V_2 + 3V_3$. Taking the tadpole diagram into consideration, all possible singular graphs are listed in Table \ref{singular_diagram_table} according to the dimension $d$.
This is entirely consistent with all the graphs of overall divergences in the $\phi^3$ model; see \cite[Chapter 5.7]{collins1984renormalization}. It is particularly interesting that the singular criterion is also equivalent to Hall's theorem, as demonstrated in Lemma \ref{lemma:upper_bound_integral} below.

If $d<6$, then only a finite set of singular graphs, illustrated in  Table \ref{singular_diagram_table},  have overall divergences.    So we  expect to have   renormalizability as in   $\phi^3$ model.  When $d>6$, there are too many singular graphs, so $\phi^3$ theory is not renormalizable; see \cite[Chapter 5.7]{collins1984renormalization}.

\begin{table}
    \centering
    \begin{tabular}{|c| c|}
        \hline
        $d$ & Additional   singular patterns          \\
        \hline
        1 & None  \\
                2 & Tadpole\\
                3 & None\\
                4 &          $(2,0), (1,2)$\\
                5 &   $(2,2),(1,4)$\\
                6 & All $V_2=1,2,3$ \& even $V_3$\\
        \hline
    \end{tabular}
    \caption{Singular patterns in  typical diagrams  }
    \label{singular_diagram_table}
\end{table}

\begin{figure}
\centering
\includegraphics[width=0.7\textwidth]{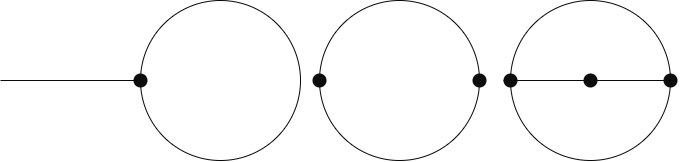}
\caption{Singular patterns for $d\le 4$}
\label{singular_diagram_ks<3}  
\end{figure}

\begin{figure}
\centering
\includegraphics[width=0.5\textwidth]{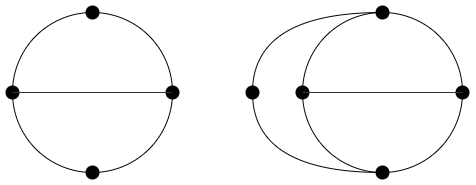}
\caption{Additional singular patterns at $d= 5$, exactly at $d={14}/{3}$ (left) and   $d=5$ (right).}
\label{singular_diagram_5d}
\end{figure}

\subsection{Asymptotics for regular diagram functions}\label{sec:diagram_func_bound}
Let  $\mathfrak{C}(\mathfrak{D})$ be  the system of linear equations  in Definition \ref{D-system},   
    the ratio  between   the number of integer solutions in the restricted  domain and the domain  volume   can be  approximated  by a constant defined by   
\begin{equation} \label{CD}
    C_{\mathfrak{D}}:=\lim_{n_1,\ldots,n_k \to \infty}  \frac{\#\big\{\{w(e)\}_{e\in E}| \text{ all } w({e})\in \mathbb{Z}_{+} \text{ and } \{w({e})\} \text{ satisfying } \mathfrak{C}(\mathfrak{D})\big\}}{\mathrm{Vol}\big\{\{ w(e)\}| \text{ all } w(e)\in \mathbb{R}_+ \text{ and } \{w(e)\}  \text{ satisfying }  \mathfrak{C}(\mathfrak{D})\big\}},
\end{equation} 
where $\mathbb{Z}_{+}$ and $\mathbb{R}_{+}$ denote the sets of nonnegative integers  and nonnegative real numbers, respectively. To illustrate the aforementioned constants, we consider two simple linear equations:
(1) $x+y=n$, (2) $x+2y=n$. For these cases, we respectively observe
        \begin{equation}
            \frac{\#\{(x,y)\in\mathbb{Z}^2_+|x+y=n\}}{\text{Vol}\{(x,y)\in \mathbb{R}^2_+|x+y=n\}}=\frac{n-1}{\sqrt{2}n}\rightarrow\frac{\sqrt{2}}{2},
        \end{equation}
and   \begin{equation}
            \frac{\#\{(x,y)\in\mathbb{Z}^2_+|x+2y=n\}}{\text{Vol}\{(x,y)\in \mathbb{R}^2_+|x+2y=n\}}=\frac{[\frac{n}{2}]}{\frac{1}{2}\sqrt{5}n}\rightarrow\frac{\sqrt{5}}{5}.
        \end{equation}

Similar to the linear system    in Definition \ref{D-system},  for   $\tau_i\in (0,\infty)$
we also introduce a simplex formed by systems of linear equations in $\mathbb{R}_{+}^{|E|}$
  \begin{equation} 
        \mathfrak{C}(\tau): \quad  \sum_{e\in {E}}c_i(e)\alpha_e=\tau_j, \qquad i=1,...,k,
    \end{equation}
    and  
 another  simplex formed by systems of linear inequalities  in $\mathbb{R}_{+}^{|E|-k}$      
  \begin{equation}
        \widetilde{\mathfrak{C}}(\tau): \quad  \sum_{e\in {E}\backslash T_i}c_i(e)\alpha_e\le \tau_i,  \qquad i=1,...,k,
    \end{equation}
where      $c_i(e)\in \{0,1,2\}$ and $T_i$ is  the tail edge in the $i$-circuit.

The central results of this section are the following two theorems about   limits and upper bounds  for diagram functions. Their proofs will be left to the end of this section.
\begin{theorem}\label{thm:homogeneous_theorem}
       
  For  any  connected and  regular   diagram $\mathfrak{D}$ in $\mathscr{G}_{\beta, k,3}$, assume that 
      $\sum_{i=1}^k n_i$ is even   and   $\tau_i\in (0,\infty)$ for $i=1,\ldots,k$.
    \begin{itemize}
        \item[(i)] 
({\bf Supercritical regime})
    If  $W\gg L^{(1-\frac{d}{6})_+}$  and all $n_i\sim \tau_i N^{\frac{1}{3}}$, then
    \begin{equation}
        F_\mathfrak{D}(\{n_i\}_{i=1}^k)=
        (1+o(1))C_{\mathfrak{D}}\mathrm{Vol}_{ \mathfrak{C}(\tau)}\big(\big\{\tau_i\big\}\big)N^{\frac{k}{3}-1}.
    \end{equation}

 \item[(ii)]({\bf Subcritical regime})     
    If $1\ll W\ll L^{1-\frac{d}{6}}$ ($d<6$) and all  $  n_i\sim \tau_i W^{\frac{2d}{6-d}}$, then  \begin{equation}
    \begin{aligned}
 F_\mathfrak{D}(\{n_i\}_{i=1}^k)& 
 &= (1+o(1))C_{\mathfrak{D}}\det({\Sigma})^{\frac{k-s-1}{2}}{{W^{\frac{2d(k-3)}{6-d}}}}\int_{\mathfrak{C}(\tau)}  \big(\mathcal{U}_{\mathfrak{D}}(\{\alpha_e\})\big)^{-\frac{d}{2}} d\alpha.
\end{aligned}
    \end{equation}
   
\item[(iii)] ({\bf Critical regime}) 
       If  $  W\sim \gamma L^{1-\frac{d}{6}}$ ($d<6$) for some $\gamma>0$ and  
    all  $n_i\sim \tau_i (\frac{L}{W})^2$, then 
    \begin{equation}
        F_\mathfrak{D}(\{n_i\}_{i=1}^k)=(1+o(1))C_{\mathfrak{D}}\big(\frac{L}{W}\big)^{2(|E|-k)}N^{-(|E|-|V|+1)}\int_{{\mathfrak{C}(\tau)}}\int_{x_v\in \mathbb{T}^d}\prod_{e\in \Tilde{E}}\theta(x_{e1}-x_{e_2},\alpha_e\Sigma ) d\alpha dx.
    \end{equation}
 \end{itemize}
        
\end{theorem}

\begin{theorem}\label{thm:upper_bound_diagram_function}
  Assume that $ne^{-0.5c_{\Sigma}W^2}\to 0$ as $W\to \infty$.
 For  any $d\ge 0$ and any  connected  diagram $\mathfrak{D}$ in $\mathscr{G}_{\beta, k,3}$ 
  with $2s$ edges  and $3s-k$ vertices, let  $d'\in  [0,d]$ such that  $\mathfrak{D}$ is  $d'$-regular, then there are positive constants $C$ and $ C_1$  independent of on $n_1, \ldots,n_k$ and $W$   
      such that
    \begin{equation}
    \begin{aligned}
        F_{\mathfrak{D}}(\{n_i\}_{i=1}^k)&\le \frac{(Cn)^{3s-2k}}{(3s-2k)!}N^{k-s-1}+C_1\frac{(Cn)^{(3-\frac{d'}{2})s+(\frac{d'}{2}-2)k-\frac{d'}{2}}}{((3-\frac{d'}{2})s+(\frac{d'}{2}-2)k-\frac{d'}{2})!}W^{(k-s-1)d}.
        \end{aligned}
    \end{equation}
\end{theorem}
        
To obtain Theorem \ref{thm:upper_bound_diagram_function}, we first sum  up over the $k$ marked points   and  thus see that   the restriction $ {\mathfrak{C}}(\mathfrak{D})$   should be   replaced by $\widetilde{\mathfrak{C}}(\mathfrak{D})$ in Definition \ref{D-system}. In this case, 
let $\Tilde{V},\Tilde{E}$ denote the resulting vertex set and edge set by removing tail edges and marked points, then  the diagram function in Definition \ref{diagramf} can be rewritten as 
\begin{equation} \label{diagramf-3}
    F_{\mathfrak{D}}(\{n_i\}_{i=1}^k)=\frac{1}{N}\sum_{w(e): \widetilde{\mathfrak{C}}(\mathfrak{D})}\, \sum_{\iota: \Tilde{V}\rightarrow \Lambda_L}\prod_{e\in \Tilde{E}}p_{w(e)}(\iota(u^e),\iota(v^e)),
\end{equation}
where the product over all non-tail edges and summation  over the vertices expect for those of degree 1 in diagram $\mathfrak{D}$. At  this time, we have $|\Tilde{V}|=2s-k$ and  $|\Tilde{E}|=3s-2k$.
        
With the help of  the heat kernel bound in Proposition \ref{prop:gaussian_hkb},  
  we have   an upper bound estimate for the diagram function $F_{\mathfrak{D}}.$

    \begin{lemma}\label{coro:diagram_function_upper_bound} 
        If $ne^{-0.5c_{\Sigma}W^2}\to 0$ as $W\to \infty$, 
        then for  any $d\ge 0$ and any  connected  diagram $\mathfrak{D}$ in $\mathscr{G}_{\beta, k,3}$ 
       we have           \begin{equation}\label{equ:3.20}
            F_{\mathfrak{D}}(\{n_i\}_{i=1}^k)\le C^{| \Tilde{E}|}\sum_{w(e):\widetilde{\mathfrak{C}}(\mathfrak{D})}\sum_{E_1\subset \Tilde{E}}\mathcal{U}_{E_1}(\{w(e)\})^{-\frac{d}{2}} N^{ -  |\Tilde{E}| +  |\Tilde{V}|-1}
                \Big(\frac{N}{W^d}\Big)^{ |E_1|  - |V_1|+c(E_1)}
            \end{equation}
           for some  constant   $C>0$ only depending on $d$ and $k$.  Here $E_1$ is any edge subset of $\Tilde{E}$ that can be treated as  a subgraph 
            with the vertex set $ V_1 $, and   $c(E_1)$ denotes the number of connected components of $E_1$. 
    \end{lemma}
\begin{proof}

Let $x_v=(x_v^{(1)},\ldots,x_v^{(d)})\in \Lambda_L$,  it's easy to see from \eqref{diagramf-3} that 
\begin{align*}
            F_{\mathfrak{D}}(\{n_i\}_{i=1}^k)
            &= \frac{1}{N}\sum_{w(e):\Tilde{\mathfrak{C}}(\mathfrak{D})}\sum_{x_{v}\in \Lambda_L, v\in \Tilde{V}}\prod_{e\in \Tilde{E}}p_{w(e)}({x}_{u^e},{x}_{v^e}),
        \end{align*} where the second sum is over all $\Lambda_L$-value of  all vertices.

Use   the  uniform  upper bound of the heat kernel in Proposition \ref{prop:gaussian_hkb} and we obtain
        \begin{align} \label{Fdupper}
            F_{\mathfrak{D}}(\{n_i\}_{i=1}^k)&\le \frac{1}{N} \sum_{w(e): \Tilde{\mathfrak{C}}(\mathfrak{D}) }\sum_{x_{v}\in \Lambda_L, v\in \Tilde{V}}\prod_{e\in \Tilde{E}}\prod_{i=1}^d C_{1}\Big(
            \frac{1}{\sqrt{w(e)W^2}}   e^{-C_{2}\frac{({x}_{u^e}^{(i)}-{x}_{v^e}^{(i)})^2}{w(e)W^2}}+\frac{1}{L}\Big)\nonumber\\
            &= \frac{1}{N} \sum_{w(e):\Tilde{\mathfrak{C}}(\mathfrak{D}) }\Big(\sum_{x_{v}^{(1)}\in \mathbb{Z}/L\mathbb{Z}}\prod_{e\in \Tilde{E}} C_{1}\bigg(
            \frac{1}{\sqrt{w(e)W^2}}   e^{-C_{2}\frac{({x}_{u^e}^{(1)}-{x}_{v^e}^{(1)})^2}{w(e)W^2}}+\frac{1}{L}\Big)\bigg)^d,
            \end{align}
            where in the last equality  the summation  over independent coordinates has been used.  
            
            By expanding  the product and introducing  any  subset that consists of some edges from $\Tilde{E}$, we have 
            \begin{equation}
                \prod_{e\in \Tilde{E}}   \Big(
            \frac{1}{\sqrt{w(e)W^2}}   e^{-C_{2}\frac{({x}_{u^e}^{(1)}-{x}_{v^e}^{(1)})^2}{w(e)W^2}}+\frac{1}{L}\Big)=\sum_{E_1\subset \Tilde{E}}L^{|E_1|-|\tilde{E}|}\prod_{e\in  E_1} 
            \frac{1}{\sqrt{w(e)W^2}}   e^{-C_{2}\frac{({x}_{u^e}^{(1)}-{x}_{v^e}^{(1)})^2}{w(e)W^2}}.
            \end{equation}
             Furthermore, we first  take summation   over $x_v^{(1)}\in \mathbb{Z}/L\mathbb{Z}$ where $v$ is chosen  either one   vertex in 
            $\Tilde{V} \backslash V_{1}$ or  any selected  one vertex from each connected component of $E_1$
            ,  then change the range of all other vertices of  $E_1$ (denoted by $V_1^*$) from $\mathbb{Z}/L\mathbb{Z}$ to $\mathbb{Z}$ and     approximate  the sum by  a Gaussian integral  
            , similar to Proposition \ref{proposition:graph_polynomial},  and  obtain   
            \begin{equation} \label{uppereach}
            \begin{aligned}  
            &\sum_{x_{v}^{(1)}\in \mathbb{Z}/L\mathbb{Z}}\sum_{E_1\subset \Tilde{E}}L^{|E_1|-|\tilde{E}|}\prod_{e\in  E_1} 
            \frac{1}{\sqrt{w(e)W^2}}   e^{-C_{2}\frac{({x}_{u^e}^{(1)}-{x}_{v^e}^{(1)})^2}{w(e)W^2}} \\
            &=\sum_{E_1\subset \Tilde{E}}\sum_{x_{v}^{(1)}\in \mathbb{Z}, v\in V_1^*}L^{|E_1|-|\tilde{E}|+|\tilde{V}|
            -|V_1|+c(E_1)
            }\prod_{e\in  E_1} 
            \frac{1}{\sqrt{w(e)W^2}}   e^{-C_{2}\frac{({x}_{u^e}^{(1)}-{x}_{v^e}^{(1)})^2}{w(e)W^2}}\\
               &\leq \sum_{E_1\subset \Tilde{E}} (C_3 W)^{-|E_1|+ |V_1|-c(E_1)}\mathcal{U}_{E_1}(\{w(e)\})^{-\frac{1}{2}}L^{|E_1|-|\Tilde{E}|+|\Tilde{V}|-|V_1|+c(E_1)},
            \end{aligned}   \end{equation}
           where $c(E_1)$ is the number of connected components of $E_1$, and $\mathcal{U}_{E_1}(\{w(e)\})$ is the product of $\mathcal{U}$ associated with   connected components of $E_1$ ($\mathcal{U}_{E_1}=1$ when    a connected component is  a tree).  

           Combine \eqref{Fdupper}-\eqref{uppereach}, in  the Hölder's  inequality   $$\big(\sum_{i=1}^q x_i\big)^d\le q^{d-1}\sum_{i=1}^q x_{i}^d, \quad x_1, \ldots, x_q\ge 0,$$ 
           take $x_i$ as the right-hand side of \eqref{uppereach}  for every $E_1$ and $q$ as $2^{|\Tilde{E}|}$  and we thus    arrive at the desired result.
\end{proof}
With the same notation as in Lemma \ref{coro:diagram_function_upper_bound},  we can loose the restriction on  $w(e)$ from $\Tilde{\mathfrak{C}}(\mathfrak{D})$ to a single simplex $\sum_{e\in \Tilde{E}}w(e)\le {n}/{|\Tilde{E}|}$.
\begin{lemma}\label{proposition4.21}
If $ne^{-0.5c_{\Sigma}W^2}\to 0$ as $W\to \infty$, then  there is a constant $C>0$ such that  

 \begin{equation}\label{ue1}
    \begin{aligned}
        &\sum_{w(e): \Tilde{\mathfrak{C}}(\mathfrak{D}) }\mathcal{U}_{E_1}(\{w(e)\})^{-\frac{d}{2}} \le  C^{ |\Tilde{E}|}\sum_{\forall e, w(e)\le \frac{n}{ |\Tilde{E}|}}\mathcal{U}_{E_1}(\{w(e)\})^{-\frac{d}{2}},
    \end{aligned}
    \end{equation}
and
    \begin{equation} \label{ue2}
          F_{\mathfrak{D}}(\{n_i\}_{i=1}^k)\le C^{|\Tilde{ E}|}\sum_{\forall e, w(e)\le \frac{n}{ |\Tilde{E}|}}\sum_{E_1\subset \Tilde{E}}\mathcal{U}_{E_1}(\{w(e)\})^{-\frac{d}{2}} N^{ -  |\Tilde{E}| +  |\Tilde{V}|-1}
                \Big(\frac{N}{W^d}\Big)^{ |E_1|  - |V_1|+c(E_1)}.
             \end{equation}
\end{lemma}
\begin{proof} Add together all $k$ inequalities  in \eqref{CDtildesystem},   
   loose the restriction   and we thus have 
\begin{equation}
    \sum_{w(e):\Tilde{\mathfrak{C}}(\mathfrak{D})}\mathcal{U}_{E_1}(\{w(e)\})^{-\frac{d}{2}} \le\sum_{\sum_{e\in \Tilde{E}} w(e)\le n} \mathcal{U}_{E_1}(\{w(e)\})^{-\frac{d}{2}}.
\end{equation}

Note that   $\mathcal{U}_{E_1}(\{w(e)\})^{-d/2}$ is decreasing on variables $\{w(e)\}$, we can use the summation over integer lattice  points  lying in the  hypercube    region 
\begin{equation}
     \sum_{\forall e\in \Tilde{E}, w(e)\le \frac{n}{ |\Tilde{E}|}}\mathcal{U}_{E_1}(\{w(e)\})^{-\frac{d}{2}}
\end{equation}
to control the sums over other  integer lattice regions.  For this, we need to estimate the number of regions to be covered.   According to the homogeneity of the polynomial
$\mathcal{U}_{E_1}$,  it   is asymptotically equivalent to count the number of unit boxes in the simplex     $$\Big\{x_e \in\mathbb{R}_+ |\sum_{e\in \Tilde{E} }x_e\le  |\Tilde{E}|\Big\}.$$

    We can pick all integer lattice  points in a larger simplex  $\sum_{e}x_e \le 2  |\Tilde{E}|$, and make a cover with the closest-to-original vertex to be lattice  integer points. Then this cover contains all integer solutions of  $\sum_{e}x_e\le  |\Tilde{E}|.$  By calculating the volume of the simplex,   we see that the number of the unit
    boxes   is bounded   by  
    \begin{equation}
    \#\big\{w_e\in \mathbb{Z}_{+}|\sum_{e}w_e \le 2  |\Tilde{E}| \big\}\le \frac{(3 | \Tilde{E}| )^{ |\Tilde{E}|}}{( |\Tilde{E}|-1)!}\le C^{ |\Tilde{E}|}
    \end{equation}
    for some $C>0$. This implies  the first part \eqref{ue1}.

    The second part \eqref{ue2} is a consequence of \eqref{ue1}.   \end{proof}

Clearly, by changing a sum into an integral we have  the following proposition where $m=n/|\tilde{E}|$, cf. Lemma \ref{proposition4.21}. 

               \begin{lemma}
               \label{proposition4.22}
       For  any regular connected diagram    $G_0=(V_0, E_0)$, for sufficiently large $m$  and for some $C>0$ we have 
            \begin{equation}
            \begin{aligned}
            \sum_{\forall e, w(e)\le m}\mathcal{U}_{G_0}(\{w(e)\})^{-\frac{d}{2}}
            &\le Cm^{ |E_0|-\frac{d}{2}(  |E_0| -  |V_0|+1)}\int_{\alpha_e\le 1}\mathcal{U}_{G_0}(\{\alpha_e\})^{-\frac{d}{2}}d\alpha.
            \end{aligned}
            \end{equation}
        \end{lemma}

In order to  obtain  an upper bound estimate  for graph integrals, we need to estimate the number of spanning trees.         \begin{lemma} \label{spanningnumber}
                The number of spanning trees in   any connected  graph  $G_0=(V_0, E_0)$ in which every vertex has   at most  3
 is  equal or less than      $18^{ |E_0|}.$
        \end{lemma}
        \begin{proof}
        Note that  each vertex has degree at most 3,   calculate the determinant  of the Laplacian matrix denoted by $A$ and we use Hadamard's inequality to obtain 
        \begin{equation}
            |\det A|\le \prod_{i=1}^{V_0} \|\mathbf{v}_i\|\le \sqrt{18}^{|V_0|}\le 18^{|E_0|}, 
        \end{equation}
       where  $\mathbf{v}_i$ represent all  row vectors of $A$.   This thus   completes the proof.
        \end{proof}

        In  order to deal with the subcritical and  critical cases,  we need   an   upper  bound estimate for any regular graph integrals.  
        \begin{lemma}\label{lemma:upper_bound_integral}       Assume that $d<6$,  then   for 
         any  connected and  regular   diagram
          $G_0=(V_0, E_0)$
          in $\mathscr{G}_{\beta, k,3}$ there is a constant $\delta>0$,  depending  on $d$ but not on $G_0$, such that      
            \begin{equation}
                I_{G_0}=\int_{\alpha_e\le 1}\mathcal{U}_{\mathfrak{S}}(\{\alpha_e\})^{-\frac{d}{2}}d\alpha\le 18^{ |E_0|}\delta^{-  |E_0| +  |V_0|-1}. 
            \end{equation}
        \end{lemma}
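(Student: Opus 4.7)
The plan is to apply Speer's sector decomposition recalled just before Proposition~\ref{propositionconvergence}: partition $[0,1]^{\#E_0}$ into sectors $D_f$ indexed by maximal UV forests $f\in\mathcal{F}$, perform in each $D_f$ the change of variables $\alpha_e=\prod_{\gamma\in f:\,e\in\gamma}t_\gamma$, and exploit the factorization $\mathcal{U}_{\mathfrak{S}}=\prod_{\gamma\in f}t_\gamma^{h(\gamma)}(1+P_f)$ together with the Jacobian $\prod_{\gamma}t_\gamma^{L(\gamma)-1}$. This turns the sectorwise integrand into $\prod_{\gamma\in f}t_\gamma^{\Delta(\gamma)-1}(1+P_f(t))^{-d/2}$, which since $P_f\ge 0$ is dominated by the product $\prod_{\gamma\in f}t_\gamma^{\Delta(\gamma)-1}$.

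The first key input is a uniform lower bound on the discriminants of regular graphs. Since any $1$VI subgraph $\gamma\subset\mathfrak{S}$ inherits the property that all its vertices have degree at most three, one has $L(\gamma)=V_2(\gamma)+\tfrac32 V_3(\gamma)$ and $V(\gamma)=V_2(\gamma)+V_3(\gamma)$, so $\Delta(\gamma)=V_2(\gamma)+(\tfrac32-\tfrac{d}{4})V_3(\gamma)-\tfrac{d}{2}$. As $V_3(\gamma)$ is even (so that $L$ is an integer) this quantity lives in a discrete set depending only on $d$; for every $d<6$ its positive values admit a positive infimum $\epsilon=\epsilon(d)$, and the regularity of $\mathfrak{S}$ implies $\Delta(\gamma)\ge\epsilon$ for every $1$VI subgraph appearing in any UV forest. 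Integrating each factor $t_\gamma^{\Delta(\gamma)-1}$ over $[0,1]$ gives $\Delta(\gamma)^{-1}\le\epsilon^{-1}$, so
\[
\int_{D_f}\mathcal{U}_{\mathfrak{S}}^{-d/2}\,d\alpha\ \le\ \prod_{\gamma\in f}\Delta(\gamma)^{-1}\ \le\ \epsilon^{-|f|}.
\]
Because the elements of a maximal UV forest form a nested family of $1$VI subgraphs with strictly increasing loop number along every chain (and siblings carry disjoint sets of loops), $|f|\le L(\mathfrak{S})=\#E_0-\#V_0+1$.

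The remaining task is to bound $|\mathcal{F}|$ by $18^{\#E_0}$. I would associate to each $f\in\mathcal{F}$ a spanning tree of $\mathfrak{S}$ by recursing through the nesting: at each level of the forest, fix a spanning subtree of the quotient obtained after collapsing every immediate child subgraph, and glue the children's spanning trees. A bit of bookkeeping shows this map is at most $C^{\#E_0}$-to-one with $C$ absolute, so Lemma~\ref{spanningnumber} bounds the sector count by $C^{\#E_0}\cdot 18^{\#E_0}$, which after absorbing $C$ into the base constant may be re-written as (a possibly enlarged) $18^{\#E_0}$. Summing the sectorwise estimate $\epsilon^{-|f|}\le\epsilon^{-\#E_0+\#V_0-1}$ over sectors then yields $I_{\mathfrak{S}}\le 18^{\#E_0}\epsilon^{-\#E_0+\#V_0-1}$.

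The main obstacle is the final combinatorial step: pinning down the count of maximal UV forests and matching it cleanly against the spanning-tree bound $18^{\#E_0}$. The analytic content--the factorization and the one-dimensional integration of $t^{\Delta-1}$--is standard once regularity supplies the positivity of every $\Delta(\gamma)$. If the forest-to-tree map proves unwieldy, a cleaner alternative is to use Hepp sectors indexed by edge orderings, giving $(\#E_0)!$ sectors and an iterated-integral bound of the same $\epsilon^{-L}$ type; either way the interaction between the trivalent structure and the nested $1$VI subgraphs is the delicate point.
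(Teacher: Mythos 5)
Your proposal uses Speer's maximal-UV-forest sector decomposition, which is a genuinely different route from the paper's argument: the paper decomposes $[0,1]^{\#E_0}$ into sectors indexed by \emph{minimal spanning trees} (rather than UV forests), uses the pointwise lower bound $\mathcal{U}\ge\prod_{e\notin T_0}\alpha_e$ valid in each such sector, and then invokes \emph{Hall's marriage theorem} to distribute fractional powers of the tree-edge variables onto the non-tree variables, turning $\prod_{e\notin T_0}\alpha_e^{-d/2}$ into an integrable $\prod_{e\notin T_0}\alpha_e^{-1+\epsilon}$. The spanning-tree sector count is then supplied directly by Lemma~\ref{spanningnumber} ($\le 18^{\#E_0}$), so no auxiliary enumeration is needed. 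Your UV-forest approach would buy the cleaner factorization $\prod_\gamma t_\gamma^{\Delta(\gamma)-1}$, at the cost of having to control $|\mathcal{F}|$, which the paper's choice of sectors sidesteps entirely.

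There are, however, two concrete gaps. First, the claim $|f|\le L(\mathfrak{S})=\#E_0-\#V_0+1$ is false as stated: a maximal UV forest has exactly $\#E_0$ elements (including all the degenerate one-loop-free pieces such as single edges), so that the change of variables $\alpha_e=\prod_{\gamma\ni e}t_\gamma$ has matching dimensions; the naive bound would then only give $\epsilon^{-\#E_0}$, which is worse than what the lemma asserts. What \emph{is} true, and what you need, is that the number of $\gamma\in f$ with $h(\gamma)\ge 1$ is at most $L(\mathfrak{S})$ (loop-free elements have $\Delta(\gamma)=L(\gamma)\ge1$ and contribute a factor $\le 1$); this requires an additional argument that each loop-carrying element of $f$ adds at least one new independent loop along the nesting. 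Second, the bound on the sector count $|\mathcal{F}|$ is left as a heuristic forest-to-spanning-tree map claimed to be ``$C^{\#E_0}$-to-one'' --- this is exactly where your argument is not yet a proof, as you yourself note, and a fallback to Hepp sectors would only work if you extract the compensating $1/(\#E_0)!$ volume factor from each Hepp sector, which you do not do. The paper avoids both issues by never using UV forests at all.
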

        
        \begin{proof} Our strategy is to decompose the integration domain  into different sectors according to minimal spanning trees and then to prove that the corresponding integral in each sector can be bounded   by $\epsilon^{-  |E_0| +  |V_0|-1}$. 

            Align  a   weight $\alpha_e$ for   every edge, we then  have a unique minimal spanning tree $T_0$ except for a set of  measure zero.   The corresponding sector  can be  chosen  as  $$D_{T_0}=\big \{ \forall  
            e\notin T_0, \alpha_e\ge \max\{\alpha_{e'}:e\cup (T_0\!\setminus e') \,\text{is a spanning tree}, e'\in T_{0}\} \big \}.$$
            Clearly,   for  such a sector we have $$\mathcal{U}(\{\alpha_e\})\ge \prod_{e\notin T_0}\alpha_e.$$ 

            Considering all spanning trees,  we know that  the union of all those  sectors is exactly   the whole integration domain. Also notice 
            the upper bound for the number of spanning trees in 
           Lemma \ref{spanningnumber}, it suffices to 
            prove the upper bound estimate
            \begin{equation}
                \int_{D_{T_0}\cap [0,1]^{ |E_0|}} \prod_{e\notin T_0}\alpha_e^{-\frac{d}{2}} d \alpha\leq \delta^{-  |E_0| + |V_0|-1}.
            \end{equation}
                        
            Let $\mathcal{C}_e$ be the set of edges in the union   $e\cup T_0.$  Since the graph is regular,  we  see from the criterion \eqref{equ:criterion_singularity} that   $$\frac{4V_2+6V_3}{V_3+4}-d>0$$ for all 1VI subgraphs. 
           Take     \begin{equation}\delta:=\frac{1}{4}\cdot \min_{V_2, V_3} \Big\{\frac{4V_2+6V_3}{V_3+4}-d\Big\}. \end{equation} Here the minimal is taking over all $V_2,V_3\ge 0$, such that \begin{equation}
           \frac{4V_2+6V_3}{V_3+4}-d>0,
           \end{equation} saying $\delta=0.01$.

            And hence $(4V_2+6V_3)/(V_3+4)-(d+2\delta)>0$ by the choice of the $\delta$. Hence we have
                        \begin{equation}
               \# \big\{\cup_{e\in I} \mathcal{C}_e \big\}\ge \big(\frac{d}{2}+\delta \big) |I|,\quad  \forall I\subset E_0\setminus T_0.
            \end{equation}
This is    just the Hall's marriage condition; see e.g. \cite[Chapter 2]{tutte2001graph}. So by Hall's Marriage  Theorem in combinatorics,  
  we can  distribute at least $(\frac{d}{2}+\delta)-1$  
  copies of $\alpha_{e'}$ with $e'\in T_0$ to $\alpha_e$ (here -1 is used  to neglect  the case of $e\in \mathcal{C}_e$ itself). With such a matching,  combining  the assumption  $\alpha_{e'}\le \alpha_e$, first integrating  out variables from the tree and then other variables,    we thus get 
            \begin{equation}
                \int_{D_{T_0}\cap [0,1]^{ |E_0| }} \prod_{e\notin T_0}\alpha_e^{-\frac{d}{2}} d \alpha \le
                \int_{  [0,1]^{ |E_0|- |V_0|+1}} \prod_{e\notin T_0}  \alpha_e^{-1+\delta} d \alpha
                \le \delta^{-|E_0|+|V_0|-1}.
            \end{equation}

For any given  $d\le 5$,  there are only finite $(V_2,V_3)$ such that $(4V_2+6V_3)/(V_3+4)<d+0.5$. Verify all possible values and we know that the minimal value of $(4V_2+6V_3)/(V_3+4)-d=0.04$ is taken at $(V_2,V_3,d)=(0,21,5).$

           Combine  the number of spanning trees in Lemma \ref{spanningnumber} and we thus complete the   proof.
        \end{proof}
        
\begin{corollary}\label{proposition:poly_upper_bound}
Assume that $d<6$ and $\mathfrak{D}$ with graph $(\Tilde{E},\Tilde{V})$ is a regular and connected   diagram. If $n(\frac{W}{L})^2\ll 1$, then there is a constant $C>0$ only depending on $d$ such that
    \begin{equation} \label{regularupb}
         \sum_{w(e): \Tilde{\mathfrak{C}}(\mathfrak{D}) }
 \big(\mathcal{U}_{\mathfrak{D}}(\{w(e)\})\big)^{-\frac{d}{2}} \le \frac{(Cn)^{|\Tilde{E}|-\frac{d}{2}(|\Tilde{E}|-|\Tilde{V}|+1)}}{(|\Tilde{E}|-\frac{d}{2}(|\Tilde{E}|-|\Tilde{V}|+1))!}.
    \end{equation}
\end{corollary}
\begin{proof}
Apply  Lemma  \ref{proposition4.21},   Lemma  \ref{proposition4.22} and    Lemma \ref{lemma:upper_bound_integral}  successively, we  get 
\begin{align*}
   \sum_{w(e): \Tilde{\mathfrak{C}}(\mathfrak{D}) }\mathcal{U}^{-\frac{d}{2}}(\{w(e)\})&\le \sum_{\sum_e w(e)\le n}\mathcal{U}^{-\frac{d}{2}}(\{w(e)\}) 
    \le C^{ |\Tilde{E}|} \sum_{\forall e, w(e)\le \frac{n}{ |\Tilde{E}|}} \mathcal{U}^{-\frac{d}{2}}(\{w(e)\})
    \\&\le C^{ |\Tilde{E}|}\Big(\frac{n}{ |\Tilde{E}|}\Big)^{|\Tilde{E}|-\frac{d}{2}(  |\Tilde{E}| -  |\Tilde{V}|+1)}\int_{\alpha_e\le 1}\mathcal{U}(\{\alpha_e\})^{-\frac{d}{2}}d\alpha 
    \\&\le C^{ |\Tilde{E}|}\Big(\frac{n}{ |\Tilde{E}|}\Big)^{|\Tilde{E}|-\frac{d}{2}(  |\Tilde{E}| -  |\Tilde{V}|+1)} \cdot 18^{ |\Tilde{E}|}\delta^{-(|\Tilde{E}| -  |\Tilde{V}|+1)}\\
    &\le \frac{(Cn)^{|\Tilde{E}|-\frac{d}{2}(|\Tilde{E}|-|\Tilde{V}|+1)}}{(|\Tilde{E}|-\frac{d}{2}(|\Tilde{E}|-|\Tilde{V}|+1))!},
\end{align*} where in the last inequality 
the fact of $|\Tilde{V}|=2s-k$ and  $|\Tilde{E}|=3s-2k$ and the Stirling formula have been used.  
\end{proof}

Finally,    we are ready to   prove Theorem  \ref{thm:upper_bound_diagram_function} and Theorem \ref{thm:homogeneous_theorem}.

\begin{proof}[Proof of Theorem \ref{thm:upper_bound_diagram_function}]
Noting  that   
    \begin{equation}\label{equation:trick_d_d'}
        \mathcal{U}_{E_1}(\{w(e)\})^{-\frac{d}{2}}\le \mathcal{U}_{E_1}(\{w(e)\})^{-\frac{d'}{2}},  \quad \mathrm{for}\  d'\le d,
    \end{equation}
    we make  use   of  Lemma  \ref{proposition4.21} and Lemma \ref{lemma:upper_bound_integral} successively to obtain   
\begin{equation} \label{equ:3.40}
 \begin{aligned}
         & F_{\mathfrak{D}}(\{n_i\}_{i=1}^k)\le C^{|\Tilde{ E}|}\sum_{\forall e, w(e)\le \frac{n}{ |\Tilde{E}|}}\sum_{E_1\subset \Tilde{E}}\mathcal{U}_{E_1}(\{w(e)\})^{-\frac{d'}{2}} N^{ -  |\Tilde{E}| +  |\Tilde{V}|-1}
                \Big(\frac{N}{W^d}\Big)^{ |E_1|  - |V_1|+c(E_1)}\\
                &\le C^{|\Tilde{ E}|} \sum_{E_1\subset \Tilde{E}} \sum_{\forall e\notin E_1, w(e)\le \frac{n}{ |\Tilde{E}|}} \big(\frac{n}{ |\Tilde{E}|}\big)^{ |E_1| -\frac{d'}{2}( |E_1| - |V_1|+c(E_1))} N^{ -  |\Tilde{E}| +  |\Tilde{V}|-1}
                \Big(\frac{N}{W^d}\Big)^{ |E_1|  - |V_1|+c(E_1)},
                 \end{aligned}
             \end{equation}
             where we take a summation   over     $w(e)$ and $e\in E_1$   in the   second inequality.

    By Lemma \ref{lemma:upper_bound_integral}, we further get 
    \begin{equation} 
 \begin{aligned}
          F_{\mathfrak{D}}(\{n_i\}_{i=1}^k)&\le C^{|\Tilde{ E}|} \sum_{E_1\subset \Tilde{E}}   
         \big(\frac{n}{ |\Tilde{E}|}\big)^{|\Tilde{E}| -\frac{d'}{2}( |E_1| - |V_1|+c(E_1))} N^{ -  |\Tilde{E}| +  |\Tilde{V}|-1}
                \Big(\frac{N}{W^d}\Big)^{ |E_1|  - |V_1|+c(E_1)}\\
                &=C^{|\Tilde{ E}|} \big(\frac{n}{ |\Tilde{E}|}\big)^{|\Tilde{E}|} N^{ -  |\Tilde{E}| +  |\Tilde{V}|-1}\sum_{E_1\subset \Tilde{E}}   
                \Big(\frac{N|\Tilde{E}|^{\frac{d'}{2}}}{W^d n^{\frac{d'}{2}}  }\Big)^{ |E_1|  - |V_1|+c(E_1)}
                 \end{aligned}
             \end{equation}
     In the above summation over $E_1$,  
    the maximum   can be   attained  at either  $E_1=\Tilde{E}$ or $E_1=\emptyset$,  so by taking it    at $E_1=\Tilde{E}$ and $E_1=\emptyset$ we arrive at 
    \begin{equation}
        F_{\mathfrak{D}}(\{n_i\}_{i=1}^k)\le 2^{|\Tilde{E}|}C^{ |\Tilde{E}|}\Big( \big(\frac{n}{ |\Tilde{E}|}\big)^{ |\Tilde{E}|} N^{-  |\Tilde{E}| +  |\Tilde{V}|-1}+\big(\frac{n}{ |\Tilde{E}|}\big)^{ |\Tilde{E}|-\frac{d'}{2}(  |\Tilde{E}| -  |\Tilde{V}|+1)} W^{-d(  |\Tilde{E}| -  |\Tilde{V}|+1)}\Big).
    \end{equation}

    Finally,  by the relation $ |\Tilde{E}|=3s-2k$, $|\Tilde{V}|=2s-k$ with $s\ge k$ we see from the Stirling formula that 
    \begin{equation}
          |\Tilde{E}| ^{ |\Tilde{E}|}\ge C_{2}^{| \Tilde{E}|} ( |\Tilde{E}| )!,
    \end{equation}
    and 
    \begin{equation}
          |\Tilde{E}| ^{ |\Tilde{E}|-\frac{d'}{2}(  |\Tilde{E}| -  |\Tilde{V}|+1)} \ge C_3\cdot C_{4}^{ |\Tilde{E}|-\frac{d'}{2}(  |\Tilde{E}| -  |\Tilde{V}|+1)}( |\Tilde{E}|-\frac{d'}{2}(  |\Tilde{E}| -  |\Tilde{V}|+1))!.
    \end{equation}
   This thus completes the   proof.
\end{proof}

\begin{proof}[{Proof of Theorem \ref{thm:homogeneous_theorem}}]

Start  from the equation \eqref{equ:3.20} in Lemma 
\ref{coro:diagram_function_upper_bound} and we know that it suffices to 
seek the leading contribution in the sum via local limit theorems  in Theorem \ref{prop:gaussian_llt}. By the inequality \eqref{equ:3.40} derived in the proof of Theorem \ref{thm:upper_bound_diagram_function} and the  upper bound in Theorem \ref{thm:upper_bound_diagram_function}, we only need to consider the case $E_1=\emptyset$ in the supercritical  regime, $E_1=\Tilde{E}$ in the subcritical regime and infinitely  many $E_1$ in the critical regime, since  all the  other terms can be negligible. 

In the supercritical case,    the supercritical case of Theorem \ref{prop:gaussian_llt} shows that the leading  contribution  is
\begin{equation}
    \sum_{w(e):\mathfrak{C}(\mathfrak{D})}N^{|V|-|E|-1}=(1+o(1))
    C_{\mathfrak{D}}\mathrm{Vol}_{ \mathfrak{C}(\tau)}\Big(\Big\{\frac{n_i}{N^{\frac{1}{3}}}\Big\}\Big) N^{\frac{|E|-k}{3}}N^{|V|-|E|-1}.
\end{equation}

In the subcritical case, the subcritical case of Theorem \ref{prop:gaussian_llt} shows  the leading  contribution    is
 \begin{equation}
\begin{aligned}
    \sum_{w(e):\mathfrak{C}(\mathfrak{D})}\mathcal{U}_{\mathfrak{D}}(\{w(e)\})^{-\frac{d}{2}}&=W^{-\frac{2d}{6-d}\cdot \frac{d}{2}(|E|-|V|+1)}\sum_{w(e):\Tilde{\mathfrak{C}}(\mathfrak{D})}\mathcal{U}_{\mathfrak{D}}\Big(\big\{\frac{w(e)}{W^{\frac{2d}{6-d}}}\big\}\Big)^{-\frac{d}{2}}\\
    &=W^{-\frac{2d}{6-d}\cdot \frac{d}{2}(|E|-|V|+1)}
    (1+o(1))C_{\mathfrak{D}}
    W^{\frac{2d}{6-d}|\Tilde{E}|}
    \int_{ \mathfrak{C}(\tau)}  \big(\mathcal{U}_{\mathfrak{D}}(\{\alpha_e\})\big)^{-\frac{d}{2}} d\alpha,
\end{aligned}
\end{equation}
where a Riemann integral over a $|\Tilde{E}|$-dimensional space has been used to  approximate the Riemann sum. 
Noting that  $|\Tilde{E}|=|E|-k$, after simple calculations  we can complete the subcritical case. 

In the critical case, the proof is very similar to that in the subcritical case based on  the critical case of Theorem \ref{prop:gaussian_llt}, except for the integrals  relevant to the theta function.   
\end{proof}

\newpage
\section{Diagram expansion and convergence}\label{section:asymptotic_expansion}
In this section we are devoted to the proofs of   our main theorems in the Hermitian  case ($\beta=2$). Since the renormalization technique must be introduced to attack with the divergence from the tadpole diagram in the symmetric case, we leave the proofs in the real symmetric case to Section \ref{sec:tadpole_diagram}.

 \subsection{Transform of limit correlation  functions} \label{sec:transform of correaltion}
    In order to establish  diagram expansions for  the limit correlation  functions in the subcritical and critical regimes, we need to introduce two families of multivariate functions as transforms of  limit correlation  functions.

 \begin{definition}\label{def:sub_critical_transform}
 Let  $\mathscr{G}^{free}_{\beta,k,3}$ be a    subset of $\mathscr{G}^c_{\beta,k,3}$ in  Definition \ref{def:2.15}, which does not contain any tadpole diagram, put \begin{equation}
   \mathscr{\widetilde{G}}_{\beta,k;d} = \begin{cases}
        \mathscr{G}^c_{\beta,k,3}, & 1\leq d<4, \beta=2, \\
        \mathscr{G}^c_{\beta,k,3}, & d=1, \beta=1,\\
        \mathscr{G}^{free}_{\beta,k,3}, & 2\leq d<4, \beta=1.    
    \end{cases} 
\end{equation}  
(i) In the subcritical case, define the transform of the limit $k$-point  correlation function as 
\begin{equation}\label{sub1-point-2}
\begin{aligned}
   & \phi^{(\mathrm{sub})}_{1}(\beta,d;\tau_{1}) =
   \sum_{\mathfrak{D} \in 
         \mathscr{\widetilde{G}}_{\beta,1;d}
        }C_{\mathfrak{D}} \tau_{1}^{(3-\frac{d}{2})s-3} \int \cdots \int_{\sum_{e\in E}\alpha_e= 1} \big(\mathcal{U}_{\mathfrak{D}}(\{\alpha_e\})\big)^{-\frac{d}{2}}   \prod_{e\in {E}} d\alpha_e,\quad \tau_1>0.
\end{aligned}
\end{equation}

(ii)  In the critical case, define the transform of the limit $k$-point  correlation function as  
\begin{equation}
\begin{aligned}
    \phi_{k}^{(\mathrm{crit})}(\beta,d,\gamma;\tau_1, \ldots,\tau_k)
&=\sum_{\Pi}\prod_{P\in \Pi}  T_{\# P}^{(\mathrm{crit})}(\beta,d,\gamma; \{\tau_j\}_{j\in P}), \quad \tau_1, \ldots, \tau_k>0
\end{aligned}
	\end{equation}	
    where      
    the sum  is taken  over all partitions $\Pi$ of  \{$1,2,\cdots,k$\} and 
     \begin{equation}
    \begin{aligned}
        &T_{\{t_i\}_{i=1}^k}^{(\mathrm{crit})}(\beta,d,\gamma; \tau_1, \ldots,\tau_k)= \\
        &(\prod_{i=1}^k\tau_i^{-t_i})  \sum_{\mathfrak{D} \in 
         \mathscr{\widetilde{G}}_{\beta,k;d}
        }C_{\mathfrak{D}} 
         \gamma^{-6(s-k)} \int_{\alpha_e:\tilde{\mathfrak{C}}(\tau)}\int_{x_v\in \mathbb{T}^d}\prod_{e\in E}\theta(x_{e1}-x_{e_2},\alpha_e \Sigma)  dx d\alpha
    \end{aligned}
    \end{equation}
with $\theta(x,\Sigma)$ given in \eqref{theta} 
and the restriction $\tilde{\mathfrak{C}}$ given in Definition \ref{D-system_t}. When   all $t_i=1$, we also denote
   \begin{equation}
       T_{k}^{(\mathrm{crit})}(\beta,d,\gamma; \tau_1, \ldots,\tau_k)=T_{\{t_i\}_{i=1}^k}^{(\mathrm{crit})}(\beta,d,\gamma; \tau_1, \ldots,\tau_k).
   \end{equation}
\end{definition}

\subsection{Asymptotics for correlation moments}\label{sec:asy_mixed_moment}

\begin{theorem}\label{prop:summery_prop}
Let $k>0$ and $\tau>0$ be any fixed   integer and   real number, respectively.

\begin{itemize}
    \item[(i)](\textbf{Super-critical regime}) With the same assumption on  the bandwidth $W$ as in  \eqref{supercond2}
    , if   
    \begin{equation}
        n_1\le n_2\le \cdots \leq n_k\le \tau N^{\frac{1}{3}}
    \end{equation}
    and   $n_1+\cdots+n_k$ is  even, then  we have 
    \begin{equation}
        \mathbb{E}\big[\prod_{i=1}^{k}\tr \mathcal{P}_{n_i}(H)\big]=(1+o(1))\mathbb{E}\big[\prod_{i=1}^{k}\tr U_{n_i}\big(H_{\mathrm{GUE}}/2\big)\big].
    \end{equation}
    \item[(ii)](\textbf{Sub-critical  regime}) Assume that  $d<4$  and $W\ll L^{1-\frac{d}{6}}$.
    If  
\begin{equation}
   1\le n_1\le n_2\le \ldots \le n_k\le \tau W^{\frac{2d}{6-d}}
\end{equation}
and all   $n_1,\ldots ,n_k$ are even, then  we have
    \begin{equation}
        \mathbb{E}[\prod_{i=1}^{k}\tr \mathcal{P}_{n_i}(H)]=\big(1+o(1)\big)\prod_{i=1}^{k}\mathbb{E}[\tr \mathcal{P}_{n_i}(H))].
    \end{equation}
    \item[(iii)](\textbf{Critical regime}) Assume that $d<4$ and  $n_i=[\tau_i (\frac{L}{W})^{2}], \tau_i \in (0,\infty)$, $i=1,\ldots, k$  such that $\sum_{i=1}^k n_i$ is even. 
   If
    \begin{equation} 
 W= \big(1+o(1)\big)\gamma  L^{1-\frac{d}{6}}
    \end{equation} 
   for   some    $\gamma\in (0,\infty)$,  
         then  as $W\to \infty$   
\begin{equation}
          \mathbb{E}\Big[\prod_{i=1}^k \mathrm{Tr}\Big( \frac{1}{n_i}\mathcal{P}_{n_i}(H)\Big)\Big]=\big(1+o(1)\big) 
         \phi_{k}^{(\mathrm{crit})}(\beta,d,\gamma;\tau_1, \ldots,\tau_k),
                               \end{equation}
where $\phi_{k}^{(\mathrm{crit})}(\beta,d;\tau_1, \ldots,\tau_k)$ is given in Definition \ref{def:sub_critical_transform}. 
\end{itemize}
\end{theorem}
\begin{definition}\label{def:2.20}
  Given any partition $\pi=\{\pi_j\}_j$   of $[k]=\{1,2,\ldots,k\}$, 
we define  $\mathscr{G}^\pi_{\beta,k,3}$ 
as the collection of all diagrams   satisfying the following connectivity constraints:\\
(i) The connected components of the diagram are in bijection with the blocks 
  of the partition;\\
  (ii) For each block  $\pi_j$, all circuits labeled by indices $i\in \pi_j$ 
  must lie within the same connected component of $\bar{G}$.
\end{definition}

Introduce the concept of    the  $k$-point cluster function that  is defined via  a sum of connected typical diagram
\begin{equation}\label{def:T(n)}
    T(n_1,\cdots,n_k)=\sum_{\mathfrak{D}\in \mathscr{G}^c_{\beta,k,3}}\sum_{w:(\mathfrak{D},w)}\Sigma_{\beta}^{(2)}(\mathfrak{D},w).
\end{equation}
With Definition \ref{def:2.20},  it is easy to obtain the following  formula for cluster functions. 
\begin{proposition}

For any given  partition  $\pi$ of $[k]$, we have 
    \begin{equation}
        \sum_{\mathfrak{D}\in \mathscr{G}^\pi_{\beta,k, 3}}\sum_{w:(\mathfrak{D},w)}\Sigma_{\beta}^{(2)}(\mathfrak{D},w)=\prod_{j}T(\{n_i\}_{i\in \pi_j}).
    \end{equation}
\end{proposition}
\begin{proof}
    Since $\Sigma_{\beta}^{(2)}(\mathfrak{D},w)$ puts no restriction on random walks, we can sum over one connected component and then sum over another one, which gives the product of $T(\{n_i\})$ on the right hand side.
\end{proof}

We are ready to obtain   upper bounds and   leading calculations for the cluster functions.

\begin{theorem}\label{thm:T_upper_bound} 
Given $k\ge 1$, assume that  $W\rightarrow \infty$ and $ne^{-0.5c_\Sigma W^2}\rightarrow0$,  and $d'\le (4-\epsilon)\land d$ for any fixed $\epsilon>0$. Then there are constants $C_1, C_2$ and $C_3$ that may depend on $\epsilon$ such that 
    \begin{equation}
    \begin{aligned}
          T(n_1,\cdots,n_k)&\le  (C_3n)^k\exp\big\{C_1 n^{\frac{3}{2}}N^{-\frac{1}{2}}\big\}+(C_3n)^k\frac{N}{n^{\frac{d'}{2}}W^d}\exp\Big\{C_2n^{\frac{6-d'}{4-d'}}W^{-\frac{2d}{4-d'}}\Big\}.
    \end{aligned}
    \end{equation}
    \end{theorem}

\begin{proof}
In $\beta=2$, all diagram are regular in dimension $d'$. By Theorem \ref{thm:upper_bound_diagram_function}, we have 
    \begin{equation}
    \begin{aligned}
        T(\{n_i\})&\le CN\sum_{ \text{regular} \  k\text{-diagram }\mathfrak{D}\in \mathscr{G}_{\beta,k,3}}F_{\mathfrak{D}}(\{n_i\})\\
        &\le CN\sum_{\mathfrak{D}\in \mathscr{G}_{\beta,k,3}} \bigg\{\frac{(Cn)^{3s-2k}}{(3s-2k)!}N^{k-s-1}+\frac{(Cn)^{(3-\frac{d'}{2})s+(\frac{d'}{2}-2)k-\frac{d'}{2}}}{\big((3-\frac{d'}{2})s+(\frac{d'}{2}-2)k-\frac{d'}{2}\big)!}W^{(k-s-1)d}\bigg\}.
    \end{aligned}
    \end{equation}
    
It is sufficient to give an upper bound for the second term since the first one can be treated as a special case of $d=0$. By the Stirling formula, we see from Proposition \ref{diagrambound} that    
    \begin{equation}\label{equ:exponent_upper_bound}
        \begin{aligned}
            &\sum_{\mathfrak{D}\in \mathscr{G}_{\beta,k,3}} \frac{(Cn)^{(3-\frac{d'}{2})s+(\frac{d'}{2}-2)k-\frac{d'}{2}}}{((3-\frac{d'}{2})s+(\frac{d'}{2}-2)k-\frac{d'}{2})!}W^{(k-s-1)d}\\
            &=\sum_{s\ge k}D_{\beta,k}(s)\frac{(Cn)^{(3-\frac{d'}{2})s+(\frac{d'}{2}-2)k-\frac{d'}{2}}}{((3-\frac{d'}{2})s+(\frac{d'}{2}-2)k-\frac{d'}{2})!}W^{(k-s-1)d}\\
            &\le \frac{n^k}{(\sqrt{C n}W)^{d}}   
            \sum_{s\ge k}\frac{(C_1 s)^{s+k-1}}{(k-1)!}
            \Big(\frac{(Cn)^{3-\frac{d'}{2}}}{W^d}\Big)^{s-k}\frac{1}{\big((3-\frac{d'}{2})s+(\frac{d'}{2}-2)k-\frac{d'}{2}\big)!}\\
            &\le \frac{n^k}{(\sqrt{C n}W)^{d}}   
            \sum_{s\ge k}\frac{(C_1 s)^{s+k-1}}{(k-1)!}
            \Big(\frac{(Cn)^{\frac{6-d'}{4-d'}}}{W^\frac{2d}{4-d'}}\Big)^{(2-\frac{d'}{2})(s-k)}\frac{1}{\big((3-\frac{d'}{2})s+(\frac{d'}{2}-2)k-\frac{d'}{2}\big)!}\\
&\le \frac{n^k}{(\sqrt{C n}W)^{d}}   
            \sum_{s\ge k} 
            \Big(\frac{C_2 (Cn)^{\frac{6-d'}{4-d'}}}{W^\frac{2d}{4-d'}}\Big)^{(2-\frac{d'}{2})(s-k)}\frac{C_3}{\big((2-\frac{d'}{2})(s-k)\big)!}\\
            &\le C_4 \frac{n^k}{(\sqrt{ n}W)^{d}} 
            \exp\!\big\{C_5 n^{\frac{6-d'}{4-d'}}W^{-\frac{2d}{4-d'}}\big\}.
        \end{aligned}
    \end{equation}
\end{proof}

\begin{proposition}\label{prop:5.5}
   Given    $k\ge 1$ and  $d\ge 1$, with the same assumption on  the bandwidth $W$ as in  \eqref{supercond2}, if    
    \begin{equation}
       \tau _1N^{\frac{1}{3}}\le n_1\le n_2\le \cdots \leq n_k\le \tau_2N^{\frac{1}{3}}
    \end{equation} for  any fixed positive constant $\tau_1,\tau_2$
    and   $n_1+\cdots+n_k$ is even, then we have 
    \begin{equation}
   T(n_1, \ldots, n_k)=(1+o(1)) \big(\prod_{i=1}^k{ n_i}\big)\, \phi_k\Big(\Big\{\frac{n_i}{W^{\frac{2d}{6-d}}}\Big\}\Big).
\end{equation}
    Here $\phi_k(z_1, \ldots,z_k)=\sum_{s\ge k}g_{k,s}(z_1, \ldots,z_k)$, and   $g_{k,s}$  is a homogeneous function of degree $3(s-k)$.
\end{proposition}

\begin{proposition}\label{prop:T_critical}
    Given  $k\ge 1$ and $d<4$,  if $W= [\gamma L^{1-\frac{d}{6}}]$, for any    fixed constants  $\tau_i>0$ and $n_i=[\tau_i(\frac{L}{W})^2]$,
such that $n_1+\cdots+ n_k$ is even, then we have
\begin{equation}
    T(n_1, \ldots, n_k)=\big(1+o(1)\big)\big(\prod_{i=1}^k{ n_i}\big)\,   \varphi_k\Big(\gamma,\{\tau_i\}_{i=1}^{k}\Big).
\end{equation}
    Here $\varphi_k(\gamma,\{z_1, \ldots,z_k\})=\sum_{s\ge k}\gamma^{6(k-s)}f_{k,s}(z_1, \ldots,z_k)$.
\end{proposition}

\begin{proposition}\label{prop:5.4} Given  $k\ge 1$ and $d<4$,  if $W\ll L^{1-\frac{d}{6}}$, for any    fixed constant  $\tau>0$ and  for any 
\begin{equation}
   1\le n_1\le n_2\le \cdots \le n_k\le  \tau W^{\frac{2d}{6-d}}
\end{equation}
such that $n_1+\cdots+ n_k$ is even, then we have
\begin{equation}
    T(n_1, \ldots, n_k)=\big(1+o(1)\big)\frac{N}{W^{\frac{6d}{6-d}}} \big(\prod_{i=1}^k{ n_i}\big)\,   \psi_k\Big(\Big\{\frac{n_i}{W^{\frac{2d}{6-d}}}\Big\}\Big) .
\end{equation}
    Here $\psi_k(z_1, \ldots,z_k)=\sum_{s\ge k}h_{k,s}(z_1, \ldots,z_k)$, and $h_{k,s}$ is a homogeneous function of degree $\frac{1}{2}\big((6-d)s-(4+d)k-d\big)$.
\end{proposition}

\begin{proof}[Proof of Proposition \ref{prop:5.5}, \ref{prop:T_critical} and \ref{prop:5.4}]
    Recall the diagram function in \eqref{equ:3.1}, 
    \begin{equation}
        F_{\mathfrak{D}}(\{n_i\}_{i=1}^k)=\frac{1}{N}\sum_{w(e):(\mathfrak{D},w)}\Sigma_{\beta}^{(2)}(\mathfrak{D},w).
    \end{equation} 
   By  Corollary \ref{coro:2.19},  it suffices for us to just consider   
  the set of maximal $k$-diagrams, denoted by   $\mathcal{M}_{\mathrm{diagram}}$.
By  Lemma \ref{lem:2.21}, we know that for any   $\mathfrak{D}\in \mathscr{G}^{c}_{\beta,k,\ge 3}\backslash \mathscr{G}_{\beta,k,3}$ and $\mathfrak{D}\in   \mathcal{M}_{\text{diagram}}$, the diagram function can be  bounded by some $\mathfrak{D}'\in \mathscr{G}_{\beta,k,3}$, with new edges of   weight $1$ or 2, which implies   $F_{\mathfrak{D}}(\{n_i\})=o(F_{\mathfrak{D}'}(\{n_i\}))$. Hence we obtain    
  \begin{equation}\label{equ:5.14}
        \sum_{\mathfrak{D}\in \mathscr{G}^{c}_{\beta,k,\ge 3}}F_{\mathfrak{D}}(\{n_i\}_{i=1}^k)\le \sum_{\mathfrak{D'}\in \mathscr{G}^{c}_{\beta,k, 3}}C^{|E'|}F_{\mathfrak{D'}}(\{n_i\}_{i=1}^k).
        \end{equation}
    Here $C^{|E|}$ comes from the constant in Lemma \ref{lem:2.21} and the fact that  the number of $\mathfrak{D}'$ is bounded by $2^{|E|}$ for any  given $\mathfrak{D}$.
    
    On the other hand,  the upper bound of Theorem \ref{thm:T_upper_bound} shows that  we only need to consider all finite diagrams since the sum of  large-$s$ diagram can be negligible. Use the similar argument as in the derivation of   \eqref{equ:5.14}  to obtain 
    \begin{equation}\label{equ:T=Fsum}
        T(n_1, \ldots,n_k)=(1+o(1))N\sum_{\mathfrak{D}\in \mathscr{G}^c_{\beta,k, 3}}F_{\mathfrak{D}}(\{n_i\}_{i=1}^k).
    \end{equation}
    
    By Theorem \ref{thm:homogeneous_theorem},   in the subcritical case  as shown in  Proposition \ref{prop:5.4}  each diagram function of typical diagram tends to  a homogeneous function with homogeneous degree $(3-\frac{1}{2}d)s- (2+\frac{1}{2}d)k-\frac{1}{2}d$. So after simple algebraic  manipulations      we complete the proof of  Proposition \ref{prop:5.4}. 
    
    Similarly, we can complete  the proof of Proposition \ref{prop:5.5} amd \ref{prop:T_critical}.
\end{proof}

\begin{proposition}\label{prop:5.6}
With  the same assumptions as in   Proposition \ref{prop:5.5}, \ref{prop:T_critical} or Proposition \ref{prop:5.4}, we have
    \begin{equation}
    \begin{aligned}
        \mathbb{E}\big[\prod_{i=1}^k\tr\mathcal{P}_{n_i}(H)\big]
        &=\big(1+o(1)\big)\sum_{\pi}\prod_{j}T(\{n_i\}_{i\in \pi_j}).
        \end{aligned}
    \end{equation}
\end{proposition}
\begin{proof}
    By Theorem \ref{thm:thm2.5}, we have 
    \begin{equation}
         \mathbb{E}\big[\prod_{i=1}^k\tr\mathcal{P}_{n_i}(H)\big]=(1+o(1)) \mathbb{E}\big[\prod_{i=1}^k\tr V_{n_i}(H)\big].
    \end{equation}
    
  Recalling the diagram function in \eqref{equ:3.1},  by  Corollary \ref{coro:2.19} and Proposition \ref{coro:3.7},   we can use the same argument as in the proof of Proof of Proposition \ref{prop:5.4} and \ref{prop:5.5}  to derive 
    \begin{equation}
      \begin{aligned}
        \mathbb{E}\big[\prod_{i=1}^k\tr V_{n_i}(H)\big]&=(1+O(\mathcal{E}(n)))\sum_{\mathfrak{D}\in \mathscr{G}_{\beta,k,\ge 3}}F_{\mathfrak{D}}(\{n_i\}_{i=1}^k)\\
        &=(1+o(1))\sum_{\pi}\prod_{j}T(\{n_i\}_{i\in \pi_j}).
          \end{aligned}
    \end{equation} 
       \end{proof}

Now we are ready to complete the proof of Theorem \ref{prop:summery_prop}.
\begin{proof}[Proof of Theorem \ref{prop:summery_prop}]
    We only  prove the super-critical and sub-critical cases here. The proof in the critical case is similar but with lots of redundant discussions.
We start from the asymptotic estimate in   Proposition \ref{prop:5.6}.  

In the  super-critical case,  by Proposition \ref{prop:5.5} we know that  the function $T(\{n_i\})$ is the same as in the special   case of $W=L/2$. 
However, 
in the sub-critical case, by Proposition \ref{prop:5.4} we know that 
    \begin{equation}
        \begin{aligned}
        \mathbb{E}\big[\prod_{i=1}^k\tr\mathcal{P}_{n_i}(H)\big]
        &=\Big(1+o(1)+O\big(N^{-1}W^{\frac{6d}{6-d}}\big)\Big)\prod_{i=1}^kT(n_i)\\
        &=(1+o(1))\prod_{i=1}^k\mathbb{E}\big[\tr\mathcal{P}_{n_i}(H)\big].
        \end{aligned}
    \end{equation}

    This thus   completes the proof.
\end{proof}

\subsection{Asymptotics for moments of polynomial powers  }\label{sec:P_powers}
The relevant results for the expectation $\mathbb{E}[\prod_{i=1}^k\tr\mathcal{P}_{n_i}(H)]$ should be  enough and clear to detect the three-phase transition of edge statistics, however, in order to apply the convergence theorem via the limit  transform resulting from  the polynomials we need to further study   the mixed moment $\mathbb{E}[\prod_{i=1}^k\tr(\mathcal{P}_{n_i}(H))^{t_i}]$.
In this subsection, we highlight two central results—Theorem  \ref{prop:super_4} and \ref{prop:sub_4} below—which establish fundamental properties of the mixed moments.
 
\begin{theorem}\label{prop:super_4} Given any fixed integers  $k\ge 1$ and  $t_i\ge 1$ for $1\le i\le k$,  with the same assumption on  $W$ as in  Theorem \ref{metatheoremsup}, if  
    \begin{equation}
        n_1\le n_2\le \cdots \leq n_k\le \tau N^{\frac{1}{3}}
    \end{equation}
    for any fixed constant $\tau$  and  $t_1n_1+\cdots+t_kn_k$ is even, then   
    \begin{equation}
        \mathbb{E}\big[\prod_{i=1}^{k}\tr (\mathcal{P}_{n_i}(H))^{t_i}\big]=\big(1+o(1)\big)\mathbb{E}\big[\prod_{i=1}^{k}\tr (U_{n_i}(H_{\mathrm{GUE}}/2))^{t_i}\big].
    \end{equation}
\end{theorem}
\begin{theorem}\label{prop:sub_4} 
Given any fixed integers  $k\ge 1$ and  $t_i\ge 1$ for $1\le i\le k$, assume that     $d<4$ and   $W\ll L^{1-\frac{d}{6}}$. If for any fixed constant $\tau$  and 
\begin{equation}
   1\le n_1\le n_2\le \cdots \le n_k\le \tau W^{\frac{2d}{6-d}}
\end{equation}
and all  $t_1n_1,\ldots ,t_kn_k$ are  even, then  
    \begin{equation}
        \mathbb{E}\big[\prod_{i=1}^{k}\tr (\mathcal{P}_{n_i}(H))^{t_i}\big]=\big(1+o(1)\big)\prod_{i=1}^{k}\mathbb{E}\big[\tr (\mathcal{P}_{n_i}(H))^{t_i}\big].
    \end{equation}
\end{theorem}

\begin{theorem}\label{prop:critical_4}
    Given  $k\ge 1$ and $d<4$,  if $W= [\gamma L^{1-\frac{d}{6}}]$, for any    fixed constants  $\tau_i>0$ and $n_i=[\tau_i(\frac{L}{W})^2]$,
such that $n_1+\cdots+ n_k$ is even, then we have
\begin{equation}
    \mathbb{E}\big[\prod_{i=1}^{k}\frac{1}{n_i}\tr (\mathcal{P}_{n_i}(H))^{t_i}\big]=(1+o(1))\phi_{\{t_i\}}^{(\mathrm{crit})}(\beta,d,\gamma;\tau_1, \ldots,\tau_k).
\end{equation}
Here $\phi_{\{t_i\}}^{(\mathrm{crit})}(\beta,d,\gamma;\tau_1, \ldots,\tau_k)$ is defined in Definition \ref{def:sub_critical_transform}.
\end{theorem}

Since most of the proof ingredients have already been established in the previous sections, we first outline the key steps of the proof for the case when $k=1$.

\begin{itemize}
    \item\textbf{Reduction to $\mathbb{E}[\tr (V_n)^{t}]$.} Following the steps in Section \ref{sec:reduce_to_Vn}, we can prove $\mathbb{E}[\tr (\mathcal{P}_{n}(H))^{t}]=(1+o(1))\mathbb{E}[\tr (V_n)^{t}]$. 
    \item\textbf{Reduction to multi-tail diagrams.} There are possible backtracking paths between $V_n$ and itself, so when we adopt the diagram reduction, there might be many tail edges.  For instance,  in the case of $\mathbb{E}[\tr (\mathcal{P}_{n}(H))^{t}]$ there are $t$ tail edges; see Figure \ref{fig:leg_diagram} and Figure \ref{fig:tree_diagram} for   illustration. 
    \item\textbf{Diagram function limit and integral analysis.} By the singularity analysis and the criterion in Proposition \ref{propositionconvergence}, we see after the tadpole renormalization, that all other  diagrams are regular and hence the corresponding integral  is finte and well-defined. Following  the proof of Theorem \ref{thm:homogeneous_theorem}, we can obtain the diagram-wise limit for the diagram functions. Specifically, in the supercritical case, similar to Proposition \ref{prop:5.5} we know that the resulting diagram function has  the same asymptotics as in  the GUE case  of $W={L}/{2}$. 
    \item\textbf{Diagram functions are summable.} This result is analogous to Theorem  \ref{thm:T_upper_bound}.
\end{itemize}
\begin{proposition}\label{prop:Pt=Vt}
   Given integers $k\geq 1$ and   $t_1, \ldots, t_k\geq 1$,   let $n=t_1n_1+\cdots+ t_kn_k$. If $n W^{-d} \to 0
    $ as $W\to \infty$, then
    \begin{equation}
        \mathbb{E}\Big[\prod_{i=1}^k \tr(\mathcal{P}_{n_i}(H))^{t_i}\Big]=\Big(1+O\big(\frac{n}{W^d}\big)\Big)\mathbb{E}\Big[\prod_{i=1}^k \tr (V_{n_i})^{t_i}\Big].
    \end{equation}
\end{proposition}
\begin{proof}
    We proceed in much the    same way  as in   Theorem \ref{thm:thm2.5} to complete it. 
\end{proof}

Now we describe the key procedure to reduce $\mathbb{E}[\tr (V_n)^{t}]$ into multi-tail diagrams. We proceed as in almost the same way in  Section \ref{sec:diagram_reduction} to  deal with  the diagram representation for the path in $\mathbb{E}[\tr V_{l_0}V_{l_1}\cdots V_{l_t}]$.  However, there are possible back-trackings in the product $V_{l_i}V_{l_{i+1}}$.

We first perform a zip-up operation on the backtracking edges, as illustrated in Figure \ref{fig:zipup},
based on   the identity  
\begin{equation}
    1=\sum_{i=1}^{s}\mathbb{1}(x_{t-i}\ne x_{t+i})\prod_{j=0}^{i-1}\mathbb{1}(x_{t-j}= x_{t+j})+\prod_{j=0}^{s}\mathbb{1}(x_{t-j}= x_{t+j}), \quad s\leq t.
\end{equation}
We begin by detailing the zip-up operation applied to the product term  $V_{l_0}V_{l_1}$. In this context, the role of the non-backtracking event is expressed as 
\begin{equation}
    \prod_{i=0}^{l_0-2}\mathbb{1}(x_i\ne x_{i+2})\prod_{i=l_0}^{l_0+l_1-2}\mathbb{1}(x_i\ne x_{i+2}).
\end{equation}

Inserting the identity 
\begin{equation}
        1=\sum_{t=1}^{l_0\land l_1}\mathbb{1}(x_{l_0-t}\ne x_{l_0+t})\prod_{j=0}^{t-1}\mathbb{1}(x_{l_0-j}= x_{l_0+j})+\prod_{j=0}^{l_0\land l_1}\mathbb{1}(x_{l_0-j}= x_{l_0+j}),
    \end{equation}
we derive 
\begin{equation}
    \begin{aligned}
        &\prod_{i=0}^{l_0-2}\mathbb{1}(x_i\ne x_{i+2})\prod_{t=l_0}^{l_0+l_1-2}\mathbb{1}(x_t\ne x_{t+2})=\prod_{i=0}^{l_0-2}\mathbb{1}(x_i\ne x_{i+2})\\
        &\times \prod_{i=l_0}^{l_0+l_1-2}\mathbb{1}(x_i\ne x_{i+2}) \bigg(\sum_{t=1}^{l_0\land l_1}\mathbb{1}(x_{l_0-t}\ne x_{l_0+t})\prod_{j=0}^{t-1}\mathbb{1}(x_{l_0-j}= x_{l_0+j})+\prod_{j=0}^{l_0\land l_1}\mathbb{1}(x_{l_0-j}= x_{l_0+j}) \bigg)\\
        &=\sum_{t=1}^{l_0\land l_1}\mathbb{1}(x_{l_0-t}\ne x_{l_0+t})\prod_{i=0}^{l_0-t-1}\mathbb{1}(x_i\ne x_{i+2})\prod_{i=l_0+t-1}^{l_0+l_1-2}\mathbb{1}(x_i\ne x_{i+2})\prod_{j=0}^{t-1}\mathbb{1}(x_{l_0-j}=x_{l_0+j})\\
        &~~~~~~~~~~~~~~~~\cdot \prod_{i=l_0-t}^{l_0-2}\mathbb{1}(x_i\ne x_{i+2}) \prod_{i=l_0}^{l_0+t-2}\mathbb{1}(x_i\ne x_{i+2})\\
        &~~~+\prod_{i=0}^{l_0-2}\mathbb{1}(x_i\ne x_{i+2})\prod_{i=l_0}^{l_0+l_1-2}\mathbb{1}(x_i\ne x_{i+2})\prod_{j=0}^{l_0\land l_1}\mathbb{1}(x_{l_0-j}= x_{l_0+j}).
    \end{aligned}
\end{equation}
The factor 
\begin{equation}
    \sum_{t=1}^{l_0\land l_1}\mathbb{1}(x_{l_0-t}\ne x_{l_0+t})\prod_{i=0}^{l_0-t-1}\mathbb{1}(x_i\ne x_{i+2})\prod_{i=l_0+t-1}^{l_0+l_1-2}\mathbb{1}(x_i\ne x_{i+2})\prod_{j=0}^{t-1}\mathbb{1}(x_{l_0-j}=x_{l_0+j})
\end{equation}
enforces a non-backtracking path while retaining a single backtracking tail edge, as depicted in Figure \ref{fig:2zip_up}. 
For the product  of the form   $V_{l_0}V_{l_1}\cdots V_{l_t}$, the resulting structure evolves into a non-backtracking path with $t$ backtracking tail edges, illustrated in Figure \ref{fig:zipup}.

\begin{figure}
    \centering
    \includegraphics[width=0.6\textwidth]{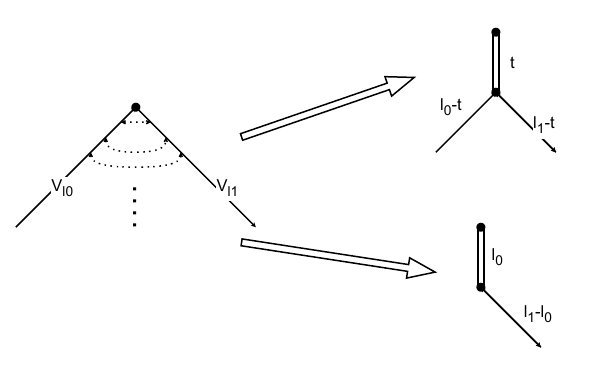}
    \caption{Zip-up operation for $V_{l_0}V_{l_1}$}
    \label{fig:2zip_up}
\end{figure}
\begin{figure}
    \centering
    \includegraphics[width=0.6\textwidth]{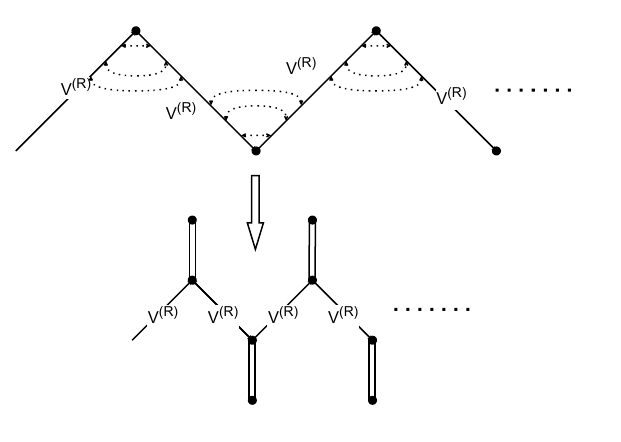}
    \caption{Zip-up operation}
    \label{fig:zipup}
\end{figure}

We now analyze the expectation   $\mathbb{E}[\tr V_{l_0}V_{l_1}\cdots V_{l_t}]$. The diagrammatic reduction technique introduced in Section \ref{sec:diagram_reduction} can be applied to non-backtracking paths. The resulting diagrams correspond to those given in Definition \ref{def:diagram}, but with $t$ additional tail edges; see Figure \ref{fig:leg_diagram} for an illustrative example. 

\begin{definition}\label{def:(k,t)diagram}Let $\beta\in \{1,2\}$.
\begin{itemize}
\item A \textbf{\textit{{$(k,\{t_i\}_{i=1}^k)$-diagram}}} of type $\beta$ is an (undirected) multi-graph $\bar{G} = (\bar{V}, \bar{E})$,
together with a $k$-tuple of circuits 
\begin{equation} \bar{\mathfrak{p}} = \bar{u}_0^1 \bar{u}_1^1 \cdots  \bar{u}_0^1 ,\,\,\,
             \bar{u}_0^2 \bar{u}_1^2 \cdots   \bar{u}_0^2 ,\,\,\,
             \cdots ,\,\,\, \bar{u}_0^k \bar{u}_1^k \cdots   \bar{u}_0^k \end{equation}
on $\bar{G}$, such that
\begin{itemize}
\item The $i$-th circuit of $\bar{\mathfrak{p}}$  is  {\em $t_i$-backtracking}, that is, in this circuit exactly $t_i$ edges 
are followed by their reverses unless in $\beta=1$ and the edge is $\bar{u}\bar{u}$; 
\item For every loop $e_0= (\bar{u}, \bar{u}) \in \bar{E}$,
\begin{equation} 
\# \left\{ (i,j) \, | \, (\bar{u}_j^i, \bar{u}_{j+1}^i) =e_0 \right\}  = 2,
     \end{equation}
and for every $
e= (\bar{u}, \bar{v}) \in \bar{E}$ with $\bar{u}\ne \bar{v}$,
\begin{equation}\begin{split}
&\# \left\{ (i,j) \, | \, (\bar{u}_j^i, \bar{u}_{j+1}^i) =e \right\} + \# \left\{ (i,j) \, | \, (\bar{u}_{j+1}^i, \bar{u}_{j}^i) = e \right\} = 2
    \quad (\beta = 1)~, \\
&\# \left\{ (i,j) \, | \, (\bar{u}_j^i,\bar{u}_{j+1}^i) = e \right\} = \# \left\{(i,j) \, | \, (\bar{u}_{j+1}^i, \bar{u}_{j}^i) = e \right\} = 1
    \quad (\beta = 2)~;
\end{split}\end{equation}
\item For $i=1,\ldots, k$, the $i$-th circuit has exact $t_i$ vertices of degree 1 and the degree of all other vertices is not less than 3. 
These   are called as   \textit{\textbf{tail vertices}} while the edges with endpoint of tail vertices are \textbf{\textit{tail edges}}.
\end{itemize}
\item A \textbf{\textit{{weighted $(k,\{t_i\}_{i=1}^k)$-diagram}}} is a $(k,\{t_i\}_{i=1}^k)$-diagram $\bar{G}$    assigned with a weight function
$\bar{w}: \bar{E} \to \{0,1,2,\cdots\}$,  in which  $\bar{w}(e)=0$ is  possible  only when $e \in \bar{E}$ is a tail edge.
\item   For simplicity,
we still  denote  by  $\mathfrak{D}=(\bar{G},\bar{\mathfrak{p}})$   as a $(k,\{t_i\}_{i=1}^k)$-diagram and  by the pair $(\mathfrak{D},w)$  as a weighted $(k,\{t_i\}_{i=1}^k)$-diagram.
\end{itemize}
\end{definition}

\begin{definition}\label{def:4.7}
    \begin{itemize}
        \item  A \textbf{\textit{typical (weighted) $(k,\{t_i\}_{i=1}^k)$-diagram}} $\bar{G} = (\bar{V}, \bar{E})$ is a $k$-diagram with $t$-tail edges, in which   the degree of tail vertex in $\bar{G}$ is 1 and the degree of all the other vertices is equal to $3$. Let  $\mathscr{G}_{\beta,k,\{t_i\},3}$ be the set of typical (weighted) $(k,\{t_i\})$-diagrams  of type $\beta$.
        \item A diagram $\mathfrak{D}=(\bar{G},\bar{\mathfrak{p}})$ is \textbf{\textit{connected}} if the  graph $\bar{G}$ is connected.  Let   $\mathscr{G}^c_{\beta,k,\{t_i\},3}$ be the set of connected typical (weighted) $(k,\{t_i\}_{i=1}^k)$-diagrams of type $\beta$ and   $\mathscr{G}^c_{\beta,k,\{t_i\},\ge 3}$ be  the set of connected general (weighted) $(k,\{t_i\}_{i=1}^k)$-diagrams.
    \end{itemize}
\end{definition}
We also introduce the linear system $\mathfrak{C}$ for typical $(k,\{t_i\}_{i=1}^k)$-diagram.
\begin{definition}[Linear $\mathfrak{D}$-system] \label{D-system_t}
  A family of non-negative integers $ \{ w({{e}}) \}_{{e}\in E}$
  is said to  satisfy a system of linear equations  associated with the typical $k$-diagram $\mathfrak{D}$  if   
		\begin{equation}  
		\mathfrak{C}(\mathfrak{D}): \quad \sum_{{e}\in {E}}c_i({e})w({{e}})=n_i,\quad i=1,...,k,  
		\end{equation}
		where  $c_i(e)\in \{0,1,2\}$ is the number of times that ${e}$ occurs in the $i$-th piece of non-backtracking path in ${\mathfrak{p}}$, and
\begin{equation}
    w(e)\ge\begin{cases}
       3,~~~~\text{if $e$ is a self-loop};\\
        1,~~~~\text{if $e$ is not a tail edge}.
   \end{cases} 
\end{equation} 
Similarly, a system of   inequalities, by removing the set $T$ of  all tail edges and denoting $\{\pi_j\}$ to be the partition of non-backtracking piece of $\bar{\mathfrak{p}}$, 
\begin{equation} 
   \widetilde{\mathfrak{C}}(\mathfrak{D}): \quad \begin{cases}
        &\sum_{{e}\in {E}\backslash T}c_i({e})w({{e}})\le n_i,\qquad i=1,...,k,\\
        &\sum_{i\in \pi_j}\sum_{{e}\in {E}\backslash T}c_i({e})w({{e}})\equiv n_i\pmod 2,\qquad i=1,...,k,\\
        &w(e)\ge 3,~~~~\text{if $e\in E\backslash T$ is a self-loop},\\
        &w(e)\ge 1,~~~~\text{if $e\in E\backslash T$}.
    \end{cases}
\end{equation}
we also introduce the simplex formed by systems of linear inequalities  in $\mathbb{R}_{+}^{|E|-k}$      
  \begin{equation}
        \widetilde{\mathfrak{C}}(\tau): \quad  \sum_{e\in {E}\backslash T_i}c_i(e)\alpha_e\le \tau_i,  \qquad i=1,...,k,
    \end{equation}
where      $c_i(e)\in \{0,1,2\}$ and $T_i$ is  the tail edge in the $i$-th piece of $\bar{\mathfrak{p}}$.

\end{definition}
\begin{figure}
    \centering
    \includegraphics[width=0.3\linewidth]{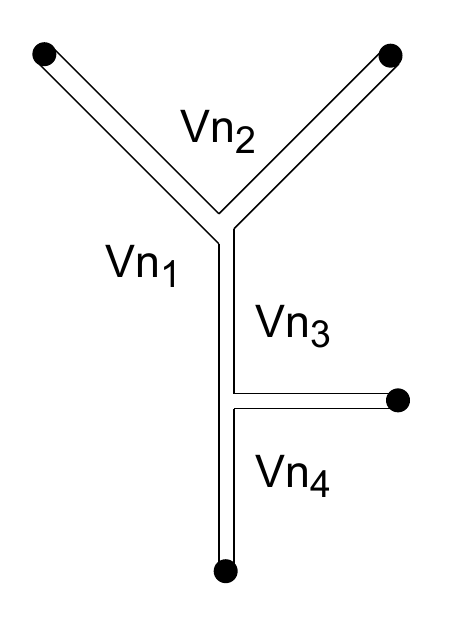}
    \caption{An example for $(1,4)$-diagram. The black vertices are tail vertices.}
    \label{fig:tree_diagram}
\end{figure}

There are possible tree diagrams since there are $\{t_i\}$ back-trackings are allowed, see Figure \ref{fig:tree_diagram} for an example. We extend the definition of diagram function $F_{\mathfrak{D}}(\{(n_i,t_i)\}_{i=1}^k)$ in Definition \ref{diagramf} to $(k,\{t_i\}_{i=1}^k)$-diagrams as
\begin{equation}
        F_{\mathfrak{D}}(\{(n_i,t_i)\}_{i=1}^k)=\frac{1}{N}\sum_{w(e):(\mathfrak{D},w)} 
        \sum_{\iota:V\rightarrow \Lambda_L}\prod_{e\in E}p_{w(e)}(\iota(u^{e}),\iota(v^{e})).
\end{equation}
We denote the sum of connected typical diagram again as
\begin{equation}\label{def:T(n,t)}
    T\big(\{(n_i,t_i)\}_{i=1}^k\big)=\sum_{\mathfrak{D}\in \mathscr{G}^c_{\beta,k,3}}\sum_{w:(\mathfrak{D},w)}\Sigma_{\beta}^{(2)}(\mathfrak{D},w).
\end{equation}
\begin{proposition}\label{prop:T_t=Fsum}Under the same  assumptions as in   Theorem \ref{prop:super_4} or Theorem \ref{prop:sub_4}, we have
    \begin{equation}
        T(\{(n_i,t_i)\}_{i=1}^k)=(1+o(1))N\sum_{\mathfrak{D}\in \mathscr{G}^c_{\beta,k,\{t_i\}, 3}}F_{\mathfrak{D}}(\{(n_i,t_i)\}_{i=1}^k).
    \end{equation}
\end{proposition}
\begin{proof} 
    The proof is exactly the same as  in the derivation of \eqref{equ:T=Fsum} in the proof of  Proposition \ref{prop:5.4} and \ref{prop:5.5}.
\end{proof}

\begin{proposition}\label{thm:T_t_upper_bound}
 Given integers $k\geq 1$ and   $t_1, \ldots, t_k\geq 1$,  let $n=\sum_{i=1}^k t_in_i$ and $t=\sum_{i=1}^k t_i$.
If 
  $W\rightarrow \infty$ and $ne^{-0.5c_\Sigma W^2}\rightarrow 0$,  and $d'\le (4-\epsilon)\land d$ for any fixed $\epsilon>0$,   then 
    \begin{equation}
    \begin{aligned}
        T\big(\{(n_i,t_i)\}_{i=1}^k\big)&\le  (Cn)^t\exp\big\{C_1 n^{\frac{3}{2}}N^{-\frac{1}{2}}\big\}+(Cn)^t\frac{N}{n^{\frac{d'}{2}}W^d}\exp\Big\{C_2n^{\frac{6-d'}{4-d'}}W^{-\frac{2d}{4-d'}}\Big\}+\delta_{k,1}(Cn)^{t-3}N.
    \end{aligned}
    \end{equation}
\end{proposition}
\begin{proof}[Proof of Proposition \ref{thm:T_t_upper_bound}]
    The proof is very similar to  that of   Proposition \ref{thm:T_upper_bound}. We compute the upper bound of diagram functions in the super-critical and sub-critical cases. Note that there are no singular diagram if $d'< 4$ in $\beta=2$ case. First note that we can remove the extra $t-k$ tail edges on the circuit so that the $(k,\{t_i\}_{i=1}^k)$-diagram $\mathfrak{D}$ (unless $\mathfrak{D}$ is a tree diagram) can be mapped to a $k$-diagram $\mathfrak{D}_0$ with $|E_0|=3s-k,|V_0|=2s$. Now put the $t-k$ tail edges back, we have $|E|=3(s-k)+2t, |V|=2(s-k)+2t$. Here it is worth stressing  that each time we add a tail edge we get two new edges and two new vertices.

    In the super-critical case, we have $t$ restrictions on the edge weights, so   the resulting upper bound is
    \begin{equation}
        \frac{n^{|E|-t}}{(|E|-t)!}N^{|V|-|E|}=\frac{n^t}{(3(s-k))!}\Big(\frac{n^3}{N}\Big)^{s-k}.
    \end{equation}
    In  the sub-critical case, by Corollary \ref{proposition:poly_upper_bound},  change $|\Tilde{E}|=3(s-k)+t,~|\Tilde{V}|=2(s-k)+t$, and  we have the upper bound  
    \begin{equation}
        W^{-d(|E|-|V|+1)}\frac{(Cn)^{|\Tilde{E}|-\frac{d'}{2}(|\Tilde{E}|-|\Tilde{V}|+1)}}{(|\Tilde{E}|-\frac{d'}{2}(|\Tilde{E}|-|\Tilde{V}|+1))!}=\frac{(Cn)^t}{n^{\frac{d'}{2}}W^{d}}\frac{(Cn^{3-\frac{d'}{2}}W^{-d})^{s-k}}{((3-\frac{d'}{2})(s-k)+t-\frac{d'}{2})!}.
    \end{equation}
    Note that for tree diagrams we have $s=0$, and for $k>1$ and connected diagrams we have $s\ge k$ (since there must be loop in the underlying graph hence $3s-k=|E|\ge |V|=2s$).  Also note that the number of diagrams with parameter $s$ is bounded by 
    \begin{equation}
        \frac{1}{(t-k)!}|E|(|E|+2)(|E|+4)\cdots (|E|+2(t-k-1))\, D_{k,\beta}(s)\le \frac{(C s)^{s+t-1}}{(t-1)!},
    \end{equation}
    where $D_{\beta,k}(s)$ is the number of typical $k$-diagrams with $|\bar{E}|=2s$ and $|\bar{V}|=3s-k$ defined in Proposition \ref{diagrambound}.

Thus, similar to the proof of Proposition \ref{thm:T_upper_bound}, we derive the first two terms in Proposition \ref{thm:T_t_upper_bound}. Turning to  the case of $k=1$ and $s=0$, the tree diagram, we obtain the last term $\delta_{k,1}(Cn)^{t-3}N$. 

This completes  the proof.
\end{proof}

\begin{proof}[Proofs of Theorem \ref{prop:super_4}-\ref{prop:critical_4}]
    The proof is similar to that  of Theorem \ref{prop:summery_prop}. In the super-critical case, by Proposition \ref{prop:T_t=Fsum} we  have  
        \begin{equation}
        T\big(\{(n_i,t_i)\}_{i=1}^k\big)=(1+o(1))N\sum_{\mathfrak{D}\in \mathscr{G}^c_{\beta,k,\{t_i\}, 3}}F_{\mathfrak{D}}(\{(n_i,t_i)\}_{i=1}^k).
    \end{equation}
    While in the super-critical case, the asymptotics of the diagram function is irrelevant to the bandwidth $W$, which  can be taken as  ${W}= {L}/{2}$. In this case,    the model reduces to a mean-field   Wigner matrix    with unimodular entries. By \cite[{Theorem I.5.3.}]{feldheim2010universality}, via   Chebyshev polynomials,  both the unimodular matrix and  GUE matrix have the same asymptotics. Moreover,   all $T(\{(n_i,t_i)\}_{i=1}^k)$ have the same asymptotics. Using the $T(\{(n_i,t_i)\}_{i=1}^k)$ version of Proposition \ref{prop:5.6}
        \begin{equation}\label{equ:P=T}
    \begin{aligned}
        \mathbb{E}[\prod_{i=1}^k\tr(\mathcal{P}_{n_i}(H))^{t_i}]
        &=(1+o(1))\sum_{\pi}\prod_{j}T(\{(n_i,t_i)\}_{i\in \pi_j}),
        \end{aligned}
    \end{equation}
   we know that  each term has the same asymptotics  as in the  GUE case. This  thus  proves Theorem \ref{prop:super_4}.

    Now we consider the sub-critical case, where we take $d'=d$. Put  $\pi=\{\pi_j\}_{j=1}^l$,  we see by Theorem \ref{thm:T_t_upper_bound}
    \begin{equation}
        \prod_{j}T(\{(n_i,t_i)\}_{i\in \pi_j})=O\big( n^t (N n^{-\frac{d}{2}} W^{-d} )^l \big).
    \end{equation} 
    Note that  $N n^{-\frac{d}{2}} W^{-d} \rightarrow \infty$ in the sub-critical case,  so only the case $l=k$     contributes the main term in \eqref{equ:P=T}. This means
    \begin{equation}
    \begin{aligned}
        \mathbb{E}[\prod_{i=1}^k\tr(\mathcal{P}_{n_i}(H))^{t_i}]
        &=(1+o(1))\prod_{i=1}^kT(n_i,t_i) 
         =(1+o(1))\prod_{i=1}^k\mathbb{E}[\tr(\mathcal{P}_{n_i}(H))^{t_i}].
        \end{aligned}
    \end{equation}
    This gives a proof of   Theorem \ref{prop:sub_4}. 
    
    The proof of Theorem \ref{prop:critical_4} is similar by using Proposition \ref{prop:T_t=Fsum} and Theorem \ref{thm:homogeneous_theorem}.
\end{proof}

Proposition \ref{thm:T_t_upper_bound} provides   a   sharp upper bound  that is of independent interest.  As an immediate corollary, we obtain right-tail decay estimates   for the top eigenvalue in $\beta=2$ case.
\begin{theorem}[\textbf{Tail probability decay}]\label{thm:upper_bound_largest_eigenvalue}
For the unimodular  \rm{RBM} with $\beta=2$, as $W\to \infty$  the following right  tail probability estimates hold for any  $x\ge 0$.
 
    (1)  If $d<4$ and $W\ll L^{1-\frac{d}{6}}$,
    then  
    \begin{equation}
        \mathbb{P}(\lambda_{\mathrm{max}}(H)\ge 2-a_4+x{W^{-\frac{4d}{6-d}}})\le C_1 \frac{N}{W^{\frac{6d}{6-d}}} e^{-C_2|x|^{\frac{6-d}{4}}}.
    \end{equation}

(2)
    Under  the   {\bf Supercritical Assumption},    
    we have   right tail estimate  of the  Tracy-Widom distribution
    \begin{equation}
        \mathbb{P}(\lambda_{\mathrm{max}}(H)\ge 2-a_4+xN^{-\frac{2}{3}})\le C_1 e^{-C_2|x|^{\frac{3}{2}}}.
    \end{equation}

    (3) For      $d\ge 4$ and for  any given   $\delta\in (0,0.1)$,  if    $W\ll L^{\frac{1}{3}}$, 
then     
\begin{equation}
        \mathbb{P}(\lambda_{\mathrm{max}}(H)\ge 2-a_4+xW^{-\frac{4d}{2+\delta}})\le C_{\delta, 1} N e^{-C_{\delta,2}|x|^{\frac{2+\delta}{4}}}.
\end{equation}
\end{theorem}
\begin{proof}[Proof of Theorem \ref{thm:upper_bound_largest_eigenvalue}]
By Lemma \ref{lem:B.7_outside_upperbound}, we have
\begin{equation}\label{equ:upper_4.51}
    \begin{aligned}
        &\mathbb{E}\big[e^{C_1 n \sqrt{(\lambda_{\mathrm{max}}-2-a_4)_+}}\big]\le \mathbb{E}\big[\mathrm{Tr} (\frac{1}{n}\mathcal{P}_{n}(H))^4\big]\\
        &\le C\left\{\exp\{Cn^{\frac{3}{2}}N^{-\frac{1}{2}}\}+\frac{N}{n^{\frac{d'}{2}}W^d}\exp\{Cn^{\frac{6-d'}{4-d'}}W^{-\frac{2d}{4-d'}}\}+\frac{N}{n^{3}}\right\},
    \end{aligned}
\end{equation}
where the last inequality comes from Proposition \ref{thm:T_t_upper_bound} by taking $k=1,t_1=4$ and from the upper bound of $T(\{(n,4)\})$. Hence by Markov's  inequality, 
\begin{equation}
    \mathbb{P}(\lambda_{\mathrm{max}}(H)\ge 2-a_4+y)\le e^{-n\sqrt{y}}\mathbb{E}\big[e^{C_1 n \sqrt{(\lambda_{\mathrm{max}}-2-a_4)_+}}\big].
\end{equation}

In the sub-critical case, we take $d'=d$, $y=xW^{-\frac{4d}{6-d}}$ and $n\sim C_3W^{\frac{2d}{6-d}}x^{\frac{4-d}{4}}$, we have
\begin{equation}
        \mathbb{P}(\lambda_{\mathrm{max}}(H)\ge 2-a_4+x{W^{-\frac{4d}{6-d}}})\le C_1' \frac{N}{W^{\frac{6d}{6-d}}} e^{-C_2'|x|^{\frac{6-d}{4}}}.
\end{equation}
Similarly, in the super-critical case, we take $d'=(4-\epsilon)\land d$, $y=xN^{-\frac{2}{3}}$ and $n= C_4 N^{\frac{1}{3}}x$, we have
\begin{equation}
        \mathbb{P}(\lambda_{\mathrm{max}}(H)\ge 2-a_4+xN^{-\frac{2}{3}})\le C_1'' e^{-C_2''|x|^{\frac{3}{2}}}.
\end{equation}
In the case $d\ge 4$ and $W\ll L^{\frac{1}{3}}$, we take $d'=4-\delta$, $y=xW^{-\frac{4d}{2+\delta}}$, $n=C_5W^{\frac{2d}{2+\delta}}x^{\frac{\delta}{4}}$, and we obtain
\begin{equation}
        \mathbb{P}(\lambda_{\mathrm{max}}(H)\ge 2-a_4+xW^{-\frac{4d}{2+\delta}})\le C_{\delta, 1} N e^{-C_{\delta,2}|x|^{\frac{2+\delta}{4}}}.
\end{equation}

    This thus completes the proof.  

\end{proof}

\subsection{Proofs of Theorem  \ref{metatheoremsup}--\ref{metatheoremcri}: \texorpdfstring{$\beta=2$}{β=2} case}\label{sec:beta=2_main_thm}

\begin{proof}[Proof of Theorem \ref{metatheoremsup}: $\beta=2$]
   Take all $t_i=4, 8$  and $n_i=[\tau_iN^{\frac{1}{3}}]$ in  Theorem \ref{prop:super_4},   we have  
    \begin{equation}\label{equ:4.64}
        \mathbb{E}\big[\prod_{i=1}^{k}\tr (\mathcal{P}_{n_i}(H))^4\big]=(1+o(1))\mathbb{E}\big[\prod_{i=1}^{k}\tr (U_{n_i}(H_{\mathrm{GUE}}/2))^4\big]
    \end{equation}
    and 
    \begin{equation}\label{equ:P8=U8}
        \mathbb{E}\big[\prod_{i=1}^{k}\tr (\mathcal{P}_{n_i}(H))^8\big]=(1+o(1))\mathbb{E}\big[\prod_{i=1}^{k}\tr (U_{n_i}(H_{\mathrm{GUE}}/2))^8\big].
    \end{equation}
    
    Now we consider the case of $k=1$ first and put  $n=[\tau N^{\frac{1}{3}}]$.  Let $\lambda_1\le \lambda_2\le \cdots \le \lambda_N$ be eigenvalues of $H$ and let  $\delta>0$ be a fixed small number.   Divide eigenvalues into three regimes and we rewrite  
      \begin{equation}
        \begin{aligned}
             &\tr \big(\frac{1}{n}\mathcal{P}_{n}(H)\big)^8= \sum_{i=1}^{N} \big(\frac{1}{n}\mathcal{P}_{n}(\lambda_i)\big)^8=\Sigma_{1}+\Sigma_{2}+\Sigma_{3},
        \end{aligned}
    \end{equation}
    where 
      \begin{equation}
        \begin{aligned}
            \Sigma_{1}=\sum_{i: |\lambda_i|\le 2-a_4-N^{-\frac{5}{12}+\delta}} &\big(\frac{1}{n}\mathcal{P}_{n}(\lambda_i)\big)^8, \quad 
             \Sigma_{2}= \sum_{i:|\lambda_i|\ge 2-a_4+N^{-\frac{2}{3}+\delta}}\big(\frac{1}{n}\mathcal{P}_{n}(\lambda_i)\big)^8,\\
             & \Sigma_{3}= \sum_{i:2-a_4-N^{-\frac{5}{12}+\delta}<|\lambda_i|< 2-a_4+N^{-\frac{2}{3}+\delta}}\big(\frac{1}{n}\mathcal{P}_{n}(\lambda_i)\big)^8.
        \end{aligned}
    \end{equation}
    By Lemma \ref{lem:B.6_inside_upperbound},
    \begin{equation}
        \Sigma_1 \le N \cdot N^{-\frac{8}{3}} N^{4(-\frac{5}{12}+\delta)}=o(1).
    \end{equation}
    Here $N^{-\frac{5}{12}+\delta}$ can also be replaced by $N^{-\frac{2}{3}+\delta}$ for any fixed $\delta>0$ if we change the exponent $8$ to $2k$ for some sufficiently  large constant $k$. 
    
    We also throw away the eigenvalues  in the second sum    $\Sigma_2$, as Theorem \ref{thm:upper_bound_largest_eigenvalue} shows that the probability of finding eigenvalues in $[2-a_4+N^{\frac{2}{3}+\delta},\infty)$ is exponentially small.
    
    For the third sum $\Sigma_3$,
    by Lemma \ref{lem:B.5} and \ref{lem:B.9_Chebyshev}, we need to  introduce rescaling variables  $\lambda_i=2-a_4+N^{-2/3}y_i$.   Combining equation \eqref{equ:4.64} with \eqref{equ:P8=U8} and applying the continuity theorem (Theorem \ref{prop:continuity_thm}), we conclude that in the super-critical case, all $k$-point correlation functions of the RBM (Definition \ref{defmodel}) converge weakly to their GUE  counterparts. This approach follows the pioneering work of Soshnikov  \cite{soshnikov1999universality}. 
\end{proof}

\begin{proof}[Proof of Theorem \ref{metatheoremsub}:  $\beta=2$]
    Use  the sub-critical part of Theorem \ref{prop:summery_prop} and we   know that
    \begin{equation}
        \mathbb{E}\big[\prod_{i=1}^{k}\tr \mathcal{P}_{n_i}(H)\big]=(1+o(1))\prod_{i=1}^{k}\mathbb{E}\big[\tr \mathcal{P}_{n_i}(H))\big].
    \end{equation}
    On the other hand, by Proposition \ref{prop:5.4},  
    \begin{equation}
    \mathbb{E}\big[\tr \mathcal{P}_{n_i}(H))\big]=(1+o(1))T(n_i)=(1+o(1))\frac{N}{W^{\frac{6d}{6-d}}} n_i \psi_1(\frac{n_i}{W^{\frac{2d}{6-d}}}),
    \end{equation}
    from which the desired result  immediately follows.  
\end{proof}

\begin{proof}[Proof of Theorem \ref{metatheoremcri}: $\beta=2$]
    The proof is immediate by taking all $t_i=1$ in Theorem \ref{prop:critical_4}.
\end{proof}

\section{Tadpole diagram renormalization}\label{sec:tadpole_diagram}

\subsection{Renormalization polynomials}\label{sec:renorm_poly}

To attack the tadpole  divergence problem, we introduce  a loop-free    event starting at the  $i$-th step  with cutoff parameter $R$  
\begin{equation} \label{Lnotation1}
    \mathbb{1}(\mathcal{L}\{i;R\})=\prod_{l=3}^{R}(1-\mathbb{1}_{i,2l}), \quad \mathbb{1}_{i,2l}=\prod_{t=i}^{i+l}\mathbb{1}(x_t=x_{t+l}),
\end{equation}
where   $\mathbb{1}_{i,2l}=0$ denotes a loop in which every vertex is traveled exactly two times and   $\mathbb{1}_{i,2l}=0$ by convention when   $i+2l$ exceeds  the index   of $x_i$. We also introduce   the loop-free event in an interval 
\begin{equation} \label{Lnotation2}
     \mathbb{1}\left(\mathcal{L}\left\{[t,s];R\right\}\right)=\prod_{i=t}^s\mathbb{1}(\mathcal{L}\{i;R\}).
\end{equation}

Recalling the four matrices $(\Phi_1)_{xy}, (\Phi_3)_{xy}, (\Phi_5)_{xy}$ and $(\Phi_7)_{xy}$ defined in \eqref{equ:2.12}, and  the notation    $\mathbf{b}^{(s)}=(b_1, \ldots, b_s)$ with nonnegative integers and    $|\mathbf{b}^{(s)}|=\sum_{i= 1}^{s}b_i$. Similar to Definition \ref{non-backtracking power}, we have 

\begin{definition} \label{non-backtracking R-power}

The   almost   nonbacktracking power of matrix $H$  is said to be a  loop-free   matrix with truncated parameter $R$,  denoted  by $\mathcal{V}^{(R)}_{n}$, if   the $(x,y)$ entry   
\begin{equation}\label{equ:V_n_def}
    (\mathcal{V}^{(R)}_{n})_{xy}=\sum_{s\ge 1}\sum_{|\mathbf{b}^{(s)}|=n}\sum_{\{x_i\}_{i=0}^s}\delta_{x_0,x}\delta_{x_s,y}\mathbb{1}(\mathcal{L}\{[0,s];R\})\prod_{i=0}^{s-2}\mathbb{1}({x_i\ne x_{i+2}})\prod_{i=1}^{s}(\Phi_{b_i})_{x_{i-1}x_{i}},
\end{equation}
where the sum $\sum_{|\mathbf{b}^{(s)}|=n}$ is taken  over all   $b_{i}\in\{1,3\}$, and   
  $\mathcal{V}^{(R)}_n=0$ for any integer  $n<0$ and $\mathcal{V}_{1}^{(R)}=\mathbb{I}$.  In the special case of $n=0$, the sum should be  taken over $s$ from $s=0$ and the associated term $\delta_{x,y}$ should be added.
   We also introduce two matrices by 
\begin{equation}
    (\underline{\Phi_5 \mathcal{V}_n^{(R)}})_{xy}:=\sum_{s\ge 1}\sum_{|\mathbf{b}^{(s)}|=n,b_1=5}\sum_{\{x_i\}_{i=0}^s}\delta_{x_0,x}\delta_{x_s,y}\mathbb{1}(\mathcal{L}\{[0,s];R\})\prod_{i=0}^{s-2}\mathbb{1}({x_i\ne x_{i+2}})\prod_{i=1}^{s}(\Phi_{b_i})_{x_{i-1}x_{i}},
\end{equation} 
and
\begin{equation}
    (\underline{\Phi_7 \mathcal{V}_n^{(R)}})_{xy}:=\sum_{s\ge 1}\sum_{|\mathbf{b}^{(s)}|=n,b_1=7}\sum_{\{x_i\}_{i=0}^s}\delta_{x_0,x}\delta_{x_s,y}\mathbb{1}(\mathcal{L}\{[0,s];R\})\prod_{i=0}^{s-2}\mathbb{1}({x_i\ne x_{i+2}})\prod_{i=1}^{s}(\Phi_{b_i})_{x_{i-1}x_{i}}.
\end{equation} where the sum is over all other $b_{i}\in \{1,3\}$ for $i>1$. 
\end{definition}

\begin{lemma}\label{lem:lemma4.2}
Let    $a_4=\sum_{y\in \Lambda_L}|H_{xy}|^4$, we have the recurrence relation 
    \begin{equation}\label{equ:V_n-relation}
        H\mathcal{V}^{(R)}_{n-1}=\mathcal{V}^{(R)}_n+\mathcal{V}^{(R)}_{n-2}-a_4\mathcal{V}^{(R)}_{n-4}+\sum_{l=3}^{3R}a_{2l}\mathcal{V}^{(R)}_{n-2l}+\underline{\mathcal{E}^{(1)}_n}+\underline{\mathcal{E}^{(2)}_n}+\underline{\mathcal{E}^{(3)}_n}.
    \end{equation}
   Here     for $t\ge 3$ 
\begin{equation}\label{equ:4.13}
a_{2t}=\sum_{l=3}^{R}\sum_{|\mathbf{b}^{(2s)}|=2t,b_1=1}\sum_{\{x_i\}_{i=0}^{2l}}\delta_{x_0,x}\mathbb{1}_{0,2l}\prod_{i=3}^{l-1}(1-\mathbb{1}_{0,2i})\prod_{i=0}^{2l-2}\mathbb{1}(x_i\ne x_{i+2})\prod_{i=1}^{2l}(\Phi_{b_i})_{x_ix_{i+1}},
\end{equation}
 and    the three error terms  
\begin{equation}
\underline{\mathcal{E}^{(1)}_n}:=\underline{(\Phi_{5}\mathcal{V}_{n-5}^{(R)})}+\underline{(\Phi_{7}\mathcal{V}_{n-7}^{(R)})},
\end{equation}
\begin{equation}
\begin{aligned}
    (\underline{\mathcal{E}^{(2)}_{n}})_{xy}&:=-\sum_{s\ge 1}\sum_{|\mathbf{b}^{(s)}|=n-1}\sum_{\{x_i\}_{i=1}^{s+1}}\delta_{x,x_2}\delta_{x_{s+1},y}(\Phi_1)_{xx_1}\big(1-\mathbb{1}(\mathcal{L}\{1;R\})\big)\\
    &\qquad \qquad \qquad \cdot \mathbb{1}(\mathcal{L}\{[2,s+1];R\})\prod_{i=1}^{s-1}\mathbb{1}({x_i\ne x_{i+2}})\prod_{i=1}^{s}(\Phi_{b_i})_{x_{i}x_{i+1}},
\end{aligned}
\end{equation}
\begin{equation}
    (\underline{\mathcal{E}^{(3)}_n})_{xy}=-\sum_{s\ge 1}\sum_{|\mathbf{b}^{(s)}|=n-1}\sum_{\{x_i\}_{i=0}^{s+1}}\delta_{x_0,x}\delta_{x_{s+1},y}\sum_{l=3}^{R}\mathbb{1}_{0,2l}\prod_{i=3}^{l-1}(1-\mathbb{1}_{0,2i})B_{2l}\,(\Phi_1)_{x_0x_1}\prod_{i=1}^{s}(\Phi_{b_i})_{x_{i}x_{i+1}},
\end{equation}
with  
\begin{equation}
    B_{2l}=\prod_{i=0}^{2l-2}\mathbb{1}(x_i\ne x_{i+2})\prod_{i=2l}^{s-1}\mathbb{1}(x_i\ne x_{i+2})\mathbb{1}(\mathcal{L}\{[2l,s+1];R\})\Big(1-\mathbb{1}(x_{2l-1}\ne x_{2l+1})\prod_{i=1}^{2l-1}\mathbb{1}(\mathcal{L}\{i;R\})\Big).
\end{equation}
\end{lemma}

\begin{proof}
For simplicity,   we omit the superscript $R$ of $\mathcal{V}^{(R)}$. By the definition in \eqref{equ:V_n_def}, when $n\geq 2$  we change the range of index and obtain  
\begin{equation}
\begin{aligned}
    (\mathcal{V}_{n-1})_{xy}&=\sum_{s\ge 1}\sum_{|\mathbf{b}^{(s)}|=n-1}\sum_{\{x_i\}_{i=0}^s}\delta_{x_0,x}\delta_{x_s,y}\mathbb{1}(\mathcal{L}\{[0,s];R\})\prod_{i=0}^{s-2}\mathbb{1}({x_i\ne x_{i+2}})\prod_{i=1}^{s}(\Phi_{b_i})_{x_{i-1}x_{i}}\\
    &=\sum_{s\ge 1}\sum_{|\mathbf{b}^{(s)}|=n-1}\sum_{\{x_i\}_{i=1}^{s+1}}\delta_{x_1,x}\delta_{x_s,y}\mathbb{1}(\mathcal{L}\{[1,s+1];R\})\prod_{i=1}^{s-1}\mathbb{1}({x_i\ne x_{i+2}})\prod_{i=1}^{s}(\Phi_{b_i})_{x_{i}x_{i+1}}.
    \end{aligned}
\end{equation} 
By the identity 
\begin{equation}
    1=\mathbb{1}(x_0\ne x_2)\mathbb{1}(\mathcal{L}\{0;R\})+\mathbb{1}(x_0=x_2)+\mathbb{1}(x_0\ne x_2)(1-\mathbb{1}(\mathcal{L}\{0;R\})),
\end{equation}
 split the summation into three cases  and we can rewrite the sum into three parts   
\begin{equation}\label{equ:4.11}
   \begin{aligned} (H\mathcal{V}_{n-1})_{xy}&=\sum_{s\ge 1}\sum_{|\mathbf{b}^{(s)}|=n-1}\sum_{\{x_i\}_{i=0}^{s+1}}\delta_{x_0,x}\delta_{x_{s+1},y}\\
   &\times \mathbb{1}(\mathcal{L}\{[1,s+1];R\})\prod_{i=1}^{s-1}\mathbb{1}({x_i\ne x_{i+2}})(\Phi_1)_{x_0x_1}\prod_{i=1}^{s}(\Phi_{b_i})_{x_{i}x_{i+1}}\\
   &:=\Sigma^{(i)}+\Sigma^{(ii)}+\Sigma^{(iii)}.
   \end{aligned}
\end{equation}

\begin{itemize}
    \item[\textbf{Case (i)}:] the first sum associated with the  factor  $\mathbb{1}(x_0\ne x_2)\mathbb{1}(\mathcal{L}\{0;R\})$. 
        Note that 
\begin{equation}
    \mathbb{1}(\mathcal{L}\{[0,s+1];R\})=\mathbb{1}(\mathcal{L}\{0;R\})\mathbb{1}(\mathcal{L}\{[1,s+1];R\})
\end{equation}
and 
\begin{equation}
    \mathbb{1}(x_0\ne x_2)\prod_{i=1}^{s-1}\mathbb{1}({x_i\ne x_{i+2}})=\prod_{i=0}^{s-1}\mathbb{1}({x_i\ne x_{i+2}}),
\end{equation}
we see from 
\eqref{equ:4.11} that the first sum  can be rewritten as 
\begin{equation}\label{equ:4.15}
   \Sigma^{(i)}= \sum_{s\ge 2}\sum_{|\mathbf{b}^{(s)}|=n,b_1=1}\sum_{\{x_i\}_{i=0}^{s}}\delta_{x_0,x}\delta_{x_{s},y}\mathbb{1}(\mathcal{L}\{[0,s];R\})\prod_{i=0}^{s-2}\mathbb{1}({x_i\ne x_{i+2}})\prod_{i=1}^{s}(\Phi_{b_i})_{x_{i-1}x_{i}}.
\end{equation}
\item[\textbf{Case (ii):}] the second sum associated with the factor  $\mathbb{1}(x_0=x_2)=\delta_{x_0,x_2}$. 
Rewrite the sum  
\begin{equation}\label{equ:4.16}
\begin{aligned}
    \Sigma^{(ii)}
    &=\sum_{s\ge 1}\sum_{|\mathbf{b}^{(s)}|=n-1}\sum_{\{x_i\}_{i=2}^{s+1}}\delta_{x,x_2} \delta_{x_{s+1},y} \mathbb{1}(\mathcal{L}\{[2,s+1];R\})\prod_{i=2}^{s-1}\mathbb{1}({x_i\ne x_{i+2}})\prod_{i=2}^{s}(\Phi_{b_i})_{x_{i}x_{i+1}}\\
    &~~~~~~ ~\times   \sum_{x_1}\mathbb{1}(x_1\ne x_3)(\Phi_1)_{xx_1}(\Phi_{b_1})_{x_1x}\mathbb{1}(\mathcal{L}\{1;R\}),
\end{aligned}
\end{equation}
and note that   $\mathbb{1}(\mathcal{L}\{[2,s+1];R\})\prod_{i=2}^{s-1}\mathbb{1}({x_i\ne x_{i+2}})$ is not relevant to   $x_1$,  summing  over $x_1$ gives rise to 
\begin{equation}
\begin{aligned}
    &\sum_{x_1}\mathbb{1}(x_1\ne x_3)(\Phi_1)_{xx_1}(\Phi_{b_1})_{x_1x}\mathbb{1}(\mathcal{L}\{1;R\})\\
    &=\sum_{x_1}\mathbb{1}(x_1\ne x_3)(\Phi_1)_{xx_1}(\Phi_{b_1})_{x_1x}-\sum_{x_1}\mathbb{1}(x_1\ne x_3)(\Phi_1)_{xx_1}(\Phi_{b_1})_{x_1x}(1-\mathbb{1}(\mathcal{L}\{1;R\}))\\
    &=\delta_{b_1,1}-a_4\delta_{b_1,3}-\mathbb{1}(s\ge 2) (\Phi_1)_{xx_3}(\Phi_{b_1})_{x_3x}-\sum_{x_1}\mathbb{1}(x_1\ne x_3)(\Phi_1)_{xx_1}(\Phi_{b_1})_{x_1x}(1-\mathbb{1}(\mathcal{L}\{1;R\})).
\end{aligned}
\end{equation}
This suggests that $ \Sigma^{(ii)}$ in \eqref{equ:4.16} should be   divided into   $4$ cases. Correspondingly, put \begin{equation}
    \Sigma^{(ii)}= \Sigma^{(ii)}_1+\Sigma^{(ii)}_2+\Sigma^{(ii)}_3+\Sigma^{(ii)}_4.
\end{equation} 
    \item[\textbf{Case (ii.a):}]  the term  $\delta_{b_1,1}$.  By Definition \ref{non-backtracking R-power}, we see from  the sum \eqref{equ:4.16} that 
\begin{equation}
\begin{aligned}
   \Sigma^{(ii)}_1 &=\sum_{s\ge 1}\sum_{|\mathbf{b}^{(s)}|=n-1,b_1=1}\sum_{\{x_i\}_{i=2}^{s+1}}\delta_{x,x_2}\delta_{x_{s+1},y}\mathbb{1}(\mathcal{L}\{[2,s+1];R\})\prod_{i=2}^{s-1}\mathbb{1}({x_i\ne x_{i+2}})\prod_{i=2}^{s}(\Phi_{b_i})_{x_{i}x_{i+1}}\\
    &=\sum_{s\ge 0}\sum_{|\mathbf{b}^{(s)}|=n-2}\sum_{\{x_i\}_{i=0}^{s}}\delta_{x,x_0} \delta_{x_{s},y}\mathbb{1}(\mathcal{L}\{[0,s];R\})\prod_{i=0}^{s-2}\mathbb{1}({x_i\ne x_{i+2}})\prod_{i=1}^{s}(\Phi_{b_i})_{x_{i-1}x_{i}}=\mathcal{V}_{n-2}.
\end{aligned}
\end{equation}
Here the sum with $s=0$ has contribution $0$ whenever $n>2$.
\item[\textbf{Case (ii.b):}] the term  $-a_4\delta_{b_1,3}$. At this moment, by Definition \ref{non-backtracking R-power} the corresponding sum from \eqref{equ:4.16}  
\begin{equation}
\begin{aligned}
     &\Sigma^{(ii)}_2=-a_4\sum_{s\ge 1}\sum_{|\mathbf{b}^{(s)}|=n-1,b_1=3}\sum_{\{x_i\}_{i=2}^{s+1}}\delta_{x,x_2}\delta_{x_{s+1},y}\mathbb{1}(\mathcal{L}\{[2,s+1];R\})\prod_{i=2}^{s-1}\mathbb{1}({x_i\ne x_{i+2}})\prod_{i=2}^{s}(\Phi_{b_i})_{x_{i}x_{i+1}}\\
    &=-a_4\sum_{t\ge 0}\sum_{|\mathbf{b}^{(t)}|=n-4}\sum_{\{x_i\}_{i=0}^{t}}\delta_{x_0,x}\delta_{x_{t},y}\mathbb{1}(\mathcal{L}\{[0,t];R\})\prod_{i=0}^{t-2}\mathbb{1}({x_i\ne x_{i+2}})\prod_{i=1}^{t}(\Phi_{b_i})_{x_{i-1}x_{i}}=-a_4\mathcal{V}_{n-4},
\end{aligned}
\end{equation}
where we have substituted $t=s-1$ and shifted the subscript $b_{i}$ to $b_{i-1}$ and $x_{i}$ to $x_{i-2}$.
\item[\textbf{Case (ii.c):}] the term  $-(\Phi_1)_{xx_3}(\Phi_{b_1})_{x_3x}$. Let   $l=1+b_1+b_2$,  we see from $b_i\in\{1,3\}$ that 
\begin{equation}
    -\sum_{1+b_1+b_2=l} \mathbb{1}(s\ge 2)  (\Phi_1)_{x_2x_3}(\Phi_{b_1})_{x_3x_2}(\Phi_{b_2})_{x_2x_3}= \begin{cases}
        \mathbb{1}(s\ge 2) (\Phi_{3})_{x_2x_3},~&l=3,\\
        \mathbb{1}(s\ge 2) (\Phi_{5})_{x_2x_3},~&l=5,\\
       \mathbb{1}(s\ge 2)  (\Phi_{7})_{x_2x_3},~&l=7.
    \end{cases}
\end{equation}
Thus    the $\mathbb{1}(s\ge 2) \Phi_3$ term from $\Sigma^{(ii)}_2$ gives  rise to 
\begin{equation}\label{equ:4.21}
\begin{aligned}
    &\sum_{s\ge 2}\sum_{|\mathbf{b}^{(s)}|=n-1,b_1=b_2=1}\sum_{\{x_i\}_{i=2}^{s+1}}\delta_{x,x_2}\delta_{x_{s+1},y}\mathbb{1}(\mathcal{L}\{[2,s+1];R\})\prod_{i=2}^{s-1}\mathbb{1}({x_i\ne x_{i+2}})(\Phi_{3})_{x_2x_3}\prod_{i=3}^{s}(\Phi_{b_i})_{x_{i}x_{i+1}}\\
    &=\sum_{s\ge 1}\sum_{|\mathbf{b}^{(s)}|=n,b_1=3}\sum_{\{x_i\}_{i=0}^{s}}\delta_{x,x_0}\delta_{x_s,y}\mathbb{1}(\mathcal{L}\{[0,s];R\})\prod_{i=0}^{s-2}\mathbb{1}({x_i\ne x_{i+2}})\prod_{i=1}^{s}(\Phi_{b_i})_{x_{i-1}x_{i}}.
\end{aligned}
\end{equation}
Noting that  $n\geq 2$, we can start  the summation index $s$ from 1 and obtain $\mathcal{V}_n$ by summing  up the right-hand side of \eqref{equ:4.15} and \eqref{equ:4.21}, which gives  .  

And similarly, using of  Definition \ref{non-backtracking R-power} shows that  the other two  terms in total gives the first error term 
\begin{equation}
\underline{\mathcal{E}^{(1)}_n}:=\underline{(\Phi_{5}\mathcal{V}_{n-5})}+\underline{(\Phi_{7}\mathcal{V}_{n-7})}.
\end{equation}

\item[\textbf{Case (ii.d):}] the term  $-\sum_{x_1}\mathbb{1}(x_1\ne x_3)(\Phi_1)_{xx_1}(\Phi_{b_1})_{x_1x}(1-\mathbb{1}(\mathcal{L}\{1;R\}))$.  Obviously, we see from  the sum \eqref{equ:4.16} that the relevant sum is exactly the error $ (\underline{\mathcal{E}^{(2)}_{n}})_{xy}$.

In total, combination of  case (i) and case (ii) gives rise to 
\begin{equation}
    \mathcal{V}_n+\mathcal{V}_{n-2}-a_4\mathcal{V}_{n-4}+\underline{\mathcal{E}^{(1)}_n}+\underline{\mathcal{E}^{(2)}_n}.
\end{equation}

\item[\textbf{Case (iii):}] the term  $\mathbb{1}(x_0\ne x_2)(1-\mathbb{1}(\mathcal{L}\{0;R\}))$. Using the expansion   
\begin{equation}
\begin{aligned}
    1-\mathbb{1}(\mathcal{L}\{0;R\})&=1-\prod_{l=3}^R(1-\mathbb{1}_{0,2l})=\sum_{l=3}^{R}\mathbb{1}_{0,2l}\prod_{i=3}^{l-1}(1-\mathbb{1}_{0,2i}),
\end{aligned}
\end{equation}
we have 
\begin{equation}
\begin{aligned}
    &\mathbb{1}(x_0\ne x_2)(1-\mathbb{1}(\mathcal{L}\{0;R\}))\mathbb{1}(\mathcal{L}\{[1,s+1];R\})\prod_{i=1}^{s-1}\mathbb{1}(x_i\ne x_{i+2})\\
    &=\sum_{l=3}^{R}\mathbb{1}_{0,2l}\prod_{i=3}^{l-1}(1-\mathbb{1}_{0,2i})\prod_{i=1}^{s+1}\mathbb{1}(\mathcal{L}\{i;R\})\prod_{i=0}^{s-1}\mathbb{1}(x_i\ne x_{i+2}).
\end{aligned}
\end{equation}
So fix $l$ and  we get
\begin{equation}
\begin{aligned}
    &\prod_{i=1}^{s+1}\mathbb{1}(\mathcal{L}\{i;R\})\prod_{i=0}^{s-1}\mathbb{1}(x_i\ne x_{i+2})=\prod_{i=0}^{2l-2}\mathbb{1}(x_i\ne x_{i+2})\prod_{i=2l}^{s-1}\mathbb{1}(x_i\ne x_{i+2})\\
    &\mathbb{1}(\mathcal{L}\{[2l,s+1];R\})\mathbb{1}(x_{2l-1}\ne x_{2l+1})\prod_{i=1}^{2l-1}\mathbb{1}(\mathcal{L}\{i;R\})\\
    &=\prod_{i=0}^{2l-2}\mathbb{1}(x_i\ne x_{i+2})\prod_{i=2l}^{s-1}\mathbb{1}(x_i\ne x_{i+2})\mathbb{1}(\mathcal{L}\{[2l,s+1];R\})-\prod_{i=0}^{2l-2}\mathbb{1}(x_i\ne x_{i+2})\\
    &\prod_{i=2l}^{s-1}\mathbb{1}(x_i\ne x_{i+2})\mathbb{1}(\mathcal{L}\{[2l,s+1];R\})\Big(1-\mathbb{1}(x_{2l-1}\ne x_{2l+1})\prod_{i=1}^{2l-1}\mathbb{1}(\mathcal{L}\{i;R\})\Big)\\
    &:=A_{2l}-B_{2l}.
\end{aligned}
\end{equation}

  By  the definition,   the sum associated with $B_{2l}$ leads to $(\underline{\mathcal{E}^{(3)}_n})_{xy}$. This further implies that    the third part from  
\eqref{equ:4.11}   reads  
\begin{equation}
    \begin{aligned}  &\Sigma^{(iii)}=   (\underline{\mathcal{E}^{(3)}_n})_{xy}+\\
    &\sum_{s\ge 1}\sum_{|\mathbf{b}^{(s)}|=n-1}\sum_{\{x_i\}_{i=0}^{s+1}}\delta_{x_0,x}\delta_{x_{s+1},y} \sum_{l=3}^{R}\mathbb{1}_{0,2l}\prod_{i=3}^{l-1}(1-\mathbb{1}_{0,2i})A_{2l} (\Phi_1)_{x_0x_1}\prod_{i=1}^{s}(\Phi_{b_i})_{x_{i}x_{i+1}}.
    \end{aligned}
\end{equation}

At this time, it suffices to deal with the sum associated with $A_{2l}$. To end this, 
note that for event $A_{2l}$,  $x_{2l}=x_{0}=x$ because of the indictor function $\mathbb{1}_{0,2l}$, so the sum over $x_{i},~0\le i\le 2l-1$ are irrelevant to $x_{i},~2l+1\le i\le s+1$. Hence  
\begin{equation}
    \begin{aligned}
        &\sum_{s\ge 1}\sum_{|\mathbf{b}^{(s)}|=n-1}\sum_{\{x_i\}_{i=0}^{s+1}}\delta_{x_0,x}\delta_{x_{s+1},y}\sum_{l=3}^{R}\mathbb{1}_{0,2l}\prod_{i=3}^{l-1}(1-\mathbb{1}_{0,2i})A_{2l}(\Phi_1)_{x_0x_1}\prod_{i=1}^{s}(\Phi_{b_i})_{x_{i}x_{i+1}}\\
        &=\sum_{l=3}^{R}\sum_{\{b_i\}_{i=0}^{2l-1},b_0=1}\bigg(\sum_{\{x_i\}_{i=0}^{2l}}\delta_{x_0,x}\mathbb{1}_{0,2l}\prod_{i=3}^{l-1}(1-\mathbb{1}_{0,2i})\prod_{i=0}^{2l-2}\mathbb{1}(x_i\ne x_{i+2})\prod_{i=0}^{2l-1}(\Phi_{b_i})_{x_{i}x_{i+1}}\bigg) \bigg(\sum_{s-2l\ge 0}\\
        &\sum_{\{b_i\}_{i=2l}^{s+1}}\mathbb{1}({\sum_{i=0}^{s}b_i=n})\sum_{\{x_{i}\}_{i=2l}^{s+1}}\delta_{x_{2l},x}\delta_{x_{s+1},y}\prod_{i=2l}^{s-1}\mathbb{1}(x_i\ne x_{i+2})\mathbb{1}(\mathcal{L}\{[2l,s+1];R\})\prod_{i=2l}^{s}(\Phi_{b_i})_{x_ix_{i+1}}\bigg)\\
        &= \sum_{l=3}^{R}\sum_{\{b_i\}_{i=1}^{2l},b_1=1}\left(\sum_{\{x_i\}_{i=0}^{2l}}\mathbb{1}_{0,2l}\prod_{i=3}^{l-1}(1-\mathbb{1}_{0,2i})\prod_{i=0}^{2l-2}\mathbb{1}(x_i\ne x_{i+2})\prod_{i=1}^{2l}(\Phi_{b_i})_{x_{i-1}x_i}\right)(\mathcal{V}_{n-|\mathbf{b}^{(2l)}|})_{xy}\\
        &=\sum_{t=3}^{3R}a_{2t}(\mathcal{V}_{n-2t})_{xy},
    \end{aligned}
\end{equation}
where 
\begin{equation}
a_{2t}= \sum_{|\mathbf{b}^{(2s)}|=2t,b_1=1}\sum_{\{x_i\}_{i=0}^{2l}}\delta_{x_0,x}\mathbb{1}_{0,2l}\prod_{i=3}^{l-1}(1-\mathbb{1}_{0,2i})\prod_{i=0}^{2l-2}\mathbb{1}(x_i\ne x_{i+2})\prod_{i=1}^{2l}(\Phi_{b_i})_{x_ix_{i+1}}.
\end{equation}

\end{itemize}

Finally, combine Case  (i), Case  (ii) and Case  (iii), and we complete the proof.
\end{proof}

\begin{proposition}\label{Prop:4.2}
Introduce a family of renormalization polynomials 
\begin{equation} \label{generalpolynomial}
    \widetilde{\mathcal{P}}^{(R)}_{n}(H)=H\widetilde{\mathcal{P}}_{n-1}^{(R)}(H)-\widetilde{\mathcal{P}}^{(R)}_{n-2}(H)+a_4\widetilde{\mathcal{P}}^{(R)}_{n-4}(H)-\sum_{l=3}^{3R} a_{2l}\widetilde{\mathcal{P}}^{(R)}_{n-2l}(H),\quad n\ge 3,
\end{equation}
and 
\begin{equation}
    \widetilde{\mathcal{P}}^{(R)}_{0}(H)=I,~ \widetilde{\mathcal{P}}^{(R)}_{1}(H)=H, ~\widetilde{\mathcal{P}}^{(R)}_{2}(H)=H^2-I, ~ \widetilde{\mathcal{P}}_{m}(H)=0 \ (\forall m<0).
\end{equation}
Then we have
\begin{equation}\label{equ:P_n_expansion}
    \widetilde{\mathcal{P}}^{(R)}_{n}(H)=\sum_{t\ge 0}\sum_{\{l_i\}_{i=0}^{t},|\mathbf{l}|=n}\sum_{\{c_i\}_{i=1}^{t}}\mathcal{V}^{(R)}_{l_0}\underline{\mathcal{E}_{l_1}^{(c_1)}}\ldots \underline{\mathcal{E}_{l_t}^{(c_t)}},
\end{equation}
where the sum is over $|\mathbf{l}|:=\sum_{i=0}^{t}l_i=n$ with $l_0\ge 0$, $l_i\ge 1$ for $i\ge 1$   and $c_i\in \{1,2,3\}.$
\end{proposition}
\begin{proof}
    The proof is almost the same as in that  of Proposition \ref{prop:2.7}.  Introduce   $\widetilde{\mathcal{D}}_n$ through
    \begin{equation}
        \widetilde{\mathcal{P}}^{(R)}_n=\mathcal{V}_n^{(R)}+\widetilde{\mathcal{D}}_n,
    \end{equation}
    then  we have $\widetilde{\mathcal{D}}_m=0$ for any $m\leq 3$, and 
    \begin{equation}\label{equ:4.45}
        \widetilde{\mathcal{D}}_n(H)=H\widetilde{\mathcal{D}}_{n-1}(H)-\widetilde{\mathcal{D}}_{n-2}(H)+a_4\widetilde{\mathcal{D}}_{n-4}(H)-\sum_{l=3}^R a_{2l}\widetilde{\mathcal{D}}_{n-2l}(H)+\sum_{i=1}^{3}\underline{\mathcal{E}^{(i)}_n}.
    \end{equation}
    We next prove the following relation by induction
    \begin{equation}\label{equ:4.46}
    \widetilde{\mathcal{D}}_{n}(H)=\sum_{t\ge 1}\sum_{\{l_i\}_{i=0}^{t},|\mathbf{l}|=n}\sum_{\{c_i\}_{i=1}^{t}}\mathcal{V}^{(R)}_{l_0}\underline{\mathcal{E}_{l_1}^{(c_1)}}\ldots \underline{\mathcal{E}_{l_t}^{(c_t)}}.
\end{equation}

  Assuming \eqref{equ:4.46} holds true  up to   $n-1$, now consider 
\begin{equation}
    H\widetilde{\mathcal{D}}_{n-1}(H)=\sum_{t\ge 1}\sum_{\{l_i\}_{i=0}^{t},|\mathbf{l}|=n-1}\sum_{\{c_i\}_{i=1}^{t}}H\mathcal{V}^{(R)}_{l_0}\underline{\mathcal{E}_{l_1}^{(c_1)}}\ldots \underline{\mathcal{E}_{l_t}^{(c_t)}}.
\end{equation}
Using \eqref{equ:V_n-relation}, we have 
    \begin{equation}\label{equ:4.49}
    \begin{aligned}
        &H\widetilde{\mathcal{D}}_{n-1}(H)=\sum_{t\ge 1}\sum_{\{l_i\}_{i=0}^{t},|\mathbf{l}|=n-1}\sum_{\{c_i\}_{i=1}^{t}}\\
        &\qquad \bigg(\mathcal{V}^{(R)}_{l_0+1}+\mathcal{V}^{(R)}_{l_0-1}-a_4\mathcal{V}^{(R)}_{l_0-3}+\sum_{l=3}^{R}a_{2l}\mathcal{V}^{(R)}_{l_0+1-2l}+\underline{\mathcal{E}^{(1)}_{l_0+1}}+\underline{\mathcal{E}^{(2)}_{l_0+1}}+\underline{\mathcal{E}^{(3)}_{l_0+1}}\bigg)\underline{\mathcal{E}_{l_1}^{(c_1)}}\ldots \underline{\mathcal{E}_{l_t}^{(c_t)}}\\
        &=\widetilde{\mathcal{D}}_{n-2}-a_4 \widetilde{\mathcal{D}}_{n-4}+\sum_{l=3}^{R}a_{2l}\widetilde{\mathcal{D}}_{n-2l}\\&\quad+\sum_{t\ge 1}\sum_{\{l_i\}_{i=0}^{t},|\mathbf{l}|=n-1}\sum_{\{c_i\}_{i=1}^{t}}\left(\mathcal{V}^{(R)}_{l_0+1}+\underline{\mathcal{E}^{(1)}_{l_0+1}}+\underline{\mathcal{E}^{(2)}_{l_0+1}}+\underline{\mathcal{E}^{(3)}_{l_0+1}}\right)\underline{\mathcal{E}_{l_1}^{(c_1)}}\ldots \underline{\mathcal{E}_{l_t}^{(c_t)}}.
    \end{aligned}
    \end{equation}
    On the other hand, 
    \begin{equation}\label{equ:4.50}
        \begin{aligned}
            &\sum_{t\ge 1}\sum_{\{l_i\}_{i=0}^{t},|\mathbf{l}|=n-1}\sum_{\{c_i\}_{i=1}^{t}}\left(\mathcal{V}^{(R)}_{l_0+1}+\underline{\mathcal{E}^{(1)}_{l_0+1}}+\underline{\mathcal{E}^{(2)}_{l_0+1}}+\underline{\mathcal{E}^{(3)}_{l_0+1}}\right)\underline{\mathcal{E}_{l_1}^{(c_1)}}\ldots \underline{\mathcal{E}_{l_t}^{(c_t)}}\\
            &=\bigg(\sum_{t\ge 1}\sum_{\{l_i\}_{i=1}^{t},|\mathbf{l}|=n,l_0\ge 1}\sum_{\{c_i\}_{i=1}^{t}}+\sum_{t\ge 2}\sum_{\{l_i\}_{i=1}^{t},|\mathbf{l}|=n,l_0=0}\sum_{\{c_i\}_{i=1}^{t}}\bigg)\mathcal{V}^{(R)}_{l_0}\underline{\mathcal{E}_{l_1}^{(c_1)}}\ldots \underline{\mathcal{E}_{l_t}^{(c_t)}}\\
            &=\widetilde{\mathcal{D}}_{n}- \sum_{l_0+l_1=n,l_0=0}\sum_{\{c_i\}_{i=1}^{t}}\mathcal{V}^{(R)}_{l_0}\underline{\mathcal{E}_{l_1}^{(c_1)}}\ldots \underline{\mathcal{E}_{l_t}^{(c_t)}}\\
            &=\widetilde{\mathcal{D}}_{n}-\sum_{i=1}^3\underline{\mathcal{E}_{n}^{(i)}}.
        \end{aligned}
    \end{equation}
    Combining \eqref{equ:4.49} and \eqref{equ:4.50}, we see  that \eqref{equ:4.46} satisfies   the recurrence relation \eqref{equ:4.45}. By induction, we  complete the   proposition.
\end{proof}

\subsection{Upper bound estimates}\label{sec:real_upper_bound}

  Our goal is to obtain  upper bounds for the trace terms of  the expansion  in \eqref{equ:P_n_expansion} of  Proposition \ref{Prop:4.2}.
\begin{proposition}\label{Prop:upper_error}

If $n\ll W^d$ as $W\to \infty$, then  there is some constant $C>0$ such that for any nonnegative integer $t$,
    \begin{equation}
        \sum_{\{l_i\}_{i=0}^{t},|\mathbf{l}|=n}\left|\mathbb{E}\left[\tr \mathcal{V}^{(R)}_{l_0}\underline{\mathcal{E}_{l_1}^{(c_1)}}\ldots \underline{\mathcal{E}_{l_t}^{(c_t)}} \right]\right|\le \Big(C\frac{1+\delta_{d,2}(\log W)^2}{W^{2d}}\Big)^t \sum_{\{l_i\}_{i=0}^{t},|\mathbf{l}|=n} \mathbb{E}\left[\tr V^{(R)}_{l_0}V_{l_1}^{(R)}\ldots V_{l_t}^{(R)}\right],
    \end{equation}
    where 
\begin{equation} \label{VR}
    {V}^{(R)}_{l}:=\sum_{\{x_i\}_{i=0}^l}\delta_{x_0,x}\delta_{x_l,y}\mathbb{1}(\mathcal{L}\{[0,l];R\})\prod_{i=0}^{l-2}\mathbb{1}({x_i\ne x_{i+2}})\prod_{i=1}^{l}(\Phi_{1})_{x_{i-1}x_{i}}.
\end{equation}
\end{proposition}

To prove the above proposition, we  introduce three new matrices, instead of those in \eqref{equ:2.12}, 
\begin{equation}
    (\widetilde{\Phi}_3)_{xy}=|H_{xy}|^2H_{xy},~(\widetilde{\Phi}_5)_{xy}=2|H_{xy}|^4H_{xy},~(\widetilde{\Phi}_7)_{xy}=|H_{xy}|^6H_{xy}.
\end{equation}
We also introduce the $\Phi_3$-free sums  
\begin{equation}
    (\underline{\widetilde{\Phi}_5 V_{n-5}})_{xy}:=\sum_{\{x_j\}_{j=0}^{n-3}}\delta_{x,x_0}\delta_{x_{n-3},y}\mathbb{1}(\mathcal{L}\{[0,n-3];R\})\left[\prod_{i=0}^{n-5}\mathbb{1}(x_i\ne x_{i+2})\right](\widetilde{\Phi}_{5})_{x_0x_1}\prod_{i=1}^{n-4} (\Phi_{1})_{x_{i}x_{i+1}},
\end{equation}
\begin{equation}
    (\underline{\widetilde{\Phi}_7 V_{n-7}})_{xy}:=\sum_{\{x_j\}_{j=0}^{n-5}}\delta_{x,x_0}\delta_{x_{n-5},y}\mathbb{1}(\mathcal{L}\{[0,n-5];R\})\left[\prod_{i=0}^{n-7}\mathbb{1}(x_i\ne x_{i+2})\right](\widetilde{\Phi}_{7})_{x_0x_1}\prod_{i=1}^{n-6} (\Phi_{1})_{x_{i}x_{i+1}},
\end{equation}
and  the modified error terms 
\begin{equation}
\underline{{E}^{(1)}_n}:=\underline{(\widetilde{\Phi}_{5}{V}_{n-5})}+\underline{(\widetilde{\Phi}_{7}V_{n-7})},
\end{equation}
\begin{equation}
\begin{aligned}
    (\underline{E^{(2)}_{n}})_{xy}&:=\sum_{\{x_i\}_{i=1}^{n}}\delta_{x,x_2}\delta_{x_{n},y}(\Phi_1)_{xx_1}(1-\mathbb{1}(\mathcal{L}\{1;R\}))\mathbb{1}(\mathcal{L}\{[2,n];R\})\prod_{i=1}^{n-2}\mathbb{1}({x_i\ne x_{i+2}})\prod_{i=1}^{n-1}(\Phi_{1})_{x_{i}x_{i+1}},
\end{aligned}
\end{equation}
\begin{equation}\label{equ:E3_4.56}
    (\underline{E^{(3)}_n})_{xy}=\sum_{\{x_i\}_{i=0}^{n}}\delta_{x_0,x}\delta_{x_{n},y}\sum_{l=3}^{R}\mathbb{1}_{0,2l}\prod_{i=3}^{l-1}(1-\mathbb{1}_{0,2i})B_{2l}(\Phi_1)_{x_0x_1}\prod_{i=1}^{n-1}(\Phi_{1})_{x_{i}x_{i+1}},
\end{equation}
where
\begin{equation} \label{B2l}
    B_{2l}=\prod_{i=0}^{2l-2}\mathbb{1}(x_i\ne x_{i+2})\prod_{i=2l}^{n-2}\mathbb{1}(x_i\ne x_{i+2})\mathbb{1}(\mathcal{L}\{[2l,n];R\})\Big(1-\mathbb{1}(x_{2l-1}\ne x_{2l+1})\prod_{i=1}^{2l-1}\mathbb{1}(\mathcal{L}\{i;R\})\Big).
\end{equation}
\begin{lemma}If $n\ll W^d$ and $R\le n$, then we have
    \begin{equation}
        \sum_{\{l_i\}_{i=0}^{t},|\mathbf{l}|=n}\left|\mathbb{E}[\tr \mathcal{V}^{(R)}_{l_0}\underline{\mathcal{E}_{l_1}^{(c_1)}}\ldots \underline{\mathcal{E}_{l_t}^{(c_t)}}]\right|\le \big(1+O(\frac{n}{W^d})\big)\sum_{\{l_i\}_{i=0}^{t},|\mathbf{l}|=n}\mathbb{E}\big[\tr V^{(R)}_{l_0}{E_{l_1}^{(c_1)}}\ldots {E_{l_t}^{(c_t)}}\big].
    \end{equation}
\end{lemma}
\begin{proof}
    We just need to get rid of all $\Phi_3$ terms. The proof can be completed  in a very  similar way  as in  Lemma \ref{PcalV}.
\end{proof}
The following monotonicity lemma, similar to Lemma \ref{lem:2.7}, is also important.
\begin{lemma}\label{lem:lem4.5_monotonicity}
For   all   non-negative integers $ l_i,j_i$, $i=1,\ldots,k$, let $j=\sum_{i}^t j_i$, if   $j W^{-d} \to 0$ as $W\to \infty$,  then     we have 
    \begin{equation}\label{equ:lem4.5_monotonicity}
        \mathbb{E}\Big[\tr\prod_{i=1}^t  V_{l_i}^{(R)}\Big]\le \bigg(1+O\Big(\frac{Rt(1+\delta_{d,2}\log j)}{W^d}\Big)\bigg)\mathbb{E}\Big[ \tr\prod_{i=1}^t V_{l_i+j_i+j_{i+1}}^{(R)}\Big],
    \end{equation}
    where  $j_{t+1}:=j_1$.
\end{lemma}
\begin{figure}
    \centering
    \includegraphics[width=0.4\textwidth]{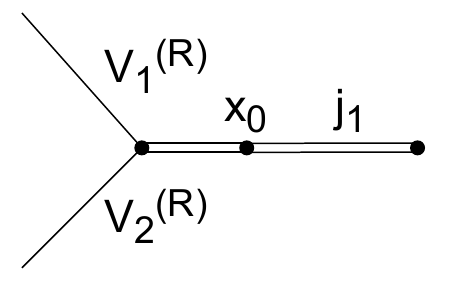}
    \caption{Add backtracking edge}
    \label{fig:add_backtracking}
\end{figure}
\begin{proof} The proof is very similar to Lemma \ref{lem:2.7}. 
We add a backtracking path of length $j_i$ between $V_{l_i}^{(R)}$ and $V_{l_{i+1}}^{(R)}$. Now we consider the probability that the new backtracking path breaks  the loop-erased structure of $V^{(R)}$; see Figure \ref{fig:add_backtracking}. By the definition of $\mathbb{1}(\mathcal{L}\{[0,n];R\})$, if the new backtracking path does not have any coincidence of the last $2R$ points of $V^{(R)}_{l_1},V_{l_2}^{(R)}$ (we denote by $X$ to be the set of $2R$ points), then the event $\mathbb{1}(\mathcal{L}\{[0,n];R\})$ does  not break. However, we denote the start point of the backtracking path as $x_0$, we can compute the exception for the number of coincidence as 
\begin{equation}
    \mathbb{E}[\#\{\text{Intersections}\}]=\sum_{x\in X}\sum_{l=1}^{j_1}p_{l}(x_0,x)\le C \sum_{l=1}^{j_1}Rl^{-\frac{d}{2}}W^{-d}=O\Big(\frac{R(1+\delta_{d,2}\log j_1)}{W^d}\Big).
\end{equation}

Thus we know that adding a backtracking path has at least $1-O\big(W^{-d}R(1+\delta_{d,2}\log j_1)\big)$ probability not breaking the loop-erased structure. Counting all $t$ path together, we arrive at\eqref{equ:lem4.5_monotonicity}.
\end{proof}

\begin{lemma}\label{lem:4.5} If $RW^{-d}(1+\delta_{d,2}\log R)\rightarrow0$ as $W\rightarrow \infty$, then there is some constant $C>0$  such that for any integer $t$,  
    \begin{equation}\label{equ:4.60}
        \sum_{\{l_i\}_{i=0}^{t},|\mathbf{l}|=n}\mathbb{E}[\tr V^{(R)}_{l_0}{E_{l_1}^{(c_1)}}\ldots {E_{l_t}^{(c_t)}}]\le \Big(C\frac{1+\delta_{d,2}\log W}{W^{2d}}\Big)^t \sum_{\{l_i\}_{i=0}^{t},|\mathbf{l}|=n} \mathbb{E}\left[\tr V^{(R)}_{l_0}V_{l_1}^{(R)}\ldots V_{l_t}^{(R)}\right].
    \end{equation}
\end{lemma}
With Lemma \ref{lem:4.5}, we can prove Proposition \ref{Prop:upper_error}.
\begin{proof}[Proof of Proposition \ref{Prop:upper_error}]
    By Lemma \ref{lem:4.5}, we have
    \begin{equation}
    \begin{aligned}
       \sum_{\{l_i\}_{i=0}^{t},|\mathbf{l}|=n}\left|\mathbb{E}\left[\tr \mathcal{V}^{(R)}_{l_0}\underline{\mathcal{E}_{l_1}^{(c_1)}}\ldots \underline{\mathcal{E}_{l_t}^{(c_t)}} \right]\right|&\le \sum_{\{l_i\}_{i=0}^{t},|\mathbf{l}|=n}\mathbb{E}[\tr V^{(R)}_{l_0}{E_{l_1}^{(c_1)}}\ldots {E_{l_t}^{(c_t)}}]\\
       \le &\Big(C\frac{1+\delta_{d,2}\log W}{W^{2d}}\Big)^t \sum_{\{l_i\}_{i=0}^{t},|\mathbf{l}|=n} \mathbb{E}\left[\tr V^{(R)}_{l_0}V_{l_1}^{(R)}\ldots V_{l_t}^{(R)}\right]. 
    \end{aligned}
    \end{equation}
    Thus we finish the proof.
\end{proof}
\begin{definition}\label{equ:prec_E}
    Let $U_{xy}$ be a path 
    from $x$ to $y$ with certain shape constraints, such as, the condition $\mathbb{1}_{0,2l}$ whose first $2l$ steps need to form a double loop, or $\underline{\widetilde{\Phi}_5V_{n-5}}$, $\underline{\widetilde{\Phi}_7V_{n-7}}$ whose first $5$ or $7$ steps traverse the same edge. We classify the index of the paths  into two classes:  
\begin{itemize}
    \item[(i)] \textbf{free index}, one index    not in the shape constraints; 
    \item[(ii)] \textbf{loop index}, one index    under the shape constraints.
\end{itemize}
    For example, in $\underline{\widetilde{\Phi}_5V_{n-5}}$,   the   constraints for $x_0,x_1,x_2,x_3$ are $\delta_{x_0,x_2}\delta_{x_1,x_3}\delta_{x_2,x_4}\delta_{x_3,x_5}$, while   in $\mathbb{1}_{0,2l}$   the   constraints for $x_0,x_1,\ldots x_{2l-1}$ are $\prod_{i=0}^{l}\delta_{x_i,x_{i+l}}$.

    We construct the forgetful functor  $\phi$ from each entry of a matrix $U$ by removing all  the loop indices and shape constraints, and keeping the remaining path, 
   denoted by $\phi(U)=V$. For example, $\phi(\underline{\widetilde{\Phi}_5V_{n-5}})=V_{n-4}^{(R)}$, $\phi(\mathbb{1}_{0,2l}V_{n})=V_{n-2l}^{(R)}$. At this time,  we also denote 
   $U\prec C_W\cdot V$, once   
   fixing    the free indices and  the total weight of the loop indices not greater than $C_W$.     For example, $$\underline{\widetilde{\Phi}_5V_{n-5}}\prec W^{-2d}V_{n-4}^{(R)}, \quad \mathbb{1}_{0,2l}V_{n}^{(R)}\prec l^{-\frac{d}{2}}W^{-d}V_{n-2l}^{(R)}.$$
\end{definition}
\begin{proof}[Proof of Lemma \ref{lem:4.5}]
By the   forgetful functor  $\phi$ in the above definition, we proceed to deal with the three errors respectively. 
\begin{itemize} 
    \item For the first error $\underline{E^{(1)}_{n}}$,   we have
\begin{equation}\label{equ:E1bound}
    \underline{{E}^{(1)}_n}=\underline{(\widetilde{\Phi}_{5}V_{n-5})}+\underline{(\widetilde{\Phi}_{7}V_{n-7})}\prec CW^{-2d} V_{n-4}^{(R)}+CW^{-3d}V_{n-6}^{(R)}
\end{equation} for some constant $C>0$. 
 Here the inequality should be understand as when we take the expectation and consider the counting of path. 
\item For the second  error $\underline{E^{(2)}_{n}}$,  by   \eqref{Lnotation1}  and \eqref{Lnotation2}  we have
\begin{equation}
\begin{aligned}
    &(1-\mathbb{1}(\mathcal{L}\{1;R\}))\mathbb{1}(\mathcal{L}\{[2,n];R\})\prod_{i=1}^{n-2}\mathbb{1}({x_i\ne x_{i+2}})\\
    &\le (\sum_{l=3}^R \mathbb{1}_{1,2l})\cdot \mathbb{1}(\mathcal{L}\{[2,n];R\})\prod_{i=1}^{n-2}\mathbb{1}({x_i\ne x_{i+2}})\\
    &\le \sum_{l=3}^R \mathbb{1}_{1,2l}\mathbb{1}(\mathcal{L}\{[2l+1,n];R\})\prod_{i=2l+1}^{n-2}\mathbb{1}({x_i\ne x_{i+2}}).
\end{aligned}
\end{equation}

On one hand, note that in the last inequality above there is no restriction on $x_i$ with all $i\le 2l$,   and   $x_1=x_{l+1}=x_{2l+1}$ under the condition $\mathbb{1}_{1,2l}$,  
sum over all $x_i$ for $i\le 2l$ and $i\ne 1,l+1,2l+1$ and we use the upper bound of the transition probability  to obtain (see Figure \ref{fig:E2case}) 
\begin{equation}
    \sum_{\{x_i\}_{i=2,\neq l+1,}^{2l}}\mathbb{1}_{1,2l}\delta_{x_{2},x}(\Phi_1)_{xx_{2l+1}}\prod_{i=1}^{2l}(\Phi_1)_{x_ix_{i+1}}\prec p_{1}(x_0,x_1)p_{l-1}(x_1,x_0)(\Phi_{1})_{xx_{2l+1}}\prec Cl^{-\frac{d}{2}}W^{-2d}(\Phi_{1})_{xx_{2l+1}}.
\end{equation}

On the other hand,  summing over  the subscript of $i\ge 2l+1$  gives us 
\begin{equation}
\begin{aligned}
    &\sum_{\{x_i\}_{i=2l+1}^{n}}\delta_{x_{n},y}(\Phi_1)_{xx_{2l+1}}\sum_{l=3}^{R}\mathbb{1}(\mathcal{L}\{[2l+1,n];R\})\prod_{i=2l+1}^{n-2}\mathbb{1}({x_i\ne x_{i+2}})\prod_{i=2l+1}^{n-1}(\Phi_{1})_{x_{i}x_{i+1}}\\
    &=\sum_{x_{2l+1}}(\Phi_1)_{xx_{2l+1}}(V^{(R)}_{n-2l-1})_{x_{2l+1}y}
    =(HV^{(R)}_{n-2l-1})_{xy}.
\end{aligned}
\end{equation}
Combine the two sums together and we have
\begin{equation}\label{equ:4.65}
\begin{aligned}
\underline{{E}^{(2)}_{n}}&\prec CW^{-2d}\sum_{l=3}^{R}l^{-\frac{d}{2}}(HV^{(R)}_{n-2l-1}).
\end{aligned}
\end{equation}

With \eqref{VR} in mind,  we can rewrite the product $HV^{(R)}_{n-2l-1}$ as a sum of three parts 
\begin{multline} 
\label{VR-2}
    (H{V}^{(R)}_{n-2l-1})_{xy}= \sum_{\{x_i\}_{i=0}^{n-2l-1}}
    \delta_{x_0,x}\delta_{x_{n-2l},y}  (\Phi_{1})_{x_{0}x_{1}} \\   
   \cdot 1 \cdot \mathbb{1}(\mathcal{L}\{[1,n-2l-1];R\})\prod_{i=1}^{n-2l-2}\mathbb{1}({x_i\ne x_{i+2}})\prod_{i=1}^{l}(\Phi_{1})_{x_{i}x_{i+1}},
\end{multline} 
where the constant $1$ can be 
  split as  
\begin{equation}
    1=\mathbb{1}(x_0\ne x_2)\mathbb{1}(\mathcal{L}\{0;R\})+\mathbb{1}(x_0=x_2)+\mathbb{1}(x_0\ne x_2)(1-\mathbb{1}(\mathcal{L}\{0;R\})).
\end{equation}
The  sum associated with the first  term gives $V^{(R)}_{n-2l}$,  and the sum  over $x_1$  relevant to  the second term can be controlled   by $V^{(R)}_{n-2l-2}$ up to the relation $\prec$. However, in order  to deal with   the third term, we use the simple inequality   
\begin{equation}
    \mathbb{1}(x_0\ne x_2)(1-\mathbb{1}(\mathcal{L}\{0;R\}))\le \sum_{l'=3}^R\mathbb{1}_{0,2l'},
\end{equation}
to obtain 
\begin{equation}
    \sum_{l=3}^{R}\frac{1}{l'^{\frac{d}{2}}W^d}V^{(R)}_{n-2l-2l'}.
\end{equation}
This case gives
\begin{equation}
    CW^{-2d}\sum_{l=3}^Rl^{-\frac{d}{2}}l'^{-\frac{d}{2}}W^{-d}V^{(R)}_{n-2l-2l'}\prec CW^{-2d}\sum_{l=3}^{2R}l^{-\frac{d}{2}}(V^{(R)}_{n-2l})
\end{equation}

In summary, with the discussion above and  after some simple calculations,  we see from   \eqref{equ:4.65} that 
\begin{equation}\label{equ:E2bound}
    \underline{{E}^{(2)}_{n}}\prec CW^{-2d}\sum_{l=3}^{2R}l^{-\frac{d}{2}}(V^{(R)}_{n-2l}).
\end{equation} 
\item For the third error  term
$\underline{E^{(3)}_{n}}$, we first give an upper bound for $B_{2l}$  given in \eqref{B2l} as follows: 
\begin{equation}
\begin{aligned}
    B_{2l}&=\prod_{i=0}^{2l-2}\mathbb{1}(x_i\ne x_{i+2})\prod_{i=2l}^{n-2}\mathbb{1}(x_i\ne x_{i+2})\mathbb{1}(\mathcal{L}\{[2l,n];R\})\Big(1-\mathbb{1}(x_{2l-1}\ne x_{2l+1})\prod_{i=1}^{2l-1}\mathbb{1}(\mathcal{L}\{i;R\})\Big)\\
    &\le \prod_{i=2l}^{n-2}\mathbb{1}(x_i\ne x_{i+2})\mathbb{1}(\mathcal{L}\{[2l,n];R\})\Big(1-\mathbb{1}(x_{2l-1}\ne x_{2l+1})\prod_{i=1}^{2l-1}\mathbb{1}(\mathcal{L}\{i;R\})\Big)\\
    &\le \prod_{i=2l}^{n-2}\mathbb{1}(x_i\ne x_{i+2})\mathbb{1}(\mathcal{L}\{[2l, n];R\})\Big(\delta_{x_{2l-1},x_{2l+1}}+\sum_{i=1}^{2l-1}\sum_{t=3}^{R}\mathbb{1}_{i,2t}\Big).
\end{aligned}
\end{equation}
Next, according to the  last equality  we  need to study  the sum \eqref{equ:E3_4.56} respectively in   
Case (i) $\delta_{x_{2l-1},x_{2l+1}}$, and  Case  (ii) $\mathbb{1}_{i,2t}$.  
\begin{itemize}
    \item[\textbf{Case(i):}] Sum over all possible $x_i$, $i=0,\ldots, 2l-1$, and  we have  
    \begin{equation}
        \sum_{x_0,\ldots x_{2l-2}}\mathbb{1}_{0,2l}\prod_{i=0}^{2l-1}(\Phi_1)_{x_ix_{i+1}}\delta_{x_{2l-1},x_{2l+1}}=p_{l-1}(x_{2l-1},x_{2l+1})p_1(x_{2l-1},x_{2l+1})\le Cl^{-\frac{d}{2}}W^{-2d},
    \end{equation}
    from which   the sum involving  $\delta_{x_{2l-1},x_{2l+1}}$ can be controlled by, up to the relation   $\prec$,  
    \begin{equation}
        \sum_{l=3}^{R}Cl^{-\frac{d}{2}}W^{-2d}V_{n-2l}^{(R)}.
    \end{equation}
    \item[\textbf{Case(ii):}] For any $l$, we claim the following estimate
     \begin{equation}\label{equ:B.48}
        \sum_{x_0,\ldots x_{2l-1}}\mathbb{1}_{0,2l}\sum_{i=1}^{2l-1}\sum_{t=3}^{R}\mathbb{1}_{i,2t}\prod_{i=0}^{2l-1}(\Phi_1)_{x_ix_{i+1}}\le Cl^{-\frac{d}{2}}W^{-2d}.
    \end{equation}
    To prove it we need to split the sum $\sum_{i=1}^{2l-1}\sum_{t=3}^R \mathbb{1}_{i,2t}$  into three  cases: (a) $i\ne 2l-1, t\ne l$, (b) $i\ne 2l-1,t=l$ and (c) $i=2l-1$, as   illustrated in Figure \ref{fig:case2a} and \ref{fig:case2c}.
    
    \begin{itemize}
        \item[\textbf{Case (ii.a):}] In this case,  we have $\mathbb{1}_{i,2t}\le \delta_{x_{i},x_{i+t}}\delta_{x_{i+1},x_{i+t+1}}.$ As shown in Figure \ref{fig:case2a}, there are further  different cases according to  $i+t<2l$ or $i+t\ge 2l$. In the case $i+t<2l$, without loss of generality we let $i<l$ and $x_i$ split the loop into two loops of length $l_1,l_2$ such that $l_1+l_2=l-1$
        \begin{equation}
        \begin{aligned}
         \sum_{x_0,\ldots x_{2l-1}}\mathbb{1}_{0,2l}\mathbb{1}_{i,2t}\prod_{i=0}^{2l-1}(\Phi_1)_{x_ix_{i+1}}&\le \sum_{x_0,\ldots x_{2l-1}}\mathbb{1}_{0,2l}\delta_{x_{i},x_{i+t}}\delta_{x_{i+1},x_{i+t+1}}\prod_{i=0}^{2l-1}(\Phi_1)_{x_ix_{i+1}}\\
         &\le \sum_{x_{i},x_{i+1}}p_{i}(x_0,x_i)p_{l_1-i}(x_0,x_i)p_{l_2}(x_{i+1},x_{i+1})p_1(x_i,x_{i+1})^2\\
         &\le W^{-d}\sum_{x_{i},x_{i+1}}p_{i}(x_0,x_i)p_{l_1-i}(x_0,x_i)p_{l_2}(x_{i+1},x_{i+1})p_1(x_i,x_{i+1})\\
         &\le \sum_{x_{i}}p_{i}(x_0,x_i)p_{l_1-i}(x_0,x_i)p_{l_2}(0,0)\\
         &\le C_1W^{-3d}l_1^{-\frac{d}{2}}l_{2}^{-\frac{d}{2}}.
        \end{aligned}
        \end{equation} 
        A similar   result holds  in the case  that $i+t>2l$. Together,   we know from  \eqref{equ:B.48} that 
        \begin{equation}
        \begin{aligned}
            &\sum_{x_0,\ldots x_{2l-1}}\mathbb{1}_{0,2l}\sum_{=1}^{2l-2}\sum_{t=3,t\ne l}^R\mathbb{1}_{i,2t}\prod_{i=0}^{2l-1}(\Phi_1)_{x_ix_{i+1}}\\
            &\le C\sum_{l_1=1}^{l-1}\sum_{t\le R}W^{-3d}l_1^{-\frac{d}{2}}(l-l_1)^{-\frac{d}{2}}\le \frac{CR(1+\delta_{d,2}\log R)}{W^d}l^{-\frac{d}{2}}W^{-2d}.
        \end{aligned}
        \end{equation}
        
        \item[\textbf{Case (ii.b):}] In   this case we take $i\ge l$,  as illustrated in  the lower part of Figure \ref{fig:case2a}.  We proceed in a similar way to      \textbf{Case (ii.a)} to obtain  
        \begin{equation}
        \begin{aligned}
            &\sum_{x_0,\ldots x_{2l-1}}\mathbb{1}_{0,2l}\sum_{i=1}^{2l-2}\mathbb{1}_{i,2l}\prod_{i=0}^{2l-1}(\Phi_1)_{x_ix_{i+1}}\le \frac{C(1+\delta_{d,2}\log R)}{W^d}l^{-\frac{d}{2}}W^{-2d}.
        \end{aligned}
        \end{equation}
        \item[\textbf{Case (ii.c):}] In this case, there will be another loop starting  from $x_{2l-1}$, as shown in Figure \ref{fig:case2c}. We need to sum up  $x_i$ not only for $i=0,\ldots 2l-1$ but also   for $i=2l,\ldots,2l+2t-1$.  This  gives
        \begin{equation}
        \begin{aligned}
            &\sum_{x_0,\ldots x_{2l-1},\ldots x_{2l+2t-2}}\mathbb{1}_{0,2l}\sum_{t=3}^{R}\mathbb{1}_{2l-1,2t}\prod_{i=0}^{2l-1}(\Phi_1)_{x_ix_{i+1}}\\
            &\le \sum_{t=3}^R\sum_{x_0}p_{l-1}(x_0,x_{2l-1})p_{t-1}(x_0,x_{2l-1})p_{1}(x_0,x_{2l-1})^2\\
            &\le CW^{-2d}\sum_{t=3}^R\sum_{x_0}p_{l-1}(x_0,x_{2l-1})p_{t-1}(x_0,x_{2l-1})\\
            &\le CW^{-2d}\sum_{t=3}^Rp_{l+t-2}(x_{2l-1},x_{2l-1})
            \le CW^{-3d}\sum_{t=3}^R(l+t)^{-\frac{d}{2}}.
        \end{aligned}
        \end{equation}
    \end{itemize}

  With  the inequality \eqref{equ:B.48}, similar to the argument used in the second error term, combine     
  Case(i) and Case(ii) and  we obtain
\begin{equation}\label{equ:E3bound}
\begin{aligned}
    \underline{E^{(3)}_{n}}&\prec \sum_{l=3}^{R}\frac{C(1+\delta_{d,2}\log R)}{W^d}l^{-\frac{d}{2}}W^{-2d}V_{n-2l}^{(R)}+\sum_{l=3}^R\sum_{t=3}^R C(l+t)^{-\frac{d}{2}}W^{-3d}V_{n-2l-2t}^{(R)}\\
    &\prec \frac{1+\delta_{d,2}\log R}{W^d} \sum_{l=3}^{2R} Cl^{-\frac{d}{2}}W^{-2d}V_{n-2l}^{(R)}.
\end{aligned}
\end{equation}

\end{itemize}
\end{itemize}

Finally, for simplicity  we  just prove \eqref{equ:4.60} in a special case of $t=3$ and $c_1=1,c_2=2,c_3=3$.
By \eqref{equ:E1bound},\eqref{equ:E2bound} and \eqref{equ:E3bound}, we have
\begin{equation}
    \mathbb{E}[\tr V^{(R)}_{l_0}{E_{l_1}^{(1)}}{E_{l_2}^{(2)}}{E_{l_3}^{(3)}}]\le (C_1W^{-2d})^3
    \sum_{ t_1,t_2,t_3=3}^{2R} 
    t_1^{-\frac{d}{2}}t_2^{-\frac{d}{2}}t_3^{-\frac{d}{2}}\mathbb{E}\left[\tr V^{(R)}_{l_0}V_{l_1-2t_1}^{(R)}V_{l_2-2t_2}^{(R)}V_{l_3-2t_3}^{(R)}\right].
\end{equation}
By Lemma \ref{lem:lem4.5_monotonicity}, we have
\begin{equation}
    \mathbb{E}\left[\tr V^{(R)}_{l_0}V_{l_1-2t_1}^{(R)}V_{l_2-2t_2}^{(R)}V_{l_3-2t_3}^{(R)}\right]\le \big(1+O(\frac{R(1+\delta_{d,2}\log R)}{W^d})\big)^3\mathbb{E}\left[\tr V^{(R)}_{l_0+t_1}V_{l_1-t_1+t_2}^{(R)}V_{l_2-t_2+t_3}^{(R)}V_{l_3-t_3}^{(R)}\right].
\end{equation}
We also have 
\begin{equation}
    \sum_{\{l_i\}_{i=0}^{3},|\mathbf{l}|=n} \mathbb{E}\left[\tr V^{(R)}_{l_0+t_1}V_{l_1-t_1+t_2}^{(R)}V_{l_2-t_2+t_3}^{(R)}V_{l_3-t_3}^{(R)}\right]\le \sum_{\{l_i\}_{i=0}^{3},|\mathbf{l}|=n} \mathbb{E}\left[\tr V^{(R)}_{l_0}V_{l_1}^{(R)}V_{l_2}^{(R)}V_{l_3}^{(R)}\right],
\end{equation}
so  for any given $d>1$
\begin{equation}
\begin{aligned}
     \sum_{\{l_i\}_{i=0}^{3},|\mathbf{l}|=n}&\mathbb{E}[\tr V^{(R)}_{l_0}{E_{l_1}^{(1)}}{E_{l_2}^{(2)}}{E_{l_3}^{(3)}}]\le \Big(1+O\big(\frac{R(1+\delta_{d,2}\log R)}{W^d}\big)\Big)^3(CW^{-2d})^3\\
    &\times \sum_{t_1,t_2,t_3=3}^{R}(t_1t_2t_3)^{-\frac{d}{2}}\sum_{\{l_i\}_{i=0}^{3},|\mathbf{l}|=n} \mathbb{E}\left[\tr V^{(R)}_{l_0}V_{l_1}^{(R)}V_{l_2}^{(R)}V_{l_3}^{(R)}\right]\\
    &\le \Big(C\frac{1+\delta_{d,2}\log W}{W^{2d}}\Big)^3 \sum_{\{l_i\}_{i=0}^{3},|\mathbf{l}|=n} \mathbb{E}\left[\tr V^{(R)}_{l_0}V_{l_1}^{(R)}V_{l_2}^{(R)} V_{l_3}^{(R)}\right].
\end{aligned}
\end{equation}

Thus, this completes the proof of  the desired result.
\end{proof}

\begin{figure}[htbp]
    \centering
    \begin{minipage}{0.48\textwidth}
        \centering
        \includegraphics[width=\textwidth]{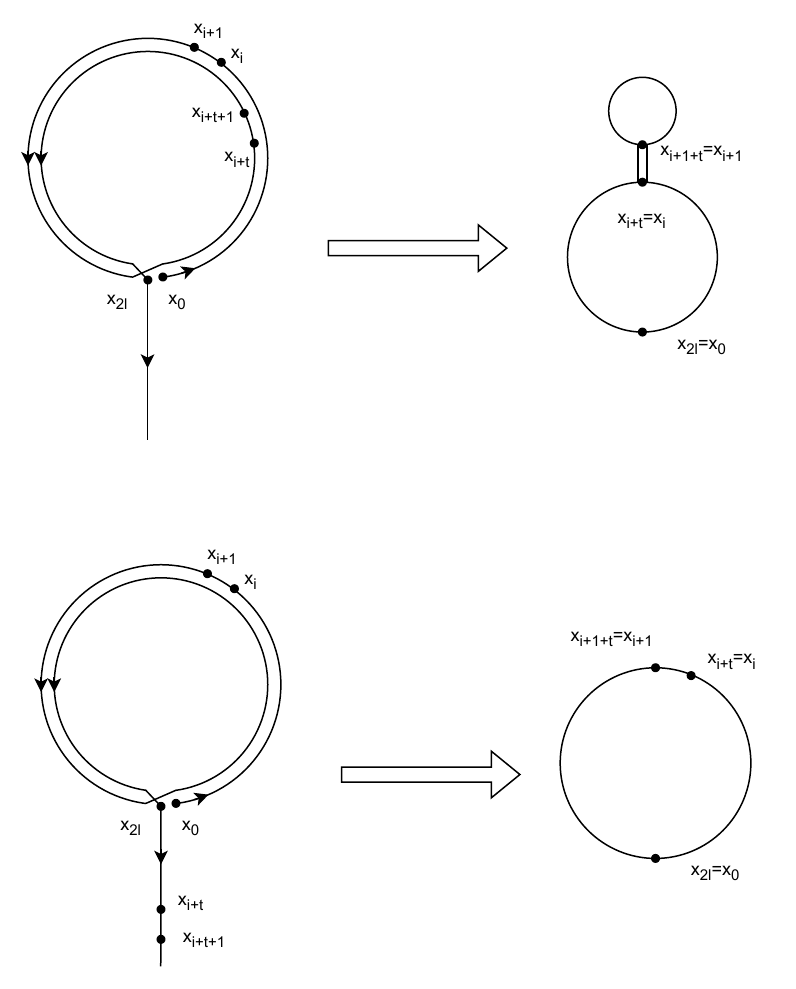}
        \caption{case(ii.a) example}
        \label{fig:case2a}
    \end{minipage}
    \hfill
    \begin{minipage}{0.48\textwidth}
        \centering
        \includegraphics[width=\textwidth]{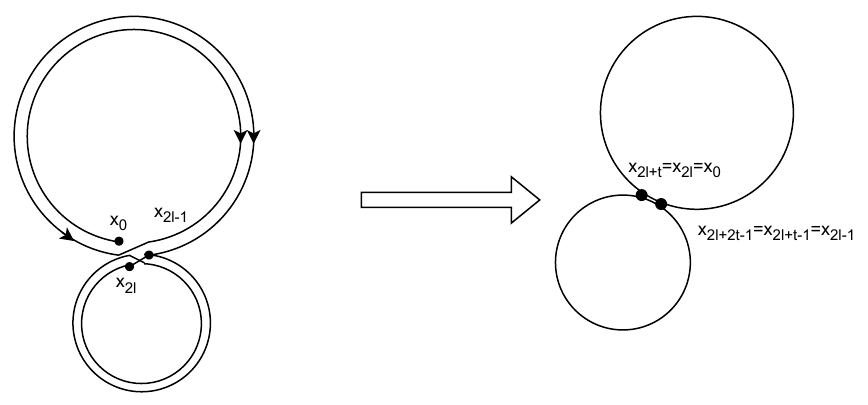}
        \caption{case(ii.c) example}
        \label{fig:case2c}
    \end{minipage}
\end{figure}
\begin{figure}[htbp]
    \centering
    \begin{minipage}{0.45\textwidth}
        \centering
        \includegraphics[width=0.9\textwidth]{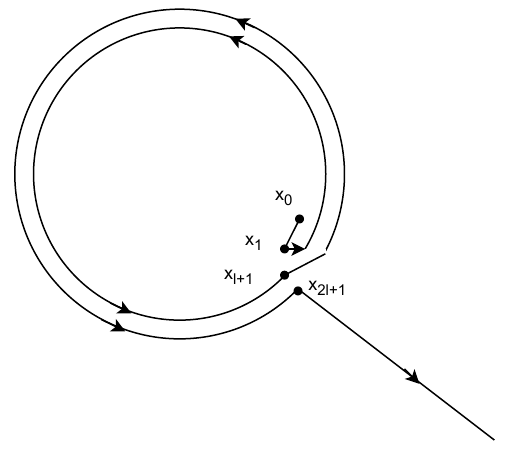}
        \caption{$E^{(2)}$ case}
        \label{fig:E2case}
    \end{minipage}
    \hfill
    \begin{minipage}{0.45\textwidth}
        \centering
        \includegraphics[width=0.9\textwidth]{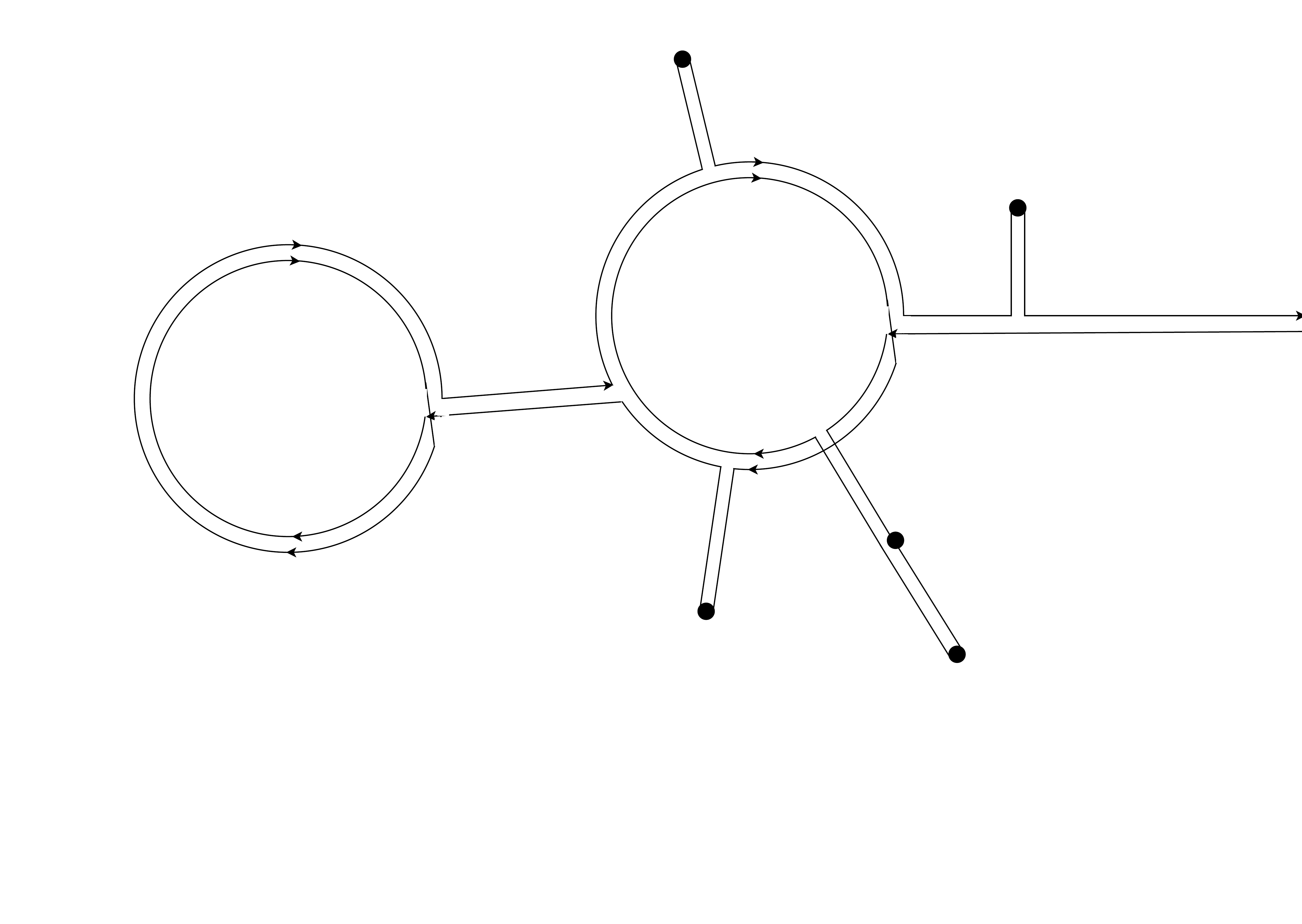}
        \caption{Diagram with extra tail edges}
        \label{fig:leg_diagram}
    \end{minipage}
\end{figure}

\subsection{Dominating polynomial moments }\label{sec:4.2}

We proceed as in almost the same way in  Section \ref{sec:P_powers} to  deal with  the diagram representation for the path in $\mathbb{E}[\tr V^{(R)}_{l_0}V^{(R)}_{l_1}\cdots V^{(R)}_{l_t}]$.

The diagrammatic reduction technique introduced in Section \ref{sec:diagram_reduction} and Section \ref{sec:P_powers} can be applied to the expectation value $\mathbb{E}[\tr V_{l_0}^{(R)}V_{l_1}^{(R)}\ldots V_{l_t}^{(R)}]$. The resulting diagrams correspond to those defined in Definition \ref{def:(k,t)diagram}, but with   stricter constraints on self-loops (see Figure \ref{fig:leg_diagram} for an illustrative example). Crucially, each piece $V_{l}^{(R)}$ inherently excludes self-loops shorter than length $R$. Consequently, the tadpole structure in Figure \ref{fig:leg_diagram} must have a minimum length of $R$.

For such a diagram, the diagram function may be  still defined as 
    \begin{equation}
    \begin{aligned}
        F^{(R)}_{\mathfrak{D}}(\{n_i\}_{i=1}^k)&:=\frac{1}{N}\sum_{w(e):\mathfrak{C}^{(R)}} \sum_{\iota:\bar{V}\rightarrow [N]}\prod_{\bar{e}\in \bar{E}}p_{w(\bar{e})}(\iota(u_{\bar{e}}),\iota(v_{\bar{e}})),
    \end{aligned}
    \end{equation}
    where $\mathfrak{C}^{(R)}$ is the linear system defined in Definition \ref{def:diagram} but with the  replacement of the  assumption \eqref{equ:2.56} by 
\begin{equation}\label{equ:4.89}
    w(e)\ge\begin{cases}
       R,~~~~\text{if $e$ is a self-loop};\\
        1,~~~~\text{if $e$ is not a tail edge}.
   \end{cases} 
\end{equation} 

After tadpole renormalization, all diagrammatic singularities are eliminated. The limiting behavior and uniform bounds of the renormalized diagram functions  
 are fully characterized by 
  theorem \ref{thm:homogeneous_theorem} and proposition \ref{thm:upper_bound_diagram_function}.

There is no singularity problem after the tadpole renormalization. The diagram function's limit behaviors and upper bounds are given by Theorem \ref{thm:homogeneous_theorem} and Proposition \ref{thm:upper_bound_diagram_function}. To be precise, 
\begin{itemize}
    \item In the subcritical and critical  cases, we choose $R=n$.
    This parameter choice explicitly prohibits self-loops in the diagrammatic expansion. In spatial dimensions  $d<4$, all remaining diagrams remain regular   by Proposition~\ref{propositionconvergence}.

    \item In the supercritical case, we take $R=\frac{L^2}{W^2}$. The renormalized diagram functions converge to identical limits as those governing GOE/GUE  ensembles. This universality persists despite the modified renormalization scale. 
\end{itemize}

For a diagram with $t$ extra edges and for $k=1$, we have $|E|=3s+2t-1$ and $|V|=2s+2t$.
 By Theorem \ref{thm:homogeneous_theorem}, we obtain for even $n$  
\begin{equation}
    F_{\mathfrak{D}}(n)=\begin{cases}
        CW^{d(-E+V-1)}n^{E-1-\frac{d}{2}(E-V+1)},~~~&n(\frac{W}{L})^2\ll 1;\\
        C n^{E-1}N^{V-E-1},~~~&n(\frac{W}{L})^2\gg 1.
    \end{cases}
\end{equation}
Furthermore, by Proposition \ref{Prop:upper_error} we have an additional factor $W^{-2dt}$ and 
\begin{equation}\label{equ:5.83}
    W^{-2dt}F_{\mathfrak{D}}(n)=\begin{cases}
        CW^{-d(s+2t)}n^{3s+2t-2-\frac{ds}{2}},~~~&n(\frac{W}{L})^2\ll 1;\\
        C W^{-2dt}n^{3s+2t-2}N^{-s},~~~&n(\frac{W}{L})^2\gg 1.
    \end{cases}
\end{equation}
By factoring out the scaling term  $(nW^{-d})^{2t}$
  in each  case, we observe that all diagram contributions vanish asymptotically when 
  $n\ll W^d$. 
  
  For general $k$, we can extend our definition of $k$-diagram to $k$-diagram with tail edges. Combine Proposition \ref{Prop:4.2}  and Proposition \ref{Prop:upper_error},  and we have indeed  proved the following corollary.

\begin{corollary}\label{coro:4.4}
For  $d<4$, if  $n\ll W^d$ as $W\to \infty$,  then 
    \begin{equation}
        \mathbb{E}\Big[\prod_{i=1}^k \tr \widetilde{\mathcal{P}}_{n_i}(H)\Big]=\big(1+o(1)\big)\mathbb{E}\Big[\prod_{i=1}^k\tr V_{n_i}^{(R)}\Big].
    \end{equation}
\end{corollary}
\begin{proof}
For simplicity, we only give the proof in  the case $k=1$ since the case  $k>1$ is similar. For $k=1$, we have 
\begin{equation}
    \mathbb{E}[\tr \widetilde{P}_{2n+1}(H)]=0,
\end{equation}
and 
\begin{equation}
\begin{aligned}
\mathbb{E}[\tr \widetilde{P}_{2n}(H)]&=\sum_{t\ge 0}\sum_{\{l_i\}_{i=0}^{t},|\mathbf{l}|=n}\sum_{\{c_i\}_{i=1}^{t}}\mathbb{E}[\tr \mathcal{V}^{(R)}_{l_0}\underline{\mathcal{E}_{l_1}^{(c_1)}}\ldots \underline{\mathcal{E}_{l_t}^{(c_t)}}]\\
&=\mathbb{E}[\tr \mathcal{V}^{(R)}_{2n}(H)]+\sum_{t\ge 1}\sum_{\{l_i\}_{i=0}^{t},|\mathbf{l}|=n}\sum_{\{c_i\}_{i=1}^{t}}\mathbb{E}[\tr \mathcal{V}^{(R)}_{l_0}\underline{\mathcal{E}_{l_1}^{(c_1)}}\ldots \underline{\mathcal{E}_{l_t}^{(c_t)}}].
\end{aligned}
\end{equation}
By Proposition \ref{Prop:upper_error}, we know that
\begin{equation}
\begin{aligned}
   &\sum_{t\ge 1}\sum_{\{l_i\}_{i=0}^{t},|\mathbf{l}|=n}\sum_{\{c_i\}_{i=1}^{t}}\mathbb{E}[\tr \mathcal{V}^{(R)}_{l_0}\underline{\mathcal{E}_{l_1}^{(c_1)}}\ldots \underline{\mathcal{E}_{l_t}^{(c_t)}}]\\
   &=\sum_{t\ge 1}(C\frac{1+\delta_{d,2}(\log W)^2}{W^d}\big)^t\sum_{\{l_i\}_{i=0}^{t},|\mathbf{l}|=n}\mathbb{E}\left[\tr V^{(R)}_{l_0}V_{l_1}^{(R)}\ldots V_{l_t}^{(R)}\right]. 
\end{aligned}
\end{equation}
By adopting the diagram expansion and using \eqref{equ:5.83}, we know that the above sum is negligible compared to $\mathbb{E}[\tr {V}^{(R)}_{2n}(H)]$.

Following the argument of Lemma \ref{lem:lemma2.7}, we know that 
\begin{equation}
    \mathbb{E}[\tr \mathcal{V}^{(R)}_{2n}(H)]=(1+O(\frac{n}{W^d}))\mathbb{E}[\tr {V}^{(R)}_{2n}(H)].
\end{equation}
Thus we finish the proof.
\end{proof}

\subsection{Proofs of Theorem  \ref{metatheoremsup}--\ref{metatheoremcri}: \texorpdfstring{$\beta=1$}{β=1} case}\label{sec:beta=1_main_thm}
Corollary \ref{coro:4.4} shows that the main term of mixed moment of $\widetilde{\mathcal{P}}_n(H)$ is the mixed moment of $V_n^{(R)}$, where the later term's asymptotics are given by the limit of diagram functions studied before, e.g. Theorem \ref{thm:homogeneous_theorem}. Hence similar results of Theorem \ref{prop:super_4} and Theorem \ref{prop:sub_4} apply for $\widetilde{\mathcal{P}}_n(x)$ case. We can  proceed as in  Section \ref{sec:P_powers} to give the proof.

\begin{theorem}\label{prop:super_4_P} Given any fixed integers  $k\ge 1$ and  $t_i\ge 1$ for $1\le i\le k$,  with the same assumption on  $W$ as in  Theorem \ref{metatheoremsup}, if  
    \begin{equation}
        n_1\le n_2\le \cdots \leq n_k\le \tau N^{\frac{1}{3}}
    \end{equation}
    for any fixed constant $\tau$  and  $t_1n_1+\cdots+t_kn_k$ is even, then   
    \begin{equation}
        \mathbb{E}\big[\prod_{i=1}^{k}\tr (\widetilde{\mathcal{P}}_{n_i}(H))^{t_i}\big]=\big(1+o(1)\big)\mathbb{E}\big[\prod_{i=1}^{k}\tr (U_{n_i}(H_{\mathrm{GOE}}/2))^{t_i}\big].
    \end{equation}
\end{theorem}
\begin{theorem}\label{prop:sub_4_P} 
Given any fixed integers  $k\ge 1$ and  $t_i\ge 1$ for $1\le i\le k$, assume that     $d<4$ and   $W\ll L^{1-\frac{d}{6}}$. If for any fixed constant $\tau$  and 
\begin{equation}
   1\le n_1\le n_2\le \cdots \le n_k\le \tau W^{\frac{2d}{6-d}}
\end{equation}
and all  $t_1n_1,\ldots ,t_kn_k$ are  even, then  
    \begin{equation}
        \mathbb{E}\big[\prod_{i=1}^{k}\tr (\widetilde{\mathcal{P}}_{n_i}(H))^{t_i}\big]=\big(1+o(1)\big)\prod_{i=1}^{k}\mathbb{E}\big[\tr (\widetilde{\mathcal{P}}_{n_i}(H))^{t_i}\big].
    \end{equation}
\end{theorem}
\begin{theorem}\label{prop:critical_4_P}
    Given  $k\ge 1$ and $d<4$,  if $W= [\gamma L^{1-\frac{d}{6}}]$, for any    fixed constants  $\tau_i>0$ and $n_i=[\tau_i(\frac{L}{W})^2]$,
such that $n_1+\cdots+ n_k$ is even, then we have
\begin{equation}
    \mathbb{E}\big[\prod_{i=1}^{k}\tr (\frac
    {1}{n_i}\widetilde{\mathcal{P}}_{n_i}(H))^{t_i}\big]=(1+o(1))\phi_{\{t_i\}}^{(\mathrm{crit})}(\beta,d,2, \gamma;\tau_1, \ldots,\tau_k).
\end{equation}
Here $\phi_{\{t_i\}}^{(\mathrm{crit})}(\beta,d,2,\gamma;\tau_1, \ldots,\tau_k)$ is defined in Definition \ref{def:sub_critical_transform}.
\end{theorem}
\begin{proposition}\label{thm:T_t_upper_bound_P}
In real symmetric case, given integers $k\geq 1$ and   $t_1, \ldots, t_k\geq 1$,  let $n=\sum_{i=1}^k t_in_i$ and $t=\sum_{i=1}^k t_i$.
If 
  $W\rightarrow \infty$ and $ne^{-0.5c_\Sigma W^2}\rightarrow 0$,  and $d'\le (4-\epsilon)\land d$ for any fixed $\epsilon>0$,   then 
    \begin{equation}
    \begin{aligned}
        T\big(\{(n_i,t_i)\}_{i=1}^k\big)&\le  (Cn)^t\exp\big\{C_1 n^{\frac{3}{2}}N^{-\frac{1}{2}}\big\}+(Cn)^t\frac{N}{n^{\frac{d'}{2}}W^d}\exp\Big\{C_2n^{\frac{6-d'}{4-d'}}W^{-\frac{2d}{4-d'}}\Big\}+\delta_{k,1}(Cn)^{t-3}N.
    \end{aligned}
    \end{equation}
\end{proposition}
However, due to the lack of tail estimate of polynomial $\widetilde{\mathcal{P}}(x)$ in the subcritical case, we can only prove the tail probability decay of the largest eigenvalue for supercritical case, by applying Proposition \ref{thm:T_t_upper_bound_P}, Lemma \ref{lem:5.8} and \ref{lem:B.8}.
\begin{theorem}[\textbf{Tail probability decay}]
\label{thm:upper_bound_largest_eigenvalue_P}
For the unimodular  \rm{RBM} with $\beta=1$, as $W\to \infty$ 
the following right  tail probability estimate holds for any  $x\ge 0$. Under  the   {\bf Supercritical Assumption}, we have right tail estimate of the Tracy-Widom distribution
    \begin{equation}
        \mathbb{P}(\lambda_{\mathrm{max}}(H)\ge 2+A_d+xN^{-\frac{2}{3}})\le C_1 e^{-C_2|x|^{\frac{3}{2}}}.
    \end{equation}
Here 
\begin{equation}\label{equ:def_Ad}
    A_d:=\begin{cases}
        -a_4+\sum_{l=3}^{W}a_{2l}=\Theta(\frac{\log W}{W^d}),&d=2\\
        -a_4+\sum_{l=3}^{\infty}a_{2l}=\Theta(\frac{1}{W^d}),&d>2.\\
    \end{cases}
\end{equation}
\end{theorem}

With all the preparation above, following the proofs in Section \ref{sec:beta=2_main_thm}, we arrive the proof of main theorem in $\beta=1$ case.
\begin{proof}[Proof of Theorem \ref{metatheoremsup}: $\beta=1$]
   Take all $t_i=4, 8$  and $n_i=[\tau_iN^{\frac{1}{3}}]$ in  Theorem \ref{prop:super_4_P},   we have  
    \begin{equation}\label{equ:4.64beta=1}
        \mathbb{E}\big[\prod_{i=1}^{k}\tr (\widetilde{\mathcal{P}}_{n_i}(H))^4\big]=(1+o(1))\mathbb{E}\big[\prod_{i=1}^{k}\tr (U_{n_i}(H_{\mathrm{GOE}}/2))^4]\big]
    \end{equation}
    and 
    \begin{equation}\label{equ:P8=U8beta=1}
        \mathbb{E}\big[\prod_{i=1}^{k}\tr (\widetilde{\mathcal{P}}_{n_i}(H))^8\big]=(1+o(1))\mathbb{E}\big[\prod_{i=1}^{k}\tr (U_{n_i}(H_{\mathrm{GOE}}/2))^8]\big].
    \end{equation}
The remaining steps  are exactly the same as in the $\beta=2$ case, by changing $a_4$ to $A_d$ defined in \eqref{equ:def_Ad}.
\end{proof}

\begin{proof}[Proof of Theorem \ref{metatheoremsub}:  $\beta=1$]
    Use Theorem \ref{prop:sub_4_P} and we   know that
    \begin{equation}
        \mathbb{E}\big[\prod_{i=1}^{k}\tr \widetilde{\mathcal{P}}_{n_i}(H)\big]=(1+o(1))\prod_{i=1}^{k}\mathbb{E}\big[\tr \widetilde{\mathcal{P}}_{n_i}(H))\big].
    \end{equation}
    On the other hand, consider the $\beta=1$ version of Proposition \ref{prop:5.4},  
    \begin{equation}
    \mathbb{E}\big[\tr \widetilde{\mathcal{P}}_{n_i}(H))\big]=(1+o(1))T(n_i)=(1+o(1))\frac{N}{W^{\frac{6d}{6-d}}} n_i \psi_1(\frac{n_i}{W^{\frac{2d}{6-d}}}),
    \end{equation}
    from which the desired result  immediately follows.  
\end{proof}

\begin{proof}[Proof of Theorem \ref{metatheoremcri}: $\beta=1$]
    The proof is immediate by taking all $t_i=1$ in Theorem \ref{prop:critical_4_P}.
\end{proof}

\newpage
\section{Concluding   remarks and  open questions} \label{openquestions}

We conclude our investigation with several important observations and open questions that naturally arise from this work.

\begin{enumerate}  
    \item[\textbf{I.}]\textbf{Universality and generalization.}
    While our current analysis focuses on unimodular random variables and Gaussian   profile functions for clarity of presentation, we conjecture that all main results (Theorems \ref{metatheoremsup}, \ref{metatheoremsub}, and \ref{metatheoremcri}) extend to i.i.d. variables with higher moments and general profile functions, potentially building on techniques from \cite{feldheim2010universality,erdHos2011quantum2,erdHos2015altshuler2}. 

\begin{question}[Universality Conjecture]\label{Universality}
    Prove that Theorems \ref{metatheoremsup}, \ref{metatheoremsub}, and \ref{metatheoremcri} remain valid for general i.i.d. entries and general  profile functions. We conjecture that $a_4$ is relevant to the fourth moment of the random variables, a more correct form of $a_4=\sum_{y\in \Lambda_L}(2\sigma_{xy}^4 - \mathbb{E}|H_{xy}|^4)$.
\end{question}

\begin{question}[Heavy-tailed Random Band Matrices]\label{htRBM}
    Investigate the spectral properties of heavy-tailed random band matrices, particularly regarding possible new phenomena. Recent progress on heavy-tailed Wigner matrices \cite{aggarwal2021goe,aggarwal2022mobility} may provide valuable insights.
\end{question}    
    \item[\textbf{II.}]\textbf{Edge eigenvector statistics.} 
    Building on the observation in \cite[Remark VI in Sect. 9]{sodin2010spectral}, we expect the Chebyshev (renormalization) polynomial  expansion method to provide new insights into eigenvector localization-delocalization phenomena at the spectral edge. 

    \begin{question}[{\textbf{Eigenvector statistics}}] \label{Vector}
        Can the method of  renormalization polynomial expansion  be used    to characterize the    eigenvector localization,   delocalization  and the  transition  for  RBM at the spectral edge?  
    \end{question}

    \item[\textbf{III.}]\textbf{Challenges in higher dimensions.} 
    
    The extension of our results to dimensions $d \geq 4$ presents several fundamental obstacles:

\begin{itemize}
    \item \textbf{Random walk approximation.} Propositions \ref{proposition:Gaussian_asymptotic_independence} and \ref{proposition:Gaussian_self_avoiding} require the condition $n \ll W^d$, which fails for $d \geq 4$. This breakdown prevents the approximation of paths by independent products of normal random walks. While the relevant diagrams are self-avoiding, their direct computation remains challenging. We note recent progress on self-avoiding walks (SAW for short)\cite{van2003lace,michta2023scaling} may provide potential approaches.
    
    \item \textbf{Singular diagram analysis.} Current mathematical techniques can only rigorously handle the tadpole diagram, despite extensive physical literature on diagram renormalization \cite{collins1984renormalization}. The treatment of more singular patterns remains an open problem.
    
    \item \textbf{Mismatched decay rates.} Theorem \ref{thm:upper_bound_largest_eigenvalue} suggests tail decay of order $e^{-C|x|^{(6-d)/4}}$, while the modified Chebyshev polynomials grow as $\mathcal{P}_n(2+x/n^2) \geq e^{C\sqrt{x}}$. This incompatibility makes Chebyshev polynomials unsuitable test functions for $d > 4$.
\end{itemize}

\begin{question}[Singular diagram correction]\label{q:singular}
     For dimensions $d \geq 4$, determine the quantitative impact of non-tadpole singular diagrams (Fig. \ref{singular_diagram_ks<3} and  Fig. \ref{singular_diagram_5d}) on the modified Chebyshev expansion. 
\end{question}

\begin{question}[SAW in higher dimensions]\label{q:saw}
    Characterize how self-avoiding walk  properties in $d \geq 4$ affect edge statistics when the condition $n \ll W^d$ fails. 
\end{question}

Resolution of these two questions would extend the Chebyshev polynomial method to $d=4$. However, since the critical dimension appears to be $d_c=6$, which is  consistent with $\phi^3$ field theory, we propose separating the remaining challenges into two cases

    \begin{question}[{\textbf{Phase transition for   $4\le d< 6$}}]\label{PT4-6}
    Characterize the phase 
    transition of  eigenvalues
    at the spectral edge after appropriate renormalization when  
   dimensions $d \in [4,6)$.
\end{question}

    \begin{question}[{\textbf{Universality in higher dimensions $d\ge 6$}}]\label{TW6}
Investigate whether GOE/GUE edge statistics  hold    in dimensions $d \geq 6$ under 
      the polynomial growth condition $W \gg L^\epsilon$ for any $\epsilon > 0$, or 
          the minimal width requirement $W > C_d$, where $C_d$ is a dimension-dependent critical threshold.
    \end{question}

    \item[\textbf{IV}]\textbf{Subcritical regime.} 
    The subcritical point process can be understood as a superposition of $NW^{-\frac{6d}{6-d}}$ non-Poisson processes, as evidenced by Proposition \ref{prop:5.4} which shows that all connected cumulants $T(\{n(i)\})$ scale as $NW^{-\frac{6d}{6-d}}$. This raises the question that identifying  the limiting universal point process whose connected cumulants are given by the rescaled form $\frac{T(\{n(i)\})}{NW^{-\frac{6d}{6-d}}}$, independent of specific profile function
            and  entry distribution.

\begin{question}[Characterization of Subcritical Process]\label{q:subcrit_process}
    Establish a complete description of the non-Poisson subcritical point process, analogous to the Airy point  process. 
\end{question}

The superposition structure of independent processes naturally suggests the following conjecture regarding extremal statistics:
\begin{question}[Top eigenvalue distribution]\label{topsub}
      Does the top eigenvalue in the subcritical regime follow an extreme value distribution? How to verify the transition for the limit distribution of the  top eigenvalue  from the critical regime to the subcritical and the supercritical regimes? 
\end{question}

    \item[\textbf{V.}]\textbf{Non-periodic RBM.}
  While the semicircle law remains valid for random band matrices with non-periodic variance profiles, including cases with i.i.d. entries and general profile functions when $1 \ll W \ll L$, we conjecture that Theorems \ref{metatheoremsup} and \ref{metatheoremsub} continue to hold in both supercritical and subcritical regimes. The critical regime, however, presents fundamentally different behavior that needs careful investigation.

\begin{question}[Geometric dependence of criticality]\label{RBMmanifold}
    Characterize how the critical behavior of random band matrices depends on underlying geometric structures, particularly for discrete manifolds arising from Lie groups and homogeneous spaces. This investigation should reveal what geometric information is encoded in the critical statistics and develop appropriate analytical methods for such settings.
\end{question}

    \item[\textbf{VI.}]\textbf{Bulk statistics.} Erd{\H{o}}s and Knowles 
    pioneered the application of Chebyshev polynomial expansions to study two fundamental problems in random matrix theory: bulk eigenvector delocalization via quantum diffusion methods \cite{erdHos2011quantum}, and mesoscopic spectral statistics through Chebyshev-Fourier expansions \cite{erdHos2015altshuler1}. Their analysis was constrained to the regimes $t \ll W^{\frac{d}{3}}$ and $\eta \gg W^{-\frac{d}{3}}$ due to inherent limitations in Feynman diagram techniques. 
While our singular analysis approach may overcome these technical restrictions, a fundamental question as highlighted in \cite{sodin2014several} remains open: Can the moment method capture the full local statistics in the bulk?

    \item[\textbf{VII.}]\textbf{Power-law RBM.} 
    All the preceding theoretical considerations and open questions naturally extend to power-law random band matrices. For instance,   the universality questions  and  the dimensional dependence of critical phenomena.

    \item[\textbf{VIII.}]\textbf{Connection to $\phi^3_d$ model.} 
    
    Our analysis reveals an interesting connection between the edge statistics of random band matrices and $\phi^3_d$ quantum field theory. Remarkably, the Tracy-Widom distribution appears to correspond to a special case of $\phi^3_d$ theory in dimension $d=0$. This observation naturally leads to:
    
    \begin{question}
       Is there a deeper   connection between RBM and $\phi^3$ model? 
      
    \end{question}    
   
\end{enumerate}

Based on the results and methods  established in this paper,   it seems that Questions~\ref{htRBM},  ~\ref{PT4-6}  and~\ref{TW6} are particularly challenging and may require significant new insights.

\ \

\noindent{\bf Acknowledgements}
We would like to thank  L. Erd\H{o}s for sharing his vision for   random band matrices, and also to thank R. Geng, Y. Gu and F. Yang  for valuable   discussions. G. Zou  is deeply grateful
  to  X. Li and F. Yang for their kind hospitality during his academic stay at Peking  University and Tsinghua University. This work was supported by the National Natural Science Foundation of China \#12371157  and \#12090012.

\appendix
\section{Random walks on the torus} 
\label{section:heat_kernel_profile}
\begin{definition}[Jacobi $\theta$ function] The Jacobi $\theta$ function with covariance matrix $\Sigma>0$ is defined as 
    \begin{equation}  \label{theta}
        \theta(x,\Sigma)=\frac{1}{(\sqrt{2\pi})^{d}\sqrt{\text{det}(\Sigma)}}\sum_{n\in \mathbb{Z}^d}e^{-\frac{1}{2}{(n+x)^T\Sigma^{-1}}(n+x)},\quad x\in \mathbb{R}^d.
    \end{equation}
   \end{definition}
The  Gaussian profile for the  unimodular RBM, equivalently, the transition probability $p_{xy}$,   can thus  be    written as 
    \begin{equation} \label{trp}
        \sigma_{xy}^2 =p_{xy} =\frac{W^d}{ML^d}\theta(\frac{x-y}{L},\frac{W^2}{L^2}\Sigma).
    \end{equation}
 Moreover,    when the bandwidth $W\ll L$, it's easy to see that   
    \begin{equation}
        \sigma^2_{xy}        =\frac{1}{M(\sqrt{2\pi})^{d}} \big(1+O(e^{-c(\frac{L}{W})^2})\big) e^{-\frac{1}{2}({x-y})^T (W^2 \Sigma)^{-1}(x-y)}
    \end{equation}
    for some $c>0$.  This indeed indicates  the Gaussian profile.

Let us   introduce    random walks  on the torus.
\begin{definition}[$f$-RW on the torus]  \label{f-RW}
Given a symmetric density function
$f(x)$ on $\mathbb{R}^d$ i.e. $f(x)=f(-x)$, $S_n$ is said to be a  random walk 
with periodic profile $f$ and bandwidth $W > 0$ on   $\Lambda_L$ , if it is a Markov
chain with transition matrix
\begin{equation}p_{xy}  =\frac{1}{M}  \sum _{n\in \mathbb{Z^d}}f\big(\frac{y-x+nL}{W}\big), \quad  M= \sum _{x\in  \Lambda_L}f\big(\frac{x}{W}\big).\end{equation}
  In particular,  when $f$ is the  Gaussian density as in Definition \ref{defmodel}, we call it a Gaussian  random walk on torus and denote by $p_n(x,y)$  the $n$-step transition probability from $x$ to $y$ in $\Lambda_L$.
\end{definition}

  Asymptotic properties  for the $n$-step transition probability  $p_n(x,y)$  play a  central role.  {{In this section we always}} consider  a  random walk on torus with the  Gaussian profile as in Definition \ref{f-RW}.

\subsection{Asymptotic properties for random walks}
Consider a  random walk on the lattice $\Lambda_L$ with  the transition probability  $p_{xy}$ given in \eqref{trp}. The local limit theorems and uniform  upper bound estimates hold. 

\begin{theorem}[\textbf{Local limit theorem}]\label{prop:gaussian_llt}
As  $W\rightarrow\infty$, if   $ne^{-0.5c_{\Sigma} W^2}\ll 1$, then     \begin{equation} \label{n-tp}
    p_{n}(x,y)= \frac{1}{N}\theta\big(\frac{x-y}{L},\frac{nW^2}{L^2}\Sigma\big) \big(1+O(n^{1+\epsilon}e^{-cW^2})\big),
    \end{equation}
     for $x,y \in \Lambda_{L}$, where 
   $N=L^d$ and      $c_\Sigma>0$ denotes the smallest eigenvalue of   $\Sigma>0$.  Moreover,  under the same assumption   the following   three distinct asymptotics  hold.  
 
    \begin{itemize}
        \item[(i)]({\bf Subcritcal regime}) When    $n\frac{W^2}{L^2}\ll 1$,   \begin{equation}p_{n}(x,y)
                =
        \frac{1}{ (\sqrt{2\pi n W^2}) ^{d}\det\Sigma}
        e^{-\frac{1}{2nW^2}(x-y)^T\Sigma^{-1}(x-y)} \big(1+o(1)\big);\end{equation}
        \item[(ii)] ({\bf Supcritcal regime})  When   $n\frac{W^2}{L^2}\gg 1$,  
        \begin{equation}p_{n}(x,y)=
       \frac{1}{N} \big(1+o(1)\big);\end{equation}
               \item[(iii)] ({\bf Critcal regime})  When  $n\frac{W^2}{L^2}\rightarrow \tau\in (0,\infty)$,  
        \begin{equation}p_{n}(x,y)=  \frac{1}{N} \theta(\frac{x-y}{L},\tau \Sigma) \big(1+o(1)\big).\end{equation} 
    \end{itemize}
\end{theorem}
\begin{proof}
    By the Poisson summation formula
    \begin{equation} \label{thetaduality}
    \begin{aligned}
        \theta(x,\Sigma) 
         =\sum_{n\in \mathbb{Z}^d} e^{-n^T \Sigma n+in\cdot x},
    \end{aligned}
    \end{equation} 
      as the positive number  $q \to \infty$    we have
    \begin{equation} \label{thetaasym}
    \begin{aligned}
        \theta(x,q\Sigma)        &=\sum_{n\in \mathbb{Z}^d} e^{-qn^T \Sigma n+in\cdot x}=1+O(e^{-qc_\Sigma}).
    \end{aligned}
    \end{equation} 
        Therefore,  by  definition of $M$ in \eqref{VP} and  the $\theta$ function  in \eqref{theta} we obtain 
    \begin{equation}\label{equ:A.11}
        \begin{aligned}
            M =W^d \theta(x,W^2\Sigma)=W^d(1+O(e^{-c_{\Sigma}W^2})).
        \end{aligned}
    \end{equation}

Now we consider the discrete convolution of the $\theta$ function:
    \begin{equation}
        \begin{aligned}
        &\frac{1}{N}\sum_{y\in \Lambda_L}\theta(\frac{x-y}{L}, \tau_1  \Sigma)\theta(\frac{y}{L}, \tau_2  \Sigma)=\frac{1}{N(2\pi \sqrt{\tau_1   \tau_2} )^{d} \det(\Sigma)}\\
        &\times \sum_{n\in \mathbb{Z}^d}\sum_{{m}\in \mathbb{Z}^d}\sum_{y\in \Lambda_L}e^{-\frac{1}{2} ({x-y+n}L)^T( \tau_1 L^2 \Sigma)^{-1}({x-y+n}L)-\frac{1}{2}({y+m}L)^T( \tau_2  L^2\Sigma)^{-1}({y+m}L) }\\
        &=\frac{1}{N(2\pi \sqrt{\tau_1   \tau_2} )^{d} \det(\Sigma)}\sum_{n\in \mathbb{Z}^d}\sum_{y\in \mathbb{Z}^d}e^{-\frac{1}{2} ({x-y+n}L)^T( \tau_1 L^2 \Sigma)^{-1}({x-y+n}L)-\frac{1}{2}y^T( \tau_2  L^2\Sigma)^{-1}y}.
        \end{aligned}
    \end{equation}
By  \eqref{thetaasym},   we have
\begin{equation}
\begin{aligned}
    &\sum_{y\in \mathbb{Z}^d}e^{-\frac{1}{2} {(x-y)}^T( \tau_1 L^2 \Sigma)^{-1}{(x-y)}-\frac{1}{2}y^T( \tau_2  L^2\Sigma)^{-1}y}\\
    &=\sum_{y\in \mathbb{Z}^d}e^{-\frac{1}{2}\frac{1}{ \tau_1 + \tau_2 }x^T (L^2\Sigma)^{-1}x-\frac{1}{2}\frac{ \tau_1 + \tau_2 }{ \tau_1  \tau_2 }{(y-}\frac{ \tau_2 }{ \tau_1 + \tau_2 }x)(L^2\Sigma)^{-1}{(y}-\frac{ \tau_2 }{ \tau_1 + \tau_2 }x)}\\
    &=N\sqrt{\det(\Sigma)}\sqrt{\frac{ \tau_1  \tau_2 }{ \tau_1 + \tau_2 }} e^{-\frac{1}{ \tau_1 + \tau_2 }x^T (L^2\Sigma)^{-1}x}\theta(\frac{ \tau_2 }{ \tau_1 + \tau_2 }x, L^2\frac{ \tau_1  \tau_2 }{ \tau_1 + \tau_2 }\Sigma)\\
    &=\big(1+O(e^{-c_{\Sigma}L^2 \frac{ \tau_1  \tau_2 }{ \tau_1 + \tau_2 }})\big)N\sqrt{\det(\Sigma)}\Big(\sqrt{\frac{ 2\pi \tau_1  \tau_2 }{ \tau_1 + \tau_2 }}\Big)^d e^{-\frac{1}{ \tau_1 + \tau_2 }x^T (L^2\Sigma)^{-1}x},
\end{aligned}
\end{equation}
Hence  
\begin{equation}
    \begin{aligned}
        &\frac{1}{N}\sum_{y\in \mathbb{Z}^d}\theta(\frac{x-y}{L}, \tau_1  \Sigma)\theta(\frac{y}{L}, \tau_2  \Sigma)\\
        &=(1+O(e^{-c_{\Sigma}L^2 \frac{ \tau_1  \tau_2 }{ \tau_1 + \tau_2 }}))\sum_{n\in \mathbb{Z}^d}\frac{e^{-\frac{1}{ \tau_1 + \tau_2 }{(x+n)}^T (L^2\Sigma)^{-1}{(x+n)}}}{(\sqrt{2\pi ( \tau_1 + \tau_2 )})^{d}\sqrt{\det(\Sigma})}\\
        &=(1+O(e^{-cL^2 \frac{ \tau_1  \tau_2 }{ \tau_1 + \tau_2 }}))\theta(\frac{x}{L},( \tau_1 + \tau_2 )\Sigma).
    \end{aligned}
\end{equation}

Take $\tau_i=n_i (W/L)^2$ with $n=n_1+n_2$ and   $i=1,2$, noting  that  $$e^{-c_{\Sigma}L^2\frac{ \tau_1  \tau_2 }{ \tau_1 + \tau_2 }}\le e^{-0.5c_{\Sigma}W^2}$$ whenever  $n_i\ge 1$ and  
\begin{equation}
    (1+O(e^{-0.5c_{\Sigma}W^2}))^n=1+O(ne^{-0.5c_{\Sigma}W^2}), 
\end{equation}
which comes   from the simple inequality  with $a\in (0,1]$
\begin{equation} \label{simpleineq}
    |(a+b)^n-a^n| \leq  n|b| e^{n|b|},\quad   n|b|\to 0, 
\end{equation}
 we can obtain the asymptotics \eqref{n-tp} after performing     the convolution $n$ times.

 Finally, based on \eqref{n-tp}, it is easy to see the subcritical regime estimate follows from the definition \eqref{theta}  and  the supercritical and critical   regime  estimates  follow from the   Poisson summation formula
   \eqref{thetaduality}.
\end{proof}

These  three different limits for  the $n$-step transition probability $p_n(x,y)$ indicate
 different behaviors of diagram function $F_{\mathfrak{D}}$ in the  subcritical, critical and  supercritical regimes, as we shall explain in Section \ref{section_diagram}.

\begin{lemma}[\textbf{Heat kernel upper bound}]\label{prop:gaussian_hkb}
 If $ne^{-0.5c_\Sigma W^2}\ll 1$ as $W\to \infty$, then there exist constants $C_1$ and  $C_2$,  independent of $n$ and $W$,  such that   
    \begin{equation}
        p_{n}({x,y})\le C_1\prod_{i=1}^d\Big( \frac{1}{\sqrt{nW^2}}  e^{-C_2\frac{(x_i-y_i)^2}{nW^2}}+\frac{1}{L}\Big),
    \end{equation}
   uniformly  for $x,y \in \Lambda_{L}$.
\end{lemma}
\begin{proof}
    By  the local limit theorem  in Theorem  \ref{prop:gaussian_llt}, since $ne^{-0.5c_{\Sigma}W^2}\ll 1$ as $W\to \infty$,   we see that  
    \begin{equation}
        p_{n}({x,y})\le   \frac{C}{N}\theta(\frac{{x-y}}{L},\frac{nW^2}{L^2}\Sigma) 
    \end{equation} 
    for some constant $C>0$. 
    In order  to  give an upper bound  of the theta function,  we just need to do it  for   the one-dimensional  theta function  $\theta_1(x,t)$.  Indeed, choose   $C_0>c_0>0$ such that $c_0\mathbb{I}\le \Sigma\le C_0\mathbb{I}$, then for $t>0$  we see from \eqref{theta} that 
        \begin{equation}
        \begin{aligned}
        \theta(x,t \Sigma)
        &\le \frac{1}{(\sqrt{ 2\pi c_0 t} )^{d} }\sum_{n\in \mathbb{Z}^d} e^{-\frac{1}{2 C_{0}t}({n+x})^T ({n+x})}\\
        &=\Big(\frac{C_0}{c_0} \Big)^{\frac{d}{2}} \prod_{i=1}^d \theta_1(x_i, C_0 t).
        \end{aligned}
    \end{equation}
     
    It is sufficient to prove 
    \begin{equation} \label{thetaupper}
        \theta_1(x,t)\le C_3\Big( \frac{1}{\sqrt{t}}e^{-C_4\frac{x^2}{t}}+1\Big),
    \end{equation}
     for any  $x\in (-\frac{1}{2},\frac{1}{2}]$ and $t>0$.  For this, we discuss  two cases of $t$.  
     When    $0<t\le 1$, we have 
    \begin{equation}
    \begin{aligned}
        \sqrt{2\pi t}\theta_1(x,t)&=\sum_{n\in \mathbb{Z}}e^{-\frac{(x+n)^2}{2t}}\le e^{-\frac{x^2}{2t}}+2\sum_{n=1 }^{\infty}e^{-\frac{1}{2t}(n-\frac{1}{2})^2}
                \\
        &\le e^{-\frac{x^2}{2t}}+2\sum_{n=1 }^{\infty}e^{-\frac{1}{8t}(2n-1)} \le  e^{-\frac{x^2}{2t}}+ \frac{1}{1-e^{-\frac{1}{4t}}}e^{-\frac{1}{8t}}\\
        &\le Ce^{-\frac{x^2}{2t}}
    \end{aligned}
    \end{equation}
    for some  constant $C>0$.
    When $t\ge 1$,   the Poisson summation formula  gives us  
    \begin{equation}
    \begin{aligned}
        \theta_1(x,t)&=\sum_{n\in \mathbb{Z}}e^{-tn^2+inx} \le \sum_{n\in \mathbb{Z}}e^{-tn^2}\le C.
        \end{aligned}
    \end{equation}
    Combine the two    cases above and    we get the desired  estimate \eqref{thetaupper}.

       Replacing  $x, t$ by $(x-y)/L, n W^2/L^2$ respectively,  we immediately arrive at   the desired proposition.
\end{proof}

We will establish asymptotic independence  and  self-avoiding property 
for  random walks on torus with the  Gaussian profile.

\begin{lemma}[\textbf{Asymptotic independence}]\label{proposition:Gaussian_asymptotic_independence}For  two independent   Gaussian random walks $S_{n}^{1}$ and $S_{n}^{2}$ starting at    the origin as  in Definition  \ref{f-RW},    set 
\begin{equation}A=\{S_{1}^{1}=x_1, S_{n_1}^{1}=x_2, S_{1}^{2}=x_3, S_{n_2}^{2}=x_4
 \} \end{equation} and let    $I(x_1,x_2,n_1;x_3,x_4,n_2)$   be the intersection event of $S_{n}^1$ and $S_{n}^2$ respectively from $x_1$ to $x_2$ with $n_1$ steps and $x_3$ to $x_4$ with $n_2$ steps. Denote by  $R_{n_1,n_2}$   the number of intersection times  
$$R_{n_1,n_2}=\sum_{i=1}^{n_1}\sum_{j=1}^{n_2}1_{\{S_{i}^{(1)}=S_{j}^{(2)}\}}.$$  
If $ne^{-0.5c_{\Sigma}W^2}\ll 1$ as $W\to \infty$, then    
\begin{equation}
    \mathbb{P}\big(I(x_1,x_2,n_1;x_3,x_4,n_2)|A\big)\le  \mathbb{E}[ R_{n_1,n_2}|A]= O\big(\frac{n^2}{N}\big)+\begin{cases}
		O\big(\frac{n^{\frac{3}{2}}}{W}\big), & d=1;\\
		O\big(\frac{n\log n}{W^2}\big), & d=2;\\
		O\big(\frac{n}{W^d}\big), & d>2,
	\end{cases}
\end{equation}
    where $n=n_1+n_2$.
\end{lemma}
\begin{proof}
Without loss of generality, we just consider  the case $\Sigma=\mathbf{I}$ for simplicity, while  the  general case is a straightforward repetition. 
Think of  $x$ as  an intersection point of   two random walks,  we can calculate  the expected number of intersection times       \begin{equation}
        \mathbb{E}[R_{n_1,n_2}|A]=\sum_{1\le i\le n_1}\sum_{1\le j\le n_2}\sum_{x\in \Lambda_L}{\frac{p_{n_1-i}(x_1,x)p_{i}(x,x_2) }{p_{n_1}(x_1,x_2) }}
        {\frac{ p_{n_2-j}(x_3,x)p_{j}(x,x_4)}{ p_{n_2}(x_3,x_4)}}.
    \end{equation}
    By the Chapman-Kolmogorov equation
    \begin{equation}\label{CK}
        \frac{1}{p_{n+m}(a,b)}\sum_{x\in \Lambda_L} p_{n}(a,x)p_{m}(x,b)=1,
    \end{equation}
    take the partial maximum over $x$ and we  obtain   
    \begin{equation}\label{expinter}
    \begin{aligned}
        \mathbb{E}[R_{n_1,n_2}|A]&\le \frac{1}{p_{n_1}(x_1,x_2)}\sum_{1\le i\le n_1}\sum_{1\le j\le n_2} \max_{x\in \Lambda_L} \{p_{n_1-i}(x_1,x)p_{i}(x,x_2)\}\\
        &=\frac{n_2}{p_{n_1}(x_1,x_2)}\sum_{1\le i\le n_1}  \max_{x\in \Lambda_L} \{p_{n_1-i}(x_1,x)p_{i}(x,x_2)\}.
    \end{aligned}
    \end{equation}
   
    Using the identity  
    \begin{equation}\label{equ:A.28}
        \frac{\|x\|^2}{\tau_1} + \frac{\|y\|^2}{\tau_2}-\frac{\|x+y\|^2}{\tau_1+\tau_2} =\frac{\|\tau_2 x-\tau_1 y\|^2}{
        \tau_1\tau_2 (\tau_1+\tau_2)},
    \end{equation}
    and the inequality  $\theta(x,\tau)\le \theta(0,\tau)$ from \eqref{thetaduality}, 
   for fixed $k\in \mathbb{Z}^d$ we derive 
    \begin{align*}
        \sum_{m_1+m_2=k} e^{-\frac{\|m_1+x\|^2}{2\tau_1} - \frac{\|m_2+y\|^2}{2\tau_2}}&= e^{-\frac{\|k+x+y\|^2}{2(\tau_1+\tau_2)}}\sum_{m_1+m_2=k}e^{-\frac{\|\tau_2 k+\tau_1x-\tau_2y-(\tau_1+\tau_2)m_2\|^2}{2\tau_1\tau_2(\tau_1+\tau_2)}}\tag{use \eqref{equ:A.28}} \\
        &\le e^{-\frac{\|k+x+y\|^2}{2(\tau_1+\tau_2)}}\sum_{m_2\in \mathbb{Z}^d}e^{-\frac{\|(\tau_1+\tau_2)m_2\|^2}{2\tau_1\tau_2(\tau_1+\tau_2)}}        \\
        &\le C\Big(1+\big(\frac{\tau_1\tau_2}{\tau_1+\tau_2}\big)^{\frac{d}{2}}\Big) e^{-\frac{\|k+x+y\|^2}{2(\tau_1+\tau_2)}}.
    \end{align*}

  Therefore    we get  
    \begin{equation}
        \begin{aligned}
\theta(x,\tau_1)\theta(y,\tau_2)&=\frac{1}{(2\pi \sqrt{\tau_1  \tau_2})^{d}}\sum_{m_1,m_2\in \mathbb{Z}^d} e^{-\frac{\|m_1+x\|^2}{2\tau_1} - \frac{\|m_2+y\|^2}{2\tau_2}}\\
& \le C\Big(1+\big(\frac{\tau_1\tau_2}{\tau_1+\tau_2}\big)^{\frac{d}{2}}\Big) \theta(x+y,\tau_1+\tau_2),
        \end{aligned}
    \end{equation}
  for some constant $C>0$.   
   
    \begin{equation}
        \frac{\theta(x,\tau_1)\theta(y,\tau_2)}{\theta(x+y,\tau_1+\tau_2)}\le C [1+(\frac{\tau_1+\tau_2}{\tau_1\tau_2})^{\frac{d}{2}}].
    \end{equation}
    
    Combination of  Theorem   \ref{prop:gaussian_llt} and Proposition \ref{prop:gaussian_hkb} yields   for $1\leq i<n_1$
    \begin{equation}\label{equation:division_p1_p2_p3}
    \begin{aligned}
        \frac{1}{p_{n_1}(x_1,x_2)}   \max_{x\in \Lambda_L} \{p_{n_1-i}(x_1,x)p_{i}(x,x_2)\}&\le \frac{C}{N} \max_{x,y \in \Lambda_L}\frac{\theta(x,(n_1-i)\frac{W^2}{L^2})\theta(y,i\frac{W^2}{L^2})}{\theta(x+y,n_1\frac{W^2}{L^2})}\\
        &\le \frac{C}{N}\Big(1+\frac{L^d}{W^d} \big(\frac{n_1}{i(n_1-i)}\big)^{\frac{d}{2}}\Big) \\
        &\le 
       C \Big(\frac{1}{N}+\frac{1}{M} \big(\frac{n_1}{i(n_1-i)}\big)^{\frac{d}{2}}\Big).
    \end{aligned}
    \end{equation}
    Hence we  see from \eqref{expinter} that   
    \begin{equation}
    \begin{aligned}
        \mathbb{E}[ R_{n_1,n_2}|A]
        \le \frac{Cn_2}{M}
         \sum_{i=1}^{n_{1}-1}
        \Big(\frac{n_1}{i(n_1-i)}\Big)^{\frac{d}{2}}+\frac{Cn_1n_2}{N}.
    \end{aligned}
    \end{equation}
    By the simple  fact 
    \begin{equation}
\sum_{i=1}^{n-1}\Big(\frac{n}{i(n -i)}\Big)^{\frac{d}{2}}=\begin{cases}
		O\big(n^{\frac{1}{2}}\big), & d=1;\\
		O\big(\log n\big), & d=2;\\
		O\big(1\big), & d>2,
	\end{cases}
\end{equation}
   application of Markov inequality  gives us  
    \begin{equation}
        \mathbb{P}\big(I(x_1,x_2,n_1;x_3,x_4,n_2)|A\big) \le  \mathbb{E}[ R_{n_1,n_2}|A] \le O\big(\frac{n^2}{N}\big)+\begin{cases}
		O\big(\frac{n^{\frac{3}{2}}}{M}\big), & d=1;\\
		O\big(\frac{n\log n}{M}\big), & d=2;\\
		O\big(\frac{n}{M}\big), & d>2,
	\end{cases}
    \end{equation}
  where    $n=n_1+n_2.$ Notice that by \eqref{equ:A.11} we have $M=(1+o(1))W^d$. This  completes the proof.
\end{proof}
\begin{lemma}[\textbf{Self-avoiding property}]\label{proposition:Gaussian_self_avoiding}
For   the    Gaussian random walk  $S_{n} $   starting at    the origin as  in Definition  \ref{f-RW}, 
 set 
\begin{equation}
B=\{S_{1}=x_1,  S_{n} =x_2\} 
\end{equation}   
and let     
 $J(x_1,x_2,n)$   be the event of the self-intersection of $S_n$  from $x_1$ to $x_2$ in $n$ steps. Denote by  $R_{n_1,n_2}$   the number of intersection times  
$$R_{n}=\sum_{1\le i<j\leq n} 1_{\{S_{i} =S_{j} \}}.$$  
 If $ne^{-0.5c_{\Sigma}W^2}\ll 1$ as $W\to \infty$, then  
\begin{equation}
    \mathbb{P}\big(J(x_1,x_2,n)|B\big)\le  \mathbb{E}[ R_n |B]=O\big(\frac{n^2}{N}\big)+ \begin{cases}
		O\big(\frac{n^{\frac{3}{2}}}{W}\big), & d=1;\\
		O\big(\frac{n\log n}{W^2}\big), & d=2;\\
		O\big(\frac{n}{W^d}\big), & d>2.
	\end{cases}
\end{equation}
\end{lemma}

\begin{proof}
Without loss of generality, we just consider  the case $\Sigma=\mathbf{I}$ for simplicity, while  the  general case is a straightforward repetition. 

Think of  $x$ as  a self-intersection point,  we can calculate  the expected number of intersection times  
        \begin{equation}
    \begin{aligned}
        \mathbb{E}[ R_n |B]&=\sum_{1\le i<j\le n} 
        \frac{1}{p_{n}(x_1,x_2)}  \sum_{x\in \Lambda_L}   p_{i}(x_1,x)p_{j-i}(x,x)p_{n-j}(x,x_2)\\
        &=\frac{1}{p_{n}(x_1,x_2)}\sum_{1\le i<j\le n}\sum_{x\in \Lambda_L}p_{i}(x_1,x)p_{n-j}(x,x_2)p_{j-i}(0,0)\\
        &=\frac{1}{p_{n}(x_1,x_2)}\sum_{1\le i<j\le n}p_{n-j+i}(x_1,x_2)p_{j-i}(0,0)\\
        &=\frac{1}{p_{n}(x_1,x_2)}\sum_{1\le k\le n-1}(n-k)p_{n-k}(x_1,x_2)p_{k}(0,0)\\
        &\le n\sum_{1\le k\le n-1}\frac{p_{n-k}(x_1,x_2)p_{k}(x_2,x_2)}{p_{n}(x_1,x_2)},
    \end{aligned}
    \end{equation}
    where in the third equality    \eqref{CK} has been used. Again 
    by the upper bound \eqref{equation:division_p1_p2_p3} one arrives at 
    \begin{equation}
        \begin{aligned}
              \mathbb{E}[ R_n |B] &\le \frac{Cn}{M}\sum_{i\le n-1} \Big(\frac{n}{i(n-i)}\Big)^{\frac{d}{2}}+C\frac{n^2}{N}.
        \end{aligned}
    \end{equation}
   At last,  apply the Markov inequality and  we get 
    \begin{equation}
        \mathbb{P}(J|B) \le  \mathbb{E}[ R_n |B] \le O(\frac{n^2}{N})+\begin{cases}
		O\big(\frac{n^{\frac{3}{2}}}{M}\big), & d=1;\\
		O\big(\frac{n\log n}{M}\big), & d=2;\\
		O\big(\frac{n}{M}\big), & d>2,
	\end{cases}
    \end{equation}
   which is the desired result since $M=(1+o(1))W^d$. 
\end{proof}

\subsection{Splitting vertices of degree greater than 3}\label{Section:diagram_d>3}

For the weighted diagram with degree greater than 3 vertices, we need to grid rid of these vertices  by splitting  operation, see  Figure \ref{fig:degree_operation}. The following useful proposition shows that 
we can split the vertices with degree greater than 3    in the loss of a unit of   weight by taking  $n_3=1$.

\begin{lemma}[\textbf{Vertex splitting}]\label{proposition:degree_operation} If $(n_1+n_2+n_3)e^{-0.5c_\Sigma W^2}\ll 1$ as $W\to \infty$, 
   then  for  $n_1, n_2\ge n_3\geq 1$ there exists a constant $C>0$ such that  
    \begin{equation}
        p_{n_1}(x_1,x_3)p_{n_2}(x_2,x_3)\le C\sum_{x\in \Lambda_L} p_{n_1}(x_1,x)p_{n_2}(x_2,x)p_{n_3}(x,x_3).
    \end{equation}
In particular, $n_3=1$ is usually chosen.
   \end{lemma}
\begin{proof}
    We first prove the  Gaussian part  and  give an  estimate about Gaussian integrals. For this, 
let 
    \begin{equation}
    \Tilde{p}_{n}(x,y)=\frac{1}{(\sqrt{2\pi})^{d}\sqrt{\text{det}(nW^2\Sigma)}}e^{-\frac{1}{2}{x^T(nW^2\Sigma)^{-1}}x}, \quad  x\in \mathbb{Z}^d.\end{equation}
        By   local limit theorem  as in  Theorem \ref{prop:gaussian_llt}, we have
    \begin{equation}
        p_{n}(x,y)=\frac{1}{M}\big(1+O(ne^{-0.5c_{\Sigma}W^2})\big)\sum_{m\in \mathbb{Z}^d}\Tilde{p}_n(x,y+mL).
    \end{equation}

    For simplicity,  without loss of generality we consider $\Sigma=\mathbf{I}$ only. When  $\sigma_1, \sigma_2\ge \sigma_3$, let $ A=\sigma_1^2\sigma_2^2+\sigma_2^2\sigma_3^2+\sigma_3^2\sigma_1^2$,  noting  the  simple inequalities    $A\leq 3 \sigma_1^2\sigma_2^2$  and 
     \begin{equation}
    \begin{aligned}
        \frac{ax^2+by^2+c(x+y)^2}{ab+bc+ca}\le \frac{x^2}{b}+\frac{y^2}{a},  \quad x,y\in \mathbb{R},\ a,b,c>0,
           \end{aligned}
    \end{equation} 
    we arrive at  
    \begin{equation}
        \begin{aligned}
             \int \prod_{i=1}^3\frac{1}{          (\sqrt{2\pi}\sigma_{i})^{d}}e^{-\frac{1}{2\sigma_i^2}\|x_i-x\|^2} dx &=\frac{1}{(2\pi \sqrt{A})^{d}}e^{-\frac{1}{2A}(\sigma_1^2\|x_2-x_3\|^2+\sigma_2^2\|x_3-x_1\|^2+\sigma_3^2\|x_1-x_2\|^2)}\\
            &\ge \frac{1}{(2\sqrt{3}\pi \sigma_1 \sigma_2)^{d}}
            e^{-\frac{1}{2\sigma_1^2}\|x_3-x_1\|^2-\frac{1}{2\sigma_2^2}\|x_2-x_3\|^2}
                    \end{aligned}
    \end{equation}

    By Poisson summation formula, with $n=n_1+n_2+n_3$, it's easy to see that 
    \begin{equation}
        \sum_{x\in \mathbb{Z^d}} \Tilde{p}_{n_1}(x_1,x)\Tilde{p}_{n_2}(x_2,x)\Tilde{p}_{n_3}(x,x_3)=(1+O(ne^{-0.5c_{\Sigma}W^2})) \int \prod_{i=1}^3\frac{1}{      (\sqrt{2\pi}\sigma_{i})^{d}}e^{-\frac{1}{2\sigma_i^2}\|x_i-x\|^2} dx, 
    \end{equation}
   from which  
        \begin{equation}
        \Tilde{p}_{n_1}(x_1,x_3)\Tilde{p}_{n_2}(x_2,x_3)\le C \sum_{x\in \mathbb{Z^d}} \Tilde{p}_{n_1}(x_1,x)\Tilde{p}_{n_2}(x_2,x)\Tilde{p}_{n_3}(x,x_3).
    \end{equation}
Change $x_1,x_2$   to  $x_1+m_1L$ and $x_2+m_2L$ and   take a  summation over  integers $m_1,m_2$,  we obtain  
    \begin{equation}
    \begin{aligned}
        {p}_{n_1}(x_1,x_3){p}_{n_2}(x_2,x_3)&\le C(1+O(ne^{-0.5c_{\Sigma}W^2})) \sum_{x\in \mathbb{Z^d}} {p}_{n_1}(x_1,x){p}_{n_2}(x_2,x)\Tilde{p}_{n_3}(x,x_3)\\
        &\le C\sum_{x\in \Lambda_L} {p}_{n_1}(x_1,x){p}_{n_2}(x_2,x){p}_{n_3}(x,x_3),
    \end{aligned}
    \end{equation}
    where $x_1, x_2, x_3 \in \Lambda_L$.

    This thus completes the proof.
\end{proof}

\section{Polynomial moment method: analytical aspects}\label{sec:poly_moment_analysis}

\subsection{Properties of renormalization polynomials}\label{sec:re_poly}
The  family of polynomials  $\widetilde{\mathcal{P}}_n$ can be regarded   as a  perturbation of the classical  Chebyshev polynomials,  since  
  all the recursion coefficients $a_{2l}$  decay as  $W^{-d}$.  
  A key question is how this perturbation influences the asymptotic behavior of the Chebyshev polynomials $U_n$, depending on the interplay between 
    the degree  $n$, the bandwidth parameter $W$ and the dimension $d$. 

Our starting point is the generating function of the polynomials $\widetilde{\mathcal{P}}_n$
\begin{equation}
    \widetilde{\mathcal{G}}(z;x)=\sum_{n=0}^\infty \widetilde{\mathcal{P}}_n(2x)z^n,
\end{equation}
  which provides a powerful tool for analyzing the  asymptotic behavior under the given perturbation.
Indeed, by the recursion relation  defined in Proposition \ref{Prop:4.2}
we have
\begin{equation} \label{Gcal}
    \widetilde{\mathcal{G}}(z;x)=\frac{1}{1-2xz+z^2-a_4z^4+\sum_{l=3}^{3R}a_{2l}z^{2l}}.
\end{equation}

Similarly,   the generating functions for polynomials  $\mathcal{P}_n$ and   $U_n$ read 
\begin{equation}
    \mathcal{G}(z;x):=\sum_{n=0}^\infty\mathcal{P}_n(2x)z^n=\frac{1}{1-2xz+z^2-a_4z^4},
\end{equation}
and \begin{equation}
    G(z;x):=\sum_{n=0}^\infty U_n(x)z^n=\frac{1}{1-2xz+z^2}.
\end{equation}

 First,  we  need to derive the asymptotic behaviors for  $a_{2l}$.

\begin{lemma}\label{lem:a_2l}
    Under the Gaussian profile function with $\Sigma=I_{d}$ and with  $\mathcal{E}(R)$  given in \ref{ENNotation}, 
    let $R\geq 2$ be an integer such that  
    $\mathcal{E}(R), RW^{-d}\rightarrow 0$. 
    Then  
    \begin{equation}
        a_{2l}=\Big(1+O\big(RW^{-d}+\mathcal{E}(R)\big)\Big)(2\pi l)^{-\frac{d}{2}}W^{-d},\quad 2\leq l\leq R,
    \end{equation}
and  
    \begin{equation}
        a_{2l}=O\Big(\big(\frac{R}{W^d}\big)^{l-R}W^{-d}\Big), \quad R< l\le  3R.
    \end{equation}
\end{lemma}

\begin{proof}[Proof of Lemma \ref{lem:a_2l}] 
In  the case $l\le R$,  we see from     \eqref{equ:4.13} that 
the sum for $a_{2l}$ can be rewritten as two parts, respectively corresponding  to $s=l$ and $s<l$,
\begin{multline}
    a_{2l}=a_{2l}^{(0)}+a_{2l}^{(1)}:=\\  \Big(\sum_{s=l}+\sum_{3\leq s<l}\Big)
     \sum_{|\mathbf{b}^{(2s)}|=2l,b_1=1}\sum_{\{x_i\}_{i=0}^{2s}}\delta_{x_0,x}\mathbb{1}_{0,2s}\prod_{i=3}^{s-1}(1-\mathbb{1}_{0,2i})\prod_{i=0}^{2l-2}\mathbb{1}(x_i\ne x_{i+2})\prod_{i=1}^{2s}(\Phi_{b_i})_{x_ix_{i+1}}.
\end{multline}

For the leading part, that is,   when  $s=l$ we have $b_i=1$ for all $1\le i\le 2l$ and 
\begin{equation}\label{equ:4.51}
\begin{aligned}
    a_{2l}^{(0)}=\sum_{\{x_i\}_{i=0}^{2l}}\delta_{x_0,x}\mathbb{1}_{0,2l}\prod_{i=3}^{l-1}(1-\mathbb{1}_{0,2i})\prod_{i=0}^{2l-2}\mathbb{1}(x_i\ne x_{i+2})\prod_{i=1}^{2l}(\Phi_1)_{x_ix_{i+1}}.
\end{aligned}
\end{equation}
Note that 
    $\mathbb{1}_{0,2l}=\prod_{t=0}^{l}\delta_{x_t,x_{t+l}}$,
it's easy to see 
\begin{equation}
   \mathbb{1}\{x_1, x_2,\ldots, x_{l-1}~\text{distinct}\} \le  \mathbb{1}_{0,2l}\prod_{i=3}^{l-1}(1-\mathbb{1}_{0,2i})\prod_{i=0}^{2l-2}\mathbb{1}(x_i\ne x_{i+2})\le 1,
\end{equation}
so this implies  
\begin{equation}
    \sum_{x_1,\ldots, x_{l-1}~\text{distinct}}H_{x_0x_1}^2H_{x_1x_2}^2\cdots H_{x_{l-1}x_0}^2\le a_{2l}^{(0)}\le \sum_{x_1,\ldots, x_{l-1}}H_{x_0x_1}^2H_{x_1x_2}^2\cdots H_{x_{l-1}x_0}^2.
\end{equation}
Since the right-hand side is equal to $p_{l}(x_0,x_0)$, use of  Proposition \ref{proposition:Gaussian_self_avoiding} gives  
\begin{equation}
     (1+\mathcal{E}(l))p_{l}(x_0,x_0) \le a_{2l}^{(0)}\le p_{l}(x_0,x_0).
\end{equation}

On the other hand,  we have 
\begin{equation}\label{equ:4.56}
\begin{aligned}
     |a_{2l}^{(1)}|\le \sum_{3\leq s<l}\sum_{|\mathbf{b}^{(2s)}|=2l,b_1=1}\sum_{\{x_i\}_{i=0}^{2s}}\delta_{x_0,x}\mathbb{1}_{0,2s}\prod_{i=1}^{2s}|(\Phi_{b_i})_{x_ix_{i+1}}|. 
\end{aligned}
\end{equation}
For every  $s$, the   number of appearance of  $\Phi_3$ is $l-s$, so there are at most  $\frac{(2s)^{l-s}}{(l-s)!}$ ways to pick the position of $\Phi_3$. Hence, we know from   $|(\Phi_3)_{xy}|\le C_1 W^{-d}|(\Phi_1)_{xy}|$ that  
\begin{equation}
    \begin{aligned}
        |a_{2l}^{(1)}|&\le \sum_{3\leq s<l}\frac{1}{(l-s)!}(\frac{2C_1 s}{W^d})^{l-s}\sum_{\{x_i\}_{i=0}^{2s}}\delta_{x_0,x}\mathbb{1}_{0,2s}\prod_{i=1}^{2s}|(\Phi_1)_{x_ix_{i+1}}|\\
        &=\sum_{3\leq s<l}\frac{1}{(l-s)!}(\frac{2C_1 s}{W^d})^{l-s}p_{s}(x_0,x_0)\\
        &\le  C_2 \sum_{3\leq s<l}   \big(\frac{2C_1 l}{W^d}\big)^{l-s}W^{-d}s^{-\frac{d}{2}}=O\big(\frac{l}{W^d}p_{l}(x_0,x_0)\big),
    \end{aligned}
\end{equation} 
where we use the assumption $lW^{-d}\to 0$ and the sum can be controlled by the last term.

In   the case $l>R$, there must be   many $\Phi_3$ terms in the product of   \eqref{equ:4.13}.  Moreover,  the number of appearance of  $\Phi_3$ is $ l-s$. This implies that  the number to choose the positions of $\Phi_3$ can be bounded by $(2R)^{l-s}$. Note that    each $\Phi_3$ contributes   a $W^{-d}$ factor and the remaining  terms are  bounded by the probability  $p_{s}(x_0,x_0)$, we obtain an upper bound  
\begin{equation}
    |a_{2l}|\le \sum_{3\leq s\leq R} \big(\frac{2C_3 R}{W^d}\big)^{l-s} p_{s}(0,0)
    \le  C_4 W^{-d}\sum_{s\le R}s^{-\frac{d}{2}} \big(\frac{2C_3 R}{W^d}\big)^{l-s}=O\Big((\frac{R}{W^d})^{l-R}W^{-d}\Big).
\end{equation} Here  we have used the assumption $RW^{-d}\to 0$ from which the sum can be controlled by the last term.

Combine the above two case and we thus complete the proof.
\end{proof}

When $d\ge 2$, introduce a function 
\begin{equation}\label{equ:def_A(z)}
    A(z)=W^d(-a_4z^4+\sum_{l=3}^{3R}a_{2l}z^{2l}).
\end{equation}
Then Lemma \ref{lem:a_2l} shows that   $A(z)$ has  similar asymptotic behavior  as the polylogarithm $\text{Li}_{{d}/{2}}(z)$, which is defined by
\begin{equation}
\operatorname{Li}_{\frac{d}{2}}(z) = \sum_{k=1}^\infty \frac{z^k}{k^{\frac{d}{2}}}, \quad |z|<1.
\label{eq:polylog}
\end{equation}
Indeed,  for integer $d\geq 2$,  as $z\to 1$  we have
\begin{equation}\label{equ:polylogarithm}
    \text{Li}_{\frac{d}{2}}(z)= \begin{cases}
        -\log (1-z),&d=2,\\
        \zeta(\frac{3}{2})-2\sqrt{\pi}\sqrt{1-z}+O(|1-z|),&d=3,\\
        \zeta(2)+(1-z)\log(1-z)- (1-z)+O(|(1-z)^2\log (1-z)|),&d=4,\\
        \zeta(\frac{d}{2})+\zeta(\frac{d}{2}-1)(1-z)+O(|z-1|^{\frac{3}{2}}),&d \geq 5.
    \end{cases}
\end{equation}

Since  the generating function  \ref{Gcal}  gives an  integral representation   
\begin{equation}\label{equ:integral}
    \widetilde{\mathcal{P}}_n(2x)=\frac{1}{2\pi i}\oint_{|z|=r}\frac{z^{-n-1}}{1-2xz+z^2+W^{-d} A(z)}dz,
\end{equation} where  some $r\in (0,1)$, 
we need the following lemma to  bound the  coefficient.
\begin{lemma}\label{lem:4.6series}
    Consider two power series
    \begin{equation}
        h(z)^{-1}=\sum_{i=0}^{\infty} h_iz^i, \quad 
        g(z)^{-1}=\sum_{i=0}^{\infty} g_iz^i,
    \end{equation}
    with convergence radius $r\ge 1-\epsilon$, $\epsilon\in (0,1)$. If there are two  fixed constants  $\alpha, \beta$, $0\le \beta\le 1$ and  $\beta< 2\alpha-1$, such that 
    \begin{equation}
        |h|,|g|\ge \delta_1|z-1|^\alpha, ~ |h-g|\le \delta_2 |z-1|^\beta, \quad \forall \ |z|\le 1-\epsilon,
    \end{equation}
    for some $\delta_1,\delta_2>0$, then  
    \begin{equation}
        |h_n-g_n|\le   \frac{\delta_2}{(2\alpha-\beta-1)\delta_{1}^2}2^{\alpha-1-\frac{\beta}{2}}   (1-\epsilon)^{-n-\frac{1}{2}}  
        \epsilon^{\beta-2\alpha+1}.
    \end{equation}
\end{lemma}
\begin{proof}
   Note that  \begin{equation}
        h_n-g_n=\frac{1}{2\pi i}\oint_{|z|=1-\epsilon}\frac{z^{-n-1}(g(z)-h(z))}{h(z)g(z)}dz,
    \end{equation}
  we use   the simple fact 
    \begin{equation} 
        \left|\frac{z^{-n-1}(g(z)-h(z))}{h(z)g(z)}\right|\le \delta_{1}^{-2}\delta_2 (1-\epsilon)^{-n-1}|z-1|^{\beta-2\alpha}
    \end{equation}
   to obtain     
    \begin{equation}\label{b24}
        |h_n-g_n|\le \frac{\delta_2}{2\pi \delta_{1}^2} (1-\epsilon)^{-n-1}\oint_{|z|=1-\epsilon}|z-1|^{\beta-2\alpha}|dz|.
    \end{equation}

      In order to derive an upper estimate for  the above integral, let $z=(1-\epsilon)e^{i\theta}$, then we have
    \begin{equation} \label{b25}
        \oint_{|z|=1-\epsilon}|z-1|^{\beta-2\alpha}|dz|=(1-\epsilon)\int_{-\pi}^{\pi}\left|(1-\epsilon)e^{i\theta}-1\right|^{\beta-2\alpha}d\theta.
    \end{equation}
Simple manipulation  gives   
    \begin{equation}
     \begin{aligned}
        \left|(1-\epsilon)e^{i\theta}-1\right|&=
        \sqrt{\epsilon^2+4(1-\epsilon)\sin^2\frac{\theta}{2}}\ge \frac{1}{\sqrt{2}} \big( \epsilon+2\sqrt{1-\epsilon} |\sin\frac{\theta}{2}| \big) \\
        &\ge\frac{1}{\sqrt{2}} \big( \epsilon+2\sqrt{1-\epsilon} \frac{|\theta|}{\pi} \big),
         \end{aligned} 
    \end{equation}
    whenever $|\theta|\le \pi$.  Noting  the assumption  $\beta-2\alpha<-1$, we  get 
     \begin{equation}
    \begin{aligned}
    \int_{-\pi}^{\pi}\left|(1-\epsilon)e^{i\theta}-1\right|^{\beta-2\alpha}d\theta
    &\leq 2  \big(\frac{1}{\sqrt{2}}\big)^{\beta-2\alpha}  \int_{0}^{\pi}
    \big( \epsilon+2\sqrt{1-\epsilon} \frac{|\theta|}{\pi} \big)^{\beta-2\alpha}d\theta\\
    &\leq  2^{\alpha-\frac{\beta}{2}} \frac{\pi}{2\alpha-\beta-1} \frac{\epsilon^{\beta-2\alpha+1}}{\sqrt{1-\epsilon}}.
     \end{aligned} 
      \end{equation}

      Combining  \eqref{b24} and  \eqref{b25}, we complete the proof. 
\end{proof}

\begin{lemma}\label{lem:5.8}
 For the \textbf{super-critical case}  where $W\gg L^{1-\frac{d}{6}}$ for $d< 4$ or $W\gg L^{\frac{1}{3}}(1+\delta_{d,4}\log L)$ for $d\ge 4$, chose  $R=(\frac{L}{W})^2$. Given any two fixed positive constants $C_1,C_2>0$,  assume that  $n\le  C_1 L^{\frac{d}{3}}$ and 
 \begin{equation}
    x= \sqrt{1+W^{-d}A(1)}+C_2n^{-2}.
\end{equation}
Then  
\begin{equation}\label{equ:4.44}
 \begin{aligned}
\widetilde{\mathcal{P}}_n(2x)
&=\frac{1}{2\pi i}\oint_0\frac{z^{-n-1}}{1-2xz+z^2+W^{-d} A(1)}dz+o(n)\\
&=\big(1+W^{-d} A(1)\big)^{\frac{n}{2}}U_{n}\Big(\frac{x}{\sqrt{1+W^{-d}A(1)}}\Big)+o(n).\\
\end{aligned}
\end{equation}
\end{lemma}
\begin{proof}
Rewrite 
\begin{equation}
 \begin{aligned}
    &1-2xz+z^2+W^{-d} A(z)\\
    &=\big(\sqrt{1+W^{-d}A(1)}-z\big)^2+2\big(\sqrt{1+W^{-d}A(1)}-x\big)z+W^{-d}(A(z)-A(1)).
     \end{aligned}
\end{equation}
Take the contour  $\mathcal{C}:|z|=1-\frac{C_2}{100n}$, then we immediately have
\begin{equation}\label{equ:f_lower_bound}
    \Big|\big(\sqrt{1+W^{-d}A(1)}-z\big)^2+2\big(\sqrt{1+W^{-d}A(1)}-x\big)z\Big|>C_2|z-1|^2.
\end{equation}

We also claim that 
    \begin{equation}\label{equ:Az-A1}
        |A(z)-A(1)|=\begin{cases}
            O(R|z-1|),~~~~&d=2,\\
            O(|z-1|^{\frac{1}{2}}),~~~~&d=3,\\
            O(|z-1|\log (1-z)),~~~~&d=4,\\
            O(|z-1|),~~~~&d>4.
        \end{cases}
    \end{equation} 
Since
\begin{equation}
\begin{aligned}
    |A(z)-A(1)|&=\Big|W^d\big(-a_4(z^4-1)+\sum_{l=3}^{3R}a_{2l}(z^{2l}-1)\big)\Big|\\
    &= O(|z-1|)+|z^2-1|\Big|\sum_{l=3}^{3R}W^da_{2l}(z^{2l-2}+z^{2l-4}+\cdots +1)\Big|\\
    &=|z^2-1|\Big|\sum_{l=0}^{3R}z^{2l}\sum_{i=l+1}^{3R}W^da_{2i}\Big|+O(|z-1|),
\end{aligned}
\end{equation}
  we have  for  $d>2$ 
\begin{equation}
    W^d\sum_{i=l+1}^{3R}a_{2i}\le Cl^{1-\frac{d}{2}},
\end{equation}
while for  $d=2$ 
\begin{equation}
    |z^2-1|\left|\sum_{l=3}^{3R}W^da_{2l}(z^{2l-2}+z^{2l-4}+\cdots +1)\right|\le |z^2-1|\sum_{l=3}^{3R}W^da_{2l}l=O(R|z-1|).
\end{equation}

Now we take $h(z)=1-2xz+z^2+W^{-d} A(z)$, $g(z)=1-2xz+z^2+W^{-d} A(1)$. Noting that $|z-1|\ge \frac{1}{n}$ on the contour, we have  $W^{-d}|A(z)-A(1)|=o(n^{-2})=o(|z-1|^2)$, which 
together with \eqref{equ:f_lower_bound} gives us 
\begin{equation}
    |h|,|g|\ge C|z-1|^2.
\end{equation}
Hence, by Lemma \ref{lem:4.6series} with $\beta=1/2$ for $d=2$ and with $\beta=1$ for $d>2$,  the error term $f_n-g_n$ can be   bounded by
    \begin{equation}
        \begin{cases}
             n^2 W^{-d}R,&d=2,\\
             n^{\frac{5}{2}} W^{-d}, &d=3,\\
           {n^2W^{-d}\log R}, &d=4,\\
           n^2 W^{-d}, &d\geq 5,
        \end{cases}
    \end{equation}
    all of which has order $o(n)$.

    This shows 
    \begin{equation}
        \widetilde{\mathcal{P}}_n(2x)=\frac{1}{2\pi i}\oint_C\frac{z^{-n-1}}{1-2xz+z^2+W^{-d} A(1)}dz+o(n),
    \end{equation}
    from which the second estimate follows. 
\end{proof}

\begin{lemma}\label{lem:P_sub}
    For \textbf{the subcritical case}
    , where $d< 4$ and   $W\ll L^{1-\frac{d}{6}}$, let   $n=[t W^{\frac{2d}{6-d}}]$ and  $R=n$. 
    Set  
       \begin{equation}
    x=\begin{cases}
        1+\frac{-2-\log2 +\log W}{4\pi W^2}-\frac{\hat{x}}{2W^{2}},&d=2,\\
        1+\frac{1}{2(\sqrt{2\pi})^3 W^3}\big(\zeta(\frac{3}{2})-1-\frac{1}{\sqrt{2}}\big)-\frac{\hat{x}}{2W^{4}}, &d=3,
    \end{cases}
\end{equation}
then we have 
\begin{equation}
\frac{1}{n}\widetilde{\mathcal{P}}_n(2x) 
=o(1)+
 \frac{1}{2\pi it} \begin{cases}
       \int_{-i\infty}^{i\infty}\frac{e^{-ty}dy}{y^2+\hat{x}-\frac{1}{2\pi}\log(- y) },&d=2,\\
       \int_{-i\infty}^{i\infty}\frac{e^{-ty}dy}{y^2+\hat{x}-\frac{1}{\pi}\sqrt{-y}},&d=3.
\end{cases}
\end{equation}
\end{lemma}
\begin{proof}
    The value $a_{2l}$ of $2l>n$ do not affect the evaluation of $\widetilde{\mathcal{P}}_n(2x)$, so we can take $\Tilde{a}_{2l}=(2\pi l)^{-\frac{d}{2}}W^{-d}$ for $2l>n$ and $\Tilde{a}_{2l}=a_{2l}$ for $2l\le n$. Let
    \begin{equation}
        \widetilde{A}(z)=W^d(-a_4z^4+\sum_{l\ge 3}\Tilde{a}_{2l}z^{2l}),
    \end{equation}
    then we have 
    \begin{equation}
        \widetilde{\mathcal{P}}_n(2x)=\frac{1}{2\pi i}\oint_{\mathcal{C}}\frac{z^{-n-1}}{1-2xz+z^2+W^{-d} \widetilde{A}(z)}dz.
    \end{equation}
    Take the contour  $|z|=1-W^{-\frac{2d}{6-d}}$, then as $z\to 1$ we see from  \eqref{equ:polylogarithm} that    
    \begin{equation}
        \widetilde{A}(z)=\begin{cases} -\frac{1}{\pi}
            - \frac{1}{2\pi}\log (1-z^2) +o(1), &~~d=2,\\
            (2\pi)^{-\frac{3}{2}}\big(\zeta(\frac{3}{2})-1-\frac{1}{\sqrt{2}}\big)- \frac{1}{\sqrt{2}\pi}  \sqrt{1-z^2}+O(|1-z|),&~~d=3.
        \end{cases}
    \end{equation}
   
   In Lemma \ref{lem:4.6series}, take $h(z)=1-2xz+z^2+W^{-d} \widetilde{A}(z)$ and 
    \begin{equation}
         g(z)=1-2xz+z^2+W^{-d}\begin{cases}
            -\frac{1}{\pi}
            - \frac{1}{2\pi}\log (1-z^2),&~~d=2,\\
            (2\pi)^{-\frac{3}{2}}\big(\zeta(\frac{3}{2})-1-\frac{1}{\sqrt{2}}\big)- \frac{1}{\sqrt{2}\pi}  \sqrt{1-z^2},&~~d=3.
        \end{cases}
    \end{equation}
    Then for the integral associated with  the function $g$, make a change of   variables  $z=W^{-\frac{2d}{6-d}}y-1$, and we know that the leading  contribution changes to an integral over the imaginary axis.  
    
    Thus  we complete  the proof.
\end{proof}

\begin{lemma}\label{lem:B.7_outside_upperbound}
    Let   $x=\sqrt{1-a_4+\delta}$ 
     where $\delta> 0$ is sufficiently small and      $\epsilon^{1-2^{-k}}\ll \sqrt{\delta}$  for  any  given integer $k\ge 1$. Then 
\begin{equation}
   \frac{1}{n} \left|\mathcal{P}_n(2x)\right|\le  C_1\frac{e^{C_2n\sqrt{\delta}}}{n\sqrt{\delta}}
\end{equation} 
for  some  constants $C_1,C_2>0$.  
Moreover, If   $n\sqrt{\delta}\ge 1$, then there are  some constants $C_3,C_4>0$  such that
\begin{equation}\label{equ:B.50}
    \frac{1}{n}\mathcal{P}_n(2x)\ge  C_3e^{C_4n\sqrt{\delta}}.
\end{equation}
\end{lemma}
\begin{proof} By  the   four-term recursion  for $\mathcal{P}_n(z)$ in Definition \eqref{a4Polynomial}, 
      we have  a contour representation of 
      \begin{equation}\label{contourrep}
\begin{aligned}
    \mathcal{P}_n(2x)&=\frac{1}{2\pi i}\oint_{\mathcal{C}}\frac{z^{-n-1}}{1-2xz+z^2-a_4z^4}dz,
\end{aligned}
    \end{equation}
    where $\mathcal{C}$ is a small contour encircling the origin.

Put  $\epsilon=a_4$ and introduce two functions 
    \begin{equation}
        g(z):=1-2xz+z^2-\epsilon z^4, \quad  g_0(z)=1-2xz+z^2.
    \end{equation}
On the contour $\mathcal{C}_2=\{z\in \mathbb{C}:|z|=2\}$,  for $\epsilon$ small enough we have
\begin{equation}
    |g(z)-g_0(z)|\le 16\epsilon < |g(z)|,
\end{equation}
Hence by Rouch\'e's Theorem,   $g(z)$ and $g_0(z)$ have  the same number of zeros    inside $\mathcal{C}_2$, which  is exactly 2.  Moreover, let    $z_1, z_2$     be the two zeros of the equation $g(z)=0$ such that $|z_1|, |z_2|<2$.

  By the  residue theorem, we get  
\begin{equation}\label{equ:B.46}
\begin{aligned}
    \mathcal{P}_n(2x) 
    &=\frac{1}{2\pi i}\oint_{\mathcal{C}_2}\frac{z^{-n-1}}{g(z)}dz-z_1^{-n-1}(g'(z_1))^{-1}-z_2^{-n-1}(g'(z_2))^{-1}\\
    & =-z_1^{-n-1}(g'(z_1))^{-1}-z_2^{-n-1}(g'(z_2))^{-1}+O(2^{-n}).
\end{aligned}
    \end{equation}
where the simple fact has been used  
\begin{equation}\label{equ:B.47}
    \left|\frac{z^{-n-1}}{1-2xz+z^2-\epsilon z^4}\right|\le C2^{-n}, \quad   |z|=2.
\end{equation}

 We just consider the  asymptotic estimate at one point    $z_1$ since the other is similar.  Note that 
    \begin{equation}
        z_1 =x+ \sqrt{x^2-1+\epsilon+\epsilon(z_1^4-1)},
    \end{equation}
we use   the  bound  of $|z_1|\le 2$ to obtain 
\begin{equation}\label{equ:B.54}
\begin{aligned}
        z_1&=\sqrt{1-\epsilon+\delta}+\sqrt{\delta+\epsilon(z_1^4-1)}=1+O(\sqrt{\epsilon}+\sqrt{\delta}).
\end{aligned}
\end{equation}
Thus, the replacement of this bound  leads to 
\begin{equation}
    \sqrt{\delta+\epsilon(z_1^4-1)}=\sqrt{\delta+O(\epsilon\sqrt{\delta}+\epsilon^{\frac{3}{2}})}=O(\sqrt{\delta}+\epsilon^{\frac{3}{4}}).
\end{equation}
substitution of  which into  the first equality of \eqref{equ:B.54}   implies  that 
\begin{equation}\label{equ:B.59}
\begin{aligned}
        z_1&=1+O(\delta+\epsilon)\pm \sqrt{\delta+\epsilon(z_1^4-1)}
        =1+O(\sqrt{\delta}+\epsilon^{\frac{3}{4}}).
\end{aligned}
\end{equation}
Repeat the procedure for $k$ times and  we arrive at 
\begin{equation} \label{z1bound}
    z_1=1+O_k(\sqrt{\delta}+\epsilon^{1-2^{-k}})=1+O_k(\sqrt{\delta}).
\end{equation}

On the other hand,  it's easy to see from  \eqref{z1bound} that 
\begin{equation}
    g'(z_1)=-2x+2z_1-4\epsilon z_1^3=2\sqrt{\delta+\epsilon(z_1^4-1)}-4\epsilon z_1^3=2\sqrt{\delta}+O(\epsilon).
\end{equation}
So  we have 
\begin{equation}\label{equ:B.54f'}
    c\delta^{-\frac{1}{2}}\le |g'(z_1)|^{-1}\le C\delta^{-\frac{1}{2}}.
\end{equation}
for some constant $C>0$.

Combination of  \eqref{equ:B.46},    \eqref{equ:B.59} and \eqref{equ:B.54f'} leads to  
\begin{equation}
   \frac{1}{n} \left|\mathcal{P}_n(2x)\right|\le  C_1\frac{e^{C_2n\sqrt{\delta}}}{n\sqrt{\delta}}.
\end{equation}

Similarly we can obtain the lower bound \eqref{equ:B.50}
 and thus we finish the proof.
\end{proof}

\begin{lemma}\label{lem:B.6_inside_upperbound}
   Assume that   
    $ n \ll \sqrt{\epsilon} W^d$ 
    as $W\to \infty$.  Then 
  there is  an absolute constant $C$ such that
\begin{equation}
|\mathcal{P}_n(2x)|\le \frac{C}{\sqrt{1-x^2}}, \quad  0\le x\le1-\epsilon.
\end{equation}
\end{lemma}
\begin{proof}
By the same argument as in the proof of  Lemma \ref{lem:B.7_outside_upperbound},  we   have
\begin{equation}\label{equ:B.61}
\begin{aligned}
    \mathcal{P}_n(2x)
    &=-z_1^{-n-1}(g'(z_1))^{-1}-z_2^{-n-1}(g'(z_2))^{-1}+O(2^{-n}),
\end{aligned}
\end{equation}
where $z_1$ and $ z_2$ are the two zeros of $g(z):=1-2xz+z^2-\epsilon z^4 $  inside $|z|=2$. 

Using 
\begin{equation}
    z_{1}=x+\sqrt{x^2-1+a_4z_1^4}=x\pm\sqrt{x^2-1}+O\big(\frac{a_4}{\sqrt{x^2-1}}\big), 
\end{equation}
and   $\big|x\pm\sqrt{x^2-1}\big|=1$ for  $-1\le x\le 1$,  together with the assumption $ n \ll \sqrt{\epsilon} W^d$ we derive that   
\begin{equation}\label{equ:B.63}
    |z_1|^{-n-1} =1+O(\frac{n+1}{\sqrt{\epsilon}W^{d}})=1+o(1).
\end{equation}

On the other hand, 
\begin{equation}
    g'(z_1)=-2x+2z_1-4a_4z_1^3=2\sqrt{x^2-1}+O(W^{-d}).
\end{equation}
So we see from  $|\sqrt{x^2-1}|\ge \sqrt{\epsilon}\gg W^{-d}$ that  
\begin{equation}\label{equ:B.65}
    |g'(z_1)|^{-1}\le \frac{C}{\sqrt{1-x^2}}.
\end{equation}

As to $z_2$, similar estimates with \eqref{equ:B.63} and   \eqref{equ:B.65} hold true.  Therefore,  the desired result follows from  \eqref{equ:B.61}.
\end{proof}

Similarly, we can also obtain  the  asymptotic behavior  of $\mathcal{P}_n$ via the well-known Chebyshev polynomial.
\begin{lemma}\label{lem:B.5}
Given any fixed constant $C_0>0$,  
 if $n\ll  W^d$, then 
\begin{equation}
\begin{aligned}
\mathcal{P}_n(2x)&=\big(1-a_4\big)^{\frac{n}{2}}U_{n}\big(\frac{x}{\sqrt{1-a_4}}\big)+o(n), \quad 0\le x\le1-a_4+\frac{C_0} {n^{2}}.
\end{aligned}
\end{equation}
\end{lemma}

Finally, we  just state  some  properties of Chebyshev polynomials whose proofs are well-known.
\begin{lemma}[{\cite[Lemma 11.2]{erdHos2011quantum2}}]\label{lem:B.8}
 There is  a constant $C>0$ such that  
\begin{equation}
U_n(1+x)\ge e^{C n\sqrt{x}}, \quad x\in [0,0.1),
\end{equation}
and  
\begin{equation}
    |U_n(1+x)|\le 2n,\quad -1\le x\le 0,
\end{equation}
uniformly for  $n\ge 1$. 
\end{lemma}
\begin{lemma}\label{lem:B.9_Chebyshev}
    Let $n=[tM]$,  for any   $y\in \mathbb{R}$ we have
    \begin{equation}
        \lim\limits_{M\rightarrow\infty}\frac{1}{n}U_n\big(1+\frac{y}{2M^2}\big)=\frac{\sin(t\sqrt{-y})}{t\sqrt{-y}}.
    \end{equation}
\end{lemma}

\subsection{The continuity theorem}\label{sec:cont_thm}
\begin{theorem}\label{prop:continuity_thm}
Given  a sequence of    measures $\{\sigma_N\}$, for integer $j\geq 1$ define    
\begin{equation}\label{equ:sin4_}
    \widehat{\sigma}_{N}^{(2j)}(x):=\int_{-\infty}^{\infty} \frac{\sin^4 (x\sqrt{-\lambda})}{(x\sqrt{-\lambda})^4}d \sigma_N(\lambda), \quad  x\in (0,\infty).
\end{equation}
If    for  some constant $C_0$  and    for any  $x_0\in [a,b]$ with any given  $b>a>0$,   
 \begin{equation}
      |\widehat{\sigma}_{N}^{(4)}(x_0)|\leq C  
  \end{equation}
 and also    as $N\rightarrow\infty$ 
\begin{equation}
    \widehat{\sigma}_{N}^{(8)}(x) \to \widehat{\sigma}^{(8)}(x),  \ \  \widehat{\sigma}_{N}^{(10)}(x)  \to \widehat{\sigma}^{(10)}(x), \ \forall x>0, 
\end{equation}  for some functions $\widehat{\sigma}^{(8)}(x)$ and $\widehat{\sigma}^{(10)}(x)$, then there exists a measure $\sigma$ such that 
\begin{equation}\label{equ:f=f_N}
    \widehat{\sigma}^{(8)}(x)=\int_{-\infty}^{\infty} \frac{\sin^8 (x\sqrt{-\lambda})}{(x\sqrt{-\lambda})^8}d \sigma(\lambda),\quad   \widehat{\sigma}^{(10)}(x)=\int_{-\infty}^{\infty} \frac{\sin^{10} (x\sqrt{-\lambda})}{(x\sqrt{-\lambda})^{10}}d \sigma(\lambda).
\end{equation}

 Moreover, if for any  fixed $\delta\in (0,2)$ and for some $C>0$, \begin{equation}
     \widehat{\sigma}^{(8)}(x)\le e^{Cx^{2-\delta}}, \quad \widehat{\sigma}^{(10)}(x)\le e^{Cx^{2-\delta}},
 \end{equation}  
  then $\sigma(x)$ is   unique up to some constant and $\sigma_N$ converges vaguely to $\sigma$.
\end{theorem}
\begin{proof} {\bf Step 1: Existence.} 
Choose $x_0,x_1\in [a,b]$ such that
\begin{equation}
    \left|\frac{\sin (x_0\sqrt{-\lambda})}{x_0\sqrt{-\lambda}}\right|^4+\left|\frac{\sin (x_1\sqrt{-\lambda})}{x_1\sqrt{-\lambda}}\right|^4=\begin{cases}
        \Theta_{x_0,x_1}(\frac{1}{1+\lambda^2}),~&\lambda<0,\\
        e^{\Theta_{x_0,x_1}(\sqrt{\lambda})},~&\lambda>0.
    \end{cases}
\end{equation}
Here the notation $f=\Theta(g)$ means that there exist constants $c,C>0$ such that $cg\le f\le Cg$.

By the boundedness assumption in  \eqref{equ:sin4_}, so there exists a constant $C_1,C_2>0$ such that for $\lambda>0$
\begin{equation}\label{equ:negative_bound}
    \sigma_N(0)-\sigma_N(-\lambda)\le C_1(1+\lambda^2), \quad \sigma_N(\infty)-\sigma_N(\lambda)\le C_1e^{-C_2\sqrt{\lambda}}.
\end{equation}
  Introduce a family of new measures  
    \begin{equation}
        dv_N(\lambda)=\begin{cases}
             (1+\lambda^2)^{-1-\epsilon}d\sigma_N(\lambda),&\lambda\le 0,\\
            d\sigma_N(\lambda),&\lambda>0,
        \end{cases}
    \end{equation}
    where $\epsilon$ is a fixed positive number. Then 
    by \eqref{equ:negative_bound}, 
      for $\lambda>0$
    \begin{equation}\label{equ:v_negative_decay}
        v_N(-\lambda)-v_N(-\infty)\le  C_1 (1+\lambda^2)^{-\epsilon},\quad v_{N}(\infty)-v_{N}(\lambda)\le C_1e^{-C_2\sqrt{\lambda}}.
    \end{equation}
  This shows that the sequence    $\{v_N\}$ is tight. Thus by Prokhorov's theorem, $\{v_N\}$ is compact and there is a subsequence $\{v_{N_j}\}$  that converges to some measure $v$. 
        
        Now for any $x<0.01 C_2$, as $j\to \infty$ we have 
    \begin{equation}
    \begin{aligned}
        &\int_{0}^{\infty} \frac{\sin^8 (x\sqrt{-\lambda})}{(x\sqrt{-\lambda})^8}d v_{N_j}(\lambda)=\int_{0}^{t}\frac{\sin^8 (x\sqrt{-\lambda})}{(x\sqrt{-\lambda})^8}d v_{N_j}(\lambda)+O(e^{-C_3t})\\
        &=\int_{0}^{t}\frac{\sin^8 (x\sqrt{-\lambda})}{(x\sqrt{-\lambda})^8}d v(\lambda)+o(1)+O(e^{-C_3t})
      =\int_{0}^{\infty}\frac{\sin^8 (x\sqrt{-\lambda})}{(x\sqrt{-\lambda})^8}d v(\lambda)+o(1)+O(e^{-C_3t}).
    \end{aligned}
    \end{equation}
    Take $t\rightarrow \infty$, as $j\to \infty$ we obtain
    \begin{equation}
        \int_{0}^{\infty} \frac{\sin^8 (x\sqrt{-\lambda})}{(x\sqrt{-\lambda})^8}d \sigma_{N_j}(\lambda)=\int_{0}^{\infty} \frac{\sin^8 (x\sqrt{-\lambda})}{(x\sqrt{-\lambda})^8}d v_{N_j}(\lambda)\rightarrow \int_{0}^{\infty}\frac{\sin^8 (x\sqrt{-\lambda})}{(x\sqrt{-\lambda})^8}d \sigma(\lambda).
    \end{equation}

Similarly, by the tail decay  \eqref{equ:v_negative_decay} we have
    \begin{equation}
    \begin{aligned}
        &\int_{-\infty}^{0} \frac{\sin^8 (x\sqrt{-\lambda})}{(x\sqrt{-\lambda})^8}d \sigma_{N_j}(\lambda) =\int_{-t}^{0}\frac{\sin^8 (x\sqrt{-\lambda})}{(x\sqrt{-\lambda})^8}(1+\lambda^2)^{1+\epsilon}d v_{N_j}(\lambda)+O(t^{-2})\\
        &=\int_{-t}^{0}\frac{\sin^8 (x\sqrt{-\lambda})}{(x\sqrt{-\lambda})^8}(1+\lambda^2)^{1+\epsilon}d v(\lambda)+o(1)+O(t^{-2})\\
        &=\int_{-\infty}^{0}\frac{\sin^8 (x\sqrt{-\lambda})}{(x\sqrt{-\lambda})^8}d \sigma(\lambda)+o(1)+O(t^{-2}).
    \end{aligned}
    \end{equation}
   
  Also, we can prove the similar  convergence of $\widehat{\sigma}_{N_j}^{(10)}(x)$.   Thus, these  ensure     the existence as shown in \eqref{equ:f=f_N}. 

    {\bf Step 2: Uniqueness.} 
    Assume there is another positive measure $\widetilde{\sigma}(\lambda)$ such that
    \begin{equation}\label{equ:bound_sigma}
        \int_{-\infty}^{0}\frac{d\widetilde{\sigma}(\lambda)}{1+\lambda^2}<\infty, \quad \int_{0}^{\infty}e^{x\sqrt{|\lambda|}}d\widetilde{\sigma}(\lambda)\le C_3e^{C_4|x|^{2-\delta}},
    \end{equation} 
    and
    \begin{equation}
        \int_{-\infty}^{\infty} \frac{\sin^8 (x\sqrt{-\lambda})}{(x\sqrt{-\lambda})^8}d \sigma(\lambda)=\int_{-\infty}^{\infty} \frac{\sin^8 (x\sqrt{-\lambda})}{(x\sqrt{-\lambda})^8}d \widetilde{\sigma}(\lambda).
    \end{equation}
  Introduce  a signed measure   $\mu:=\sigma-\widetilde{\sigma}$, then  
    \begin{equation}\label{equ:g=A=B}
        g(x)=\int_{-\infty}^{0-} \frac{\sin^8 (x\sqrt{-\lambda})}{(x\sqrt{-\lambda})^8}d \mu(\lambda)=-\int_{0}^{\infty} \frac{\sin^8 (x\sqrt{-\lambda})}{(x\sqrt{-\lambda})^8}d \mu(\lambda),\quad \forall x\in \mathbb{R},
    \end{equation}
    where the range for  the first integral is chosen to be $(-\infty,0)$ to avoid the possible atom measure $\mu(\{0\})$. 

     Moreover,  we can extend $g(x)$ to a holomorphic function on $\mathbb{C}$ as
    \begin{equation}
        g(z):=-\int_{0}^{\infty} \frac{\sin^8 (z\sqrt{-\lambda})}{(z\sqrt{-\lambda})^8}d \mu(\lambda),
    \end{equation}
    and see from  \eqref{equ:bound_sigma} that 
    \begin{equation}
        |g(z)|\le C_3e^{C_4|z|^{2-\delta}}.
    \end{equation}
    
  We claim    that   $g(z)$ is bounded when $z\in [0,+\infty)\cup i[0, \infty)$.   Indeed,  if $z\in [1,+\infty)$, we have
    \begin{equation}
        |g(z)|=\left|\int_{-\infty}^{0-} \frac{\sin^8 (z\sqrt{-\lambda})}{(z\sqrt{-\lambda})^8}d \mu(\lambda)\right|\le C_5
    \end{equation}
    while  $z\in i[1,+\infty)$,  
    \begin{equation}
        |g(z)|=\left|\int_{0}^{\infty} \frac{\sin^8 (z\sqrt{-\lambda})}{(z\sqrt{-\lambda})^8}d \mu(\lambda)\right|\le C_6\left|\int_{0}^{\infty} \frac{1}{1+\lambda^4}d \mu(\lambda)\right|\le C_7.
    \end{equation}
   This shows that  $g(z)$ is bounded in $[0,1]\cup i[0,1]$ as $g(z)$ is a holomorphic function. So $g(z)$ is bounded on the boundary of the first quadrant. 

By Phragm\'en–Lindel\"of principle and the assumption  condition \eqref{equ:bound_sigma},   $g(z)$ is bounded in the first quadrant.  The same argument applies to all other 3 quadrants. Thus $g(z)$ is bounded on the whole plane and must be a constant. Moreover, 
    \begin{equation}
        g(z)\equiv 0, \label{g=0}
    \end{equation}
 since
    \begin{equation}
        \lim_{x\rightarrow\infty }\int_{-\infty}^{0-} \frac{\sin^8 (x\sqrt{-\lambda})}{(x\sqrt{-\lambda})^8}d \mu(\lambda)=0.
    \end{equation}

    Next, we show that $\mu\equiv 0$.   Firstly,   we treat the uniqueness of $\lambda>0$ part.  Put 
    \begin{equation}
        h(x):=-\int_{0}^{\infty} \cos(x\sqrt{-\lambda}) d \mu(\lambda).
    \end{equation}

Using  the expansion of $(\sin \sqrt{-\lambda}x)^8$  in terms of the  cosine functions of multiple angles
     and differentiating it 
repeatedly,   
    we  see from \eqref{g=0} that 
    \begin{equation}
        0=\frac{d^8 }{dx^8}\big(x^8g(x)\big)= A_1 h(2x)+A_2 h(4x)+A_3 h(6x)+A_4 h(8x)
    \end{equation}
    for some constants $A_j$.    By \eqref{equ:bound_sigma}, we know that $h(z)$ is also analytic function on $\mathbb{C}$, so we can do Taylor's expansion and check the coefficients, which show  that $h$ must be a polynomial. Hence after taking  finite times of derivatives,
    \begin{equation}
        h^{(4k)}=-\int_{0}^{\infty} \cos(x\sqrt{-\lambda}) \lambda^{2k}d \mu(\lambda)=0.
    \end{equation}
    By the uniqueness theorem as in   \cite[Theorem 1.1]{sodin2021uniqueness}    due to   Levitan, Meiman and Vul,  the measure  $d \mu(\lambda)\equiv 0$ for  $\lambda>0$.

    Secondly,  we treat the uniqueness of $\lambda<0$ part. By \eqref{equ:g=A=B} and \eqref{g=0}, we know that
    \begin{equation}
        \int_{-\infty}^{0-} \frac{\sin^8 (x\sqrt{-\lambda})}{(x\sqrt{-\lambda})^8}d \mu(\lambda)=0,
    \end{equation}
    and similarly
    \begin{equation}\label{equ:sinc_10}
        \int_{-\infty}^{0-} \frac{\sin^{10} (x\sqrt{-\lambda})}{(x\sqrt{-\lambda})^{10}}d \mu(\lambda)=0.
    \end{equation}
    By the assumption condition \eqref{equ:bound_sigma}, we can take twice derivatives on the left-hand side of \eqref{equ:sinc_10}
    \begin{equation}
        0=\frac{d^2}{dx^2}\int_{-\infty}^{0-} \frac{\sin^{10} (x\sqrt{-\lambda})}{(\sqrt{-\lambda})^{10}}d \mu(\lambda)=\int_{-\infty}^{0-} \frac{\sin^{8} (x\sqrt{-\lambda})(80-90\sin^2(x\sqrt{-\lambda}))}{(\sqrt{-\lambda})^{8}}d \mu(\lambda), 
    \end{equation}
    from which 
 \begin{equation}
         \int_{-\infty}^{0-} \frac{\sin^{10} (x\sqrt{-\lambda})}{(\sqrt{-\lambda})^{8}}d \mu(\lambda)=0.
    \end{equation}
    
   Let   $dv_1(t)=- t^2(1+t^6)^{-1}d\mu(-t^2)$, it's    easy to  see  
    \begin{equation}
    \begin{aligned}
        0&=\int_{-\infty}^{0-} \frac{\sin^{10} (x\sqrt{-\lambda})}{(\sqrt{-\lambda})^{8}}d \mu(\lambda)
        =\int_{0}^{\infty}\frac{\sin^{10}(xt)}{t^{10}}(1+t^6)dv_1(t).
    \end{aligned}
    \end{equation}
   By \eqref{equ:bound_sigma}, we see that  the Mellin transform  
\begin{equation}
    \mathcal{M}[v](s):=\int_{0}^{\infty}t^{s-1}dv(t)
\end{equation}
is well-defined for $-1\le \Re s\le 1$. 

 Noting that  the Mellin transform  of  $\sin^{10} t$ reads 
    \begin{equation}
        \mathcal{M}[\sin^{10} t](s)=\Big(- \frac{105}{256}2^{-s} + \frac{15}{64} 4^{-s} - \frac{45}{512} 6^{-s} + \frac{5}{256} 8^{-s} - \frac{1}{512} 10^{-s}\Big)\cos(\frac{\pi s}{2})\Gamma(s)
    \end{equation}
for $-10<\Re s<0$, 
  by Plancherel's theorem we see that 
    \begin{equation}
        0=\int_{0}^{\infty}\frac{\sin^{10}(xt)}{ t^{10}}(1+t^6)dv(t)=\frac{1}{2\pi i}\int_{c-i\infty}^{c+i\infty}x^{-s}\big(\mathcal{M}[\sin^{10} t](s-10)+\mathcal{M}[\sin^{10} t](s-4)\big)\cdot \mathcal{M}[v_1](1-s)ds
    \end{equation}
holds for $0<c<2$ and for any $x$. 
 Thus $\mathcal{M}[v_1](1-s)$ does vanish on the line $c+i(-\infty,\infty)$ for some constant $0<c<2$, except for discrete points from zeros of the above factor of the integrand and hence $\mathcal{M}[v_1](1-s)$ vanishes on the whole line.

   So $dv_1(t)\equiv0$ for $t\ge 0$, which implies    $d\mu(\lambda)\equiv0$ for $\lambda<0$. We thus complete  the uniqueness for $\lambda<0$ part.

    At last, we also need to show that there is no atom measure when $\lambda=0$, that is, $\mu(\{0\})=0$. Since when $\lambda=0$, $ (x\sqrt{-\lambda})^{-8}\sin^8(x\sqrt{-\lambda}) =1$, and for the other    part   $d\mu=0$,   we have
    \begin{equation}
        0=\int_{-\infty}^{\infty}\frac{\sin^8(x\sqrt{-\lambda})}{(x\sqrt{-\lambda})^8}d\mu(\lambda)=\mu(\{0\}).
    \end{equation}
    
    Hence,  we complete the whole proof of the desired results.
\end{proof}
\begin{remark} Theorem \ref{prop:continuity_thm} 
  above    also  holds true if we change  the transform 
   \begin{equation}
        \int_{-\infty}^{\infty} \frac{\sin^{2j} (x\sqrt{-\lambda})}{(x\sqrt{-\lambda})^{2j}}d \sigma(\lambda).
    \end{equation}
    from    $j= 4,5$ to any integer $j\geq 4$. The condition $\widehat{\sigma}^{(2j)}(x)\le e^{Cx^{2-\delta}}$ can potentially be relaxed to $\widehat{\sigma}^{(8)}(x)\le e^{Cx^{2}}$ through an application of the techniques developed in \cite[Lemma 2.8.2]{Levi87}.
\end{remark}

\let\oldthebibliography\thebibliography
\let\endoldthebibliography\endthebibliography
\renewenvironment{thebibliography}[1]{
  \begin{oldthebibliography}{#1}
    \setlength{\itemsep}{0.5em}
    \setlength{\parskip}{0em}
}
{
  \end{oldthebibliography}
}

\begin{spacing}{0}
\newcommand{\etalchar}[1]{$^{#1}$}

\end{spacing}
\end{document}